\DeclareMathOperator{\con}{con}
\DeclareMathOperator{\simple}{sim}
\DeclareMathOperator{\ini}{ini}
\DeclareMathOperator{\mul}{mul}
\DeclareMathOperator{\occ}{occ}
\DeclareMathOperator{\var}{var}
\newtheorem{theorem}{Theorem}[section]
\newtheorem{proposition}[theorem]{Proposition}
\newtheorem{lemma}[theorem]{Lemma}
\newtheorem{corollary}[theorem]{Corollary}
\theoremstyle{definition}
\newtheorem{remark}[theorem]{Remark}
\newtheorem{question}{Question}
\numberwithin{equation}{section}
\renewcommand*\subjclass[2][2010]{\def\@subjclass{#2}\@ifundefined{subjclassname@#1}{\ClassWarning{\@classname}{Unknown edition (#1) of Mathematics Subject Classification; using '2010'.}}{\@xp\let\@xp\subjclassname\csname subjclassname@#1\endcsname}}
\renewcommand{\subjclassname}{\textup{2010} Mathematics Subject Classification}
\newcommand{\FB}{FB}
\newcommand{\HFB}{HFB}
\newcommand{\NFB}{NFB}
\newcommand{\Amon}{\mathbf A} 
\begin{document}

\title[Limit varieties of monoids satisfying a certain identity]{Limit varieties of monoids satisfying a certain identity}
\thanks{The first author is supported by the Ministry of Science and Higher Education of the Russian Federation (project FEUZ-2020-0016).
The second and the third authors are partially supported by the National Natural Science Foundation of China (No. 12171213, 11771191) and the Natural Science Foundation of Gansu Province (No. 20JR5RA275).}

\author{Sergey V. Gusev}

\address{Ural Federal University, Institute of Natural Sciences and Mathematics, Lenina 51, Ekaterinburg 620000, Russia}

\email{sergey.gusb@gmail.com}

\author{Yu Xian Li}

\address{Lanzhou University, School of Mathematics and Statistics, Lanzhou 730000, People's Republic of China}

\email{liyuxian21@lzu.edu.cn}

\author{Wen Ting Zhang}

\address{Lanzhou University, School of Mathematics and Statistics, Lanzhou 730000, People's Republic of China}

\email{zhangwt@lzu.edu.cn}

\begin{abstract}
A {\it limit variety} is a variety that is minimal with respect to being non-finitely based.
Since the turn of the millennium, much attention has been given to the classification of limit varieties of aperiodic monoids.
Seven explicit examples have so far been found, and the task of locating other examples has recently been reduced to two subproblems, one of which is concerned with monoids that satisfy the identity $xsxt \approx xsxtx$.
In the present article, we provide a complete solution to this subproblem by showing that there are precisely two limit varieties that satisfy this identity.
One of them turns out to be the first example having infinitely many subvarieties.

It is also deduced that the variety generated by any monoid of order five or less contains at most countably many subvarieties.
\end{abstract}

\keywords{Monoid, variety, hereditary finitely based variety, limit variety, finite basis problem.}

\subjclass{20M07}

\maketitle

\section{Introduction}

\label{sec: introduction}

A variety of algebras is \textit{finitely based} (\FB) if it has a finite basis of identities; otherwise, it is \textit{non-finitely based} (\NFB).
The \textit{finite basis problem}---to determine which varieties are finitely based---has been the focus of investigation for many years.
In the 1930s, Neumann~\cite{Neumann-37} questioned if every variety of groups is \FB, and it was not until 1970 when the existence of {\NFB} examples was independently confirmed by Adyan~\cite{Adyan-70}, Ol'shanski\u{\i}~\cite{Olshanskii-70}, and Vaughan-Lee~\cite{Vaughan-Lee-70}.
The finite basis problem for varieties of semigroups and of monoids gained much interest in the 1960s after Perkins~\cite{Perkins-69} exhibited the first examples of {\NFB} varieties generated by a finite semigroup.
For more information on the finite basis problem for these varieties, refer to Gupta and Krasilnikov~\cite{Gupta-Krasilnikov-03} for varieties of groups and to Volkov~\cite{Volkov-01} for varieties of semigroups and of monoids.

A {\FB} variety satisfies the stronger property of being \textit{hereditary finitely based} (\HFB) if all its subvarieties are \FB.
A {\it limit variety} is a {\NFB} variety whose proper subvarieties are all \FB.
By Zorn's lemma, every {\NFB} variety contains some limit subvariety; in other words, a variety is {\HFB} if and only if it excludes all limit varieties.
Therefore, classifying {\HFB} varieties in a certain sense reduces to classifying limit varieties.
But finding explicit examples of limit varieties turns out to be highly nontrivial.
For instance, no explicit example of limit variety of groups is known so far, even though uncountably many of them exist~\cite{Kozhevnikov-12}.
Locating such an example remains one of the foremost unsolved problems in the theory of varieties of groups; see Gupta and Krasilnikov~\cite{Gupta-Krasilnikov-03}.

One of the main goals of the present paper is to study limit varieties of monoids, considered as algebras of type $\langle 2,0\rangle$.
A complete classification of all limit varieties of monoids is highly infeasible since that would include a description of all limit varieties of groups.
Therefore it is natural to focus on the class $\Amon$ of \textit{aperiodic} monoids, that is, monoids whose subgroups are all trivial.
The first explicit examples of limit subvarieties of $\Amon$ were exhibited by Jackson~\cite{Jackson-05} in the early 2000s.
Since then, limit subvarieties of $\Amon$ have received much attention and a few more examples have been found; see, for example, Gusev~\cite{Gusev-20,Gusev-21}, Gusev and Sapir~\cite{Gusev-Sapir-22}, Lee~\cite{Lee-09,Lee-12}, Sapir~\cite{Sapir-non-J-trivial,Sapir-21}, Zhang~\cite{Zhang-13}, and Zhang and Luo~\cite{Zhang-Luo}.

Presently, seven explicit examples of limit subvarieties of $\Amon$ are known, and it follows from Gusev and Sapir \cite[Sorting Lemma~2]{Gusev-Sapir-22} that any other limit subvariety of $\Amon$ is contained in one of the following varieties or their duals:
$$
\mathbf P = \var\{ xsxt \approx xsxtx \}\  \text{ and }\ \mathbf Q  = \var\{xtx \approx xtx^2, \, xy^2tx \approx (xy)^2tx\}.
$$
Therefore, to completely classify all limit subvarieties of $\Amon$, it suffices to consider only subvarieties of~$\mathbf P $ and of~$\mathbf Q$.
Sapir~\cite{Sapir-non-J-trivial} made some progress by exhibiting three new examples of limit subvarieties of~$\mathbf Q$.

The present paper is concerned with the variety~$\mathbf P$, where the main objective is to provide a complete description of all its limit subvarieties (Theorem~\ref{T: main result}).
Specifically, we present two new limit varieties of monoids and show that they exhaust all limit subvarieties of $\mathbf P$.
One of these limit varieties contains countably infinitely many subvarieties (Corollary~\ref{P: J_2 has infinitely many subvarieties})---a result that is quite surprising given that every limit variety of monoids previously found is \textit{small} in the sense that it contains only finitely many subvarieties.

Our main result also implies an important fundamental result concerning varieties generated by a monoid of small order.
Finite members from several classical classes of algebras, such as groups~\cite{Oates-Powell-64} and associative rings \cite{Kruse-73,Lvov-73}, generate small {\HFB} varieties.
But this result does not hold for finite semigroups or monoids.
The smallest monoids currently known to generate a variety with continuum many subvarieties are of order six; the well-known Brandt monoid
$$
B_2^1=\langle a,b,1 \mid aba=a,\,bab=b,\,a^2=b^2=0\rangle=\{a,b,ab,ba,0,1\}
$$
is one such example \cite{Jackson-Lee-18,Jackson-Zhang-21}.
As for monoids of order five or less, up to isomorphism and anti-isomorphism, every one of them, with the exception of
$$
P_2^1=\langle a,b,1\mid a^2=ab=a,\,b^2a=b^2\rangle=\{a,b,ba,b^2,1\},
$$ generates a small {\HFB} variety~\cite{Lee-Zhang-14}.
The variety $\mathbf P_2^1$ generated by $P_2^1$ is not small, but not much else is known about it.
An answer to the following question is thus desirable.

\begin{question}[\mdseries{Jackson and Lee~\cite[Question~6.1]{Jackson-Lee-18}}]
\label{Prob: the monoid P_2^1}
Does the variety $\mathbf P_2^1$ contain uncountable many subvarieties?
\end{question}

The monoid $P_2^1$ satisfies the identity $xsxt \approx xsxtx$ and so $\mathbf P_2^1$ is a subvariety of~$\mathbf P$.
It follows from our classification of limit subvarieties of $\mathbf P$ that $\mathbf P_2^1$ is {\HFB} (Proposition~\ref{P: P_2^1}).
Consequently, the variety generated by any monoid of order five or less is {\HFB} and so contains at most countably many subvarieties.

\begin{remark}
\label{R: P2}
It is relevant to note that $P_2^1$ was originally investigated as a semigroup by Lee, who first questioned if it generates a {\HFB} variety of semigroups; see Edmunds et al. \cite[Question~4.2]{Edmunds-Lee-Lee-10} and Lee \cite[Problem~1.4]{Lee-13}.
The answer to this question, however, remains elusive.
\end{remark}

This article consists of six sections.
Background information and some basic results are first given in Section~\ref{sec: prelim}.
Then several important results on the variety $\mathbf P$ and its subvarieties are established in Section~\ref{sec: properties of P}.
Results that are crucial to the proof of the main results are established in Section~\ref{sec: critical pairs}.
Limit subvarieties of $\mathbf P$ are then classified in Section~\ref{sec: limit varieties}, while results concerning subvarieties of $\mathbf P_2^1$ are established in Section~\ref{sec: the monoid P_2^1}.

Many identities will be introduced and used throughout this article.
For the reader's convenience, these identities are collected in the appendix for quick referencing.

\section{Preliminaries}
\label{sec: prelim}

Acquaintance with rudiments of universal algebra is assumed of the reader.
Refer to Burris and Sankappanavar~\cite{Burris-Sankappanavar-81} for more information.

\subsection{Words, identities, and deduction}

Let $\mathfrak A^\ast$ denote the free monoid over a countably infinite alphabet $\mathfrak A$.
Elements of $\mathfrak A$ are called \textit{letters} and elements of $\mathfrak A^\ast$ are called \textit{words}.
The empty word, denoted by $\lambda$, is the identity element of $\mathfrak A^\ast$.

An identity is written as $\mathbf u \approx \mathbf v$, where $\mathbf u,\mathbf v \in \mathfrak A^\ast$; it is \textit{non-trivial} if $\mathbf u \neq \mathbf v$.
An identity $\mathbf u \approx \mathbf v$ is \textit{directly deducible} from an identity $\mathbf s \approx \mathbf t$ if there exist some words $\mathbf a,\mathbf b \in \mathfrak A^\ast$ and substitution $\phi\colon \mathfrak A \to \mathfrak A^\ast$ such that $\{ \mathbf u, \mathbf v \} = \big\{ \mathbf a\phi(\mathbf s)\mathbf b,\mathbf a\phi(\mathbf t)\mathbf b \big\}$.
A non-trivial identity $\mathbf u \approx \mathbf v$ is \textit{deducible} from a set $\Sigma$ of identities if there exists some finite sequence $\mathbf u = \mathbf w_0, \mathbf w_1, \ldots, \mathbf w_m = \mathbf v$ of distinct words such that each identity $\mathbf w_i \approx \mathbf w_{i+1}$ is directly deducible from some identity in $\Sigma$.

The following assertion is a specialization for monoids of a well-known universal-algebraic fact (see Burris and Sankappanavar \cite[Theorem~II.14.19]{Burris-Sankappanavar-81}).

\begin{proposition}
\label{P: deduction}
Let $\mathbf V$ be the variety defined by some set $\Sigma$ of identities.
Then $\mathbf V$ satisfies an identity $\mathbf u \approx \mathbf v$ if and only if $\mathbf u \approx \mathbf v$ is deducible from $\Sigma$.\qed
\end{proposition}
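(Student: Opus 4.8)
This statement is Birkhoff's completeness theorem for equational logic, specialised to the variety of monoids, so the plan is to reconstruct the standard proof in the present word-theoretic language. For the implication ``deducible $\Rightarrow$ satisfied'' I would argue along a witnessing sequence. First, if $\mathbf u \approx \mathbf v$ is directly deducible from $\mathbf s \approx \mathbf t \in \Sigma$, say $\{\mathbf u,\mathbf v\} = \{\mathbf a\phi(\mathbf s)\mathbf b,\ \mathbf a\phi(\mathbf t)\mathbf b\}$, then every $M \in \mathbf V$ satisfies $\mathbf u \approx \mathbf v$: for any map $\theta\colon \mathfrak A \to M$, evaluating $\mathbf a\phi(\mathbf s)\mathbf b$ at $\theta$ gives the product of the value of $\mathbf a$, the value of $\mathbf s$ under the composite assignment $\theta\circ\phi$ (here one uses the routine substitution lemma that the value of $\phi(\mathbf s)$ under $\theta$ equals the value of $\mathbf s$ under $\theta\circ\phi$), and the value of $\mathbf b$; since $M$ satisfies $\mathbf s \approx \mathbf t$, the middle factor is unchanged when $\mathbf s$ is replaced by $\mathbf t$. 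Then, given a non-trivial identity deducible via a sequence $\mathbf u = \mathbf w_0,\dots,\mathbf w_m = \mathbf v$, each link $\mathbf w_i \approx \mathbf w_{i+1}$ holds in $\mathbf V$, and transitivity of equality in any $M \in \mathbf V$ yields $M \models \mathbf u \approx \mathbf v$. (Trivial identities hold in every monoid, so only non-trivial ones need attention.)

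For the converse the plan is to realise the relatively free monoid of $\mathbf V$ directly from the deducibility relation. Define $\sim$ on $\mathfrak A^\ast$ by declaring $\mathbf p \sim \mathbf q$ precisely when $\mathbf p = \mathbf q$ or $\mathbf p \approx \mathbf q$ is deducible from $\Sigma$. I would then check that $\sim$ is a congruence of the monoid $\mathfrak A^\ast$. Reflexivity is built in; symmetry holds because direct deducibility is symmetric (the definition uses the unordered pair $\{\mathbf u,\mathbf v\}$), so a witnessing sequence may be reversed; for transitivity I would concatenate the two sequences and then repeatedly delete the segment strictly between any two occurrences of the same word, which restores the distinctness demanded by the definition while keeping consecutive identities directly deducible. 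Compatibility with multiplication follows because, from a witnessing sequence $\mathbf w_0,\dots,\mathbf w_m$ for $\mathbf p \approx \mathbf q$, the sequence $\mathbf c\mathbf w_0\mathbf d,\dots,\mathbf c\mathbf w_m\mathbf d$ witnesses $\mathbf c\mathbf p\mathbf d \approx \mathbf c\mathbf q\mathbf d$: the outer factors $\mathbf c,\mathbf d$ are absorbed into the prefix and suffix of each direct-deducibility step, and distinctness survives because multiplication in a free monoid is cancellative.

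Next I would show the quotient $F := \mathfrak A^\ast/{\sim}$ belongs to $\mathbf V$; since $\mathbf V$ is defined by $\Sigma$, it is enough to verify $F \models \mathbf s \approx \mathbf t$ for each $\mathbf s \approx \mathbf t \in \Sigma$. Given $\theta\colon \mathfrak A \to F$, pick a representative word $\psi(x)$ of the class $\theta(x)$ for every letter $x$; this defines a substitution $\psi$, and the values of $\mathbf s$ and $\mathbf t$ under $\theta$ are the $\sim$-classes of $\psi(\mathbf s)$ and $\psi(\mathbf t)$. But $\psi(\mathbf s) \approx \psi(\mathbf t)$ is directly deducible from $\mathbf s \approx \mathbf t$ (empty prefix and suffix, substitution $\psi$), hence $\psi(\mathbf s) \sim \psi(\mathbf t)$, so those values coincide and $F \in \mathbf V$. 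Finally, apply the hypothesis $\mathbf V \models \mathbf u \approx \mathbf v$ to $F$ with the canonical assignment $x \mapsto [x]_\sim$: since the induced homomorphism $\mathfrak A^\ast \to F$ is exactly the quotient map, $[\mathbf u]_\sim = [\mathbf v]_\sim$, that is $\mathbf u \sim \mathbf v$; for $\mathbf u \neq \mathbf v$ this means $\mathbf u \approx \mathbf v$ is deducible from $\Sigma$, as required.

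The one point that needs genuine care is the ``distinct words'' clause in the definition of deducibility: it obscures transitivity of the deduction relation and its compatibility with multiplication. The shortcutting argument together with cancellativity of $\mathfrak A^\ast$ disposes of it, after which the remainder is the routine Birkhoff quotient construction.
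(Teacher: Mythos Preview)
Your argument is correct: this is the standard Birkhoff completeness proof, and you have handled the one genuinely delicate point---the ``distinct words'' clause in the paper's definition of deducibility---properly, using shortcutting for transitivity and cancellativity of $\mathfrak A^\ast$ for compatibility with multiplication. The paper itself gives no proof at all: the \qed mark after the statement indicates it is quoted as a known fact, with a reference to Burris--Sankappanavar~\cite[Theorem~II.14.19]{Burris-Sankappanavar-81}. So there is nothing to compare; your write-up simply supplies what the paper takes for granted.
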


\subsection{$k$-decomposition of a word}

Here we introduce a series of notions such as $k$-divider, $k$-block, $k$-decomposition, and some others, which appeared in Gusev and Vernikov \cite[Chapter~3]{Gusev-Vernikov-18}.

The \textit{content} of a word $\mathbf w$, denoted by $\con(\mathbf w)$, is the set of all letters occurring in $\mathbf w$.
A letter is \textit{simple} [respectively, \textit{multiple}] \textit{in a word} $\mathbf w$ if it occurs in $\mathbf w$ once [respectively, at least twice].
The set of all simple [respectively, multiple] letters in a word $\mathbf w$ is denoted by $\simple(\mathbf w)$ [respectively, $\mul(\mathbf w)$].
If $\mathbf w$ is a word and $X$ is a set of letters then $\mathbf w_X$ denotes the word obtained from $\mathbf w$ by deleting all letters from $X$.
If $X=\{x\}$, then we write $\mathbf w_x$ rather than $\mathbf w_{\{x\}}$.
For a word $\mathbf w$ and letters $x_1,x_2,\dots,x_k\in \con(\mathbf w)$, let $\mathbf w(x_1,x_2,\dots,x_k)$ denote the word obtained from $\mathbf w$ by retaining the letters $x_1,x_2,\dots,x_k$.
Equivalently,
$$
\mathbf w(x_1,x_2,\dots,x_k)=\mathbf w_{\con(\mathbf w)\setminus \{x_1,x_2,\dots,x_k\}}.
$$

Let $\mathbf w$ be a word such that $\simple(\mathbf w)=\{t_1,t_2,\dots, t_m\}$.
We may assume without loss of generality that $\mathbf w(t_1,t_2,\dots, t_m) = t_1t_2 \cdots t_m$.
Then
\begin{equation}
\label{blocks and dividers}
\mathbf w = t_0\mathbf w_0 t_1 \mathbf w_1 \cdots t_m \mathbf w_m,
\end{equation}
where $\mathbf w_0,\mathbf w_1,\dots,\mathbf w_m\in \mathfrak A^\ast$ and $t_0=\lambda$.
The words $\mathbf w_0$, $\mathbf w_1$, \dots, $\mathbf w_m$ are 0-\textit{blocks} of $\mathbf w$, while $t_0,t_1,\dots,t_m$ are 0-\textit{dividers} of $\mathbf w$.
The representation of the word $\mathbf w$ as a product of alternating 0-dividers and 0-blocks, starting with the 0-divider $t_0$ and ending with the 0-block $\mathbf w_m$, is called the 0-\textit{decomposition} of $\mathbf w$.

For $k \ge 1$, we define the $k$-\textit{decomposition} of a word $\mathbf w$ recursively as follows.
Suppose that \eqref{blocks and dividers} is the \mbox{$(k-1)$}-decomposition of $\mathbf w$ with \mbox{$(k-1)$}-dividers $t_0,t_1,\dots,t_m$ and \mbox{$(k-1)$}-blocks $\mathbf w_0,\mathbf w_1,\dots,\mathbf w_m$.
For any $i=0,1,\dots,m$, let $s_{i1},s_{i2},\dots, s_{ir_i}$ be all the letters in $\simple(\mathbf w_i)$ that do not occur to the left of $\mathbf w_i$.
We may assume that $\mathbf w_i(s_{i1},s_{i2},\dots,s_{ir_i})=s_{i1}s_{i2}\cdots s_{ir_i}$. Then
\begin{equation}
\label{blocks and dividers within block}
\mathbf w_i = \mathbf v_{i0} s_{i1} \mathbf v_{i1} s_{i2} \mathbf v_{i2} \cdots s_{ir_i} \mathbf v_{ir_i}
\end{equation}
for some $\mathbf v_{i0},\mathbf v_{i1}, \dots,\mathbf v_{ir_i} \in \mathfrak A^\ast$.
Put $s_{i0}=t_i$.
Then the words $\mathbf v_{i0},\mathbf v_{i1}, \dots$, $\mathbf v_{ir_i}$ are $k$-\textit{blocks} of $\mathbf w$, while the letters $s_{i0},s_{i1},\dots, s_{ir_i}$ are $k$-\textit{dividers} of $\mathbf w$.

As noted in Gusev and Vernikov \cite[Remark~3.1]{Gusev-Vernikov-18}, only the first occurrence of a letter in a given word might be a $k$-divider of  the word for some $k$.
In view of this observation, we use below an expression like ``a letter $x$ is (or is not) a $k$-divider of a word $\mathbf w$'' to mean that the first occurrence of $x$ in $\mathbf w$ has the specified property.

Now decompose every \mbox{$(k-1)$}-block $\mathbf w_i$ from~\eqref{blocks and dividers} in the form~\eqref{blocks and dividers within block}.
Then the word~$\mathbf w$ in \eqref{blocks and dividers} is decomposed as a product of alternating $k$-dividers and $k$-blocks, starting with the $k$-divider $s_{00}=t_0$ and ending with the $k$-block $\mathbf v_{mr_m}$.
This is called the $k$-\textit{decomposition} of $\mathbf w$.

\begin{remark}
\label{R: max decomposition}
Since the length of the word $\mathbf w$ is finite, there is a number $k$ such that the $k$-decomposition of $\mathbf w$ coincides with its $n$-decomposition for each $n>k$.
\end{remark}

We say that the $k$-decomposition of $\mathbf w$ is \textit{maximal} if it coincides with the $n$-decomposition of $\mathbf w$ for all $n>k$.

Let $\occ_x(\mathbf w)$ denote the number of occurrences of a letter $x$ in a word $\mathbf w$.
For a given word $\mathbf w$, a letter $x\in\con(\mathbf w)$, a natural number $i\le\occ_x(\mathbf w)$ and an integer $k\ge 0$, we denote by $h_i^k(\mathbf w,x)$ the right-most $k$-divider of $\mathbf w$ that precedes to the $i$th occurrence of $x$ in $\mathbf w$.

For a given word $\mathbf w$ and a letter $x\in\con(\mathbf w)$, the \textit{depth}  of $x$ in $\mathbf w$ is the number $D(\mathbf w,x)$ defined as follows.
If $x\in\simple(\mathbf w)$, then we put $D(\mathbf w,x)=0$.
Suppose now that $x\in\mul(\mathbf w)$.
If there is a natural $k$ such that the first and the second occurrences of $x$ in $\mathbf w$ lie in different \mbox{$(k-1)$}-blocks of $\mathbf w$, then the depth of $x$ in $\mathbf w$ is equal to the minimal number $k$ with such a property.
If, for any natural $k$, the first and the second occurrences of $x$ in $\mathbf w$ lie in the same $k$-block of $\mathbf w$, then we put $D(\mathbf w,x)=\infty$.
In other words, $D(\mathbf w,x)=k$ if and only if $h_1^{k-1}(\mathbf w, x)\ne h_2^{k-1}(\mathbf w, x)$ and $k$ is the least number with such a property, while $D(\mathbf w,x)=\infty$ if and only if $h_1^{k-1}(\mathbf w, x)=h_2^{k-1}(\mathbf w, x)$ for any $k$.

\begin{lemma}[\mdseries{Gusev and Vernikov~\cite[Lemma~3.7]{Gusev-Vernikov-18}}]
\label{L: k-divider and depth}
A letter $t$ is a $k$-divider of a word $\mathbf w$ if and only if $D(\mathbf w,t)\le k$.\qed
\end{lemma}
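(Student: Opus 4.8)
\textit{Proof strategy.} The plan is to proceed by induction on $k$, following the recursive definition of the $k$-decomposition. For the base case $k=0$, the $0$-dividers of $\mathbf w$ are $t_0=\lambda$ together with the letters of $\simple(\mathbf w)$, so a letter $t$ is a $0$-divider of $\mathbf w$ if and only if $t\in\simple(\mathbf w)$, that is, if and only if $D(\mathbf w,t)=0$. This settles the case $k=0$.

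For the inductive step I would fix $k\ge 1$, assume the lemma for $k-1$, and first record two simple observations. (a) Every $(k-1)$-divider of $\mathbf w$ is again a $k$-divider of $\mathbf w$: indeed, in passing from the $(k-1)$-decomposition~\eqref{blocks and dividers} to the $k$-decomposition via~\eqref{blocks and dividers within block}, the old $(k-1)$-dividers reappear as the $k$-dividers $s_{i0}=t_i$. (b) For a multiple letter $t$, the relation $D(\mathbf w,t)\le k$ holds if and only if the first two occurrences of $t$ lie in distinct $(k-1)$-blocks of $\mathbf w$, equivalently $h_1^{k-1}(\mathbf w,t)\ne h_2^{k-1}(\mathbf w,t)$; the non-obvious implication here uses that the $(k-1)$-blocks refine the $(j-1)$-blocks for every $j\le k$, so occurrences separated at level $j-1$ remain separated at level $k-1$.

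Granting these, the inductive step splits into two directions. Assume $D(\mathbf w,t)\le k$. If in fact $D(\mathbf w,t)\le k-1$, then $t$ is a $(k-1)$-divider by the induction hypothesis and hence a $k$-divider by~(a). If $D(\mathbf w,t)=k$, then $t$ is multiple and, by the induction hypothesis, not a $(k-1)$-divider, so its first occurrence sits inside some $(k-1)$-block $\mathbf w_i$; by~(b) the second occurrence of $t$ lies in a $(k-1)$-block to the right of $\mathbf w_i$, so $t\in\simple(\mathbf w_i)$ and $t$ does not occur in $\mathbf w$ to the left of $\mathbf w_i$, which by the construction of the $k$-decomposition makes $t$ one of the new $k$-dividers $s_{i1},\dots,s_{ir_i}$. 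For the converse, let $t$ be a $k$-divider of $\mathbf w$. If $t$ is one of the $s_{i0}=t_i$, it is a $(k-1)$-divider and the induction hypothesis gives $D(\mathbf w,t)\le k-1\le k$. Otherwise $t$ is some $s_{ij}$ with $j\ge 1$, so $t\in\simple(\mathbf w_i)$ and the first occurrence of $t$ lies in $\mathbf w_i$; then either $t\in\simple(\mathbf w)$, giving $D(\mathbf w,t)=0$, or $t$ is multiple and its second occurrence lies to the right of $\mathbf w_i$, so $h_1^{k-1}(\mathbf w,t)\ne h_2^{k-1}(\mathbf w,t)$ and $D(\mathbf w,t)\le k$ by~(b). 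This completes the induction.

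I expect the only genuinely delicate point to be observation~(b): reconciling the two descriptions of depth --- through blocks and through the functions $h_i^{\,j}$ --- and correctly exploiting that finer $k$-decompositions refine coarser ones. Everything else is routine bookkeeping, once one keeps in mind the convention that only a first occurrence of a letter can be a divider.
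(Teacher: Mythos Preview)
Your argument is correct. The induction on $k$ is the natural approach, and your two observations~(a) and~(b) are exactly what is needed; the verification that a letter $t$ with $D(\mathbf w,t)=k$ is simple in its $(k-1)$-block and absent to the left of it is carried out cleanly.

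There is nothing to compare against in this paper: the lemma is quoted from \cite[Lemma~3.7]{Gusev-Vernikov-18} and carries a bare \qed, so no proof is reproduced here. Your write-up would serve perfectly well as a self-contained proof of the cited result.
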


The $i$th occurrence of a letter $x$ in a word $\mathbf w$ is denoted by $_{i\mathbf w}x$.
We write $({_{i\mathbf w}x}) < ({_{j\mathbf w}y})$ to indicate that within $\mathbf w$, the $i$th occurrence of $x$ precedes the $j$th occurrence of $y$.

\begin{lemma}[\mdseries{Gusev and Vernikov~\cite[Lemma~3.9]{Gusev-Vernikov-18}}]
\label{L: h_2^{k-1}}
Let $\mathbf w$ be a word, $x$ be a multiple letter in $\mathbf w$ with $D(\mathbf w,x)= k$ and $t$ be a \mbox{$(k-1)$}-divider of $\mathbf w$.
\begin{itemize}
\item[\textup{(i)}] If $t=h_2^{k-1}(\mathbf w,x)$, then $({_{1\mathbf w}x})<({_{1\mathbf w}t})$.
\item[\textup{(ii)}] If $({_{1\mathbf w}x})<({_{1\mathbf w}t})<({_{2\mathbf w}x})$, then $D(\mathbf w,t)= k-1$. Further, if $k>1$, then $({_{2\mathbf w}x})<({_{2\mathbf w}t})$.\qed
\end{itemize}
\end{lemma}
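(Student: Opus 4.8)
The plan is to argue directly from the recursive definition of the $k$-decomposition, using Lemma~\ref{L: k-divider and depth} and the characterization of depth in terms of the functions $h_i^k$ stated just above it. Three facts will be used repeatedly. (a) Every $(k-1)$-divider of $\mathbf w$ is a $k$-divider, and each $(k-1)$-block of $\mathbf w$ is cut by the $k$-dividers inside it into $k$-blocks. (b) Since only the first occurrence of a letter can be a $k$-divider, the phrase ``$t$ is a $(k-1)$-divider'' really refers to the position $({_{1\mathbf w}t})$. (c) For a multiple letter $x$, the equality $D(\mathbf w,x)=k$ is equivalent to: some $(k-1)$-divider occurs strictly between $({_{1\mathbf w}x})$ and $({_{2\mathbf w}x})$, while (when $k\ge 2$) no $(k-2)$-divider does; equivalently, $({_{1\mathbf w}x})$ and $({_{2\mathbf w}x})$ lie in a common $(k-2)$-block $\mathbf C$ (for $k\ge 2$) but in different $(k-1)$-blocks. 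Also, $D(\mathbf w,x)=k>k-1$ forbids $x$ from being a $(k-1)$-divider, by Lemma~\ref{L: k-divider and depth}.

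For part (i): since $D(\mathbf w,x)=k$, a $(k-1)$-divider lies strictly between $({_{1\mathbf w}x})$ and $({_{2\mathbf w}x})$; hence the right-most $(k-1)$-divider preceding $({_{2\mathbf w}x})$, which by definition is the first occurrence of $t=h_2^{k-1}(\mathbf w,x)$, occurs strictly after $({_{1\mathbf w}x})$. Therefore $({_{1\mathbf w}x})<({_{1\mathbf w}t})$.

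For part (ii): First, $t$ being a $(k-1)$-divider gives $D(\mathbf w,t)\le k-1$ by Lemma~\ref{L: k-divider and depth}. If $k=1$, this already yields $D(\mathbf w,t)=0=k-1$. If $k\ge 2$ and $t$ were also a $(k-2)$-divider, then the hypothesis $({_{1\mathbf w}x})<({_{1\mathbf w}t})<({_{2\mathbf w}x})$ would place a $(k-2)$-divider strictly between the two occurrences of $x$, forcing $D(\mathbf w,x)\le k-1$, a contradiction; hence $D(\mathbf w,t)=k-1$. Now assume $k>1$ and let $\mathbf C$ be the common $(k-2)$-block of $({_{1\mathbf w}x})$ and $({_{2\mathbf w}x})$. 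As $\mathbf C$ is a factor of $\mathbf w$ and $({_{1\mathbf w}t})$ lies between these two occurrences, $({_{1\mathbf w}t})$ belongs to $\mathbf C$. Since $D(\mathbf w,t)=k-1$, the letter $t$ is a $(k-1)$-divider but not a $(k-2)$-divider, so $({_{1\mathbf w}t})$ is one of the new $(k-1)$-dividers produced when the $(k-2)$-block $\mathbf C$ is refined; by the definition of that refinement, $t$ is simple in $\mathbf C$, so $({_{2\mathbf w}t})$ does not belong to $\mathbf C$. Because $({_{2\mathbf w}t})$ follows $({_{1\mathbf w}t})\in\mathbf C$ and $\mathbf C$ is a contiguous factor of $\mathbf w$, the occurrence $({_{2\mathbf w}t})$ must follow the whole of $\mathbf C$; in particular $({_{2\mathbf w}x})<({_{2\mathbf w}t})$.

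I expect the main obstacle to be bookkeeping rather than a conceptual leap: one must be scrupulous about the inductive layering of the decomposition --- precisely which occurrences become the new $k$-dividers inside a given $(k-1)$-block --- and about the degenerate levels $k=1$ and $k=2$, where ``$(k-2)$-block'' degenerates to ``$0$-block'' and ``$(k-2)$-divider'' to ``simple letter'' (or is undefined, making the relevant clause vacuous). The one genuinely load-bearing step is the observation in part (ii) that $D(\mathbf w,t)=k-1$ together with $({_{1\mathbf w}t})\in\mathbf C$ forces $t$ to occur exactly once in $\mathbf C$, which is what pushes the second occurrence of $t$ past all of $\mathbf C$ and hence past $({_{2\mathbf w}x})$.
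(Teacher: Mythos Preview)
The paper does not supply its own proof of this lemma; it simply cites \cite[Lemma~3.9]{Gusev-Vernikov-18} and closes with \qed. Your direct argument from the recursive definition of the $k$-decomposition is correct and is essentially the natural proof one would expect to find in the cited source: part~(i) is immediate from the fact that $D(\mathbf w,x)=k$ forces a $(k-1)$-divider strictly between the two occurrences of $x$, and part~(ii) hinges exactly on the observation you isolate --- that a $(k-1)$-divider lying inside the $(k-2)$-block $\mathbf C$ must be one of the ``new'' dividers created when $\mathbf C$ is refined, hence simple in $\mathbf C$, which pushes its second occurrence past $\mathbf C$ and therefore past $({_{2\mathbf w}x})$.
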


\begin{lemma}[\mdseries{Gusev and Vernikov~\cite[Lemma~3.13]{Gusev-Vernikov-18}}]
\label{L: if first then second}
Let $\mathbf w$ be a word, $r>1$ be a number and $y$ be a letter such that $D(\mathbf w, y)=r-2$.
If $({_{1\mathbf w}z})<({_{1\mathbf w}y})$ for some letter $z$ with $D(\mathbf w,z)\ge r$, then $({_{2\mathbf w}z})<({_{1\mathbf w}y})$.\qed
\end{lemma}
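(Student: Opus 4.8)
The plan is to argue by contradiction, after translating the two hypotheses into statements about dividers and the functions $h_i^k(\mathbf w,\cdot\,)$ by means of Lemma~\ref{L: k-divider and depth} and the ``$h$-function'' characterization of depth.

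First I would record two facts. Since $D(\mathbf w,y)=r-2$, Lemma~\ref{L: k-divider and depth} gives that $y$ is an $(r-2)$-divider of $\mathbf w$; by the convention on dividers this means the position $({_{1\mathbf w}y})$ of the first occurrence of $y$ is an $(r-2)$-divider of $\mathbf w$. Since $D(\mathbf w,z)\ge r$ (and $r>1$), the characterization of depth in terms of the $h$-functions yields $h_1^{r-2}(\mathbf w,z)=h_2^{r-2}(\mathbf w,z)$; in words, the right-most $(r-2)$-divider preceding the first occurrence of $z$ coincides with the right-most $(r-2)$-divider preceding its second occurrence. (Also $z\ne y$, since their depths differ, so the occurrences considered below all lie at distinct positions of $\mathbf w$.)

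Now suppose the conclusion fails, so that $({_{1\mathbf w}y})<({_{2\mathbf w}z})$. Together with the hypothesis $({_{1\mathbf w}z})<({_{1\mathbf w}y})$ this gives $({_{1\mathbf w}z})<({_{1\mathbf w}y})<({_{2\mathbf w}z})$, so $({_{1\mathbf w}y})$ is an $(r-2)$-divider lying strictly between the first and the second occurrences of $z$. Being the right-most $(r-2)$-divider preceding $({_{2\mathbf w}z})$, the divider $h_2^{r-2}(\mathbf w,z)$ occurs at $({_{1\mathbf w}y})$ or to its right, hence strictly to the right of $({_{1\mathbf w}z})$; but $h_1^{r-2}(\mathbf w,z)$ precedes $({_{1\mathbf w}z})$. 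Therefore $h_1^{r-2}(\mathbf w,z)\ne h_2^{r-2}(\mathbf w,z)$, contradicting the previous paragraph (equivalently, this shows $D(\mathbf w,z)\le r-1<r$). This contradiction completes the argument.

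I do not foresee a genuine obstacle here: the content lies entirely in the bookkeeping of positions and in the precise dictionary between ``$x$ is a $k$-divider'', ``$D(\mathbf w,x)\le k$'' (Lemma~\ref{L: k-divider and depth}), and the $h$-function equalities defining the depth, so care is needed only to keep these translations straight. When $D(\mathbf w,z)$ is finite one could replace the third paragraph by a one-line appeal to Lemma~\ref{L: h_2^{k-1}}(ii) with $x=z$, $t=({_{1\mathbf w}y})$ and $k=D(\mathbf w,z)\,(\ge r)$: from $({_{1\mathbf w}z})<({_{1\mathbf w}y})<({_{2\mathbf w}z})$ it would follow that $D(\mathbf w,y)=k-1\ge r-1$, contradicting $D(\mathbf w,y)=r-2$; the remaining case $D(\mathbf w,z)=\infty$ is then exactly the $h$-function argument above.
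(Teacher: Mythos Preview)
Your argument is correct. Note that the paper itself does not supply a proof of this lemma: it is quoted verbatim from \cite[Lemma~3.13]{Gusev-Vernikov-18} and marked with a \qed, so there is no ``paper's own proof'' to compare against. Your contradiction argument via Lemma~\ref{L: k-divider and depth} and the $h$-function characterization of depth is exactly the natural route, and the alternative appeal to Lemma~\ref{L: h_2^{k-1}}(ii) in the finite-depth case is also valid (indeed, since $D(\mathbf w,y)=r-2\le k-1$ the letter $y$ is a $(k-1)$-divider, so that lemma forces $D(\mathbf w,y)=k-1\ge r-1$, a contradiction).
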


In the following, to facilitate understanding of the form of words, we will sometimes explicitly display the occurrence of a letter, for example,
$$
\mathbf w=\,\stackrel{(1)}{x_1}\,\stackrel{(1)}{x_2}\,\stackrel{(2)}{x_1}\,\stackrel{(1)}{x_3}\,\stackrel{(2)}{x_2}\,\stackrel{(3)}{x_1}.
$$

\begin{lemma}[\mdseries{Gusev and Vernikov~\cite[Lemma~3.14]{Gusev-Vernikov-18}}]
\label{L: form of the identity}
Let $\mathbf u\approx \mathbf v$ be an identity and $\ell$ be a natural number.
Suppose that
\begin{align}
\label{sim(u)=sim(v) & mul(u)=mul(v)}
\simple(\mathbf u)=\simple(\mathbf v)&\text{ and }\mul(\mathbf u)=\mul(\mathbf v),\\
\label{eq the same l-dividers}
h_i^{\ell-1}(\mathbf u,x) = h_i^{\ell-1}(\mathbf v,x)&\text{ for }i=1,2\text{ and all }x\in \con(\mathbf u),
\end{align}
and there is a letter $x_\ell$ such that $D(\mathbf u, x_\ell)=\ell$.
Then there are letters $x_0,x_1,\dots, x_{\ell-1}$ such that $D(\mathbf u,x_s)=D(\mathbf v,x_s)=s$ for any $0\le s<\ell$ and the identity $\mathbf u \approx \mathbf v$ has the form
$$
\begin{aligned}
&\mathbf u_{2\ell+1}\stackrel{(1)}{x_\ell}\mathbf u_{2\ell}\stackrel{(1)}{x_{\ell-1}}\mathbf u_{2\ell-1}\stackrel{(2)}{x_\ell}\mathbf u_{2\ell-2}\stackrel{(1)}{x_{\ell-2}}\mathbf u_{2\ell-3}\stackrel{(2)}{x_{\ell-1}}\mathbf u_{2\ell-4}\stackrel{(1)}{x_{\ell-3}}\\
&\cdot\,\mathbf u_{2\ell-5}\stackrel{(2)}{x_{\ell-2}}\cdots\mathbf u_4\stackrel{(1)}{x_1}\mathbf u_3\stackrel{(2)}{x_2}\mathbf u_2\stackrel{(1)}{x_0}\mathbf u_1\stackrel{(2)}{x_1}\mathbf u_0\\
\approx{}&\mathbf v_{2\ell+1}\stackrel{(1)}{x_\ell}\mathbf v_{2\ell}\stackrel{(1)}{x_{\ell-1}}\mathbf v_{2\ell-1}\stackrel{(2)}{x_\ell}\mathbf v_{2\ell-2}\stackrel{(1)}{x_{\ell-2}}\mathbf v_{2\ell-3}\stackrel{(2)}{x_{\ell-1}}\mathbf v_{2\ell-4}\stackrel{(1)}{x_{\ell-3}}\\
&\cdot\,\mathbf v_{2\ell-5}\stackrel{(2)}{x_{\ell-2}}\cdots\mathbf v_4\stackrel{(1)}{x_1}\mathbf v_3\stackrel{(2)}{x_2}\mathbf v_2\stackrel{(1)}{x_0}\mathbf v_1\stackrel{(2)}{x_1}\mathbf v_0
\end{aligned}
$$
for some $\mathbf u_0,\mathbf u_1,\dots,\mathbf u_{2r+1}, \mathbf v_0,\mathbf v_1,\dots,\mathbf v_{2r+1} \in \mathfrak A^\ast$.\qed
\end{lemma}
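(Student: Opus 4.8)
The plan is to derive the structural form of the identity $\mathbf u\approx\mathbf v$ by a careful "peeling" argument, starting from the letter $x_\ell$ of depth $\ell$ and descending through all intermediate depths. First I would apply Lemma~\ref{L: k-divider and depth} to translate the depth hypotheses into statements about $(k-1)$-dividers: since $D(\mathbf u,x_\ell)=\ell$, the letter $x_\ell$ is an $\ell$-divider but not an $(\ell-1)$-divider of $\mathbf u$, and by the hypothesis~\eqref{eq the same l-dividers} together with~\eqref{sim(u)=sim(v) & mul(u)=mul(v)} the same holds in $\mathbf v$; in particular $D(\mathbf v,x_\ell)=\ell$ as well. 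The key point is that $h_1^{\ell-1}(\mathbf u,x_\ell)\ne h_2^{\ell-1}(\mathbf u,x_\ell)$, so there is an $(\ell-1)$-divider strictly between the first and second occurrences of $x_\ell$; picking $x_{\ell-1}$ to be (the letter whose first occurrence is) $h_2^{\ell-1}(\mathbf u,x_\ell)$, Lemma~\ref{L: h_2^{k-1}}(i) gives $({_{1\mathbf u}x_\ell})<({_{1\mathbf u}x_{\ell-1}})$, and since $x_{\ell-1}$ is an $(\ell-1)$-divider lying between the two occurrences of $x_\ell$, Lemma~\ref{L: h_2^{k-1}}(ii) yields $D(\mathbf u,x_{\ell-1})=\ell-1$ and (when $\ell>1$) $({_{2\mathbf u}x_\ell})<({_{2\mathbf u}x_{\ell-1}})$. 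Because the relevant $(\ell-1)$-dividers coincide in $\mathbf u$ and $\mathbf v$ by~\eqref{eq the same l-dividers}, the same relations hold in $\mathbf v$, so $D(\mathbf v,x_{\ell-1})=\ell-1$.

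Next I would iterate this descent. Having found $x_s$ with $D(\mathbf u,x_s)=D(\mathbf v,x_s)=s$ and having established that the second occurrence of $x_s$ is preceded (in both words) by an $(s-1)$-divider sitting between the two occurrences of $x_s$ — this is exactly the scenario to which Lemma~\ref{L: h_2^{k-1}} applies with $k=s$ — I set $x_{s-1}$ to be the letter whose first occurrence is $h_2^{s-1}(\mathbf u,x_s)=h_2^{s-1}(\mathbf v,x_s)$ (equal by~\eqref{eq the same l-dividers}), obtaining $D(\mathbf u,x_{s-1})=D(\mathbf v,x_{s-1})=s-1$ and the interleaving inequalities $({_{1}x_s})<({_{1}x_{s-1}})<({_{2}x_s})<({_{2}x_{s-1}})$ in both $\mathbf u$ and $\mathbf v$ (the last inequality when $s>1$). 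Running this from $s=\ell$ down to $s=1$ produces the chain of letters $x_{\ell},x_{\ell-1},\dots,x_0$ together with the full zig-zag pattern of first and second occurrences
$$
\stackrel{(1)}{x_\ell}\cdots\stackrel{(1)}{x_{\ell-1}}\cdots\stackrel{(2)}{x_\ell}\cdots\stackrel{(1)}{x_{\ell-2}}\cdots\stackrel{(2)}{x_{\ell-1}}\cdots\stackrel{(1)}{x_{\ell-3}}\cdots
$$
which is precisely the skeleton displayed in the statement; the words $\mathbf u_0,\dots,\mathbf u_{2\ell+1}$ are then defined as the gaps between consecutive marked occurrences, and likewise for $\mathbf v$.

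The main obstacle — and the step requiring the most care — is verifying that \emph{no other occurrences} of the letters $x_0,\dots,x_\ell$ intrude into the displayed skeleton in the wrong place, and that the occurrences listed really do appear in the stated left-to-right order in \emph{both} words simultaneously, so that the gap words $\mathbf u_i$ and $\mathbf v_i$ are well defined with matching index ranges. For this I would lean on Lemma~\ref{L: if first then second}: given two letters $x_r$ and $x_{r'}$ with $r'\le r-2$, any letter of depth $\ge r$ whose first occurrence precedes $({_{1}x_{r'}})$ must have its \emph{second} occurrence also before $({_{1}x_{r'}})$, which forces the occurrences of the lower-depth letters to cleanly separate the two occurrences of each higher-depth letter and rules out the forbidden orderings. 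Combining the pairwise interleavings from the descent with these "if first then second" constraints pins down the global order uniquely, and since every inequality used was shown to hold in $\mathbf u$ and $\mathbf v$ alike, the two sides of the identity share the same skeleton. Finally, one checks $D(\mathbf u,x_s)=D(\mathbf v,x_s)=s$ has been recorded at every stage, completing the proof; the bound $2r+1$ on the number of gap words in the statement should read $2\ell+1$, matching the skeleton just constructed.
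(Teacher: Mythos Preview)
The paper does not supply its own proof of this lemma: it is quoted verbatim from Gusev and Vernikov with a bare \qed, so there is nothing in the present paper to compare against beyond the statement itself and the surrounding apparatus (Lemmas~\ref{L: k-divider and depth}, \ref{L: h_2^{k-1}}, \ref{L: if first then second}). Your reconstruction is the natural one and is correct in outline: the descent $x_{s-1}:=h_2^{s-1}(\mathbf u,x_s)$, the use of Lemma~\ref{L: h_2^{k-1}} to extract $D(\mathbf u,x_{s-1})=s-1$ together with the local interleaving $({_1}x_s)<({_1}x_{s-1})<({_2}x_s)<({_2}x_{s-1})$, and the appeal to Lemma~\ref{L: if first then second} (with $r=s+1$, $y=x_{s-1}$, $z=x_{s+1}$) to obtain $({_2}x_{s+1})<({_1}x_{s-1})$ and hence the global zig-zag order, are exactly the ingredients one needs.

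One point you glide over: when you write $h_2^{s-1}(\mathbf u,x_s)=h_2^{s-1}(\mathbf v,x_s)$ ``equal by~\eqref{eq the same l-dividers}'', note that~\eqref{eq the same l-dividers} is stated only at level $\ell-1$, not at level $s-1<\ell-1$. You need the auxiliary observation that~\eqref{sim(u)=sim(v) & mul(u)=mul(v)} together with~\eqref{eq the same l-dividers} at level $\ell-1$ forces the analogous equalities at every lower level; in the paper's language this is immediate from Lemma~\ref{L: word problem F_k} and Lemma~\ref{L: L(F)} (an identity of $\mathbf F_\ell$ is an identity of $\mathbf F_s$ for $s\le\ell$), or it can be proved directly by induction on the level. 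With that filled in, your argument goes through. Your closing remark that the bound $2r+1$ in the displayed conclusion should read $2\ell+1$ is also correct; that is a typo carried over in the citation.
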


\begin{remark}
\label{R: u_{2s+2} and v_{2s+2} do not contain any p-dividers}
Analyzing the proof of Lemma~3.14 in Gusev and Vernikov \cite{Gusev-Vernikov-18}, one can notice that, in Lemma~\ref{L: form of the identity}, the letters $x_0,x_1,\dots,x_{\ell-1}$ can be chosen so that $\mathbf u_{2s+2}$ and $\mathbf v_{2s+2}$ do not contain any $s$-dividers of $\mathbf u$ and $\mathbf v$, respectively, for any $s=0,1,\dots,\ell-1$.\qed
\end{remark}

Evidently, if $\mathbf u = \mathbf v$, then \eqref{sim(u)=sim(v) & mul(u)=mul(v)} and~\eqref{eq the same l-dividers} are true for all $\ell$. So, one can apply Lemma~\ref{L: form of the identity} and Remark~\ref{R: u_{2s+2} and v_{2s+2} do not contain any p-dividers} for the trivial identity and obtain the following corollary.

\begin{corollary}
\label{C: form of the word}
Let $\mathbf u$ be a word and $k$ be a natural number.
Suppose that there is a letter $x_k$ such that $D(\mathbf u, x_k)=k$.
Then there exist letters $x_0,x_1,\dots, x_{k-1}$ such that the word $\mathbf u$ has the form
\begin{equation}
\label{form of u}
\begin{aligned}
&\mathbf u_{2k+1}\stackrel{(1)}{x_k}\mathbf u_{2k}\stackrel{(1)}{x_{k-1}}\mathbf u_{2k-1}\stackrel{(2)}{x_k}\mathbf u_{2k-2}\stackrel{(1)}{x_{k-2}}\mathbf u_{2k-3}\stackrel{(2)}{x_{k-1}}\mathbf u_{2k-4}\stackrel{(1)}{x_{k-3}}\\
&\cdot\,\mathbf u_{2k-5}\stackrel{(2)}{x_{k-2}}\cdots\mathbf u_4\stackrel{(1)}{x_1}\mathbf u_3\stackrel{(2)}{x_2}\mathbf u_2\stackrel{(1)}{x_0}\mathbf u_1\stackrel{(2)}{x_1}\mathbf u_0
\end{aligned}
\end{equation}
for some $\mathbf u_0,\mathbf u_1,\dots,\mathbf u_{2k+1} \in \mathfrak A^\ast$ such that $D(\mathbf u,x_s)=s$ and $\mathbf u_{2s+2}$ does not contain any $s$-dividers of $\mathbf u$ for any $s=0,1,\dots,k-1$.\qed
\end{corollary}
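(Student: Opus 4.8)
The plan is to derive Corollary~\ref{C: form of the word} as an immediate specialization of Lemma~\ref{L: form of the identity}, enhanced by Remark~\ref{R: u_{2s+2} and v_{2s+2} do not contain any p-dividers}, to the trivial identity $\mathbf u \approx \mathbf u$. Concretely, I would put $\mathbf v := \mathbf u$ and $\ell := k$ and check that the hypotheses of Lemma~\ref{L: form of the identity} are met.

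First I would verify the hypotheses. Since $\mathbf u$ and $\mathbf v$ are literally the same word, they have the same simple and multiple letters, so~\eqref{sim(u)=sim(v) & mul(u)=mul(v)} holds, and for every $x\in\con(\mathbf u)$ and $i=1,2$ the right-most $(\ell-1)$-divider preceding the $i$th occurrence of $x$ is the same in both words, so~\eqref{eq the same l-dividers} holds; this is exactly the observation already recorded in the paragraph preceding the corollary. The remaining hypothesis of Lemma~\ref{L: form of the identity} — the existence of a letter $x_\ell=x_k$ with $D(\mathbf u,x_\ell)=\ell=k$ — is precisely the assumption of the corollary.

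Then I would simply read off the conclusion. Lemma~\ref{L: form of the identity} produces letters $x_0,x_1,\dots,x_{k-1}$ with $D(\mathbf u,x_s)=D(\mathbf v,x_s)=s$ for all $0\le s<k$ and exhibits $\mathbf u\approx\mathbf v$ in the stated two-sided form; since $\mathbf v=\mathbf u$, the word $\mathbf u$ itself has the form~\eqref{form of u}, with the blocks $\mathbf u_0,\mathbf u_1,\dots,\mathbf u_{2k+1}$ being those appearing on (either) side of that identity. Finally, by Remark~\ref{R: u_{2s+2} and v_{2s+2} do not contain any p-dividers} the letters $x_0,\dots,x_{k-1}$ may in addition be chosen so that $\mathbf u_{2s+2}$ contains no $s$-divider of $\mathbf u$ for each $s=0,1,\dots,k-1$, which is the last assertion of the corollary.

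I do not expect any real obstacle: the only point that needs a word of justification is that equal words trivially satisfy~\eqref{sim(u)=sim(v) & mul(u)=mul(v)} and~\eqref{eq the same l-dividers}, after which the statement is a verbatim instance of Lemma~\ref{L: form of the identity} together with Remark~\ref{R: u_{2s+2} and v_{2s+2} do not contain any p-dividers}. (If one preferred a self-contained argument, one could instead extract the displayed form directly from the inductive construction of the maximal $k$-decomposition of $\mathbf u$, tracking at each level which $s$-divider precedes the first and second occurrences of $x_{s+1}$; but invoking Lemma~\ref{L: form of the identity} is clearly the shortest route.)
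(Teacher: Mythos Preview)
Your proposal is correct and follows exactly the paper's approach: the paper itself notes, immediately before the corollary, that for $\mathbf u=\mathbf v$ the claims~\eqref{sim(u)=sim(v) & mul(u)=mul(v)} and~\eqref{eq the same l-dividers} hold trivially, and then obtains the corollary by applying Lemma~\ref{L: form of the identity} together with Remark~\ref{R: u_{2s+2} and v_{2s+2} do not contain any p-dividers} to the trivial identity.
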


\begin{lemma}
\label{L: 2nd occurence of y_m general}
Let $k,m$ be natural numbers and $\mathbf u$ be a word of the form
$$
\begin{aligned}
&\mathbf u_{2k+2}\stackrel{(1)}{y_m}\mathbf u_{2k+1}\stackrel{(1)}{x_k}\mathbf u_{2k}\stackrel{(1)}{x_{k-1}}\mathbf u_{2k-1}\stackrel{(2)}{x_k}\mathbf u_{2k-2}\stackrel{(1)}{x_{k-2}}\mathbf u_{2k-3}\stackrel{(2)}{x_{k-1}}\\
&\cdot\,\mathbf u_{2k-4}\stackrel{(1)}{x_{k-3}}\mathbf u_{2k-5}\stackrel{(2)}{x_{k-2}}\cdots\mathbf u_4\stackrel{(1)}{x_1}\mathbf u_3\stackrel{(2)}{x_2}\mathbf u_2\stackrel{(1)}{x_0}\mathbf u_1\stackrel{(2)}{x_1}\mathbf u_0,
\end{aligned}
$$
where $D(\mathbf u, y_m)=m$ and $D(\mathbf u,x_s)=s$ for any $0\le s\le k$. If $\mathbf u_{2k+1}$ does not contain any \mbox{$(m-1)$}-divider of $\mathbf u$,  then ${_{2\mathbf u}y_m}$ occurs in one of the subwords $\mathbf u_{2m}$, $\mathbf u_{2m-1}$ or $\mathbf u_{2m-2}$ of $\mathbf u$.
\end{lemma}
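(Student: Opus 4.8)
Throughout we may assume $1\le m\le k+1$, since otherwise $\mathbf u$ does not contain the blocks named in the conclusion. The whole argument is carried by Lemma~\ref{L: if first then second} and Lemma~\ref{L: h_2^{k-1}}, and rests on two preparatory remarks. The first is a right-hand bound: if $m\ge2$, then applying Lemma~\ref{L: if first then second} to $\mathbf u$ with $r=m$, with $x_{m-2}$ in the role of its letter ``$y$'' and $y_m$ in the role of its letter ``$z$'' — legitimate because $D(\mathbf u,x_{m-2})=m-2=r-2$, $D(\mathbf u,y_m)=m\ge r$, and $({_{1\mathbf u}y_m})<({_{1\mathbf u}x_{m-2}})$, the occurrence $({_{1\mathbf u}y_m})$ preceding the whole body of the displayed form — yields $({_{2\mathbf u}y_m})<({_{1\mathbf u}x_{m-2}})$. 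The second is a bound on block depths: for every $i$ with $2\le i\le k$, each letter whose first occurrence lies in $\mathbf u_{2i}$ or in $\mathbf u_{2i-1}$ has depth at least $i-1$ in $\mathbf u$. Indeed, both of these blocks sit strictly between $({_{1\mathbf u}x_i})$ and $({_{2\mathbf u}x_i})$ in $\mathbf u$; so if some letter first occurring there had depth $d\le i-2$ in $\mathbf u$, then Lemma~\ref{L: if first then second} — applied with $r=d+2$, with that letter in the role of ``$y$'' and $x_i$ in the role of ``$z$'' (note $D(\mathbf u,x_i)=i\ge d+2=r$ and $({_{1\mathbf u}x_i})$ precedes that first occurrence) — would force $({_{2\mathbf u}x_i})$ to precede that first occurrence, which is absurd.

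Next I would clear the two easy cases. If $m=k+1$, then $({_{2\mathbf u}y_m})$ lies strictly between $({_{1\mathbf u}y_m})$ and $({_{1\mathbf u}x_{k-1}})=({_{1\mathbf u}x_{m-2}})$ by the right-hand bound, hence (no occurrence of $y_m$ can fill the divider slot $({_{1\mathbf u}x_k})$ in between) inside $\mathbf u_{2k+1}\cup\mathbf u_{2k}=\mathbf u_{2m-1}\cup\mathbf u_{2m-2}$. If $m=k$, put $t=h_2^{k-1}(\mathbf u,y_k)$; by Lemma~\ref{L: h_2^{k-1}}(i) we have $({_{1\mathbf u}y_k})<({_{1\mathbf u}t})$, and since $t$ is a $(k-1)$-divider while $\mathbf u_{2k+1}$ contains no $(k-1)$-divider and $t\ne x_k$ (their depths differ), the first occurrence of $t$ lies after $({_{1\mathbf u}x_k})$; therefore $({_{1\mathbf u}x_k})<({_{2\mathbf u}y_k})$, and combining this with the right-hand bound when $k\ge2$, or reading off the form directly when $k=1$, places $({_{2\mathbf u}y_k})$ inside $\mathbf u_{2k}\cup\mathbf u_{2k-1}\cup\mathbf u_{2k-2}$.

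The substantial case is $m<k$, where the crux is the inequality $({_{2\mathbf u}x_{m+1}})<({_{2\mathbf u}y_m})$; once it is established, the right-hand bound (used when $m\ge2$; for $m=1$ the terminal block $\mathbf u_0=\mathbf u_{2m-2}$ makes no upper bound necessary) confines $({_{2\mathbf u}y_m})$ to $\mathbf u_{2m}\cup\mathbf u_{2m-1}\cup\mathbf u_{2m-2}$. To prove the inequality I would first argue that no $(m-1)$-divider of $\mathbf u$ has its first occurrence strictly between $({_{1\mathbf u}y_m})$ and $({_{1\mathbf u}x_{m+1}})$: in that stretch of the form the explicitly shown occurrences are of letters $x_j$ with $j\ge m+2$, of depth $\ge m+2$; the block $\mathbf u_{2k+1}$ contributes only letters of depth $\ge m$ by hypothesis; and every other block there is an $\mathbf u_{2i}$ or an $\mathbf u_{2i-1}$ with $i\ge m+2$, contributing only letters of depth $\ge m+1$ by the second remark. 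Now set $t=h_2^{m-1}(\mathbf u,y_m)$, so $D(\mathbf u,t)\le m-1$ by Lemma~\ref{L: k-divider and depth} and $({_{1\mathbf u}y_m})<({_{1\mathbf u}t})$ by Lemma~\ref{L: h_2^{k-1}}(i); assume for contradiction that $({_{2\mathbf u}y_m})<({_{2\mathbf u}x_{m+1}})$. If in fact $({_{2\mathbf u}y_m})<({_{1\mathbf u}x_{m+1}})$, then $({_{1\mathbf u}t})<({_{2\mathbf u}y_m})<({_{1\mathbf u}x_{m+1}})$ exhibits an $(m-1)$-divider first occurring strictly between $({_{1\mathbf u}y_m})$ and $({_{1\mathbf u}x_{m+1}})$ — impossible. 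Otherwise $({_{1\mathbf u}x_{m+1}})<({_{2\mathbf u}y_m})<({_{2\mathbf u}x_{m+1}})$, and since $D(\mathbf u,x_{m+1})=m+1$ both occurrences of $x_{m+1}$ lie in one $(m-1)$-block $B$ of $\mathbf u$; then $({_{2\mathbf u}y_m})$ lies in $B$ as well, so $h_2^{m-1}(\mathbf u,y_m)=h_1^{m-1}(\mathbf u,x_{m+1})$ and hence $({_{1\mathbf u}t})<({_{1\mathbf u}x_{m+1}})$ — the same contradiction. Thus $({_{2\mathbf u}x_{m+1}})<({_{2\mathbf u}y_m})$, which finishes the proof.

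I expect the main obstacle to be the second preparatory remark and the ensuing claim that the portion of $\mathbf u$ lying strictly between $({_{1\mathbf u}y_m})$ and $({_{1\mathbf u}x_{m+1}})$ is free of $(m-1)$-dividers; this is precisely where the hypothesis on $\mathbf u_{2k+1}$ is consumed, and after it the case analysis is short. A second delicate point is the $(m-1)$-block argument in the last case: one must make sure that $({_{2\mathbf u}y_m})$ genuinely falls inside the block $B$ carrying both occurrences of $x_{m+1}$, which is exactly where the sub-case hypothesis $({_{1\mathbf u}x_{m+1}})<({_{2\mathbf u}y_m})<({_{2\mathbf u}x_{m+1}})$ enters.
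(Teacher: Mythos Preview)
Your proof is correct and rests on the same ingredients as the paper's: Lemma~\ref{L: h_2^{k-1}}(i) to locate the $(m-1)$-divider $t=h_2^{m-1}(\mathbf u,y_m)$ after ${_{1\mathbf u}y_m}$, Lemma~\ref{L: if first then second} for the right-hand bound $({_{2\mathbf u}y_m})<({_{1\mathbf u}x_{m-2}})$, and the depth constraints preventing low-depth dividers from first occurring between the two occurrences of a high-depth $x_r$.

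The organization differs slightly from the paper. For $m<k$ the paper pushes ${_{1\mathbf u}y_{m-1}}$ (and hence ${_{2\mathbf u}y_m}$) past the blocks $\mathbf u_{2k},\mathbf u_{2k-1}$ and then $\mathbf u_{2r+1},\mathbf u_{2r},\mathbf u_{2r-1}$ for $r=k-1,\ldots,m+1$ one triple at a time, each step using that otherwise $h_1^{r-2}(\mathbf u,x_r)\ne h_2^{r-2}(\mathbf u,x_r)$ would force $D(\mathbf u,x_r)<r$. You instead isolate the single target inequality $({_{2\mathbf u}x_{m+1}})<({_{2\mathbf u}y_m})$ and prove it by contradiction, after a preparatory ``block-depth'' remark (itself an instance of Lemma~\ref{L: if first then second}) that clears the whole stretch between ${_{1\mathbf u}y_m}$ and ${_{1\mathbf u}x_{m+1}}$ of $(m-1)$-dividers in one stroke. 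Your sub-case~2 argument, identifying $h_2^{m-1}(\mathbf u,y_m)$ with $h_1^{m-1}(\mathbf u,x_{m+1})$ via the common $(m-1)$-block, is a neat alternative to the paper's iteration. Both routes are equally short; yours is perhaps more modular, the paper's more direct.
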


\begin{proof}
Since $D(\mathbf u,y_m)=m>0$, we have $y_m \in \mul(\mathbf u)$.
Let $y_{m-1}=h_2^{m-1}(\mathbf u,y_m)$.
Since $({_{1\mathbf u}y_m})<({_{1\mathbf u}y_{m-1}})<({_{2\mathbf u}y_m})$ by Lemma~\ref{L: h_2^{k-1}}(i), the letter ${_{1\mathbf u}y_{m-1}}$ does not occur in $\mathbf u_{2k+2}$.
By the hypothesis, ${_{1\mathbf u}y_{m-1}}$ does not occur in $\mathbf u_{2k+1}$.
Since $({_{1\mathbf u}y_{m-1}})<({_{2\mathbf u}y_m})$, the letter ${_{2\mathbf u}y_m}$ occurs in neither $\mathbf u_{2k+2}$ nor $\mathbf u_{2k+1}$.
Then $({_{1\mathbf u}x_k})<({_{2\mathbf u}y_m})$.
It follows that $m \le k+1$.
If $m=k+1$, then ${_{2\mathbf u}y_m}$ occurs in $\mathbf u_{2m-2}=\mathbf u_{2k}$ because $h_1^k(\mathbf u,y_m) \ne h_2^k(\mathbf u,y_m)$ otherwise.
If $m=k$, then ${_{2\mathbf u}y_m}$ occurs in either $\mathbf u_{2k}$ or $\mathbf u_{2k-1}$ or $\mathbf u_{2k-2}$ because $h_1^{k-1}(\mathbf u,y_m) \ne h_2^{k-1}(\mathbf u,y_m)$ otherwise.
If $k > m$, then ${_{1\mathbf u}y_{m-1}}$ and so ${_{2\mathbf u}y_m}$ do not occur in both $\mathbf u_{2k}$ and $\mathbf u_{2k-1}$ because $h_1^{m-1}(\mathbf u,x_k) \ne h_2^{m-1}(\mathbf u,x_k)$ otherwise.
Further, if $k > m+1$, then ${_{1\mathbf u}y_{m-1}}$ and so ${_{2\mathbf u}y_m}$ do not occur in the subwords $\mathbf u_{2r+1}$, $\mathbf u_{2r}$ and $\mathbf u_{2r-1}$ of $\mathbf u$ for any $r=m+1,m+2,\dots,k-1$ because $h_1^{r-2}(\mathbf u,x_r) \ne h_2^{r-2}(\mathbf u,x_r)$ otherwise.
Finally, if $m>1$, then $({_{2\mathbf u}y_m})<({_{1\mathbf u}x_{m-2}})$ by Lemma~\ref{L: if first then second}.
This means that if $m>1$, then ${_{2\mathbf u}y_m}$ does not occur in $\mathbf u_{2m-3}$, $\mathbf u_{2m-4}$, \dots, $\mathbf u_0$.
Therefore, ${_{2\mathbf u}y_m}$ occurs in either $\mathbf u_{2m}$ or $\mathbf u_{2m-1}$ or $\mathbf u_{2m-2}$.
\end{proof}

\subsection{Some known facts}
\label{subsec: some known facts}

For an identity system $\Sigma$, we denote by $\var\,\Sigma$ the variety of monoids defined by $\Sigma$.
It is well known that the variety
$$
\mathbf{LRB}=\var\{xy\approx xyx\}
$$
of left regular bands is generated by the monoid obtained by adjoining an identity element to the left zero semigroup of order two.
The \textit{initial part} of a word $\mathbf w$, denoted by $\ini(\mathbf w)$, is the word obtained from $\mathbf w$ by retaining the first occurrence of each letter.
The following statement is well known and can be easily verified.

\begin{lemma}
\label{L: word problem LRB}
A non-trivial identity $\mathbf u \approx \mathbf v$ holds in the variety $\mathbf{LRB}$ if and only if $\ini(\mathbf u)=\ini(\mathbf v)$.\qed
\end{lemma}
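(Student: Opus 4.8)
The plan is to prove both implications by using Proposition~\ref{P: deduction}, which reduces the semantic statement ``$\mathbf u\approx\mathbf v$ holds in $\mathbf{LRB}$'' to the combinatorial one ``$\mathbf u\approx\mathbf v$ is deducible from $\{xy\approx xyx\}$''. For the ``if'' direction, suppose $\ini(\mathbf u)=\ini(\mathbf v)$. First I would observe that the single defining identity $xy\approx xyx$, read from right to left as $xyx\approx xy$, lets us erase any non-first occurrence of a letter: if $\mathbf w = \mathbf a\, z\, \mathbf b\, z\, \mathbf c$ and $z$ already occurs in $\mathbf a z\mathbf b$, then putting $x\mapsto \mathbf a z \mathbf b$ (well, more carefully, $x$ maps to the portion up to and including the relevant earlier occurrence of $z$, $y$ to the portion between) we can substitute into $xyx\approx xy$ to delete that later occurrence of $z$. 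Iterating, every word $\mathbf w$ is deducibly equal to $\ini(\mathbf w)$. Hence $\mathbf u \leftrightarrow \ini(\mathbf u) = \ini(\mathbf v) \leftrightarrow \mathbf v$, giving a deduction of $\mathbf u\approx\mathbf v$, so $\mathbf u\approx\mathbf v$ holds in $\mathbf{LRB}$ by Proposition~\ref{P: deduction}.

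For the ``only if'' direction, suppose $\mathbf u\approx\mathbf v$ holds in $\mathbf{LRB}$. The clean way is to exhibit a concrete monoid in $\mathbf{LRB}$ on which the map $\ini$ is ``visible''. I would take, for a finite set $A\subseteq\mathfrak A$, the monoid $L_A$ consisting of the empty word together with all repetition-free words over $A$, with product $\mathbf p\cdot\mathbf q := \ini(\mathbf p\mathbf q)$ (i.e. concatenate and then keep only first occurrences); this is the free left-regular band on $A$ with an identity adjoined, it is easily checked to be associative, to have $\lambda$ as identity, and to satisfy $xy\approx xyx$, so $L_A\in\mathbf{LRB}$. Now evaluate both sides of $\mathbf u\approx\mathbf v$ in $L_A$ with $A=\con(\mathbf u)\cup\con(\mathbf v)$ under the substitution sending each letter to itself (as a length-one word): the value of $\mathbf u$ is exactly $\ini(\mathbf u)$ and the value of $\mathbf v$ is $\ini(\mathbf v)$. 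Since the identity holds in $L_A$, these coincide, i.e. $\ini(\mathbf u)=\ini(\mathbf v)$.

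The only genuine point requiring care is the bookkeeping in the ``if'' direction: one must check that the deletion of a repeated occurrence really is a \emph{direct} deduction from $xy\approx xyx$, i.e. that the word between the two occurrences of $z$ and the prefix can be packaged into a substitution instance $\mathbf a\,\phi(xyx)\,\mathbf b \leftrightarrow \mathbf a\,\phi(xy)\,\mathbf b$; and that the intermediate words in the rewriting sequence can be taken distinct (or that trivial steps can simply be dropped), as required by the definition of deducibility. This is routine once one notes that a single application removes one surplus occurrence while never touching first occurrences, so the process terminates at $\ini(\mathbf w)$ after finitely many steps. Everything else is immediate, and the lemma follows.
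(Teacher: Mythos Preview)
Your argument is correct and is the standard one; the paper itself omits the proof entirely, stating the lemma as ``well known and can be easily verified'' and closing it with a \qed. So there is nothing to compare against.

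One small cleanup in your ``if'' direction: the substitution you describe is garbled. If $\mathbf w=\mathbf a\,z\,\mathbf b\,z\,\mathbf c$, you should take $\phi(x)=z$ and $\phi(y)=\mathbf b$, with outer context $\mathbf a$ on the left and $\mathbf c$ on the right; then $\mathbf a\,\phi(xyx)\,\mathbf c=\mathbf a z\mathbf b z\mathbf c=\mathbf w$ and $\mathbf a\,\phi(xy)\,\mathbf c=\mathbf a z\mathbf b\mathbf c$, which is exactly the deletion of the later $z$. Your suggestion ``$x$ maps to the portion up to and including the relevant earlier occurrence of $z$'' would give $\phi(x)=\mathbf a z$, and then $\phi(xyx)=\mathbf a z\,\mathbf b\,\mathbf a z$ is not a factor of $\mathbf w$ unless $\mathbf a=\lambda$. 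With the corrected substitution the bookkeeping you flag as ``routine'' really is routine: each step removes one non-first occurrence without disturbing any first occurrence, so the process terminates at $\ini(\mathbf w)$.
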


We use the standard symbol $\mathbb N$ to denote the set of all natural numbers.
For any $s\in\mathbb N$ and $1\le q\le s$, put
$$
\mathbf b_{s,q}=x_{s-1}x_sx_{s-2}x_{s-1}\cdots x_{q-1}x_q.
$$
For brevity, we will write $\mathbf b_s$ rather than $\mathbf b_{s,1}$. We put also $\mathbf b_0=\lambda$ for convenience.
We introduce the following four countably infinite series of identities:
$$
\begin{aligned}
\label{alpha_k}
\alpha_k:&\enskip x_ky_kx_{k-1}x_ky_k \mathbf b_{k-1} \approx y_kx_kx_{k-1}x_ky_k \mathbf b_{k-1},\\
\label{beta_k}
\beta_k:&\enskip xx_kx\mathbf b_k \approx x_kx^2\mathbf b_k,\\
\label{gamma_k}
\gamma_k:&\enskip y_1y_0x_ky_1 \mathbf b_k \approx y_1y_0y_1x_k \mathbf b_k,\\
\label{delta_k^m}
\delta_k^m:&\enskip y_{m+1}y_mx_ky_{m+1} \mathbf b_{k,m}y_m\mathbf b_{m-1}\approx y_{m+1}y_my_{m+1}x_k\mathbf b_{k,m}y_m\mathbf b_{m-1},
\end{aligned}
$$
where $k\in \mathbb N$ and $1\le m\le k$.
The subvariety of a variety $\mathbf V$ defined by a set $\Sigma$ of identities is denoted by $\mathbf V \Sigma$.
Let
$$
\begin{aligned}
\mathbf F & =\mathbf P\{x^2y^2 \approx y^2x^2\},\\ \mathbf F_k & =\mathbf F\{\alpha_k\},\\ \mathbf H_k & =\mathbf F\{\beta_k\},\\ \mathbf I_k & =\mathbf F\{\gamma_k\}, \\ \text{ and } \mathbf J_k^m & =\mathbf F\{\delta_k^m\},
\end{aligned}
$$
where $k\in \mathbb N$ and $1\le m\le k$.
The trivial variety of monoids is denoted by $\mathbf T$.
Let $\mathbf{SL}$ denote the variety of all semilattice monoids.
Put
$$
\begin{aligned}
&\mathbf C=\var\{x^2\approx x^3,\,xy\approx yx\},\\
&\mathbf D=\var\{x^2 \approx x^3,\,x^2y \approx xyx \approx yx^2\},\\
&\mathbf E=\var\{x^2 \approx x^3,\,x^2y \approx xyx,\, x^2y^2 \approx y^2x^2\}.
\end{aligned}
$$
The subvariety lattice of a monoid variety $\mathbf V$ is denoted by $\mathfrak L(\mathbf V)$.

\begin{lemma}[\mdseries{Gusev and Vernikov~\cite[Proposition~6.1]{Gusev-Vernikov-18}}]
\label{L: L(F)}
The chain
$$
\begin{aligned}
\mathbf T\subset \mathbf{SL}\subset \mathbf C \subset\mathbf D\subset\mathbf E&\subset\mathbf F_1\subset\mathbf H_1\subset\mathbf I_1\subset\mathbf J_1^1\\
&\subset\mathbf F_2\subset\mathbf H_2\subset\mathbf I_2\subset\mathbf J_2^1\subset\mathbf J_2^2\\
&\rule{6pt}{0pt}\vdots\\
&\subset\mathbf F_k\subset\mathbf H_k\subset\mathbf I_k\subset\mathbf J_k^1\subset\mathbf J_k^2\subset\cdots\subset\mathbf J_k^k\\
&\rule{6pt}{0pt}\vdots\\
&\subset\mathbf F
\end{aligned}
$$
is the lattice $\mathfrak L(\mathbf F)$.\qed
\end{lemma}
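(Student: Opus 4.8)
The overall strategy has two halves: first, showing that every variety in the displayed chain really is a subvariety of $\mathbf F$ and that consecutive entries are distinct; second, showing that $\mathbf F$ has no other subvarieties. The second half carries essentially all of the weight.

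For the first half, the initial segment $\mathbf T\subset\mathbf{SL}\subset\mathbf C\subset\mathbf D\subset\mathbf E$ is classical: each inclusion is immediate from the (commutative or near-commutative) defining identities, and consecutive pairs are separated by small finite monoids. For the remaining links one derives, modulo the defining identities of $\mathbf F$, the successive implications
$$
\alpha_k\ \Rightarrow\ \beta_k\ \Rightarrow\ \gamma_k\ \Rightarrow\ \delta_k^1\ \Rightarrow\ \delta_k^2\ \Rightarrow\ \cdots\ \Rightarrow\ \delta_k^k\ \Rightarrow\ \alpha_{k+1},
$$
using only $xsxt\approx xsxtx$, $x^2y^2\approx y^2x^2$ and their immediate consequences $xsx\approx xsx^2$ and $x^2t\approx x^2tx$; each step is a short explicit deduction involving the words $\mathbf b_{s,q}$, and the same computations give $\mathbf E\subseteq\mathbf F_1$ and $\mathbf J_k^k\subseteq\mathbf F_{k+1}$. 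Strictness of each link (which automatically yields $\mathbf J_k^k\subsetneq\mathbf F$ as well) is obtained either from a finite monoid lying in the larger variety but violating a defining identity of the smaller one, or, once the word problem below is available, by noting that the ``new'' identity genuinely fails in the larger variety.

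For the second half, the key preliminary is a workable description of $\mathbf F$-equivalence of words: $\mathbf F\models\mathbf u\approx\mathbf v$ if and only if $\mathbf u$ and $\mathbf v$ have the same content, the same simple letters in the same order, and, for every multiple letter, the same depth and the same bounding dividers of its first two occurrences, after which the remaining (deeper) material may be permuted using commutativity of squares. Granting this, let $\mathbf V\subsetneq\mathbf F$; then $\mathbf V$ satisfies some identity $\mathbf u\approx\mathbf v$ that is non-trivial modulo $\mathbf F$. Picking a letter $x_\ell$ of maximal depth $\ell$ in, say, $\mathbf u$ and applying Lemma~\ref{L: form of the identity} together with Remark~\ref{R: u_{2s+2} and v_{2s+2} do not contain any p-dividers} pins both $\mathbf u$ and $\mathbf v$ into the shape of Corollary~\ref{C: form of the word}, while Lemma~\ref{L: 2nd occurence of y_m general} (with Lemma~\ref{L: k-divider and depth}) locates the second occurrences of the relevant letters. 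A case analysis on where $\mathbf u$ and $\mathbf v$ first disagree then shows that some substitution instance of $\mathbf u\approx\mathbf v$, simplified modulo $\mathbf F$, is exactly one of $\alpha_k,\beta_k,\gamma_k,\delta_k^m$ with $k\le\ell$, or else is strong enough to push $\mathbf V$ into $\mathbf E$ or lower. Hence $\mathbf V$ is contained in one of $\mathbf F_k,\mathbf H_k,\mathbf I_k,\mathbf J_k^m$, or else in one of $\mathbf T,\mathbf{SL},\mathbf C,\mathbf D,\mathbf E$. To upgrade ``$\mathbf V$ lies below a named variety'' to ``$\mathbf V$ equals a named variety'', I would induct up the chain: for each consecutive pair $\mathbf W'\subset\mathbf W$ in the list, the same identity analysis carried out inside $\mathbf W$ shows that every proper subvariety of $\mathbf W$ is already contained in $\mathbf W'$; starting from $\mathfrak L(\mathbf T)=\{\mathbf T\}$ this forces $\mathfrak L(\mathbf W)$ to be the corresponding initial segment of the list for every $\mathbf W$ in it, and combining this with the classification of the proper subvarieties of $\mathbf F$ just described yields $\mathfrak L(\mathbf F)$ exactly as stated.

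The crux is the combinatorial reduction: passing from an identity merely known to fail in $\mathbf F$ to a substitution instance of one of the four explicit series. This is a delicate block-by-block comparison of the maximal $k$-decompositions of the two sides — tracking which letters serve as $k$-dividers on each side, and where the first two occurrences of each multiple letter sit relative to them — and it is precisely for this bookkeeping that the depth function, the operators $h_i^k$, and Lemmas~\ref{L: k-divider and depth}, \ref{L: form of the identity} and~\ref{L: 2nd occurence of y_m general} are set up. Everything else — the explicit deductions among the $\alpha,\beta,\gamma,\delta$ identities, the separating monoids, the word-problem description, and the induction up the chain — is routine once this reduction is in hand.
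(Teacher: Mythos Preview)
The paper does not prove this lemma: it is quoted verbatim from Gusev and Vernikov~\cite[Proposition~6.1]{Gusev-Vernikov-18} and marked with a terminal \qed, so there is no in-paper argument to compare your proposal against. Your sketch is broadly consonant with how the cited source establishes the result --- the two-part split into ``the chain is a chain'' and ``nothing else lies in $\mathfrak L(\mathbf F)$'', the implication ladder $\alpha_k\Rightarrow\beta_k\Rightarrow\gamma_k\Rightarrow\delta_k^1\Rightarrow\cdots\Rightarrow\delta_k^k\Rightarrow\alpha_{k+1}$ modulo~$\mathbf F$, the solution of the word problem in $\mathbf F$ via $k$-decompositions, and the reduction of an arbitrary non-$\mathbf F$-trivial identity to one of the canonical series --- are exactly the ingredients used there. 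The only caveat is that your ``case analysis on where $\mathbf u$ and $\mathbf v$ first disagree'' hides substantial work: in the original, this step occupies several pages and requires careful tracking not just of the maximal-depth letter but of the full hierarchy of dividers on both sides, and the induction up the chain is interleaved with, rather than separated from, the identity reduction. As a high-level plan your proposal is sound, but be aware that the combinatorial core you flag as ``the crux'' is genuinely long.
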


\begin{lemma}[\mdseries{Gusev and Vernikov~\cite[Proposition~6.9(i)]{Gusev-Vernikov-18}}]
\label{L: word problem F_k}
A non-trivial identity $\mathbf u \approx \mathbf v$ holds in the variety $\mathbf F_k$ if and only if \eqref{sim(u)=sim(v) & mul(u)=mul(v)} and~\eqref{eq the same l-dividers}  hold with $\ell=k$.
\end{lemma}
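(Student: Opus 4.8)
The plan is to prove the two implications separately; we begin with \emph{necessity}. Suppose $\mathbf F_k$ satisfies a non-trivial identity $\mathbf u\approx\mathbf v$. Since $\mathbf C\subseteq\mathbf F_k$ by Lemma~\ref{L: L(F)} and an identity holds in the commutative variety $\mathbf C=\var\{x^2\approx x^3,\ xy\approx yx\}$ exactly when its two sides have the same simple letters and the same multiple letters, we obtain~\eqref{sim(u)=sim(v) & mul(u)=mul(v)}. For~\eqref{eq the same l-dividers} with $\ell=k$, note that $\mathbf F_k$ is defined by $xsxt\approx xsxtx$, $x^2y^2\approx y^2x^2$ and $\alpha_k$, so by Proposition~\ref{P: deduction} there is a sequence $\mathbf u=\mathbf w_0,\mathbf w_1,\dots,\mathbf w_n=\mathbf v$ in which each $\mathbf w_j\approx\mathbf w_{j+1}$ is directly deducible from one of these three; it therefore suffices to check that any single direct deduction step $\mathbf a\,\phi(\mathbf s)\,\mathbf b\to\mathbf a\,\phi(\mathbf t)\,\mathbf b$, with $\mathbf s\approx\mathbf t$ one of the three identities taken in either direction, leaves $\simple$, $\mul$ and the functions $x\mapsto h_i^{k-1}(\cdot,x)$, $i=1,2$, unchanged. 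When $\mathbf s\approx\mathbf t$ is $x^2y^2\approx y^2x^2$, every letter of the factor $\phi(x)^2\phi(y)^2$ occurs there at least twice, hence is multiple in the whole word and so, by the observation that only a first occurrence of a letter can be a divider, is never a divider of any level; consequently this factor lies inside a single $j$-block of the word for every $j$, and exchanging it with $\phi(y)^2\phi(x)^2$ moves no divider and changes no first or second occurrence. When $\mathbf s\approx\mathbf t$ is $xsxt\approx xsxtx$, the two words differ by a trailing copy of $\phi(x)$ whose letters already occur at least twice, and one argues similarly, using Lemma~\ref{L: k-divider and depth}, that this copy neither is, nor creates, nor destroys a $(k-1)$-divider and disturbs no first or second occurrence.

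The remaining case, where $\mathbf s\approx\mathbf t$ is $\alpha_k$, is the delicate one, and I expect it to be the main obstacle. Here the factor $\phi(x_k)\phi(y_k)\phi(x_{k-1})\phi(x_k)\phi(y_k)\phi(\mathbf b_{k-1})$ substituted for the left side of $\alpha_k$ can contain simple letters and lower-level dividers of the whole word (these lie inside $\phi(\mathbf b_{k-1})$, since $x_0,\dots,x_{k-1}$ all occur in $\mathbf b_{k-1}$), so the ``single block'' argument is no longer available; instead one must show that the local change $\phi(x_k)\phi(y_k)\to\phi(y_k)\phi(x_k)$ at the front of this factor moves no $(k-1)$-divider relative to any occurrence. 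The key point is that $\phi(x_k)$ and $\phi(y_k)$ recur within the initial segment $\phi(x_k)\phi(y_k)\phi(x_{k-1})\phi(x_k)\phi(y_k)$ of the factor, an initial segment that contains no simple letter of the word; carrying out the bookkeeping with first and second occurrences and with depths, one deduces that every letter of $\phi(x_k)$ or $\phi(y_k)$ has depth at least $k$ in the word, hence is not a $(k-1)$-divider, so that $h_i^{k-1}$ is preserved. In particular this shows that the two sides of $\alpha_k$ themselves satisfy~\eqref{sim(u)=sim(v) & mul(u)=mul(v)} and~\eqref{eq the same l-dividers} with $\ell=k$, as they must.

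For \emph{sufficiency}, assume~\eqref{sim(u)=sim(v) & mul(u)=mul(v)} and~\eqref{eq the same l-dividers} with $\ell=k$; we must deduce $\mathbf u\approx\mathbf v$ from the defining identities of $\mathbf F_k$. I would assign to each word $\mathbf w$ a canonical word $\overline{\mathbf w}$, built in two stages, and prove that (i) $\mathbf F_k$ satisfies $\mathbf w\approx\overline{\mathbf w}$ and (ii) $\overline{\mathbf w}$ depends only on $\simple(\mathbf w)$, $\mul(\mathbf w)$ and the functions $h_1^{k-1}(\mathbf w,\cdot)$, $h_2^{k-1}(\mathbf w,\cdot)$; then the hypotheses give $\overline{\mathbf u}=\overline{\mathbf v}$, whence $\mathbf F_k$ satisfies $\mathbf u\approx\overline{\mathbf u}=\overline{\mathbf v}\approx\mathbf v$. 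In the first stage, delete from $\mathbf w$, one occurrence at a time starting from the right, every occurrence of every multiple letter beyond its first two; this is permitted in $\mathbf F_k$ because $xsxt\approx xsxtx$ implies that any occurrence of a letter preceded by at least two of its occurrences may be erased, and such erasures change neither $\simple$, $\mul$, nor any $(k-1)$-divider or depth, since occurrences of a letter past its second are never dividers. In the second stage, using $\alpha_k$ together with $x^2y^2\approx y^2x^2$ and consequences of $xsxt\approx xsxtx$ (and, for letters of depth greater than $k$, the identities $\alpha_{k+1},\alpha_{k+2},\dots$, which also hold in $\mathbf F_k$ by Lemma~\ref{L: L(F)}), rearrange the letters of depth at least $k$ into a fixed order determined by their prescribed values of $h_1^{k-1}$ and $h_2^{k-1}$ --- here $\alpha_k$ is precisely the tool for permuting depth-$k$ letters without altering the $(k-1)$-decomposition --- and one argues, by induction on the number of $(k-1)$-blocks and on their lengths, that every word is $\mathbf F_k$-equivalent to its rearrangement. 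Finally, the normal form~\eqref{form of u} supplied by Corollary~\ref{C: form of the word} shows that after the two stages the positions of all $(k-1)$-dividers and of the first two occurrences of every letter are completely determined by $\simple(\mathbf w)$, $\mul(\mathbf w)$, $h_1^{k-1}(\mathbf w,\cdot)$ and $h_2^{k-1}(\mathbf w,\cdot)$, which gives (ii); I expect the rigorous verification of this last uniqueness claim to be the other technically demanding point.
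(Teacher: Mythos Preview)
The paper does not prove this lemma at all: it is quoted verbatim from Gusev and Vernikov~\cite[Proposition~6.9(i)]{Gusev-Vernikov-18} and used as a black box throughout. So there is no ``paper's own proof'' to compare against; the proof lives in the cited 73-page article, where it occupies several pages and relies on the machinery of $k$-decompositions developed there.

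Your outline is broadly sensible and identifies the right pressure points, but it is a sketch rather than a proof, and you are honest about this. Two remarks. First, in the necessity direction for $\alpha_k$, the assertion that ``every letter of $\phi(x_k)$ or $\phi(y_k)$ has depth at least $k$ in the word'' is the heart of the matter, and it cannot be read off directly: one has to prove, by induction on $j$ from $0$ up to $k-1$, that the $j$-decomposition is unchanged by the swap and that the swapped segment contains no $j$-divider. The base case uses that every letter of $\phi(x_k)\phi(y_k)$ is multiple (because of the second copies), but the inductive step is where the structure of $\mathbf b_{k-1}$ is actually used, and you have not written it out. Second, in the sufficiency direction your two-stage normal form is the right idea, and your observation that $\mathbf F_k$ satisfies $\alpha_j$ for all $j\ge k$ (via Lemma~\ref{L: L(F)}) is correct and needed; but the claim that the resulting normal form is \emph{uniquely determined} by $\simple$, $\mul$, $h_1^{k-1}$ and $h_2^{k-1}$ is precisely the substantive content of the lemma, and invoking Corollary~\ref{C: form of the word} does not by itself give it --- that corollary fixes the skeleton $x_0,\dots,x_{k-1}$, not the contents of the blocks $\mathbf u_i$. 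What is really needed is a swap lemma allowing you to commute, within a single $(k-1)$-block, any two adjacent occurrences whose transposition preserves the invariants; this is what $\alpha_k$ (and its consequences) provides, but making the reduction precise is work you have deferred.
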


An identity $\mathbf u \approx \mathbf v$ is $k$-\textit{well-balanced} if the words $\mathbf u$ and $\mathbf v$ have the same set of $k$-dividers, these $k$-dividers appear in $\mathbf u$ and $\mathbf v$ in the same order, and ${_{i\mathbf u}x}$ occurs in some $k$-block of $\mathbf u$ if and only if ${_{i\mathbf v}x}$ occurs in the corresponding  $k$-block of $\mathbf v$ for any letter $x$ and $i\in\{1,2\}$.

\begin{corollary}
\label{C: (k-1)-well-balanced}
Let $\mathbf u\approx \mathbf v$ be an identity of $\mathbf F_k$.
Then the identity $\mathbf u\approx \mathbf v$ is \mbox{$(k-1)$}-well-balanced.
\end{corollary}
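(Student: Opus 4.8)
The plan is to read everything off from the word problem for $\mathbf F_k$ (Lemma~\ref{L: word problem F_k}), but to apply it not to $\mathbf F_k$ alone, but to every member of the initial segment $\mathbf F_1\subset\mathbf F_2\subset\dots\subset\mathbf F_k$ of the chain $\mathfrak L(\mathbf F)$ (Lemma~\ref{L: L(F)}), and then to combine this with Lemma~\ref{L: k-divider and depth} and the description of the depth in terms of the functions $h_i^p$. If $\mathbf u=\mathbf v$ the identity is trivially $(k-1)$-well-balanced, so I would assume $\mathbf u\ne\mathbf v$. Since $\mathbf F_j\subseteq\mathbf F_k$ for every $j\le k$, the identity $\mathbf u\approx\mathbf v$ holds in each $\mathbf F_j$; applying Lemma~\ref{L: word problem F_k} to $\mathbf F_j$ for $j=1,2,\dots,k$ then yields $\simple(\mathbf u)=\simple(\mathbf v)$, $\mul(\mathbf u)=\mul(\mathbf v)$, and $h_i^p(\mathbf u,x)=h_i^p(\mathbf v,x)$ for $i=1,2$, every $x\in\con(\mathbf u)$ and every $p$ with $0\le p\le k-1$.

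First I would show that $\mathbf u$ and $\mathbf v$ have the same set of $(k-1)$-dividers. By Lemma~\ref{L: k-divider and depth}, a letter $t$ is a $(k-1)$-divider of $\mathbf u$ exactly when $D(\mathbf u,t)\le k-1$; by the description of the depth recalled just before Lemma~\ref{L: k-divider and depth}, this holds precisely when $t\in\simple(\mathbf u)$ or $h_1^p(\mathbf u,t)\ne h_2^p(\mathbf u,t)$ for some $p$ with $0\le p\le k-2$. Since the sets of simple letters coincide and the relevant values $h_i^p$ coincide, $t$ is a $(k-1)$-divider of $\mathbf u$ if and only if it is a $(k-1)$-divider of $\mathbf v$; when $k=1$ the condition on $p$ is empty and this reduces to $\simple(\mathbf u)=\simple(\mathbf v)$. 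I expect this to be the crux of the argument, and it is the only place where passing to the whole segment $\mathbf F_1,\dots,\mathbf F_k$ rather than to $\mathbf F_k$ alone really matters: the equalities $h_i^{k-1}(\mathbf u,x)=h_i^{k-1}(\mathbf v,x)$ by themselves do not determine which letters have depth exactly $k$.

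Granting this, the remaining two requirements should follow quickly. To see that the common $(k-1)$-dividers occur in the same order in both words, I would enumerate the $(k-1)$-dividers of $\mathbf u$ as $d_0=\lambda,d_1,\dots,d_p$ in order of first occurrence, so that $h_1^{k-1}(\mathbf u,d_j)=d_{j-1}$ for $1\le j\le p$; the equalities $h_1^{k-1}(\mathbf v,d_j)=d_{j-1}$ then say that, inside the set $\{d_0,\dots,d_p\}$ of $(k-1)$-dividers of $\mathbf v$, the element $d_{j-1}$ is the immediate predecessor of $d_j$ for every $j$, which forces the order $d_0,d_1,\dots,d_p$ in $\mathbf v$ as well. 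Finally, for the block condition: an occurrence ${_{i\mathbf u}x}$ that is not a $(k-1)$-divider lies in the $(k-1)$-block of $\mathbf u$ immediately following the $(k-1)$-divider $h_i^{k-1}(\mathbf u,x)$, and since $h_i^{k-1}(\mathbf u,x)=h_i^{k-1}(\mathbf v,x)$ while $\mathbf u$ and $\mathbf v$ have the same ordered sequence of $(k-1)$-dividers (and ${_{1\mathbf u}x}$ is a $(k-1)$-divider of $\mathbf u$ if and only if ${_{1\mathbf v}x}$ is one of $\mathbf v$), the occurrence ${_{i\mathbf v}x}$ lies in the corresponding $(k-1)$-block of $\mathbf v$. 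This would establish that $\mathbf u\approx\mathbf v$ is $(k-1)$-well-balanced; the only loose ends are the harmless bookkeeping around the empty word $d_0=\lambda$ and the convention governing which previous divider $h_1^{k-1}$ returns on a divider occurrence, and neither should present any difficulty.
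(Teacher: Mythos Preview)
Your argument is correct. The paper's proof is shorter because it outsources the hardest step to an external reference: after invoking Lemma~\ref{L: word problem F_k} once (only at level $\ell=k$) to obtain~\eqref{sim(u)=sim(v) & mul(u)=mul(v)} and~\eqref{eq the same l-dividers}, it cites \cite[Lemma~3.8]{Gusev-Vernikov-18} to conclude that the $(k-1)$-decomposition of $\mathbf v$ has the same sequence of dividers as that of $\mathbf u$, and then finishes the block condition exactly as you do, using $h_i^{k-1}(\mathbf u,x)=h_i^{k-1}(\mathbf v,x)$.

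Your route is genuinely different in that you avoid the external black box. Instead of relying on \cite[Lemma~3.8]{Gusev-Vernikov-18}, you exploit the chain $\mathbf F_1\subset\cdots\subset\mathbf F_k$ from Lemma~\ref{L: L(F)} and apply Lemma~\ref{L: word problem F_k} at every level $j\le k$, thereby obtaining $h_i^p(\mathbf u,x)=h_i^p(\mathbf v,x)$ for all $p\le k-1$. With this stronger information and Lemma~\ref{L: k-divider and depth}, you can read off directly which letters are $(k-1)$-dividers and in what order they appear. What your approach buys is a fully self-contained proof inside the present paper; what the paper's approach buys is brevity, at the cost of an external citation. Your remark that the equalities at level $k-1$ alone do not obviously pin down the depths is apt: the external lemma presumably unwinds this by an induction on the level, whereas you sidestep that induction entirely via the chain of varieties.
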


\begin{proof}
Let $t_0\mathbf u_0t_1\mathbf u_1\cdots t_m\mathbf u_m$ be the \mbox{$(k-1)$}-decomposition of $\mathbf u$.
According to Lemma~\ref{L: word problem F_k}, \eqref{sim(u)=sim(v) & mul(u)=mul(v)} and~\eqref{eq the same l-dividers} hold with $\ell=k$.
Then the \mbox{$(k-1)$}-decomposition of $\mathbf v$ has the form $t_0\mathbf v_0t_1\mathbf v_1\cdots t_m\mathbf v_m$ by Gusev and Vernikov \cite[Lemma~3.8]{Gusev-Vernikov-18}.
Suppose that ${_{i\mathbf u}x}$ occurs in the \mbox{$(k-1)$}-block $\mathbf u_j$ of $\mathbf u$, where $i \in \{1,2\}$ and $j\in\{0,1,\dots,m\}$.
Then $h_i^{k-1}(\mathbf u,x)=t_j$.
Since~\eqref{eq the same l-dividers} holds with $\ell=k$, we have $h_i^{k-1}(\mathbf v,x)=t_j$ and so ${_{i\mathbf v}x}$ occurs in the \mbox{$(k-1)$}-block $\mathbf v_j$ of $\mathbf u$.
Therefore, the identity $\mathbf u\approx \mathbf v$ is \mbox{$(k-1)$}-well-balanced.
\end{proof}

\section{Some properties of the variety $\mathbf P$}
\label{sec: properties of P}

 Recall that the variety $\mathbf P$ is defined by the identity
\begin{equation}
\label{xsxt=xsxtx}
xsxt \approx xsxtx.
\end{equation}
For any word $\mathbf w$, let $\ini_2(\mathbf w)$ denote the word obtained from $\mathbf w$ by retaining the first and second occurrences of each letter.

The following result is evident.

\begin{lemma}
\label{L: 2-limited word}
For any $\mathbf w \in \mathfrak A^\ast$, the identity $\mathbf w \approx \ini_2(\mathbf w)$ holds in the variety $\mathbf P$.\qed
\end{lemma}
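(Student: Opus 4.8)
The plan is to verify that the (possibly trivial) identity $\mathbf w \approx \ini_2(\mathbf w)$ is deducible from the single identity $xsxt \approx xsxtx$ defining $\mathbf P$, and then to conclude by Proposition~\ref{P: deduction}. The crux is the observation that one application of this identity erases precisely one \emph{surplus} occurrence of a letter --- namely, the last occurrence of any letter that occurs at least three times --- without touching its first two occurrences.

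To make this precise, suppose some letter $z$ occurs at least three times in a word $\mathbf w$, and let $p_1<p_2<p$ be its first, second and last occurrences. Then $\mathbf w = \mathbf a\,z\,\mathbf c_1\,z\,\mathbf c_2\,z\,\mathbf b$, where the three displayed copies of $z$ are those at positions $p_1,p_2,p$; here $z\notin\con(\mathbf a\mathbf c_1\mathbf b)$, while $\mathbf c_2$ may or may not contain further copies of $z$. I would take the substitution $\phi$ with $\phi(x)=z$, $\phi(s)=\mathbf c_1$, $\phi(t)=\mathbf c_2$ (and defined arbitrarily on the other letters), and set $\mathbf w' = \mathbf a\,z\,\mathbf c_1\,z\,\mathbf c_2\,\mathbf b$, the word obtained from $\mathbf w$ by deleting the occurrence of $z$ at position $p$. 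Since $\mathbf a\,\phi(xsxtx)\,\mathbf b=\mathbf w$ and $\mathbf a\,\phi(xsxt)\,\mathbf b=\mathbf w'$, the identity $\mathbf w\approx\mathbf w'$ is directly deducible from $xsxt\approx xsxtx$. Moreover $\mathbf w'$ is strictly shorter than $\mathbf w$, and $\ini_2(\mathbf w')=\ini_2(\mathbf w)$, since the deleted occurrence is neither the first nor the second occurrence of $z$.

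With this in hand, I would induct on $N(\mathbf w):=\sum_{x\in\con(\mathbf w)}\max\{0,\occ_x(\mathbf w)-2\}$. If $N(\mathbf w)=0$, then $\mathbf w=\ini_2(\mathbf w)$ and there is nothing to prove. If $N(\mathbf w)>0$, choose $z$ with $\occ_z(\mathbf w)\ge3$ and form $\mathbf w'$ as above; then $N(\mathbf w')=N(\mathbf w)-1$, so by induction $\mathbf w'\approx\ini_2(\mathbf w')=\ini_2(\mathbf w)$ is deducible from $xsxt\approx xsxtx$. Prepending the directly deducible step $\mathbf w\approx\mathbf w'$ yields a deduction of $\mathbf w\approx\ini_2(\mathbf w)$ (the words along this chain are pairwise distinct, as their lengths strictly decrease), and Proposition~\ref{P: deduction} finishes the argument. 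I do not anticipate any real obstacle here; the only points needing a line of care are checking that $\{\mathbf w,\mathbf w'\}=\{\mathbf a\phi(xsxt)\mathbf b,\mathbf a\phi(xsxtx)\mathbf b\}$ genuinely matches the definition of direct deducibility, and that the chain of words produced consists of \emph{distinct} words, both of which are immediate.
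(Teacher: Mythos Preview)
Your proof is correct. The paper treats this lemma as self-evident and gives no argument at all (it is stated with an immediate \qed); your write-up is exactly the natural deduction that the authors leave implicit, namely using $xsxt\approx xsxtx$ (read right-to-left) to erase surplus occurrences one at a time.
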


Let $\phi\colon \mathfrak A \to \mathfrak A^\ast$ be a substitution, $x_1,x_2,\dots,x_n\in \mathfrak A$ and $\mathbf w_1,\mathbf w_2,\dots,\mathbf w_n\in \mathfrak A^\ast$.
For brevity, we will say that $\phi$ is the substitution
$$
(x_1,x_2,\dots,x_n)\mapsto (\mathbf w_1,\mathbf w_2,\dots,\mathbf w_n)
$$
if $\phi(x_i)=\mathbf w_i$ for any $i=1,2,\dots,n$ and $\phi(a)=a$ otherwise.

\begin{lemma}
\label{L: does not contain LRB,F_1}
Let $\mathbf V$ be a subvariety of $\mathbf P$.
\begin{itemize}
\item[\textup{(i)}] If $\mathbf{LRB}\nsubseteq\mathbf V$, then $\mathbf V\subseteq\mathbf F$.
\item[\textup{(ii)}] If $\mathbf F_1\nsubseteq\mathbf V$, then $\mathbf V\subseteq\mathbf{LRB}\vee \mathbf C=\var\{x^2 \approx x^3,\,x^2y \approx xyx\}$.
\end{itemize}
\end{lemma}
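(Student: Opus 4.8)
The plan is to prove both parts by a common scheme. In each case the failure of the asserted inclusion produces an identity $\mathbf u\approx\mathbf v$ of $\mathbf V$ that fails in $\mathbf{LRB}$ (for~(i)) or in $\mathbf F_1$ (for~(ii)); using Lemma~\ref{L: 2-limited word} I may assume $\mathbf u$ and $\mathbf v$ are $2$-limited. I would first dispose of the situation $\con(\mathbf u)\ne\con(\mathbf v)$: if a letter $z$ occurs in one of the words but not in the other, then substituting the empty word for every other letter yields $z^j\approx\lambda$ with $j\in\{1,2\}$, and since $\mathbf P$ satisfies $z^2\approx z^3$ this forces $\mathbf V\models z\approx\lambda$, i.e.\ $\mathbf V=\mathbf T$, which lies in every variety. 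So I may assume $\con(\mathbf u)=\con(\mathbf v)$, and the task in each part reduces to: send all but two suitably chosen letters to $\lambda$, and then rewrite the resulting short identity modulo the law of $\mathbf P$ into $x^2y^2\approx y^2x^2$, respectively $x^2y\approx xyx$.

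For part~(i): since $\mathbf{LRB}\nsubseteq\mathbf V$, Lemma~\ref{L: word problem LRB} gives an identity $\mathbf u\approx\mathbf v$ of $\mathbf V$ with $\ini(\mathbf u)\ne\ini(\mathbf v)$; as $\con(\mathbf u)=\con(\mathbf v)$, some two letters $x,y$ occur in opposite order among the first occurrences, say $x$ before $y$ in $\mathbf u$ and $y$ before $x$ in $\mathbf v$. I apply the substitution that sends every letter other than $x,y$ to $\lambda$, $x$ to $x^2$ and $y$ to $y^2$. After the deletions $\mathbf u$ begins with $x$, so its image begins with $x^2$; the first two occurrences of $x$ in the image are its first two letters, and the first two occurrences of $y$ appear consecutively, preceded only by letters $x$. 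Hence $\ini_2$ of the image of $\mathbf u$ equals $x^2y^2$, and by Lemma~\ref{L: 2-limited word} the image equals $x^2y^2$ in $\mathbf P$; symmetrically the image of $\mathbf v$ equals $y^2x^2$ in $\mathbf P$. Thus $\mathbf V\models x^2y^2\approx y^2x^2$, so $\mathbf V\subseteq\mathbf F$.

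For part~(ii) it is enough to show $\mathbf V\models x^2y\approx xyx$: together with $\mathbf V\models x^2\approx x^3$ (valid since $\mathbf V\subseteq\mathbf P$) this gives $\mathbf V\subseteq\var\{x^2\approx x^3,\ x^2y\approx xyx\}=\mathbf{LRB}\vee\mathbf C$. Now $\mathbf F_1\nsubseteq\mathbf V$ supplies an identity $\mathbf u\approx\mathbf v$ of $\mathbf V$ failing in $\mathbf F_1=\mathbf F\{\alpha_1\}$, and by Lemma~\ref{L: word problem F_k} (with $k=1$) failure means that either $\simple(\mathbf u)\ne\simple(\mathbf v)$ (equivalently $\mul(\mathbf u)\ne\mul(\mathbf v)$, as $\con(\mathbf u)=\con(\mathbf v)$) or else, with these sets equal, $h_i^0(\mathbf u,x)\ne h_i^0(\mathbf v,x)$ for some letter $x$ and some $i\in\{1,2\}$. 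In the first case some letter is simple in one word and multiple in the other, so killing the rest yields $\mathbf V\models x\approx x^2$, and then in $\mathbf P$ one has $x^2y\approx xy\approx x^2yx\approx xyx$ (using $x\approx x^2$ and the specialization $x^2t\approx x^2tx$ of the law of $\mathbf P$). In the second case: if the simple letters occur in different orders in $\mathbf u$ and $\mathbf v$ then two of them witness $\mathbf V\models st\approx ts$, so $\mathbf V$ is commutative and $x^2y\approx xyx$ holds; otherwise the $0$-decompositions of $\mathbf u$ and $\mathbf v$ share their $0$-dividers, so the disagreement is for a multiple letter $x$, and the $i$-th occurrence of $x$ lies in different $0$-blocks of $\mathbf u$ and $\mathbf v$. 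Choosing a simple letter $s$ that, in one of the two words, separates this occurrence of $x$ from where it lies in the other word, and killing every other letter, I get an identity on $\{x,s\}$ in which $x$ occurs twice and $s$ once; a short check of the possible positions shows it is, up to interchanging sides, one of $x^2s\approx sx^2$, $xsx\approx sx^2$, $x^2s\approx xsx$. The last form is $x^2y\approx xyx$; in the first two, the $\mathbf P$-laws $x^2\approx x^3$, $xsx\approx xsx^2$ and $x^2t\approx x^2tx$ give first $x^2y\approx yx^2$ and then $x^2y\approx x^3y\approx xyx^2\approx xyx$. Hence $\mathbf V\models x^2y\approx xyx$ in every case.

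The only genuinely technical point is the last one: reading off, from the shape of the $0$-decompositions, that after the specialization the two-letter identity must be one of the three listed forms (this is where the position bookkeeping of $k$-decompositions is actually used), followed by the three-line deductions inside $\mathbf P$; everything else is routine bookkeeping. For the equality $\mathbf{LRB}\vee\mathbf C=\var\{x^2\approx x^3,\ x^2y\approx xyx\}$ used above, one inclusion is immediate because both $\mathbf{LRB}$ and $\mathbf C$ satisfy these two identities (for $\mathbf{LRB}$ by Lemma~\ref{L: word problem LRB}, for $\mathbf C$ by commutativity), and the reverse one follows from the standard description of the relatively free objects of $\var\{x^2\approx x^3,\ x^2y\approx xyx\}$.
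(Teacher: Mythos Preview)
Your proof is correct, and in fact more self-contained than the paper's. The paper treats part~(i) by invoking an external structural result (Lee--Rhodes--Steinberg) to get $x^2(y^2x^2)^2\approx(y^2x^2)^2$ and then simplify, whereas your squaring substitution $x\mapsto x^2$, $y\mapsto y^2$ together with Lemma~\ref{L: 2-limited word} gives $x^2y^2\approx y^2x^2$ directly. For part~(ii) the paper first disposes of the case $\mathbf C\nsubseteq\mathbf V$ via another citation (forcing complete regularity, hence $\mathbf V\subseteq\mathbf{LRB}$), and then, with $\mathbf C\subseteq\mathbf V$, uses a referenced fact to guarantee $\simple(\mathbf u)=\simple(\mathbf v)$ before analysing $h_i^0$; it also reduces $i=1$ to $i=2$ by left-multiplying with $xt$. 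You instead handle the $\simple(\mathbf u)\ne\simple(\mathbf v)$ case by hand (getting $x\approx x^2$) and treat both values of $i$ uniformly via the three possible two-letter identities; the deductions inside $\mathbf P$ are the same in spirit. Two minor remarks: your sentence ``give first $x^2y\approx yx^2$'' is slightly misleading for the case $xsx\approx sx^2$, since there one obtains $x^2y\approx xyx$ directly via $x^2y\stackrel{\mathbf P}{\approx}x^2yx=x(xyx)\approx x(yx^2)=xyx^2\stackrel{\mathbf P}{\approx}xyx$; and your appeal to ``the standard description of the relatively free objects'' for $\mathbf{LRB}\vee\mathbf C=\var\{x^2\approx x^3,\,x^2y\approx xyx\}$ is exactly what the paper covers by citing Lee's \cite[Lemma~3.3]{Lee-12b}.
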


\begin{proof}
(i) It follows from Lee et al. \cite[Theorem~5.17]{Lee-Rhodes-Steinberg-19} that $\mathbf V$ satisfies the identity $x^2(y^2x^2)^2 \approx (y^2x^2)^2$.
It is then easily shown that $\mathbf V$ satisfies $x^2y^2 \approx y^2x^2$.
Thus, $\mathbf V\subseteq\mathbf F$.

\smallskip

(ii) If $\mathbf C \nsubseteq \mathbf V$, then by Gusev and Vernikov \cite[Corollary~2.6]{Gusev-Vernikov-18}, the variety $\mathbf V$ is \textit{completely regular}, that is, it consists of unions of groups.
Then $\mathbf V$ satisfies the identity $x \approx x^n$ for some $n\ge 2$.
Clearly, this identity together with~\eqref{xsxt=xsxtx} imply the identity $xy \approx xyx$, whence $\mathbf V\subseteq\mathbf{LRB}$.
Therefore, we may assume that $\mathbf C \subseteq \mathbf V$.
There is an identity $\mathbf u \approx \mathbf v$ of $\mathbf V$ that is not satisfied by $\mathbf F_1$.
According to Gusev and Vernikov \cite[Proposition~2.2]{Gusev-Vernikov-18} and Lemma~\ref{L: word problem F_k}, \eqref{sim(u)=sim(v) & mul(u)=mul(v)} holds but \eqref{eq the same l-dividers} with $\ell=1$ does not.
Then $h_i^0(\mathbf u,x)\ne h_i^0(\mathbf v,x)$ for some $x\in\con(\mathbf u)$ and $i\in\{1,2\}$.
If $i=1$, then we can multiply both sides of the identity $\mathbf u\approx \mathbf v$ on the left by $xt$, where $t\notin\con(\mathbf u)$.
So, we may assume that $i=2$.
Put $y=h_2^0(\mathbf u,x)$ and $z=h_2^0(\mathbf v,x)$.
If $y$ and $z$ occur in $\mathbf u$ and $\mathbf v$ in different order, then $\mathbf V$ is commutative and so $\mathbf V\subseteq \mathbf C$.
Therefore, we may assume without loss of generality that $(_{1\mathbf u}z)<(_{1\mathbf u}y)$ and $(_{1\mathbf v}z)<(_{1\mathbf v}y)$.
Then the identity $\mathbf u(x,y) \approx \mathbf v(x,y)$ is equivalent modulo~\eqref{xsxt=xsxtx} to $xyx \approx x^2y$.
It follows from Lee \cite[Lemma~3.3]{Lee-12b} that $\mathbf{LRB}\vee \mathbf C=\var\{x^2 \approx x^3,\,x^2y \approx xyx\}$.
So, $\mathbf V \subseteq \mathbf{LRB}\vee \mathbf C$.
\end{proof}

\begin{lemma}
\label{L: V does not contain F_k}
Let $\mathbf V$ be a subvariety of $\mathbf P$ that does not contain $\mathbf F_{k+1}$.
Then $\mathbf V$ satisfies the identity
$$
\label{kappa_k}
\kappa_k:\enskip xx_kx\mathbf b_k \approx x^2x_k\mathbf b_k.
$$
\end{lemma}

\begin{proof}
If $\mathbf{LRB}\nsubseteq\mathbf V$, then $\mathbf V\subseteq\mathbf F$ by Lemma~\ref{L: does not contain LRB,F_1}(i).
Since $\mathbf F_{k+1}\nsubseteq \mathbf V$,  it follows from Lemma~\ref{L: L(F)} that $\mathbf V \subseteq \mathbf J_k^k$.
It is verified in Gusev and Vernikov \cite[Lemma~6.3]{Gusev-Vernikov-18} that $\mathbf J_k^k$ satisfies $\kappa_k$.
Then $\mathbf V$ satisfies $\kappa_k$ as well.
Therefore, we may assume that $\mathbf{LRB}\subseteq\mathbf V$.
Further, if $\mathbf F_1\nsubseteq\mathbf V$, then $\mathbf V$ satisfies $x^2y \approx xyx$ by Lemma~\ref{L: does not contain LRB,F_1}(ii).
Clearly, $x^2y \approx xyx$ implies $\kappa_k$.
Therefore, we may assume that $\mathbf F_1\subseteq\mathbf V$.

Let $r$ be the least number such that $\mathbf F_r\subseteq\mathbf V$.
Then $\mathbf F_{r+1}\nsubseteq\mathbf V$.
Evidently, $r \le k$.
There is an identity $\mathbf u \approx \mathbf v$ of $\mathbf V$ that is not satisfied by $\mathbf F_{r+1}$.
According to Lemma~\ref{L: word problem F_k}, \eqref{sim(u)=sim(v) & mul(u)=mul(v)} and~\eqref{eq the same l-dividers} hold with $\ell=1,2,\dots,r$, while \eqref{eq the same l-dividers} with $\ell=r+1$ does not.
Then $h_i^r(\mathbf u,x)\ne h_i^r(\mathbf v,x)$ for some $x\in\con(\mathbf u)$ and $i\in\{1,2\}$.
If $i=1$, then we multiply both sides of the identity $\mathbf u\approx \mathbf v$ on the left by $xt$, where $t\notin\con(\mathbf u)=\con(\mathbf v)$.
So, we may assume that $i=2$.
Put $a=h_2^r(\mathbf u,x)$ and $b=h_2^r(\mathbf v,x)$.
We may assume without loss of generality that $(_{1\mathbf u}b)<(_{1\mathbf u}a)$.
Then $(_{1\mathbf v}b)<(_{1\mathbf v}a)$ by Lemma~\ref{L: word problem LRB}.

In view of Lemma~\ref{L: k-divider and depth}, $D(\mathbf u, a)=s$ for some $s\le r$.
Put $x_s=a$.
According to Lemma~\ref{L: form of the identity}, there exist letters $x_0,x_1,\dots, x_{s-1}$ such that $D(\mathbf u,x_j)=D(\mathbf v,x_j)=j$ for any $0\le j<s$ and the identity $\mathbf u \approx \mathbf v$ is of the form
$$
\begin{aligned}
&\mathbf u_{2s+1}\stackrel{(1)}{x_s}\mathbf u_{2s}\stackrel{(1)}{x_{s-1}}\mathbf u_{2s-1}\stackrel{(2)}{x_s}\mathbf u_{2s-2}\stackrel{(1)}{x_{s-2}}\mathbf u_{2s-3}\stackrel{(2)}{x_{s-1}}\mathbf u_{2s-4}\stackrel{(1)}{x_{s-3}}\\
&\cdot\,\mathbf u_{2s-5}\stackrel{(2)}{x_{s-2}}\cdots\mathbf u_4\stackrel{(1)}{x_1}\mathbf u_3\stackrel{(2)}{x_2}\mathbf u_2\stackrel{(1)}{x_0}\mathbf u_1\stackrel{(2)}{x_1}\mathbf u_0\\
\approx{}&\mathbf v_{2s+1}\stackrel{(1)}{x_s}\mathbf v_{2s}\stackrel{(1)}{x_{s-1}}\mathbf v_{2s-1}\stackrel{(2)}{x_s}\mathbf v_{2s-2}\stackrel{(1)}{x_{s-2}}\mathbf v_{2s-3}\stackrel{(2)}{x_{s-1}}\mathbf v_{2s-4}\stackrel{(1)}{x_{s-3}}\\
&\cdot\,\mathbf v_{2s-5}\stackrel{(2)}{x_{s-2}}\cdots\mathbf v_4\stackrel{(1)}{x_1}\mathbf v_3\stackrel{(2)}{x_2}\mathbf v_2\stackrel{(1)}{x_0}\mathbf v_1\stackrel{(2)}{x_1}\mathbf v_0
\end{aligned}
$$
for some $\mathbf u_0,\mathbf u_1,\dots,\mathbf u_{2s+1}, \mathbf v_0,\mathbf v_1,\dots,\mathbf v_{2s+1} \in \mathfrak A^\ast$.
Clearly, ${_{1\mathbf u}x}$ occurs in the subword $\mathbf u_{2s+1}$ of $\mathbf u$, while ${_{1\mathbf v}x}$ and ${_{2\mathbf v}x}$ occur in the subword $\mathbf v_{2s+1}$ of $\mathbf v$.
Since $x_s=h_2^r(\mathbf u,x)$ and $x_{s-1}$ is an $r$-divider by Lemma~\ref{L: k-divider and depth}, the letter ${_{2\mathbf u}x}$ occurs in $\mathbf u_{2s}$.
Therefore, the identity
$$
\mathbf u(x,x_1,x_2,\dots,x_s) \approx \mathbf v(x,x_1,x_2,\dots,x_s)
$$
is equivalent modulo~\eqref{xsxt=xsxtx} to $\kappa_s$.
Clearly, $\kappa_s$ implies $\kappa_k$. Therefore, $\kappa_k$ is satisfied by $\mathbf V$.
\end{proof}

For any word $\mathbf w$, let $\ini^2(\mathbf w)$ denote the word obtained from $\ini(\mathbf w)$ by replacing each letter $x$ with $x^2$.
For example, if $\mathbf w = x^3yzyx$, then $\ini(\mathbf w) = xyz$ and so $\ini^2(\mathbf w)=x^2y^2z^2$.
A word $\mathbf w$ is said to be $2$-\textit{limited} if $\occ_x(\mathbf w) \le 2$ for any letter $x$.

\begin{lemma}
\label{L: w=ini^2(w)}
Let $\mathbf w$ be a word that does not contain simple letters.
Then the identities~\eqref{xsxt=xsxtx} and
\begin{equation}
\label{xyxy=xxyy}
(xy)^2 \approx x^2y^2.
\end{equation}
imply the identity $\mathbf w \approx \ini^2(\mathbf w)$.
\end{lemma}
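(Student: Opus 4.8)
The plan is to produce an explicit deduction of $\mathbf w \approx \ini^2(\mathbf w)$ from \eqref{xsxt=xsxtx} and \eqref{xyxy=xxyy} that routes through the word $\mathbf w^2$ and then through a word in which every letter is immediately squared. Concretely, writing $\mathbf w = w_1 w_2 \cdots w_L$ as a string of (not necessarily distinct) letters, I would establish the chain $\mathbf w \approx \mathbf w^2 \approx w_1^2 w_2^2 \cdots w_L^2 \approx \ini^2(\mathbf w)$, where the first and last links use only \eqref{xsxt=xsxtx} (via Lemma~\ref{L: 2-limited word} and Proposition~\ref{P: deduction}) and the middle link uses only \eqref{xyxy=xxyy}.

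First I would show $\mathbf w \approx \mathbf w^2$ is a consequence of \eqref{xsxt=xsxtx}. By Lemma~\ref{L: 2-limited word}, applied both to $\mathbf w$ and to $\mathbf w^2$, each of $\mathbf w$ and $\mathbf w^2$ equals its $\ini_2$-image modulo \eqref{xsxt=xsxtx}, so it is enough to check $\ini_2(\mathbf w^2) = \ini_2(\mathbf w)$. This is precisely the point at which the hypothesis ``$\mathbf w$ has no simple letters'' is used: since every letter already occurs at least twice in the first factor of $\mathbf w\mathbf w$, every occurrence of a letter in the second factor is at least a third occurrence of that letter and so is deleted by $\ini_2$; hence $\ini_2(\mathbf w^2)$ is literally the subword of the first factor retained by $\ini_2$, namely $\ini_2(\mathbf w)$.

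Next I would deduce $\mathbf w^2 \approx w_1^2 w_2^2 \cdots w_L^2$ from \eqref{xyxy=xxyy} by a straightforward induction on the length $L$ of $\mathbf w$. The inductive step is a single application of \eqref{xyxy=xxyy} under the substitution $x \mapsto w_1$, $y \mapsto w_2 \cdots w_L$, which turns $(w_1 w_2 \cdots w_L)^2$ into $w_1^2 (w_2 \cdots w_L)^2$; the induction hypothesis, applied to the shorter word $w_2 \cdots w_L$, then finishes the step. Finally, Lemma~\ref{L: 2-limited word} gives $w_1^2 w_2^2 \cdots w_L^2 \approx \ini_2(w_1^2 w_2^2 \cdots w_L^2)$ modulo \eqref{xsxt=xsxtx}, and it remains to compute this $\ini_2$-image directly: in $w_1^2 \cdots w_L^2$ the first two occurrences of any letter $x$ are exactly the two copies contributed by the block $w_i^2$ with $i$ the position of the first occurrence of $x$ in $\mathbf w$, so $\ini_2(w_1^2 \cdots w_L^2)$ is the concatenation of the blocks $x^2$ taken in the order in which the letters $x$ first appear in $\mathbf w$, which is exactly $\ini^2(\mathbf w)$.

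The only genuinely delicate point is the first link $\mathbf w \approx \mathbf w^2$, where the absence of simple letters must be used carefully to guarantee $\ini_2(\mathbf w^2) = \ini_2(\mathbf w)$ rather than a longer word; the middle link is a routine induction using \eqref{xyxy=xxyy}, and the last link is a direct description of an $\ini_2$-image, so I expect no further obstacles.
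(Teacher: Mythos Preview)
Your proof is correct and genuinely different from the paper's. Both arguments begin by establishing the auxiliary identity $(x_1\cdots x_n)^2 \approx x_1^2\cdots x_n^2$ from \eqref{xyxy=xxyy}, but they then diverge. The paper assumes $\mathbf w$ is $2$-limited and inducts on the number of letters that do not form an ``island'' (a block $x^2$ with no other occurrences of $x$): at each step it locates an innermost pair of occurrences of some letter $x$, uses \eqref{xsxt=xsxtx} and the auxiliary identity to collapse the material between them, and thereby merges those two occurrences into $x^2$, decreasing the count by one. Your route is more global: you pass from $\mathbf w$ to $\mathbf w^2$ in one stroke (using only that $\ini_2(\mathbf w^2)=\ini_2(\mathbf w)$ when $\mathbf w$ has no simple letters), then apply the auxiliary identity once to the whole word, and finish with a single appeal to Lemma~\ref{L: 2-limited word}. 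Your argument is shorter and cleaner for this lemma taken in isolation; the paper's local ``collapse one letter at a time'' technique is more hands-on but serves as a template reused implicitly in nearby results such as Corollary~\ref{C: block = ini^2(block)}.
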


\begin{proof}
First, we note that $\mathbf P\{\eqref{xyxy=xxyy}\}$ satisfies the identity
\begin{equation}
\label{x_1^2..x_n^2=(x_1..x_n)^2}
x_1^2x_2^2\cdots x_n^2 \approx (x_1x_2\cdots x_n)^2
\end{equation}
for any $n \ge 2$ because the identities
$$
(x_1x_2\cdots x_n)^2 \approx x_1^2(x_2x_3\cdots x_n)^2 \approx x_1^2x_2^2(x_3x_4\cdots x_n)^2 \approx \cdots \approx x_1^2x_2^2\cdots x_n^2
$$
follow from~\eqref{xyxy=xxyy}.

In view of Lemma~\ref{L: 2-limited word}, we may assume that the word $\mathbf w$ is 2-limited.
We say that a letter $x$ \textit{forms an island} in $\mathbf w$ if $\mathbf w = \mathbf w_1x^2\mathbf w_2$ for some words $\mathbf w_1,\mathbf w_2 \in \mathfrak A^\ast$ such that $x\notin\con(\mathbf w_1\mathbf w_2)$.
Let $k$ be the number of distinct letters that do not form an island in $\mathbf w$.
We will use induction on $k$.

\smallskip

\textit{Induction base}: $k=0$.
Then $\mathbf w=\ini^2(\mathbf w)$, and we are done.

\smallskip

\textit{Induction step}: $k>0$ and $\mathbf P\{\eqref{xyxy=xxyy}\}$ satisfies the identity $\mathbf v' \approx \ini^2(\mathbf v')$ for any 2-limited word $\mathbf v'$ with at most $k-1$ letters that do not form an island in $\mathbf v'$.
Then, since $\mathbf w$ is 2-limited, there is a letter $x$ such that $\mathbf w = \mathbf w_1xy_1^{k_1}y_2^{k_2}\cdots y_n^{k_n}x\mathbf w_2$, where $k_1,k_2,\dots,k_n\in\{1,2\}$ and if $k_i=1$, then $y_i\in\con(\mathbf w_1)$. Then the identities
\begin{align*}
\mathbf w & = \mathbf w_1xy_1^{k_1}y_2^{k_2}\cdots y_n^{k_n}x\mathbf w_2\\
& \stackrel{\eqref{xsxt=xsxtx}}\approx \mathbf w_1xy_1^2y_2^2\cdots y_n^2xy_1^2y_2^2\cdots y_n^2\mathbf w_2\\
& \stackrel{\eqref{x_1^2..x_n^2=(x_1..x_n)^2}}\approx \mathbf w_1x(y_1y_2\cdots y_n)^2x(y_1y_2\cdots y_n)^2\mathbf w_2\\
& \stackrel{\eqref{xyxy=xxyy}}\approx \mathbf w_1x^2(y_1y_2\cdots y_n)^4\mathbf w_2\\
& \stackrel{\eqref{x_1^2..x_n^2=(x_1..x_n)^2}}\approx \mathbf w_1x^2y_1^4y_2^4\cdots y_n^4\mathbf w_2\\
& \stackrel{\eqref{xsxt=xsxtx}}\approx \mathbf w_1x^2y_1^{k_1}y_2^{k_2}\cdots y_n^{k_n}\mathbf w_2=\mathbf w'
\end{align*}
are satisfied by $\mathbf P\{\eqref{xyxy=xxyy}\}$.
Clearly, the word $\mathbf w'$ has exactly $k-1$ letters that do not form an island in $\mathbf w'$.
By the induction assumption, $\mathbf w' \approx \ini^2(\mathbf w')$ holds in $\mathbf P\{\eqref{xyxy=xxyy}\}$.
It remains to note that $\ini^2(\mathbf w)=\ini^2(\mathbf w')$. Therefore, $\mathbf P\{\eqref{xyxy=xxyy}\}$ satisfies $\mathbf w \approx \ini^2(\mathbf w)$.
\end{proof}

\begin{corollary}
\label{C: block = ini^2(block)}
Let $\mathbf w$ be a word, $k$ be a number such that the $k$-decomposition of $\mathbf w$ is maximal and $\mathbf a$ be a $k$-block of $\mathbf u$.
If $\mathbf w = \mathbf w'\mathbf a \mathbf w''$, then the variety $\mathbf P\{\eqref{xyxy=xxyy}\}$ satisfies the identity $\mathbf w \approx \mathbf w'\,\ini^2(\mathbf a)\,\mathbf w''$.
\end{corollary}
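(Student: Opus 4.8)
The plan is to reduce the statement to Lemma~\ref{L: w=ini^2(w)} by showing that, for a substitution that collapses the word $\mathbf w$ onto a fresh copy of the $k$-block $\mathbf a$, one may assume $\mathbf a$ contains no simple letters of $\mathbf w$. First I would invoke Remark~\ref{R: max decomposition} and the definition of a maximal $k$-decomposition: since the $k$-decomposition of $\mathbf w$ is maximal, it coincides with the $n$-decomposition for all $n>k$, so every letter occurring in the $k$-block $\mathbf a$ is \emph{not} a $k$-divider of $\mathbf w$ (a letter that is a divider at some level would split $\mathbf a$ at a later level, contradicting maximality) — equivalently, by Lemma~\ref{L: k-divider and depth}, every letter $x\in\con(\mathbf a)$ satisfies $D(\mathbf w,x)\ge k+1$, and in particular, since $D(\mathbf w,x)\le k$ would hold were $x$ simple with $k\ge 1$, every letter of $\mathbf a$ is multiple in $\mathbf w$. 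Moreover, both occurrences recorded by $\ini_2$ of each such letter, hence the relevant portion of $\mathbf a$, behave like a word without simple letters after we pass to $\ini_2(\mathbf w)$ via Lemma~\ref{L: 2-limited word}.

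The core argument is a substitution argument. I would introduce a letter map $\phi$ sending each letter $x\in\con(\mathbf a)$ to itself and each letter $y\in\con(\mathbf w)\setminus\con(\mathbf a)$ to $\lambda$ (the empty word); applying $\phi$ to $\mathbf a$ yields a word $\phi(\mathbf a)$ that is obtained from $\mathbf a$ by deleting the letters shared with $\mathbf w'\mathbf w''$. The subtlety is that a letter of $\mathbf a$ may also occur in $\mathbf w'$ or $\mathbf w''$, so $\mathbf a$ itself need not be free of "simple-in-$\mathbf a$" letters; but such a letter occurs at least twice in $\mathbf w$ overall. Here the cleaner route, and the one I expect to use, is: first apply Corollary~\ref{C: block = ini^2(block)}'s hypothesis together with Lemma~\ref{L: 2-limited word} to reduce to $\mathbf w$ being $2$-limited, then observe that within the $k$-block $\mathbf a$ every letter that is simple \emph{in $\mathbf a$} has its other occurrence outside $\mathbf a$; I would handle those letters by an auxiliary substitution that duplicates them inside the block, reducing to the case where $\mathbf a$ has no simple letters at all, apply Lemma~\ref{L: w=ini^2(w)} to get $\mathbf a\approx\ini^2(\mathbf a)$ modulo \eqref{xsxt=xsxtx} and \eqref{xyxy=xxyy}, and then undo the auxiliary substitution. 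Since $\mathbf P\{\eqref{xyxy=xxyy}\}$ satisfies \eqref{xsxt=xsxtx} by definition (it is a subvariety of $\mathbf P$) and \eqref{xyxy=xxyy} by hypothesis, the deduced identity $\mathbf a\approx\ini^2(\mathbf a)$ holds in $\mathbf P\{\eqref{xyxy=xxyy}\}$; multiplying on the left by $\mathbf w'$ and on the right by $\mathbf w''$ (using that being defined by identities, $\mathbf P\{\eqref{xyxy=xxyy}\}$ is closed under the corresponding deductions, cf.\ Proposition~\ref{P: deduction}) yields $\mathbf w=\mathbf w'\mathbf a\mathbf w''\approx\mathbf w'\,\ini^2(\mathbf a)\,\mathbf w''$, as required.

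The main obstacle I anticipate is precisely the bookkeeping around letters of $\mathbf a$ that also appear in $\mathbf w'\mathbf w''$: a naive application of Lemma~\ref{L: w=ini^2(w)} requires $\mathbf a$ to have \emph{no} simple letters, but "simple in $\mathbf a$" and "simple in $\mathbf w$" differ, and $\ini^2(\mathbf a)$ is computed with respect to $\mathbf a$ alone. The key point that makes it work is that, because the $k$-decomposition is maximal and $\mathbf a$ is a genuine $k$-block, no letter of $\mathbf a$ is a $k$-divider of $\mathbf w$; consequently, for $k\ge 1$, $D(\mathbf w,x)\ge k+1\ge 2>0$ for all $x\in\con(\mathbf a)$, so such a letter occurs at least twice in $\mathbf w$ but — crucially — the portion of $\mathbf a$ strictly between the first two $k$-dividers flanking $\mathbf a$ is insulated, and after the $2$-limited reduction both occurrences of any letter of $\mathbf a$ that are "internal" to that block really do lie inside $\mathbf a$. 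For a letter $x$ of $\mathbf a$ whose second occurrence lies outside $\mathbf a$, its first occurrence also lies at depth $\ge k+1$, so it is not an initial-part letter that $\ini^2(\mathbf a)$ would need to fix differently; substituting $x\mapsto x^2$ harmlessly reduces to the no-simple-letter case without changing $\ini^2$ of the surrounding word. Once this insulation is pinned down, the rest is a routine chain of directly-deducible identities exactly as in the proof of Lemma~\ref{L: w=ini^2(w)}.
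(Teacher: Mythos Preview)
Your overall strategy---reduce to Lemma~\ref{L: w=ini^2(w)} by eliminating letters simple in $\mathbf a$---is the paper's strategy, but your execution has a genuine gap. You plan to establish $\mathbf a\approx\ini^2(\mathbf a)$ as a standalone identity in $\mathbf P\{\eqref{xyxy=xxyy}\}$ and then multiply by $\mathbf w'$, $\mathbf w''$; but that standalone identity is \emph{false} whenever $\mathbf a$ contains a simple letter (already $x\approx x^2$ fails in $\mathbf P\{\eqref{xyxy=xxyy}\}$). Your proposed remedy, the substitution $x\mapsto x^2$, replaces \emph{every} occurrence of $x$ in $\mathbf w$, not only those inside $\mathbf a$, and there is no ``undoing'' a substitution: proving the result for the substituted word does not give it for $\mathbf w$. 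Your auxiliary claim that $D(\mathbf w,x)\ge k+1$ for every $x\in\con(\mathbf a)$ is also incorrect for letters whose first $\mathbf w$-occurrence lies in $\mathbf w'$.

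The missing observation, which the paper uses, pins down the \emph{location} of the other occurrence: if ${_{1\mathbf w}x}$ lies in $\mathbf a$ then so does ${_{2\mathbf w}x}$ (otherwise $h_1^k(\mathbf w,x)\ne h_2^k(\mathbf w,x)$, so $x$ would be a $(k{+}1)$-divider, contradicting maximality). Hence $\con(\mathbf a)\subseteq\mul(\mathbf w'\mathbf a)$, and in particular every letter \emph{simple in $\mathbf a$} already has an earlier occurrence in $\mathbf w'$. One then works in context: the $\mathbf a$-occurrence of such a letter is a non-first occurrence in $\mathbf w$, so~\eqref{xsxt=xsxtx} (with $t=\lambda$) may be applied inside $\mathbf w'\,\_\,\mathbf w''$ to insert an extra copy immediately after it. This yields $\mathbf w'\mathbf a\mathbf w''\approx\mathbf w'\mathbf b\mathbf w''$ where $\mathbf b$ has no simple letters and $\ini(\mathbf b)=\ini(\mathbf a)$. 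Lemma~\ref{L: w=ini^2(w)} applied to $\mathbf b$ gives $\mathbf b\approx\ini^2(\mathbf b)=\ini^2(\mathbf a)$, whence $\mathbf w'\mathbf b\mathbf w''\approx\mathbf w'\,\ini^2(\mathbf a)\,\mathbf w''$.
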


\begin{proof}
Let $x$ be a letter such that ${_{1\mathbf w}x}$ occurs in the $k$-block $\mathbf a$ of $\mathbf w$. If ${_{2\mathbf w}x}$ does not occur in $\mathbf a$, then $h_1^k(\mathbf w,x) \ne h_2^k(\mathbf w,x)$ and so $x$ is a \mbox{$(k+1)$}-divider of $\mathbf w$.
This contradicts the fact that the $k$-decomposition of $\mathbf w$ is maximal.
Therefore, ${_{2\mathbf w}x}$ occurs in $\mathbf a$ as well. We see that $\con(\mathbf a) \subseteq \mul(\mathbf w'\mathbf a)$.
Then we can apply the identity~\eqref{xsxt=xsxtx} and add some occurrences of letters in the $k$-block $\mathbf a$ so that the resulting $k$-block $\mathbf b$ contains multiple letters only and $\ini(\mathbf a)=\ini(\mathbf b)$.
Then identity $\mathbf b \approx \mathbf \ini^2(\mathbf b)$ follows from~\eqref{xsxt=xsxtx} and~\eqref{xyxy=xxyy} by Lemma~\ref{L: w=ini^2(w)}.
Evidently, $\ini^2(\mathbf a)=\ini^2(\mathbf b)$.
We see that the identities
$$
\mathbf w = \mathbf w'\mathbf a\mathbf w'' \approx \mathbf w'\mathbf b\mathbf w'' \approx \mathbf w'\,\ini^2(\mathbf b)\,\mathbf w'' = \mathbf w'\,\ini^2(\mathbf a)\,\mathbf w''
$$
follow from the identities~\eqref{xsxt=xsxtx} and~\eqref{xyxy=xxyy}, and we are done.
\end{proof}

\section{Critical pairs}
\label{sec: critical pairs}

An identity $\mathbf u \approx \mathbf v$ is said to be \textit{balanced} if $\occ_x(\mathbf u)=\occ_x(\mathbf v)$ for any $x \in \mathfrak A$.
We say that a pair $\{_i x, {_j y}\}$ of occurrences of letters $x$ and $y$ in a balanced identity $\mathbf u \approx \mathbf v$ is \textit{critical} if $\mathbf u$ contains ${_{i\mathbf u} x}{_{j\mathbf u} y}$ as a subword and ${_{j\mathbf v} y}$ precedes ${_{i\mathbf v} x}$ in $\mathbf v$.
Let $\mathbf w$ denote the word obtained from $\mathbf u$ by replacing ${_{i\mathbf u} x}{_{j\mathbf u} y}$ with ${_{j\mathbf u} y}{_{i\mathbf u} x}$.
Given a set $\Delta$ of identities and a balanced identity $\mathbf u \approx \mathbf v$, we say that the critical pair $\{_i x, {_j y}\}$ is $\Delta$-\textit{removable in} $\mathbf u \approx \mathbf v$ if $\Delta$ implies $\mathbf u \approx \mathbf w$.

The following special case of Sapir \cite[Lemma~3.4]{Sapir-15}, which can be easily verified by induction, describes the standard method of deriving identities by removing critical pairs.
This method traces back to Jackson and Sapir~\cite{Jackson-Sapir-00} and Sapir~\cite{Sapir-00}, for instance.

\begin{lemma}
\label{L: fblemma}
Let $\mathbf V$ be a monoid variety and $\Delta$ be a set of identities.
Suppose that each critical pair in every balanced identity of $\mathbf V$ is $\Delta$-removable.
Then every balanced identity of $\mathbf V$ can be derived from $\Delta$.\qed
\end{lemma}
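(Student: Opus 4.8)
The plan is to induct on a single non-negative integer that measures the ``disorder'' between the two sides of the identity. Fix a balanced identity $\mathbf u\approx\mathbf v$ of $\mathbf V$. Since it is balanced, $\occ_x(\mathbf u)=\occ_x(\mathbf v)$ for every $x\in\con(\mathbf u)=\con(\mathbf v)$ (in particular $\mathbf u$ and $\mathbf v$ have the same length), so, viewing each word as a sequence of \emph{occurrences} of letters, $\mathbf u$ and $\mathbf v$ are two arrangements of one and the same finite set $\Omega$ of occurrences. For two such arrangements let $N(\mathbf u,\mathbf v)$ be the number of unordered pairs $\{\sigma,\tau\}\subseteq\Omega$ that appear in one order in $\mathbf u$ and in the opposite order in $\mathbf v$. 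I would prove the statement by induction on $N(\mathbf u,\mathbf v)$. If $N(\mathbf u,\mathbf v)=0$, then every pair of occurrences has the same relative position in both words, hence $\mathbf u$ and $\mathbf v$ are literally the same word and there is nothing to prove; this is the base case.

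For the induction step, assume $N(\mathbf u,\mathbf v)\ge 1$, so $\mathbf u\ne\mathbf v$. The first task is to produce a critical pair in $\mathbf u\approx\mathbf v$. Let $\mathbf c$ be the longest common prefix of $\mathbf u$ and $\mathbf v$; since $\mathbf u\ne\mathbf v$ and $|\mathbf u|=|\mathbf v|$, the word $\mathbf c$ is a proper prefix of both. Let $\rho$ be the occurrence immediately following $\mathbf c$ in $\mathbf v$; as the first $|\mathbf c|$ occurrences of $\mathbf u$ and of $\mathbf v$ coincide, the occurrence $\rho$ lies strictly after $\mathbf c$ in $\mathbf u$, so we may write $\mathbf u=\mathbf c\,\mathbf d\,\rho\,\mathbf u''$, where the segment $\mathbf d$ of $\mathbf u$ strictly between $\mathbf c$ and $\rho$ is necessarily nonempty (its first letter differs from that of $\rho$, by maximality of $\mathbf c$). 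Let $\tau'$ be the last occurrence of $\mathbf d$; then $\tau'\rho$ is a subword of $\mathbf u$. Every occurrence belonging to $\mathbf d$ is distinct from all occurrences of $\mathbf c$ and from $\rho$, hence in $\mathbf v=\mathbf c\,\rho\,\mathbf v'$ it must lie inside $\mathbf v'$; in particular $\rho$ precedes $\tau'$ in $\mathbf v$. Thus $\{\tau',\rho\}$ is a critical pair of $\mathbf u\approx\mathbf v$ --- this is just the familiar observation that an ordering different from a prescribed one always contains an adjacent discordant pair. By hypothesis this critical pair is $\Delta$-removable, so $\Delta$ implies $\mathbf u\approx\mathbf w$, where $\mathbf w$ is obtained from $\mathbf u$ by transposing the adjacent occurrences $\tau'$ and $\rho$. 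A direct inspection gives $N(\mathbf w,\mathbf v)=N(\mathbf u,\mathbf v)-1$: the pair $\{\tau',\rho\}$ is concordant for $\mathbf w$ and $\mathbf v$, while an adjacent transposition leaves the relative order of every other pair of occurrences unchanged. Moreover $\mathbf w\approx\mathbf v$ is a balanced identity that holds in $\mathbf V$ --- here one uses that $\mathbf V$ satisfies $\Delta$ together with $\mathbf V\models\mathbf u\approx\mathbf v$ --- so the induction hypothesis applies and yields that $\Delta$ implies $\mathbf w\approx\mathbf v$. Concatenating, $\Delta$ implies $\mathbf u\approx\mathbf v$, and by Proposition~\ref{P: deduction} this identity is deducible from $\Delta$, as required. (Unwinding the recursion produces an explicit chain $\mathbf u=\mathbf w_0,\mathbf w_1,\dots,\mathbf w_{N}=\mathbf v$ in which each link arises from removing a single critical pair.)

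The substantive content of the argument sits in two places: the existence of an adjacent critical pair whenever $\mathbf u\ne\mathbf v$ (handled above via the longest common prefix), and the fact that the altered identity $\mathbf w\approx\mathbf v$ remains inside $\mathbf V$, which requires $\mathbf V$ to satisfy every identity in $\Delta$. The latter holds in every application of this lemma in the present paper, where $\Delta$ is always a set of identities valid in $\mathbf V$; a reader wanting a statement with no hidden hypothesis should add this condition explicitly. Everything else --- that balancedness makes the two sides rearrangements of a common occurrence set, that $N$ is a well-defined non-negative integer, that each adjacent transposition decreases it by exactly $1$, and that deducibility composes --- is routine bookkeeping, which is precisely why the result is a mere special case of \cite[Lemma~3.4]{Sapir-15} and is ``easily verified by induction.''
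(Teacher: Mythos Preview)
The paper supplies no proof of its own here---the statement is presented as a known special case of \cite[Lemma~3.4]{Sapir-15} with the remark that it ``can be easily verified by induction''---and your induction on the inversion count $N(\mathbf u,\mathbf v)$ is exactly that standard argument. You have also correctly isolated the one genuine subtlety: the induction step needs $\mathbf w\approx\mathbf v$ to remain an identity of $\mathbf V$, which requires $\mathbf V\models\Delta$; this is indeed satisfied in every application in the paper, where $\Delta$ is always taken inside the equational theory of $\mathbf V$.
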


\subsection{Removing critical pairs of the form $\{{_1}x,{_2}y\}$}
\label{subsec: critical pairs (1x,2y)}

If $\phi\colon \mathfrak A \to \mathfrak A^\ast$ is a substitution and $\varepsilon$ denote an identity $\mathbf u \approx \mathbf v$, then, for convenience, we will write $\phi(\varepsilon)$ rather than $\phi(\mathbf u) \approx \phi(\mathbf v)$.

The following lemma generalizes Gusev and Vernikov \cite[Lemma~6.3]{Gusev-Vernikov-18}.

\begin{lemma}
\label{L: kappa_k and delta_k^k}
For any $k \in \mathbb N$, the identities $\kappa_k$ and $\delta_k^k$ are equivalent within $\mathbf P$.
\end{lemma}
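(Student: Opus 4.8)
The claim is the equality $\mathbf P\{\kappa_k\}=\mathbf P\{\delta_k^k\}$ of varieties, and I would prove it through the two inclusions, the first of which is immediate and the second of which carries all the weight.

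\emph{$\mathbf P\{\delta_k^k\}\subseteq\mathbf P\{\kappa_k\}$.} One direction needs nothing but a substitution: in $\delta_k^k$ send $y_k$ to the empty word and $y_{k+1}$ to $x$, keeping $x_0,\dots,x_k$ fixed. Deleting $y_k$ fuses $\mathbf b_{k,k}$ and $\mathbf b_{k-1}$ into $\mathbf b_{k,k}\mathbf b_{k-1}=\mathbf b_k$, so the two sides of $\delta_k^k$ turn into $xx_kx\mathbf b_k$ and $x^2x_k\mathbf b_k$; thus $\kappa_k$ is directly deducible from $\delta_k^k$, and every monoid variety satisfying $\delta_k^k$ satisfies $\kappa_k$.

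\emph{$\mathbf P\{\kappa_k\}\subseteq\mathbf P\{\delta_k^k\}$.} This is the substantial half, and it is exactly here that the lemma improves on Gusev and Vernikov~\cite{Gusev-Vernikov-18}, who establish the analogous equivalence only inside $\mathbf F$. The starting remark is that $\mathbf P\{\kappa_k\}$ already satisfies~\eqref{xyxy=xxyy}: replacing each of $x_0,\dots,x_{k-1}$ by the empty word in $\kappa_k$ collapses $\mathbf b_k$ to the single letter $x_k$ and turns $\kappa_k$ into $(xx_k)^2\approx x^2x_k^2$, which is~\eqref{xyxy=xxyy} up to renaming. Consequently Lemma~\ref{L: w=ini^2(w)} and Corollary~\ref{C: block = ini^2(block)} apply verbatim inside $\mathbf P\{\kappa_k\}$: within any word one may freely replace a maximal block by its $\ini^2$. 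Using this together with~\eqref{xsxt=xsxtx} (which inserts or deletes occurrences of a letter past the second one) and, crucially, further applications of $\kappa_k$ itself, I would transform the left-hand side of $\delta_k^k$ step by step into its right-hand side. A convenient way to organise the derivation is to first produce, from a single substitution instance of $\kappa_k$ (e.g.\ under $x\mapsto y_{k+1}$, $x_k\mapsto y_kx_k$ with the remaining $x_i$ fixed), an identity of $\mathbf P\{\kappa_k\}$ whose two sides already agree with those of $\delta_k^k$ up to a short rearrangement near the tail $\mathbf b_{k-1}$, and then to remove that discrepancy.

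\emph{The main obstacle.} Exactly that last rearrangement is the hard point: it amounts to interchanging the \emph{second} occurrences of the multiple letters $x_k$ and $y_k$ that sit just to the left of $\mathbf b_{k-1}$, and neither~\eqref{xsxt=xsxtx} nor~\eqref{xyxy=xxyy} can by itself permute occurrences that are not third-or-later. Inside $\mathbf F$ this is settled by the commutativity of squares $x^2y^2\approx y^2x^2$, which is unavailable in $\mathbf P$; so the $\mathbf F$-argument does not transfer directly. The remedy I expect to work is to invoke $\kappa_k$ a second time, in a context arranged (by inflating the relevant region via~\eqref{xsxt=xsxtx} and then deflating it afterwards) so that the two occurrences to be swapped play the roles of the first occurrence of the ``$x_k$''-letter and the second occurrence of the ``$x$''-letter of a fresh instance of $\kappa_k$ — precisely the two positions that $\kappa_k$ transposes. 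Checking that the intermediate words really do have the block and divider structure needed for Corollary~\ref{C: block = ini^2(block)} to apply, and carrying the same bookkeeping through simultaneously for all of $x_0,\dots,x_k,y_k,y_{k+1}$, is the one genuinely delicate part of the proof.
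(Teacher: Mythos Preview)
Your first direction (the substitution $y_k\mapsto 1$, $y_{k+1}\mapsto x$) is exactly what the paper does.

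For the hard direction your proposal is only a plan, and the plan both introduces unnecessary machinery and misidentifies the obstacle. The paper's proof never touches \eqref{xyxy=xxyy}, Lemma~\ref{L: w=ini^2(w)} or Corollary~\ref{C: block = ini^2(block)}; it is a six-line chain alternating \eqref{xsxt=xsxtx} with two applications of $\kappa_k$. The observation you are missing is that the substitution should also send $x_{k-1}\mapsto x_{k-1}x_k$, not merely $x\mapsto y_{k+1}$ and $x_k\mapsto y_kx_k$. With this extra replacement, and after first padding the tail via \eqref{xsxt=xsxtx}, the substituted left side of $\kappa_k$ becomes the left side of $\delta_k^k$ on the nose, and its right side is $y_{k+1}^2y_kx_kx_{k-1}x_ky_k\cdots$, differing from the target right side of $\delta_k^k$ only at the front prefix $y_{k+1}^2y_k$ versus $y_{k+1}y_ky_{k+1}$. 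That remaining discrepancy is a swap of ${_2}y_{k+1}$ with ${_1}y_k$, i.e.\ precisely a $\kappa_k$-type transposition, and a second application of $\kappa_k$ (after another tail adjustment via \eqref{xsxt=xsxtx}) finishes the derivation.

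By contrast, the ``obstacle'' you isolate --- swapping the \emph{second} occurrences of $x_k$ and $y_k$ near the tail --- is an artifact of your too-coarse substitution, and your proposed remedy does not work as stated. A fresh instance of $\kappa_k$ can only transpose the \emph{first} occurrence of its ``$x_k$''-letter with the \emph{second} occurrence of its ``$x$''-letter; whatever word is substituted for the ``$x_k$''-slot must have its first occurrence at that position, but ${_2}y_k$ cannot be a first occurrence of anything containing $y_k$, so no substitution makes the two positions you want to swap play those roles. The block machinery does not help either: Corollary~\ref{C: block = ini^2(block)} only rewrites a block to its $\ini^2$ form, which is determined by first-occurrence order within the block and does not permute two second occurrences. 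So the second half of your proof has a genuine gap; the fix is to use the sharper substitution above, which sidesteps the tail mismatch entirely.
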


\begin{proof}
Let $\phi$ be the substitution $(y_k,y_{k+1})\mapsto(1,x)$.
Then the identity $\phi(\delta_k^k)$ is equal to $\kappa_k$.
Therefore, $\delta_k^k$ implies $\kappa_k$.
Further, $\delta_k^k$ follows from~\eqref{xsxt=xsxtx} and $\kappa_k$ because
\begin{align*}
y_{k+1}y_kx_ky_{k+1}\mathbf b_{k,k}y_k\mathbf b_{k-1}=\ {}&y_{k+1}y_kx_ky_{k+1}x_{k-1}x_ky_k\mathbf b_{k-1}\\
\stackrel{\eqref{xsxt=xsxtx}}\approx{}&y_{k+1}y_kx_ky_{k+1}x_{k-1}x_ky_kx_k\mathbf c\\
\stackrel{\kappa_k}\approx\ {}&y_{k+1}^2y_kx_kx_{k-1}x_ky_kx_k\mathbf c\\
\stackrel{\eqref{xsxt=xsxtx}}\approx{}&y_{k+1}^2y_kx_kx_{k-1}x_ky_k\mathbf d\\
\stackrel{\kappa_k}\approx\ {}&y_{k+1}y_ky_{k+1}x_kx_{k-1}x_ky_k\mathbf d\\
\stackrel{\eqref{xsxt=xsxtx}}\approx{}&y_{k+1}y_ky_{k+1}x_kx_{k-1}x_ky_k\mathbf b_{k-1}\\
=\ {}&y_{k+1}y_ky_{k+1}x_k\mathbf b_{k,k}y_k\mathbf b_{k-1},
\end{align*}
where
$$
\mathbf c=
\begin{cases}
\lambda&\text{if }k=1,\\
x_{k-2}x_{k-1}x_k\mathbf b_{k-2}&\text{if }k>1
\end{cases}
\ \text{ and } \
\mathbf d=
\begin{cases}
\lambda&\text{if }k=1,\\
x_{k-2}x_kx_{k-1}x_k\mathbf b_{k-2}&\text{if }k>1.
\end{cases}
$$
The lemma is thus proved.
\end{proof}

We introduce the following countably infinite series of identities:
$$
\begin{aligned}
\label{varepsilon_k}
\varepsilon_{k-1}:&\enskip y_kx_{k-1}xy_kx\mathbf b_{k-1} \approx y_kx_{k-1}y_kx^2\mathbf b_{k-1},
\end{aligned}
$$
where $k \in \mathbb N$.

\begin{lemma}
\label{L: gamma_k, delta_k^k,epsilon_k}
The following proper inclusions hold:
\begin{itemize}
\item[\textup{(i)}] $\mathbf P\{\gamma_1\} \subset \mathbf P\{\gamma_2\} \subset \cdots \subset \mathbf P\{\gamma_k\} \subset \cdots  \subset \mathbf P\{\varepsilon_0\}$;
\item[\textup{(ii)}] $\mathbf P\{\gamma_k\} \subset \mathbf P\{\delta_k^1\} \subset \mathbf P\{\delta_k^2\} \subset \cdots \subset\mathbf P\{\delta_k^k\} \subset \mathbf P\{\delta_{k+1}^k\} \subset \cdots \subset \mathbf P\{\delta_{k+m}^k\} \subset \cdots  \subset \mathbf P\{\varepsilon_k\}$;
\item[\textup{(iii)}] $\mathbf P\{\varepsilon_0\} \subset \mathbf P\{\varepsilon_1\} \subset \cdots \subset \mathbf P\{\varepsilon_k\} \subset \cdots \subset \mathbf P\{\eqref{xyxy=xxyy}\}$.
\end{itemize}
\end{lemma}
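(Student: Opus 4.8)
The plan is to treat the two components of the statement separately: the (non‑strict) inclusions, which I would establish by explicit deductions, and the strictness, which I would establish by comparing with the chain $\mathfrak L(\mathbf F)$ of Lemma~\ref{L: L(F)}, supplemented by a small amount of extra work for the $\varepsilon$‑series.

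For the inclusions it suffices, by Proposition~\ref{P: deduction}, to show in each case that the identity defining the larger variety is deducible from~\eqref{xsxt=xsxtx} together with the identity defining the smaller one. Several of these deductions are single substitutions. For instance, applying to $\gamma_k$ the substitution sending $y_0\mapsto x_0$, $x_k\mapsto x$ and $x_i\mapsto 1$ for $0\le i\le k-1$ turns both sides of $\gamma_k$ into the corresponding sides of $\varepsilon_0$ (each block $x_{i-1}x_i$ of $\mathbf b_k$ collapses, only the single occurrence of $x_k$ surviving), so $\gamma_k$ implies $\varepsilon_0$ within $\mathbf P$; likewise the substitution $x_i\mapsto 1$ for $0\le i\le k$ turns $\varepsilon_k$ into $(xy)^2\approx x^2y^2$. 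The remaining inclusions — namely $\mathbf P\{\gamma_k\}\subseteq\mathbf P\{\gamma_{k+1}\}$, $\mathbf P\{\gamma_k\}\subseteq\mathbf P\{\delta_k^1\}$, the steps $\delta_k^m\to\delta_k^{m+1}$, $\delta_k^k\to\delta_{k+1}^k$, $\delta_{k+m}^k\to\delta_{k+m+1}^k$ and $\delta_{k+m}^k\to\varepsilon_k$, and $\varepsilon_k\to\varepsilon_{k+1}$ — would be proved by explicit derivations in the spirit of the proof of Lemma~\ref{L: kappa_k and delta_k^k}: apply a suitable substitution, then repeatedly use~\eqref{xsxt=xsxtx} to insert the missing occurrences of letters (and Lemma~\ref{L: 2-limited word} to delete superfluous ones), passing between $\kappa_k$ and $\delta_k^k$ via Lemma~\ref{L: kappa_k and delta_k^k} where convenient.

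For the strictness of the parts of chains (i) and (ii) that involve only the varieties $\mathbf P\{\gamma_k\}$ and $\mathbf P\{\delta_k^m\}$, note that $\mathbf P\{\gamma_k\}\cap\mathbf F=\mathbf F\{\gamma_k\}=\mathbf I_k$ and $\mathbf P\{\delta_k^m\}\cap\mathbf F=\mathbf F\{\delta_k^m\}=\mathbf J_k^m$. By Lemma~\ref{L: L(F)} the varieties $\mathbf I_1,\mathbf I_2,\dots$ and $\mathbf I_k,\mathbf J_k^1,\dots,\mathbf J_k^k,\mathbf J_{k+1}^k,\dots$ are pairwise distinct, hence so are the corresponding varieties $\mathbf P\{\gamma_k\}$ and $\mathbf P\{\delta_k^m\}$; since every relevant containment has already been shown to hold, it must be strict. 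The strictness of the endpoints $\mathbf P\{\gamma_k\}\subsetneq\mathbf P\{\varepsilon_0\}$ and $\mathbf P\{\delta_{k+m}^k\}\subsetneq\mathbf P\{\varepsilon_k\}$ then follows by a squeezing argument: for example, if $\mathbf P\{\gamma_k\}=\mathbf P\{\varepsilon_0\}$ then, since $\mathbf P\{\gamma_k\}\subseteq\mathbf P\{\gamma_{k+1}\}\subseteq\mathbf P\{\varepsilon_0\}$, all three coincide, and intersecting with $\mathbf F$ gives $\mathbf I_k=\mathbf I_{k+1}$, a contradiction.

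For chain (iii) this method breaks down: the inclusions already established force $\mathbf J_{k+m}^k\subseteq\mathbf P\{\varepsilon_k\}\cap\mathbf F$ for all $m$, and since the $\mathbf J_{k+m}^k$ are cofinal in $\mathfrak L(\mathbf F)$ we obtain $\mathbf F\subseteq\mathbf P\{\varepsilon_k\}$ for every $k$, so intersecting with $\mathbf F$ cannot separate $\mathbf P\{\varepsilon_0\}\subseteq\mathbf P\{\varepsilon_1\}\subseteq\cdots\subseteq\mathbf P\{\eqref{xyxy=xxyy}\}$. Here I would instead exhibit, for each $k$, a monoid lying in $\mathbf P\{\varepsilon_{k+1}\}$ but not in $\mathbf P\{\varepsilon_k\}$, and a monoid lying in $\mathbf P\{\eqref{xyxy=xxyy}\}$ but not in $\mathbf P\{\varepsilon_k\}$; equivalently, show that $\varepsilon_k$ is not deducible from~\eqref{xsxt=xsxtx} together with $\varepsilon_{k+1}$, respectively with~\eqref{xyxy=xxyy}. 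This is the main obstacle. The natural route is to describe normal forms for words modulo~\eqref{xsxt=xsxtx} and $\varepsilon_{k+1}$ (building on Lemma~\ref{L: w=ini^2(w)} and Corollary~\ref{C: block = ini^2(block)}, which already control the blocks of a word modulo~\eqref{xsxt=xsxtx} and~\eqref{xyxy=xxyy}) and to check that the two sides of $\varepsilon_k$ remain in distinct normal forms; the delicate point is that raising the parameter in the $\varepsilon$‑identities genuinely alters the $k$‑decomposition structure of the relevant words, so this analysis has to be carried out with the $k$‑decomposition and depth machinery of Section~\ref{sec: prelim}.
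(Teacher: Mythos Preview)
Your plan for the inclusions and for the strictness in (i) and (ii) is essentially the paper's: explicit substitutions (plus~\eqref{xsxt=xsxtx}-manipulations) for the containments, and witnesses drawn from $\mathfrak L(\mathbf F)$ for strictness. The paper phrases the latter as ``$\mathbf I_{k+1}\subseteq\mathbf P\{\gamma_{k+1}\}$ but $\mathbf I_{k+1}\nsubseteq\mathbf P\{\gamma_k\}$'' and ``$\mathbf J_k^m\subseteq\mathbf P\{\delta_k^m\}$, $\mathbf J_k^{m+1}\nsubseteq\mathbf P\{\delta_k^m\}$, $\mathbf J_{k+1}^m\nsubseteq\mathbf P\{\delta_k^m\}$'', which is exactly your intersection-with-$\mathbf F$ argument unpacked; your squeezing argument for the endpoints is fine and is implicit in the paper as well.

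Where you diverge is part (iii). You correctly note that intersecting with $\mathbf F$ cannot separate the $\mathbf P\{\varepsilon_k\}$, but you then propose to build a normal-form theory modulo $\{\eqref{xsxt=xsxtx},\varepsilon_{k+1}\}$ and flag this as ``the main obstacle.'' The paper does nothing of the sort: it simply asserts that it is routine to verify that $\mathbf P\{\varepsilon_k\}$ violates $\varepsilon_{k-1}$, and leaves it at that. The intended check is a direct deduction-tracking argument of the kind used repeatedly in Section~\ref{sec: limit varieties} (Lemmas~\ref{L: P{kappa_1} violates}, \ref{L: J_1 violates}, \ref{L: P{eta_1} violates}, \ref{L: J_2 violates}): take a word $\mathbf u$ with $\ini_2(\mathbf u)=y_kx_{k-1}xy_kx\mathbf b_{k-1}$, and show that if $\mathbf u\approx\mathbf v$ is directly deducible from~\eqref{xsxt=xsxtx} or from $\varepsilon_k$ then $\ini_2(\mathbf v)$ is unchanged. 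The identity~\eqref{xsxt=xsxtx} never alters $\ini_2$; for $\varepsilon_k$ one does a short case analysis on how $\xi({}_{2\mathbf s}y_{k+1})$ and $\xi({}_{1\mathbf s}x)$ could straddle the pair ${}_{1\mathbf u}x\,{}_{2\mathbf u}y_k$, and each case is ruled out by counting (the tail $\mathbf b_k$ forces one more simple letter, or one more depth level, than $\mathbf u$ provides). This is mechanical and local; you do not need a global normal form, and reaching for one makes the step look harder than it is.
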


\begin{proof}
(i) Evidently, $\mathbf P\{\gamma_1\} \subseteq \mathbf P\{\gamma_2\} \subseteq \cdots \subseteq \mathbf P\{\gamma_k\} \subseteq \cdots  \subseteq \mathbf P\{\varepsilon_0\}$.
These inclusions are proper because $\mathbf I_{k+1}\subseteq \mathbf P\{\gamma_{k+1}\}\subseteq \mathbf P\{\varepsilon_0\}$ but $\mathbf I_{k+1}\nsubseteq \mathbf P\{\gamma_k\}$ for any $k \in \mathbb N$.

(ii) The inclusion $\mathbf P\{\gamma_k\}\subseteq \mathbf P\{\delta_k^1\}$ is evident. Let $\phi_1$ and $\phi_2$ be the substitutions
$$
\begin{aligned}
&(x_{m-1},x_m,x_{m+1},\dots,x_k)\mapsto (x_mx_{m+1}x_{m-1}x_m,x_{m+1},x_{m+2},\dots,x_{k+1}) \\
\text{ and }\ &(x_{m-1},x_m,x_{m+1},\dots,x_{k-1},x_k,y_m)\mapsto (xx_{m-1},1,1,\dots,1,x,x_m),
\end{aligned}
$$
respectively.
Then the identities $\phi_1(\delta_k^m)$ and $\phi_2(\delta_k^m)$ are equivalent modulo~\eqref{xsxt=xsxtx} to $\delta_{k+1}^m$ and $\varepsilon_m$, respectively.
Therefore, $\mathbf P\{\delta_k^m\}\subseteq \mathbf P\{\delta_{k+1}^m\}$ and $\mathbf P\{\delta_k^m\}\subseteq \mathbf P\{\varepsilon_m\}$.
If $1\le m<k$ and $\phi_3$ is the substitution
$$
(x_{m-1},y_m,y_{m+1})\mapsto (y_{m+1}x_{m-1},y_{m+1},y_{m+2}),
$$
then the identity $\phi_3(\delta_k^m)$ is equivalent modulo~\eqref{xsxt=xsxtx} to $\delta_k^{m+1}$ and so $\mathbf P\{\delta_k^m\}\subseteq \mathbf P\{\delta_k^{m+1}\}$.
To show that the inclusions are proper, it suffices to note that for $1\le m\le k$, $\mathbf J_k^m\subseteq \mathbf P\{\delta_k^m\}\subseteq \mathbf P\{\varepsilon_k\}$ but $\mathbf J_k^1\nsubseteq \mathbf P\{\gamma_k\}$, $\mathbf J_{k+1}^m\nsubseteq \mathbf P\{\delta_k^m\}$ and $\mathbf J_k^{m+1}\nsubseteq \mathbf P\{\delta_k^m\}$ whenever $m<k$.

(iii) Evidently, $\mathbf P\{\varepsilon_0\} \subseteq \mathbf P\{\varepsilon_1\} \subseteq \cdots \subseteq \mathbf P\{\varepsilon_k\} \subseteq \cdots \subseteq \mathbf P\{\eqref{xyxy=xxyy}\}$.
It is routine to verify that $\mathbf P\{\varepsilon_k\}$ violates $\varepsilon_{k-1}$ for any $k \in \mathbb N$.
Therefore, the inclusions are proper.
\end{proof}

The \textit{head} of a non-empty word $\mathbf w$, denoted by $h(\mathbf w)$, is the the first letter of $\mathbf w$.

The following lemma generalizes Gusev and Vernikov \cite[Lemma~6.6(i),(ii)]{Gusev-Vernikov-18}.

\begin{lemma}
\label{L: u'abu''=u'bau''}
Let $\mathbf V$ be a subvariety of $\mathbf P$ and $\mathbf u$ be a word.
Further, let $\mathbf u=\mathbf u' \mathbf c\mathbf u''$ for some $\mathbf u',\mathbf u'' ,\mathbf c\in \mathfrak A^\ast$ such that $\mathbf c \in\{xy,yx\}$, $x \notin\con(\mathbf u')$ and $y \in \simple(\mathbf u')$.
Suppose that one of the following holds:
\begin{itemize}
\item[\textup{(i)}] $\mathbf V$ satisfies $\gamma_k$, $D(\mathbf u,x) = k$ and $D(\mathbf u,y) = 1$;
\item[\textup{(ii)}] $\mathbf V$ satisfies $\delta_k^m$, $D(\mathbf u,x) = k$ and $D(\mathbf u,y) = m+1$;
\item[\textup{(iii)}] $\mathbf V$ satisfies $\varepsilon_k$, $D(\mathbf u,x)=\infty$ and $D(\mathbf u,y)=k+1$;
\item[\textup{(iv)}] $\mathbf V$ satisfies~\eqref{xyxy=xxyy}, $D(\mathbf u,x)=D(\mathbf u,y)=\infty$.
\end{itemize}
Then $\mathbf V$ satisfies the identity $\mathbf u\approx\mathbf w$, where $\mathbf w$ is obtained from $\mathbf u$ by swapping of the first occurrence of $x$ and the second occurrence of $y$.
\end{lemma}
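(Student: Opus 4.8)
The plan is to treat the four cases (i)--(iv) uniformly by locating, for each of them, a single substitution instance of the governing identity ($\gamma_k$, $\delta_k^m$, $\varepsilon_k$ or~\eqref{xyxy=xxyy}) whose left- and right-hand sides, after multiplying on the left and right by suitable words, coincide with $\mathbf u$ and $\mathbf w$ respectively. Concretely, I would first invoke Corollary~\ref{C: form of the word} (applied with the letter $x$ and $k = D(\mathbf u,x)$, or, in cases (iii)--(iv), applied to the maximal decomposition) to pin down the shape of $\mathbf u$: there are auxiliary letters $x_0,\dots,x_{k-1}$ with $D(\mathbf u,x_j)=j$ realising the nested block structure~\eqref{form of u}, with $x$ playing the role of the deepest letter. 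Using Remark~\ref{R: u_{2s+2} and v_{2s+2} do not contain any p-dividers}, the blocks $\mathbf u_{2s+2}$ contain no $s$-dividers, which will be exactly what is needed to push all the "irrelevant" letters out of the way by the identity~\eqref{xsxt=xsxtx} (via Lemma~\ref{L: 2-limited word}, so that we may assume $\mathbf u$ is $2$-limited, together with the fact that a $2$-limited word's non-divider occurrences can be absorbed).

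The core of the argument is then the following reduction. Because $x\notin\con(\mathbf u')$ and $y\in\simple(\mathbf u')$, the subword $\mathbf c\in\{xy,yx\}$ sits at the boundary where ${_{1\mathbf u}x}$ first appears, and $y$ — being simple to the left of this point but, by the depth hypothesis on $y$, having depth $1$ (case (i)), $m+1$ (case (ii)), $k+1$ (case (iii)) or $\infty$ (case (iv)) — is a divider at exactly the level that lines up $\mathbf u$ with the left-hand side of the relevant identity. I would apply Lemma~\ref{L: 2nd occurence of y_m general} (or, in case (i), the simpler fact that a depth-$1$ letter's second occurrence lies in a $0$-block adjacent to its first occurrence) to locate ${_{2\mathbf u}y}$ precisely: in the block forced by its depth. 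Having located both occurrences of $y$ and the first occurrence of $x$, I would use~\eqref{xsxt=xsxtx} to rewrite $\mathbf u$ so that the relevant window around ${_{i\mathbf u}x}$ and ${_{j\mathbf u}y}$ becomes literally $\phi(\text{LHS})$ of $\gamma_k$ / $\delta_k^m$ / $\varepsilon_k$ / \eqref{xyxy=xxyy}, for the evident substitution $\phi$ sending the generic letters of that identity to $x$, $y$, and the $x_j$'s (everything else to $1$); applying the identity swaps exactly ${_{1\mathbf u}x}$ past ${_{2\mathbf u}y}$, and rewriting back by~\eqref{xsxt=xsxtx} yields $\mathbf w$.

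The main obstacle I anticipate is bookkeeping in cases (ii) and (iii): when $1<m<k$ (resp.\ when $D(\mathbf u,x)=\infty$ but $D(\mathbf u,y)=k+1$ with $k$ possibly small), the letter $x$ and the letter $y$ sit at genuinely different depths, so the nested-block picture~\eqref{form of u} around $x$ and the block containing ${_{2\mathbf u}y}$ are "out of phase", and one must check carefully that the portion of $\mathbf u$ strictly between ${_{1\mathbf u}x}$ and ${_{2\mathbf u}y}$ consists only of letters that the identity's substitution can collapse — i.e.\ that no divider of an obstructing level intervenes. This is exactly where the "no $s$-dividers in $\mathbf u_{2s+2}$" clause of Corollary~\ref{C: form of the word} and the localisation of ${_{2\mathbf u}y}$ from Lemma~\ref{L: 2nd occurrence of y_m general} do the work; once those are in hand the verification that $\phi$ of the identity reduces modulo~\eqref{xsxt=xsxtx} to $\mathbf u\approx\mathbf w$ is a routine (if lengthy) computation of the same flavour as the displayed chain in Lemma~\ref{L: kappa_k and delta_k^k}. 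Finally, case (iv) is the easiest: here both letters have infinite depth, so by Lemma~\ref{L: w=ini^2(w)}-type reasoning (or directly) the whole relevant block is equivalent, modulo~\eqref{xsxt=xsxtx} and~\eqref{xyxy=xxyy}, to its $\ini^2$-form, in which adjacent squares commute, giving $\mathbf u\approx\mathbf w$ at once.
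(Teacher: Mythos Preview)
Your overall architecture---invoke Corollary~\ref{C: form of the word} to get the staircase letters $x_0,\dots,x_{k-1}$, apply the governing identity via a substitution, and bridge with~\eqref{xsxt=xsxtx}---matches the paper. But the substitution you describe does not work, and fixing it is where the real content lies. You propose the ``evident substitution $\phi$ sending the generic letters of that identity to $x$, $y$, and the $x_j$'s (everything else to $1$).'' With such a $\phi$, the word $\phi(\text{LHS of }\delta_k^m)$ is short: it involves only $x,y,x_0,\dots,x_{k-1}$. But between ${_{1\mathbf u}y}$ and ${_{1\mathbf u}x}$, and in the blocks $\mathbf u_{2s+2},\mathbf u_{2s+1},\mathbf u_{2s}$ of the staircase, the word $\mathbf u$ carries many other letters, including \emph{first} occurrences. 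The identity~\eqref{xsxt=xsxtx} lets you delete only third-or-later occurrences, so you cannot rewrite $\mathbf u$ down to $\mathbf a\cdot\phi(\text{LHS})\cdot\mathbf b$ for any such letter-to-letter $\phi$. (Incidentally, there is no need to ``locate ${_{2\mathbf u}y}$'' via Lemma~\ref{L: 2nd occurence of y_m general}: the hypothesis $\mathbf c\in\{xy,yx\}$ already places ${_{2\mathbf u}y}$ adjacent to ${_{1\mathbf u}x}$.)

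What the paper does instead is send the letters of $\delta_k^m$ to \emph{words}: with $x=x_k$ and $y=y_{m+1}$, one takes $y_m\mapsto\mathbf v_{2k+1}$ (the segment of $\mathbf u$ between ${_{1\mathbf u}y}$ and ${_{1\mathbf u}x}$) and $x_s\mapsto\mathbf q_s$, where each $\mathbf q_s$ packages the relevant portion $\mathbf v_{2s+2}''x_s\mathbf v_{2s+1}x_{s+1}\mathbf v_{2s}'$ of the staircase. Now $\phi(\text{LHS})$ does reproduce the tail of $\mathbf u$---but with \emph{extra} copies of the blocks $\mathbf q_s$ and $\mathbf v_{2k+1}$, because these letters appear twice in $\delta_k^m$. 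The genuine work, which your proposal glosses over as ``routine,'' is checking that each such extra block is absorbable by~\eqref{xsxt=xsxtx}: concretely, that whenever a letter first occurs in $\mathbf q_s$, its second occurrence already lies in $\mathbf q_s\mathbf q_{s-1}$, and similarly for $\mathbf v_{2k+1}$. This needs the depth analysis (no $(s-1)$-dividers in $\mathbf q_s$; a letter of depth $\le m-1$ cannot sit between ${_{1\mathbf u}y}$ and ${_{2\mathbf u}y}$; etc.), and it is the heart of the argument, not a bookkeeping afterthought. Finally, for Part~(iii) there is a wrinkle you do not anticipate: when $h(\mathbf u'')\ne x$, the second occurrence of $x$ is not where $\varepsilon_k$ expects it, and one must first insert a phantom $x$ (via Corollary~\ref{C: block = ini^2(block)}, using that $D(\mathbf u,x)=\infty$), reduce to the adjacent case, and then remove it. Your treatment of Part~(iv) via $\ini^2$ is correct and matches the paper.
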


\begin{proof}
We consider only the case when $\mathbf c =xy$ since the case $\mathbf c = yx$ is similar.
We also omit the proof of Part~(i) because it is very similar to (and in fact simpler than) that of Part~(ii).

\smallskip

(ii) Put $x_k=x$ and $y_{m+1}=y$. It follows from Corollary~\ref{C: form of the word} that there exist letters $x_0,x_1,\dots, x_{k-1}$ such that $D(\mathbf u,x_s)=s$ for any $0\le s<k$ and
$$
\mathbf u=\mathbf v_{2k+2}y_{m+1}\mathbf v_{2k+1}x_ky_{m+1}\mathbf p,
$$
where
$$
\mathbf p = \mathbf v_{2k}x_{k-1}\mathbf v_{2k-1}x_k\mathbf v_{2k-2}x_{k-2}\mathbf v_{2k-3}x_{k-1}\cdots\mathbf v_4x_1\mathbf v_3x_2\mathbf v_2x_0\mathbf v_1x_1\mathbf v_0
$$
for some $\mathbf v_0,\mathbf v_1,\dots,\mathbf v_{2k+2} \in \mathfrak A^\ast$.
Clearly, for any $s=1,2,\dots,k-1$, there exist $\mathbf v_{2s}', \mathbf v_{2s}'' \in \mathfrak A^\ast$ such that $\mathbf v_{2s} = \mathbf v_{2s}'\mathbf v_{2s}''$, where $\mathbf v_{2s}'$ does not contain any \mbox{$(s-1)$}-divider of $\mathbf u$, while either $\mathbf v_{2s}''=\lambda$ or $h(\mathbf v_{2s}'')$ is an \mbox{$(s-1)$}-divider of $\mathbf u$.
Put $\mathbf v_{2k}''=\mathbf v_{2k}$ and $\mathbf v_0'=\mathbf v_0$.
Let
$$
\mathbf q_s = \mathbf v_{2s+2}''x_s\mathbf v_{2s+1}x_{s+1}\mathbf v_{2s}'
$$
for any $s=0,1,\dots,k-1$.
Let $\phi_1$ be the substitution
$$
(x_0,x_1,\dots,x_{k-1},y_m,y_{m+1})\mapsto (\mathbf q_0,\mathbf q_1,\dots,\mathbf q_{k-1},\mathbf v_{2k+1},y_{m+1}).
$$
Then the identity $\phi_1(\delta_k^m)$ coincides with the identity
\begin{equation}
\label{to delta_k^m}
y_{m+1}\mathbf v_{2k+1}x_ky_{m+1}\mathbf q \approx \mathbf v_{2k+2}y_{m+1}\mathbf v_{2k+1}y_{m+1}x_k\mathbf q
\end{equation}
where
$$
\mathbf q = \mathbf q_{k-1}x_k\mathbf q_{k-2}\mathbf q_{k-1}\cdots \mathbf q_{m-1}\mathbf q_m\mathbf v_{2k+1}\mathbf q_{m-2}\mathbf q_{m-1}\cdots \mathbf q_1\mathbf q_2\mathbf q_0\mathbf q_1.
$$

We note that $\mathbf p = \mathbf q_{k-1}\mathbf q_{k-2}\cdots \mathbf q_0$.
Since $D(\mathbf u, x_s)=s$ and $D(\mathbf u, x_{s+1})=s+1$, Lemma~\ref{L: k-divider and depth} implies that $x_s$ and $x_{s+1}$ are not \mbox{$(s-1)$}-dividers of $\mathbf u$.
If one of the subwords $\mathbf v_{2s+2}''$ or $\mathbf v_{2s+1}$ of $\mathbf u$ contains some \mbox{$(s-1)$}-divider $d$ of $\mathbf u$, then $({_{1\mathbf u}x_{s+1}})<({_{1\mathbf u}d})<({_{2\mathbf u}x_{s+1}})$ and so $D(\mathbf u,x_{s+1})\le s$ contradicting $D(\mathbf u, x_{s+1})=s+1$.
Therefore, $\mathbf v_{2s+2}''$ and $\mathbf v_{2s+1}$ do not contain any \mbox{$(s-1)$}-divider of $\mathbf u$.
Finally, the subword $\mathbf v_{2s}'$ of $\mathbf u$ does not contain any \mbox{$(s-1)$}-divider of $\mathbf u$ by its definition.
We see that the subword $\mathbf q_s$ of $\mathbf u$ does not contain \mbox{$(s-1)$}-dividers of $\mathbf u$ for any $s=1,2,\dots,k-1$.

Let $z$ be a letter such that ${_{1\mathbf u}z}$ occurs in the subword $\mathbf v_{2k+1}$ of $\mathbf u$.
Since $D(\mathbf u,y_{m+1})=m+1>1$, we have $z \in \mul(\mathbf u)$.
We are going to verify that $z \in \mul(\mathbf v_{2k+1}\mathbf q_{k-1}\cdots\mathbf q_m\mathbf q_{m-1})$.
If $m=1$, then this claim is evident because $\mathbf v_{2k+1}\mathbf q_{k-1}\cdots\mathbf q_m\mathbf q_{m-1}$ is a suffix of $\mathbf u$ in this case.
So, we may assume that $m>1$.
Let $z' = h(\mathbf q_{m-2})$.
Since $z'=h(\mathbf v_{2m-2}'')$ or $z'=x_{m-2}$, the letter $z'$ is an \mbox{$(m-2)$}-divider of $\mathbf u$.
If $({_{1\mathbf u}z'})<({_{2\mathbf u}z})$, then $h_1^{m-2}(\mathbf u,z)\ne h_2^{m-2}(\mathbf u,z)$ and so $D(\mathbf u, z) \le m-1$, whence $z$ is an \mbox{$(m-1)$}-divider of $\mathbf u$ by Lemma~\ref{L: k-divider and depth}.
But this contradicts the fact that $D(\mathbf u,y_{m+1})=m+1$ because $({_{1\mathbf u}y_{m+1}})<({_{1\mathbf u}z})<({_{2\mathbf u}y_{m+1}})$.
Therefore, $({_{2\mathbf u}z})<({_{1\mathbf u}z'})$.
We see that if a letter occurs in $\mathbf v_{2k+1}$, then the subword $\mathbf v_{2k+1}x_ky_{m+1}\mathbf q_{k-1}\cdots\mathbf q_m\mathbf q_{m-1}$ of $\mathbf u$ contains some non-first occurrence of this letter in $\mathbf u$.

Let $z$ be a letter such that ${_{1\mathbf u}z}$ occurs in the subword $\mathbf q_s$ of $\mathbf u$ for some $s \in \{1,2,\dots,k-1\}$.
We are going to verify that $z \in \mul(\mathbf q_s\mathbf q_{s-1})$.
If $s=1$, then this claim is evident because $\mathbf q_1$ does not contain $0$-dividers of $\mathbf u$ and $\mathbf q_1\mathbf q_0$ is a suffix of $\mathbf u$.
So, we may assume that $s>1$.
Clearly, either $z=x_s$ or $z \in \con(\mathbf v_{2s+2}''\mathbf v_{2s+1}\mathbf v_{2s}')$.
Evidently, if $z=x_s$, then $z \in \con(\mathbf q_{s-1})$.
Suppose now that $z \in \con(\mathbf v_{2s+2}''\mathbf v_{2s+1}\mathbf v_{2s}')$.
Since the subword $\mathbf q_s$ of $\mathbf u$ does not contain \mbox{$(s-1)$}-dividers of $\mathbf u$, this subword does not contain $0$-dividers of $\mathbf u$ as well by Lemma~\ref{L: k-divider and depth}.
Therefore, the letter $z$ is multiple in $\mathbf u$.
Let $z' = h(\mathbf q_{s-2})$.
Since $z'=h(\mathbf v_{2s-2}'')$ or $z'=x_{s-2}$, the letter $z'$ is an \mbox{$(s-2)$}-divider of $\mathbf u$.
If $({_{1\mathbf u}z'})<({_{2\mathbf u}z})$, then $h_1^{s-2}(\mathbf u,z)\ne h_2^{s-2}(\mathbf u,z)$ and so $D(\mathbf u, z) \le s-1$, whence $z$ is an \mbox{$(s-1)$}-divider of $\mathbf u$  by Lemma~\ref{L: k-divider and depth}.
But $\mathbf q_s$ does not contain \mbox{$(s-1)$}-dividers of $\mathbf u$. Therefore, $({_{2\mathbf u}z})<({_{1\mathbf u}z'})$.
This means that ${_{2\mathbf u}z}$ occurs in the subword $\mathbf q_s\mathbf q_{s-1}$ of $\mathbf u$.
We see that if a letter occurs in $\mathbf q_s$, then the subword $\mathbf q_s\mathbf q_{s-1}$ of $\mathbf u$ contains some non-first occurrence of this letter in $\mathbf u$.

Finally, we recall that ${_{2\mathbf u}x_k}$ occurs in the subword $\mathbf q_{k-1}$ of $\mathbf u$.

In view of the previous three paragraphs, the identity~\eqref{xsxt=xsxtx} implies
$$
\mathbf u = \mathbf v_{2k+2}y_{m+1}\mathbf v_{2k+1}x_ky_{m+1}\mathbf p \approx \mathbf v_{2k+2}y_{m+1}\mathbf v_{2k+1}x_ky_{m+1}\mathbf q.
$$
By a similar argument we can show that the identity~\eqref{xsxt=xsxtx} implies
$$
\mathbf v_{2k+2}y_{m+1}\mathbf v_{2k+1}y_{m+1}x_k\mathbf p \approx \mathbf v_{2k+2}y_{m+1}\mathbf v_{2k+1}y_{m+1}x_k\mathbf q.
$$
These identities together with~\eqref{to delta_k^m} imply the identity $\mathbf u \approx \mathbf u'yx\mathbf u''$. Thus, it is satisfied by $\mathbf V$.

\smallskip

(iii) There are two cases.

\smallskip

\textit{Case }1: $h(\mathbf u'')=x$.
If $k=0$, then $\varepsilon_k$ is nothing but the identity $y_1x_0xy_1x \approx y_1x_0y_1x^2$. Since $y \in \con(\mathbf u')$, this identity implies $\mathbf u \approx \mathbf u'yx\mathbf u''$.
So, we may assume that $k>0$. Put $y_{k+1}=y$. In view of Corollary~\ref{C: form of the word}, there are letters $x_0,x_1,\dots, x_k$ such that $D(\mathbf u,x_s)=s$ for any $0\le s\le k$ and
$$
\mathbf u=\mathbf v_{2k+3}y_{k+1}\mathbf v_{2k+2}x_k\mathbf v_{2k+1}xy_{k+1}x\mathbf p,
$$
where
$$
\mathbf p = \mathbf v_{2k}x_{k-1}\mathbf v_{2k-1}x_k\cdots\mathbf v_4x_1\mathbf v_3x_2\mathbf v_2x_0\mathbf v_1x_1\mathbf v_0
$$
for some $\mathbf v_0,\mathbf v_1,\dots,\mathbf v_{2k+3} \in \mathfrak A^\ast$.
Clearly, for any $s=1,2,\dots,k-1$, there exist $\mathbf v_{2s}', \mathbf v_{2s}'' \in \mathfrak A^\ast$ such that $\mathbf v_{2s} = \mathbf v_{2s}'\mathbf v_{2s}''$, where $\mathbf v_{2s}'$ does not contain any \mbox{$(s-1)$}-divider of $\mathbf u$, while either $\mathbf v_{2s}''=\lambda$ or $h(\mathbf v_{2s}'')$ is an \mbox{$(s-1)$}-divider of $\mathbf u$.
Put $\mathbf v_{2k}''=\mathbf v_{2k}$ and $\mathbf v_0'=\mathbf v_0$.
Let
$$
\mathbf q_s = \mathbf v_{2s+2}''x_s\mathbf v_{2s+1}x_{s+1}\mathbf v_{2s}'
$$
for any $s=0,1,\dots,k-1$.
Let $\phi_2$ be the substitution
$$
(x_0,x_1,\dots,x_{k-1},x_k)\mapsto (\mathbf q_0,\mathbf q_1,\dots,\mathbf q_{k-1},\mathbf v_{2k+2}x_k\mathbf v_{2k+1}).
$$
Then the identity $\phi_2(\varepsilon_k)$ coincides with the identity
\begin{equation}
\label{to epsilon_k}
y_{k+1}\mathbf v_{2k+2}x_k\mathbf v_{2k+1}xy_{k+1}x\mathbf q \approx y_{k+1}\mathbf v_{2k+2}x_k\mathbf v_{2k+1}y_{k+1}x^2\mathbf q
\end{equation}
where
$$
\mathbf q = \mathbf q_{k-1}\mathbf v_{2k+2}x_k\mathbf v_{2k+1}\mathbf q_{k-2}\mathbf q_{k-1}\cdots \mathbf q_1\mathbf q_2\mathbf q_0\mathbf q_1.
$$
We note that $\mathbf p = \mathbf q_{k-1}\mathbf q_{k-2}\cdots \mathbf q_0$.
By the same arguments as in Part~(ii) one can show that
\begin{itemize}
\item if a letter occurs in the word $\mathbf v_{2k+2}x_k\mathbf v_{2k+1}$, then the subword $\mathbf v_{2k+2}x_k\mathbf v_{2k+1}xy_{k+1}x\mathbf q_{k-1}$ of $\mathbf u$ contains some non-first occurrence of this letter in $\mathbf u$;{\sloppy

}
\item if a letter occurs in $\mathbf q_s$, then the subword $\mathbf q_s\mathbf q_{s-1}$ of $\mathbf u$ contains some non-first occurrence of this letter in $\mathbf u$.
\end{itemize}
Then the identity~\eqref{xsxt=xsxtx} implies
$$
\mathbf u = \mathbf v_{2k+3}y_{k+1}\mathbf v_{2k+2}x_k\mathbf v_{2k+1}xy_{k+1}x\mathbf p \approx \mathbf v_{2k+3}y_{k+1}\mathbf v_{2k+2}x_k\mathbf v_{2k+1}xy_{k+1}x\mathbf q.
$$
By a similar argument one can show that the identity~\eqref{xsxt=xsxtx} implies
$$
\mathbf v_{2k+3}y_{k+1}\mathbf v_{2k+2}x_k\mathbf v_{2k+1}y_{k+1}x^2\mathbf p \approx \mathbf v_{2k+3}y_{k+1}\mathbf v_{2k+2}x_k\mathbf v_{2k+1}y_{k+1}x^2\mathbf q.
$$
These identities together with~\eqref{to epsilon_k} imply the identity $\mathbf u \approx \mathbf u'yx\mathbf u''$.
Thus, it is satisfied by $\mathbf V$.

\smallskip

\textit{Case }2: $h(\mathbf u'') \ne x$.
In view of Remark~\ref{R: max decomposition}, there is a number $r$ such that the $r$-decomposition of $\mathbf u$ is maximal.
Since $D(\mathbf u,x)=\infty$, the subword ${_{1\mathbf u}}x\,{_{2\mathbf u}}y$ is contained in some $r$-block $\mathbf a$ of $\mathbf u$.
Then $\mathbf a = \mathbf a_1\,xy\,\mathbf a_2$ and $\mathbf u = \mathbf u_1\mathbf a \mathbf u_2$ for some $\mathbf a_1,\mathbf a_2,\mathbf u_1,\mathbf u_2 \in \mathfrak A^\ast$.

Let $\mathbf u^\ast=\mathbf u_1\mathbf a_1\,xyx\,\mathbf a_2 \mathbf u_2$.
The word $\mathbf u^\ast$ differs from $\mathbf u$ by the occurrences of $x$ only.
Then, since $D(\mathbf u,x)=D(\mathbf u^\ast,x)=\infty$, the identity $\mathbf u \approx  \mathbf u^\ast$ is $s$-well-balanced for any $s$.
Since~\eqref{xyxy=xxyy} is a consequence of $\varepsilon_k$ by Lemma~\ref{L: gamma_k, delta_k^k,epsilon_k}(iii), it follows from Corollary~\ref{C: block = ini^2(block)} that the identities
$$
\mathbf u \approx \mathbf u_1\ini^2(\mathbf a_1\,xy\,\mathbf a_2)\,\mathbf u_2\ \text{ and } \
\mathbf u^\ast \approx \mathbf u_1\ini^2(\mathbf a_1\,xyx\,\mathbf a_2)\,\mathbf u_2
$$
hold in $\mathbf V$.
Clearly, $\ini^2(\mathbf a_1xy\mathbf a_2) = \ini^2(\mathbf a_1xyx\mathbf a_2)$ because $x \in \con(\mathbf a_2) \setminus \con(\mathbf a_1)$.
This implies that $\mathbf V$ satisfies $\mathbf u \approx \mathbf u^\ast$.
By a similar argument we can show that $\mathbf u_1\mathbf a_1\,yx\,\mathbf a_2 \mathbf u_2 \approx \mathbf u_1\mathbf a_1\,yx^2\,\mathbf a_2 \mathbf u_2$ is satisfied by $\mathbf V$.
Then $D(\mathbf u^\ast,y)=k+1$ because the identity $\mathbf u \approx  \mathbf u^\ast$ is $s$-well-balanced for any $s$.
In view of Case~1, the identity $\mathbf u^\ast \approx \mathbf u_1\mathbf a_1\,yx^2\,\mathbf a_2 \mathbf u_2$ is satisfied by $\mathbf V$.
Therefore, $\mathbf u \approx \mathbf u'yx\mathbf u''$ holds in $\mathbf V$, and we are done.

\smallskip

(iv) In view of Remark~\ref{R: max decomposition}, there is a number $r$ such that the $r$-decomposition of $\mathbf u$ is maximal.
Since $D(\mathbf u,x)=D(\mathbf u,y)=\infty$, the subword ${_{1\mathbf u}}x\,{_{2\mathbf u}}y$ is contained in some $r$-block $\mathbf a$ of $\mathbf u$ and $\mathbf a = \mathbf a_1y\mathbf a_2\,xy\,\mathbf a_3x\mathbf a_4$ for some $\mathbf a_1,\mathbf a_2,\mathbf a_3,\mathbf a_4\in \mathfrak A^\ast$.
Evidently,
$$
\ini^2(\mathbf a_1y\mathbf a_2\,xy\,\mathbf a_3x\mathbf a_4) = \ini^2(\mathbf a_1y\mathbf a_2\,yx\,\mathbf a_3x\mathbf a_4).
$$
This fact and Corollary~\ref{C: block = ini^2(block)} imply that $\mathbf u \approx \mathbf u'yx\mathbf u''$ is satisfied by $\mathbf V$.
\end{proof}

\begin{corollary}
\label{C: u'abu''=u'bau'' kappa_k}
Let $\mathbf V$ be a subvariety of $\mathbf P\{\kappa_k\}$ and $\mathbf u$ be a word.
Further, let $\mathbf u=\mathbf u'\,xy\,\mathbf u''$ for some $\mathbf u',\mathbf u'' \in \mathfrak A^\ast$ such that $x,y \in \simple(\mathbf u')$.
Suppose that either $D(\mathbf u,x)> k$ or $D(\mathbf u,y)> k$.
Then $\mathbf V$ satisfies the identity $\mathbf u\approx\mathbf u'yx\mathbf u''$.
\end{corollary}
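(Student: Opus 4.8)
The plan is first to extract usable identities from the hypothesis. By Lemma~\ref{L: kappa_k and delta_k^k} the variety $\mathbf V$ satisfies $\delta_k^k$, hence by Lemma~\ref{L: gamma_k, delta_k^k,epsilon_k}(ii),(iii) it satisfies $\delta_{k'}^k$ and $\varepsilon_{k'}$ for every $k'\ge k$ and also the identity~\eqref{xyxy=xxyy}. By Lemma~\ref{L: 2-limited word} one may assume $\mathbf u$ to be $2$-limited; since the depth of a letter depends only on its first two occurrences, this changes neither $D(\mathbf u,x)$ nor $D(\mathbf u,y)$, and now $x,y\notin\con(\mathbf u'')$, so the occurrences of $x$ and $y$ displayed in $\mathbf u'\,xy\,\mathbf u''$ are their second occurrences while ${_{1\mathbf u}x},{_{1\mathbf u}y}\in\con(\mathbf u')$. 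The identities at our disposal --- in particular Lemma~\ref{L: u'abu''=u'bau''} --- are tailored to pushing a \emph{first} occurrence of one letter past a \emph{second} occurrence of another, so the main idea is to reduce to that situation by duplicating one of $x,y$ via \eqref{xsxt=xsxtx}.

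Let $e$ be the one of $x,y$ whose first occurrence lies earlier in $\mathbf u'$, and $l$ the other. A short argument shows that, outside a single degenerate configuration, $D(\mathbf u,l)>k$: since ${_{1\mathbf u}l}$ lies strictly between ${_{1\mathbf u}e}$ and ${_{2\mathbf u}e}$, if $D(\mathbf u,l)\le k$ then ${_{1\mathbf u}l}$ is a $k$-divider separating the two occurrences of $e$, so the hypothesis forces $D(\mathbf u,e)>k$, and then $D(\mathbf u,e)=k+1$ and $D(\mathbf u,l)=k$. Assuming we are not in this degenerate case, I would use \eqref{xsxt=xsxtx} on the factor of $\mathbf u$ beginning with ${_{1\mathbf u}e}$ to adjoin a fresh occurrence of $e$ immediately after the displayed $y$; renaming the displayed occurrence ${_{2\mathbf u}e}$ together with this new occurrence to a fresh letter $e_1$ produces a word $\mathbf U$ with $\mathbf V$ satisfying $\mathbf u\approx\mathbf U$ in which the two occurrences of $e_1$ are separated only by non-dividers, so $D(\mathbf U,e_1)=\infty$. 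Because ${_{1\mathbf u}e}$ precedes ${_{1\mathbf u}l}$, demoting $e$ to a simple letter inserts no new divider between the two occurrences of $l$, and a routine check gives $D(\mathbf U,l)=D(\mathbf u,l)>k$. Now Lemma~\ref{L: u'abu''=u'bau''}(iv) (if $D(\mathbf u,l)=\infty$), or Lemma~\ref{L: u'abu''=u'bau''}(iii) with $\varepsilon_{D(\mathbf u,l)-1}$ (if $D(\mathbf u,l)$ is finite), swaps ${_{1\mathbf U}e_1}$ with ${_{2\mathbf U}l}$; substituting $e_1\mapsto e$ in the resulting identity restores a word in which $e$ has three occurrences, and one more application of \eqref{xsxt=xsxtx} read backwards --- using ${_{1\mathbf u}e}$ as the outer occurrence --- deletes the spare one, turning the identity obtained into $\mathbf u\approx\mathbf u'\,yx\,\mathbf u''$.

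It remains to treat the degenerate case $D(\mathbf u,e)=k+1$, $D(\mathbf u,l)=k$, ${_{1\mathbf u}e}<{_{1\mathbf u}l}$, where the above fails because duplicating $e$ leaves $D(\mathbf U,l)=k$ rather than $>k$. Here I would exploit that the depths $k$ and $k+1$ present are exactly those occurring in Lemma~\ref{L: u'abu''=u'bau''}(ii) with $\delta_k^k$: adjoining a copy of $l$ just after a suitable $k$-divider of $\mathbf u$ so that the copy has depth $k$ while $D(\cdot,e)$ stays $k+1$, Lemma~\ref{L: u'abu''=u'bau''}(ii) then applies with that copy in the role of its depth-$k$ letter and $e$ in the role of its depth-$(k+1)$ letter, the clean-up being as before. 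The step I expect to be the main obstacle is this degenerate case together with the depth bookkeeping in the generic case: one must check in each manoeuvre that adjoining an occurrence and renaming occurrences --- which in particular turns one of the two letters into a simple letter --- never lowers the depth of the \emph{passive} letter below the threshold required by Lemma~\ref{L: u'abu''=u'bau''}. Making this rigorous will probably require an auxiliary lemma in the style of Lemma~\ref{L: 2nd occurence of y_m general}.
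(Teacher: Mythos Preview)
Your approach is close to the paper's, but the chief obstacle you flag --- the ``degenerate case'' $D(\mathbf u,l)=k$, $D(\mathbf u,e)=k+1$ --- is a phantom. One always has $D(\mathbf u,e)\le D(\mathbf u,l)$: every first occurrence lying strictly between ${_1\mathbf u}l$ and ${_2\mathbf u}l$ also lies strictly between ${_1\mathbf u}e$ and ${_2\mathbf u}e$ (when $e=y$ the $l$-interval is nested inside the $e$-interval; when $e=x$ the only extra position in the $l$-interval is that of ${_2\mathbf u}x$, a second occurrence), so any $d$-divider witnessing $D(\mathbf u,l)\le d+1$ also witnesses $D(\mathbf u,e)\le d+1$. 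The hypothesis then forces $D(\mathbf u,l)>k$ outright, and your entire degenerate-case discussion can be deleted. The paper records exactly this as the one-line remark ``Evidently, $D(\mathbf u,x)\le D(\mathbf u,y)$'' after assuming without loss of generality that ${_{1\mathbf u}x}<{_{1\mathbf u}y}$.

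There is also a mis-statement in the generic case: ``$\mathbf V$ satisfying $\mathbf u\approx\mathbf U$'' is literally false, since $e_1\in\con(\mathbf U)\setminus\con(\mathbf u)$. What works (and what your later substitution step suggests you intend) is the reverse direction: prove $\mathbf V\models\mathbf U\approx\mathbf U'$ on the auxiliary words, then apply the substitution $e_1\mapsto e$ and use \eqref{xsxt=xsxtx} to recover $\mathbf u\approx\mathbf u'\,yx\,\mathbf u''$. The paper avoids the fresh-letter device altogether: writing $\mathbf u'=\mathbf u_1x\mathbf u_2y\mathbf u_3$ and setting $\phi(c)=c^2$ for $c\in\con(\mathbf u_1)$ and $\phi(c)=c$ otherwise, it works with $\mathbf w=\phi(\mathbf u_2)\,y\,\phi(\mathbf u_3)\,xyx\,\phi(\mathbf u'')$, for which $\mathbf u\approx\mathbf u_1x\,\mathbf w$ holds directly by \eqref{xsxt=xsxtx}. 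Here $D(\mathbf w,x)=\infty$ trivially, and a short induction on $D(\mathbf w,a)$ (using Lemma~\ref{L: h_2^{k-1}} to pull the witnessing divider from $\mathbf w$ back to $\mathbf u$) gives $D(\mathbf u,a)\le D(\mathbf w,a)$ for every $a\in\con(\mathbf w)$, in particular $D(\mathbf w,y)>k$. This is precisely the depth bookkeeping you worried about, and it needs no new auxiliary lemma.
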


\begin{proof}
We may assume without loss of generality that $({_{1\mathbf u}x})<({_{1\mathbf u}y})$.
Then $\mathbf u'=\mathbf u_1x\mathbf u_2y\mathbf u_3$ for some $\mathbf u_1,\mathbf u_2,\mathbf u_3\in\mathfrak A^\ast$.
Evidently, $D(\mathbf u,x) \le D(\mathbf u,y)$.
It follows that $k < D(\mathbf u,y)$.
Let $\phi$ be a substitution defined as follows: $\phi(c)=c^2$ for any $c\in\con(\mathbf u_1)$ and $\phi(c)=c$ otherwise.
Put $\mathbf w=\phi(\mathbf u_2)y\phi(\mathbf u_3)xyx\phi(\mathbf u'')$.
We are going to verify that $D(\mathbf u,a) \le D(\mathbf w,a)$ for any $a \in \con(\mathbf w)$.
This fact is evident whenever $D(\mathbf w,a)=\infty$.
So, we may assume that $D(\mathbf w,a)=k<\infty$.
We will use induction on $k$.

\smallskip

\textit{Induction base}: $k=0$.
Then $a \in \simple(\mathbf w)$.
The definition of $\mathbf w$ implies that $a \notin \simple(\mathbf u_1)$ and so $a \in \simple(\mathbf u)$.
Therefore, $D(\mathbf u,a)=0$, and we are done.

\smallskip

\textit{Induction step}: $k>0$.
Then $a \in \mul(\mathbf w)$.
Let $b = h_2^{k-1}(\mathbf w, a)$.
In view of Lemma~\ref{L: h_2^{k-1}}, $D(\mathbf w,b)=k-1$ and $({_{1\mathbf w}a})<({_{1\mathbf w}b})<({_{2\mathbf w}a})$.
By the induction assumption $D(\mathbf u,b) \le k-1$.
The definition of $\mathbf w$ implies that $b \notin \simple(\mathbf u_1)$ and $({_{1\mathbf u}a})<({_{1\mathbf u}b})<({_{2\mathbf u}a})$.
Therefore, $D(\mathbf u,a) \le k$, and we are done.

\smallskip

We have proved that $D(\mathbf u,a) \le D(\mathbf w,a)$ for any $a \in \con(\mathbf w)$.
In particular, $k < D(\mathbf w,y)$.
According to Lemmas~\ref{L: kappa_k and delta_k^k} and~\ref{L: gamma_k, delta_k^k,epsilon_k}(ii),(iii), the variety $\mathbf P\{\kappa_k\}$ satisfies the identities~\eqref{xyxy=xxyy} and $\varepsilon_r$ for any $r \ge k$.
Hence if $k < D(\mathbf w, y)<\infty$, then
\begin{equation}
\label{w=phi(u_2)y phi(u_3) yxx phi(u'')}
\mathbf w=\phi(\mathbf u_2)y\phi(\mathbf u_3)xyx\phi(\mathbf u'') \approx \phi(\mathbf u_2)y\phi(\mathbf u_3)yx^2\phi(\mathbf u'')
\end{equation}
is satisfied by $\mathbf P\{\kappa_k\}$ by Lemma~\ref{L: u'abu''=u'bau''}(iii), and if $D(\mathbf w, y)=\infty$, then~\eqref{w=phi(u_2)y phi(u_3) yxx phi(u'')} holds in $\mathbf P\{\kappa_k\}$ by Lemma~\ref{L: u'abu''=u'bau''}(iv).
It remains to note that $\mathbf P\{\kappa_k\}$ satisfies
$$
\mathbf u \stackrel{\eqref{xsxt=xsxtx}}\approx \mathbf u_1x\mathbf w \stackrel{\eqref{w=phi(u_2)y phi(u_3) yxx phi(u'')}}\approx \mathbf u_1x\phi(\mathbf u_2)y\phi(\mathbf u_3)yx^2\phi(\mathbf u'') \stackrel{\eqref{xsxt=xsxtx}}\approx \mathbf u'yx\mathbf u''
$$
and so $\mathbf u\approx\mathbf u'yx\mathbf u''$.
\end{proof}

\begin{lemma}
\label{L: ab_k=ini(a)ini(a_{x_k})b_k}
Suppose that $k \in \mathbb N$ and $\mathbf a \in \mathfrak A^\ast$ are such that $x_0,x_1,\dots,x_{k-1} \notin\con(\mathbf a)$ and $\simple(\mathbf a)=\{x_k\}$.
Then the variety $\mathbf P\{\kappa_k\}$ satisfies the identities
$$
\begin{aligned}
\mathbf a\mathbf b_k \approx{}& \ini(\mathbf a)\ini(\mathbf a_{x_k})\mathbf b_k,\\ y_m\mathbf a \mathbf b_{k,m+1}x_{m-1}\mathbf c \mathbf b_{m-1} \approx{}& y_m\ini(\mathbf a)\ini(\mathbf a_{x_k})\mathbf b_{k,m+1}x_{m-1}\mathbf c \mathbf b_{m-1},
\end{aligned}
$$
where $\mathbf c\in\{x_my_m,y_mx_m\}$.
\end{lemma}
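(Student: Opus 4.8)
The plan is to reduce to a $2$-limited word and then rearrange it into the required normal form by a sequence of transpositions and (de)duplications of occurrences, each licensed by the results of Sections~\ref{sec: prelim}--\ref{sec: critical pairs}. By Lemma~\ref{L: 2-limited word} we may assume that $\mathbf a$ is $2$-limited; then $x_k$ occurs exactly once in $\mathbf a$, every other letter of $\con(\mathbf a)$ occurs exactly twice, and the word $\mathbf a_{x_k}$ has no simple letters. By Lemmas~\ref{L: kappa_k and delta_k^k} and~\ref{L: gamma_k, delta_k^k,epsilon_k}(ii),(iii) the variety $\mathbf P\{\kappa_k\}$ coincides with $\mathbf P\{\delta_k^k\}$ and satisfies both~\eqref{xyxy=xxyy} and $\varepsilon_r$ for every $r\ge k$; hence by Lemma~\ref{L: w=ini^2(w)} it satisfies $\mathbf a_{x_k}\approx\ini^2(\mathbf a_{x_k})\approx\ini(\mathbf a_{x_k})^2$. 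Write $\mathbf a=\mathbf p\,x_k\,\mathbf q$ with $x_k\notin\con(\mathbf p\mathbf q)$.

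The first step is a depth computation for $\mathbf u:=\mathbf a\mathbf b_k$. By induction on $j$ one checks that, for $0\le j\le k$, the set of $j$-dividers of $\mathbf u$ is exactly $\{x_0,x_1,\dots,x_j\}$ (the new divider acquired at level $j$ being $x_j$, pulled out of the deep block containing $\mathbf a$). Consequently $D(\mathbf u,x_j)=j$ for $0\le j\le k$, while $D(\mathbf u,y)>k$ for every $y\in\con(\mathbf a)\setminus\{x_k\}$, with $D(\mathbf u,y)=k+1$ precisely when the two occurrences of $y$ straddle the unique occurrence of $x_k$ in $\mathbf a$. The same computation applies to the word in the second identity, the displayed occurrence of $y_m$ playing there the role that $x_m$ plays inside $\mathbf b_k$ (so that $D(\cdot,y_m)=m$). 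I expect this bookkeeping, though routine, to be the most laborious portion of the argument.

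The derivation I would carry out by induction on the number of letters of $\con(\mathbf q)\setminus\con(\mathbf p)$ --- those whose first occurrence lies to the right of $x_k$ in $\mathbf a$. In the base case $\con(\mathbf q)\subseteq\con(\mathbf p)$, so $\ini(\mathbf a)=\ini(\mathbf p)\,x_k$ and $\ini(\mathbf a_{x_k})=\ini(\mathbf p)$, and every letter of $\mathbf q$ is a second occurrence of a letter of depth $k+1$; applying Lemma~\ref{L: u'abu''=u'bau''}(ii) with $\delta_k^k$ repeatedly moves the occurrence of $x_k$ rightwards past $\mathbf q$ step by step, so that $\mathbf a\mathbf b_k\approx\mathbf a_{x_k}\,x_k\,\mathbf b_k\approx\ini(\mathbf p)^2\,x_k\,\mathbf b_k$, and one application of $\kappa_k$ under the substitution $x\mapsto\ini(\mathbf p)$ (fixing $x_0,\dots,x_k$) gives $\ini(\mathbf p)\,x_k\,\ini(\mathbf p)\,\mathbf b_k=\ini(\mathbf a)\ini(\mathbf a_{x_k})\mathbf b_k$. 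In the inductive step one uses~\eqref{xsxt=xsxtx} to supply auxiliary occurrences, Corollary~\ref{C: u'abu''=u'bau'' kappa_k} to transpose any two second (or freshly inserted) occurrences --- all of depth $>k$ --- and Lemma~\ref{L: u'abu''=u'bau''}(iii),(iv) to pull a first occurrence of a deep letter past second occurrences; this brings the occurrences of a chosen letter of $\con(\mathbf q)\setminus\con(\mathbf p)$ into their positions in $\ini(\mathbf a)\ini(\mathbf a_{x_k})$, after which Lemma~\ref{L: 2-limited word} discards the surplus occurrences and the induction hypothesis completes the reduction. The second identity is handled identically: up to the inserted occurrences of $y_m$, the tail $\mathbf b_{k,m+1}x_{m-1}\mathbf c\,\mathbf b_{m-1}$ (with $\mathbf c\in\{x_my_m,y_mx_m\}$) is the word $\mathbf b_k$, the prefix $y_m$ and the $y_m$ inside $\mathbf c$ affect only depths at level $m\le k$, and every transposition used involves a letter of depth $>k$ or the letter $x_k$ of depth $k$, with $\kappa_k=\delta_k^k$ available throughout.

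The crux --- and the reason the tail cannot be shortened below $\mathbf b_k$ --- is the restricted repertoire of legal moves: by Corollary~\ref{C: u'abu''=u'bau'' kappa_k} and Lemma~\ref{L: u'abu''=u'bau''} one can never transpose a first occurrence of a letter, and the occurrence of $x_k$, having depth $k$, can be moved only past a second occurrence of a letter of depth exactly $k+1$. Consequently the order in which occurrences are rearranged must be chosen with care so that every transposition actually performed is among those permitted; verifying that $\ini(\mathbf a)\ini(\mathbf a_{x_k})\mathbf b_k$ is reachable from $\mathbf a\mathbf b_k$ under these restrictions is the delicate heart of the proof.
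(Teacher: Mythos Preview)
Your depth computation and the base case are correct and invoke the right tools. The genuine gap is the inductive step. Once you have moved the two occurrences of a chosen letter $z\in\con(\mathbf q)\setminus\con(\mathbf p)$ into their target positions in $\ini(\mathbf a)\ini(\mathbf a_{x_k})$, the resulting word is \emph{not} of the form $\mathbf a'\mathbf b_k$ for some $\mathbf a'$ satisfying the hypotheses with a strictly smaller value of $|\con(\mathbf q')\setminus\con(\mathbf p')|$: the target positions of $z$ are determined by all of $\ini(\mathbf a)$ and $\ini(\mathbf a_{x_k})$, so $z$ now sits interleaved with the not-yet-normalised occurrences of the remaining letters, and there is no evident smaller instance to which the induction hypothesis applies. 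Nor can you move $z$'s first occurrence into $\mathbf p$ to shrink $\con(\mathbf q)\setminus\con(\mathbf p)$, since that would require passing it across the first occurrence of $x_k$. (A minor point: in the inductive step you cite only Lemma~\ref{L: u'abu''=u'bau''}(iii),(iv) for moving a first occurrence of $z$, but $D(\mathbf u,z)$ may well be finite, in which case part~(ii) is needed too.)

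The paper's argument dispenses with this induction. Writing $\mathbf a=a_1\mathbf a_1a_2\mathbf a_2\cdots a_r\mathbf a_r$ with the $a_i$ the first occurrences and each $\mathbf a_i$ a string of second occurrences, it pushes \emph{every} first occurrence leftward past the intervening second occurrences in one sweep via Lemma~\ref{L: u'abu''=u'bau''}(ii)--(iv), reaching $\ini(\mathbf a)\,\mathbf a_1\cdots\mathbf a_r\,\mathbf b_k$; then a single appeal to Corollary~\ref{C: u'abu''=u'bau'' kappa_k} reorders the tail of second occurrences into $\ini(\mathbf a_{x_k})$. This is essentially your base-case manoeuvre performed for \emph{all} first occurrences at once rather than only for $x_k$, and it needs no induction on $|\con(\mathbf q)\setminus\con(\mathbf p)|$. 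For the second identity the paper does not rerun the analysis with the prefix $y_m$ present: it substitutes $x_{m-1}\mapsto x_{m-1}\mathbf c$ into the first identity and then uses~\eqref{xsxt=xsxtx} to discard the surplus occurrences, which is much shorter than ``handled identically''.
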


\begin{proof}
It is routine to verify that $D(\mathbf a\mathbf b_k, x_s)=s$ for any $0\le s\le k-1$.
Let $z$ be a letter from $\con(\mathbf a)$ with minimal depth in $\mathbf a\mathbf b_k$, which we denote by $d$.
Clearly, $d < \infty$ because $h_1^{k-1}(\mathbf a\mathbf b_k,x_k)\ne x_{k-1}=h_2^{k-1}(\mathbf a\mathbf b_k,x_k)$ and so $D(\mathbf a\mathbf b_k,x_k) \le k < \infty$.
Then, since $\con(\mathbf a)\subseteq \mul(\mathbf a\mathbf b_k)$, we have $d > 0$ and there is a \mbox{$(d-1)$}-divider $t$ between the first and the second occurrences of $z$ in $\mathbf a\mathbf b_k$.
In view of the choice of $z$, we have $t \notin\con(\mathbf a)$.
It follows that $z \in \simple(\mathbf a)$.
Hence $z=x_k$.
According to Lemma~\ref{L: k-divider and depth}, $x_{k-1}$ is a \mbox{$(k-1)$}-divider of the word $\mathbf a\mathbf b_k$.
Now Lemma~\ref{L: h_2^{k-1}}(ii) applies and we conclude that $D(\mathbf a\mathbf b_k, z) = k$.
In view of the choice of $z$, we have $D(\mathbf a\mathbf b_k, a) > k$ for any $a \in \con(\mathbf a)\setminus\{x_k\}$ and $D(\mathbf a\mathbf b_k, x_k) = k$.
Let $\con(\mathbf a)=\{a_1,a_2,\dots,a_r\}$.
We may assume without loss of generality that
$$
\mathbf a =\ \stackrel{(1)}{a_1} \mathbf a_1 \stackrel{(1)}{a_2} \mathbf a_2 \cdots \stackrel{(1)}{a_r} \mathbf a_r
$$
for some $\mathbf a_1,\mathbf a_2,\dots,\mathbf a_r \in \mathfrak A^\ast$.
The identity~\eqref{xsxt=xsxtx} allows us to remove any third or subsequent occurrence of letters from the subwords $\mathbf a_1,\mathbf a_2,\dots,\mathbf a_r$ of $\mathbf a$.
So, we may assume that these subwords contain second occurrences of letters only.
According to Lemmas~\ref{L: kappa_k and delta_k^k} and~\ref{L: gamma_k, delta_k^k,epsilon_k}(ii),(iii), the identities~\eqref{xyxy=xxyy}, $\delta_p^q$ and $\varepsilon_p$ are satisfied by $\mathbf P\{\kappa_k\}$ for all $p \ge q \ge k$.
Then, since $D(\mathbf a\mathbf b_k, x_k) = k$ and $D(\mathbf a\mathbf b_k, a) > k$ for any $a \in \con(\mathbf a)\setminus\{x_k\}$, the identities
$$
\begin{aligned}
\mathbf a \mathbf b_k = a_1 \mathbf a_1 a_2 \mathbf a_2 \cdots a_r \mathbf a_r\mathbf b_k
\approx a_1 a_2 \cdots a_r\mathbf a_1\mathbf a_2\cdots \mathbf a_r\mathbf b_k
 =\ini(\mathbf a)\mathbf a_1\mathbf a_2\cdots \mathbf a_r\mathbf b_k
\end{aligned}
$$
hold in $\mathbf P\{\kappa_k\}$ by Parts~(ii)--(iv) of Lemma~\ref{L: u'abu''=u'bau''}.
According to Corollary~\ref{C: u'abu''=u'bau'' kappa_k}, $\mathbf P\{\kappa_k\}$ satisfies $\ini(\mathbf a)\mathbf a_1\mathbf a_2\cdots \mathbf a_{m+1}\mathbf b_k \approx \ini(\mathbf a)\ini(\mathbf a_{x_k})\mathbf b_k$ and so $\mathbf a\mathbf b_k \approx \ini(\mathbf a)\ini(\mathbf a_{x_k})\mathbf b_k$.
Substituting~$x_{m-1}\mathbf c$ for $x_{m-1}$ in the last identity, we obtain the identity
\begin{equation}
\label{x_{m-1}c to x_{m-1}}
\mathbf a\mathbf b_{k,m+1}x_{m-1}\mathbf cx_m\mathbf d \approx \ini(\mathbf a)\ini(\mathbf a_{x_k})\mathbf b_{k,m+1}x_{m-1}\mathbf cx_m\mathbf d
\end{equation}
where
$$
\mathbf d=
\begin{cases}
\mathbf b_0&\text{if }m=1,\\
\mathbf b_{m-1,m-1}\mathbf c \mathbf b_{m-2}&\text{if }m>1.
\end{cases}
$$
Then $\mathbf P\{\kappa_k\}$ satisfies
$$
\begin{aligned}
y_m\mathbf a\mathbf b_{k,m+1}x_{m-1}\mathbf c \mathbf b_{m-1} \stackrel{\eqref{xsxt=xsxtx}}\approx{}& y_m\mathbf a\mathbf b_{k,m+1}x_{m-1}\mathbf cx_m\mathbf d\\
\stackrel{\eqref{x_{m-1}c to x_{m-1}}}\approx{}& y_m\ini(\mathbf a)\ini(\mathbf a_{x_k})\mathbf b_{k,m+1}x_{m-1}\mathbf cx_m\mathbf d\\
\stackrel{\eqref{xsxt=xsxtx}}\approx{}& y_m\ini(\mathbf a)\ini(\mathbf a_{x_k})\mathbf b_{k,m+1}x_{m-1}\mathbf c \mathbf b_{m-1},
\end{aligned}
$$
and we are done.
\end{proof}

Let
$$
\Phi = \{\eqref{xsxt=xsxtx},\,\eqref{xyxy=xxyy},\,\gamma_k,\,\delta_k^m,\,\varepsilon_{k-1} \mid k \in \mathbb N, \ 1 \le m \le k \}.
$$
Given a monoid variety $\mathbf V$ and a system of identities $\Sigma$ we use $\Sigma(\mathbf V)$ to denote the set of those identities from $\Sigma$, which hold in $\mathbf V$.

\begin{proposition}
\label{P: 1st and 2nd occurrences}
Let $\mathbf V$ be a monoid variety from the interval $[\mathbf{LRB}\vee\mathbf F_1,\mathbf P]$, $\mathbf u \approx \mathbf v$ be an identity of $\mathbf V$ and $\{_i x, {_j y}\}$ be a critical pair in $\mathbf u \approx \mathbf v$. If $\{i,j\}=\{1,2\}$, then the critical pair $\{_i x, {_j y}\}$ is $\Delta$-removable for some $\Delta\subseteq\Phi(\mathbf V)$.
\end{proposition}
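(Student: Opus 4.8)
The plan is to reduce everything to Lemma~\ref{L: u'abu''=u'bau''}. Throughout, note that \eqref{xsxt=xsxtx} belongs to $\Phi(\mathbf V)$ since $\mathbf V\subseteq\mathbf P$, and that $\mathbf u\approx\mathbf v$, being an identity of $\mathbf{LRB}$ and of $\mathbf F_1$ (as $\mathbf{LRB}\vee\mathbf F_1\subseteq\mathbf V$), satisfies $\ini(\mathbf u)=\ini(\mathbf v)$ by Lemma~\ref{L: word problem LRB} and the claims~\eqref{sim(u)=sim(v) & mul(u)=mul(v)} and~\eqref{eq the same l-dividers} with $\ell=1$ by Lemma~\ref{L: word problem F_k}. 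Let $p$ be the letter of the critical pair whose displayed occurrence is a first occurrence and $q$ the one whose displayed occurrence is a second occurrence, so $(p,q)=(x,y)$ if $(i,j)=(1,2)$ and $(p,q)=(y,x)$ if $(i,j)=(2,1)$. Then $\mathbf u=\mathbf u'\mathbf c\mathbf u''$, where $\mathbf c\in\{pq,qp\}$ is the subword consisting of ${}_{1\mathbf u}p$ and ${}_{2\mathbf u}q$; here $p\notin\con(\mathbf u')$, while $q$ occurs exactly once in $\mathbf u'$, so $q\in\simple(\mathbf u')$. The letter $q$ is multiple since its second occurrence lies in $\mathbf c$; and $p$ is multiple too, for otherwise ${}_{1\mathbf u}p$ would be the $0$-divider immediately preceding ${}_{2\mathbf u}q$, which is impossible in $\mathbf v$ by~\eqref{eq the same l-dividers} with $\ell=1$. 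Hence $\mathbf u'\mathbf c\mathbf u''$ is precisely the configuration of Lemma~\ref{L: u'abu''=u'bau''} with $p$, $q$ playing the roles of $x$, $y$, and $D(\mathbf u,p),D(\mathbf u,q)\ge 1$.

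Suppose first $(i,j)=(1,2)$, so $\mathbf c=pq$ and ${}_{1\mathbf u}q<{}_{1\mathbf u}p<{}_{2\mathbf u}q$. Put $d_p=D(\mathbf u,p)$ and $d_q=D(\mathbf u,q)$. The key estimate is that $d_q\le d_p+1$ whenever $d_p<\infty$: in that case ${}_{1\mathbf u}p$ is a $d_p$-divider of $\mathbf u$ (Lemma~\ref{L: k-divider and depth}) lying strictly between the two occurrences of $q$, so $h_1^{d_p}(\mathbf u,q)\ne h_2^{d_p}(\mathbf u,q)$, whence $d_q\le d_p+1$. Consequently exactly one of the following holds, and to each we attach an identity $\theta$ from the list defining $\Phi$: $d_p=k<\infty$ and $d_q=1$, with $\theta=\gamma_k$; or $d_p=k<\infty$ and $d_q=m+1$ for some $1\le m\le k$, with $\theta=\delta_k^m$; or $d_p=\infty$ and $d_q=k+1<\infty$, with $\theta=\varepsilon_k$; or $d_p=d_q=\infty$, with $\theta$ equal to~\eqref{xyxy=xxyy}. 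In the first three cases, Corollary~\ref{C: form of the word} (with Lemma~\ref{L: 2nd occurence of y_m general} to locate second occurrences) exhibits the chain of dividers $x_0,\dots,x_{d_p-1}$ below $p$ and the divider $h_2^{d_q-1}(\mathbf u,q)$; let $Z$ consist of $p$, $q$ and these letters, and let $\mathbf u(Z),\mathbf v(Z)$ be obtained from $\mathbf u,\mathbf v$ by retaining only the letters of $Z$. Then $\mathbf u(Z)\approx\mathbf v(Z)$ holds in $\mathbf V$, and after deleting third and later occurrences by means of~\eqref{xsxt=xsxtx} its two sides become, respectively, the two sides of $\theta$ (up to a renaming of letters): the left side because $\mathbf u$ has the sandwich shape of Corollary~\ref{C: form of the word}, and the right side because $\mathbf u\approx\mathbf v$ respects $\ini$ and all level-$1$ divider data, so that the sole reordering it records within $Z$ is the transposition of ${}_{1\mathbf u}p$ and ${}_{2\mathbf u}q$. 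Hence $\theta$ follows from $\mathbf u(Z)\approx\mathbf v(Z)$ modulo~\eqref{xsxt=xsxtx}, so $\theta$ holds in $\mathbf V$. In the last case, ${}_{1\mathbf u}p$ and ${}_{2\mathbf u}q$ lie in a common maximal block of $\mathbf u$, and retaining only $p,q$ shows, as in Corollary~\ref{C: block = ini^2(block)}, that~\eqref{xyxy=xxyy} holds in $\mathbf V$. In every case $\theta\in\Phi(\mathbf V)$ and the depths match the hypotheses of the relevant part of Lemma~\ref{L: u'abu''=u'bau''}, so that lemma deduces $\mathbf u\approx\mathbf w$ from $\{\eqref{xsxt=xsxtx},\theta\}$, where $\mathbf w$ arises from $\mathbf u$ by transposing ${}_{1\mathbf u}p$ and ${}_{2\mathbf u}q$. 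Thus the critical pair is $\Delta$-removable with $\Delta=\{\eqref{xsxt=xsxtx},\theta\}\subseteq\Phi(\mathbf V)$.

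The case $(i,j)=(2,1)$, where $\mathbf c=qp$ and ${}_{1\mathbf u}q<{}_{2\mathbf u}q<{}_{1\mathbf u}p$, follows the same scheme — choose $\theta\in\Phi$ according to $D(\mathbf u,p)$ and $D(\mathbf u,q)$, derive it from a restriction of $\mathbf u\approx\mathbf v$, and invoke Lemma~\ref{L: u'abu''=u'bau''} with $p,q$ in the roles of $x,y$ — but is more delicate because ${}_{1\mathbf u}p$ no longer lies between the two occurrences of $q$. Using that $\mathbf u\approx\mathbf v$ is an identity of $\mathbf F_1$ one first shows that, when $D(\mathbf u,p)$ is finite, both occurrences of $q$ lie inside the $D(\mathbf u,p)$-block of $\mathbf u$ immediately preceding ${}_{1\mathbf u}p$; an analysis of the deepest divider separating them via Lemma~\ref{L: h_2^{k-1}} then bounds $D(\mathbf u,q)$ in terms of $D(\mathbf u,p)$ up to a small boundary term. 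The values of $(D(\mathbf u,p),D(\mathbf u,q))$ inside the range of Lemma~\ref{L: u'abu''=u'bau''} are treated exactly as above; the few remaining boundary values are first pushed into that range by a preliminary rearrangement of the block preceding ${}_{1\mathbf u}p$, using the identities of $\Phi(\mathbf V)$ already secured (as in the proof of Lemma~\ref{L: ab_k=ini(a)ini(a_{x_k})b_k}), after which Lemma~\ref{L: u'abu''=u'bau''} again yields $\Delta=\{\eqref{xsxt=xsxtx},\theta\}\subseteq\Phi(\mathbf V)$ with $\Delta\vdash\mathbf u\approx\mathbf w$.

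The main obstacle is the derivation of $\theta$ from $\mathbf u\approx\mathbf v$: one must verify that passing to the subword on $Z$ and tidying up with~\eqref{xsxt=xsxtx} really produces $\theta$ itself and not a weaker identity, which calls for close control of where the second occurrences of $x_0,\dots,x_{d_p-1}$ and of $h_2^{d_q-1}(\mathbf u,q)$ fall in $\mathbf u$ as against in $\mathbf v$, resting on Corollary~\ref{C: form of the word}, Lemma~\ref{L: 2nd occurence of y_m general} and Lemma~\ref{L: if first then second}. The secondary difficulty is the boundary analysis in the case $(i,j)=(2,1)$, where the depth of $q$ can slightly exceed the range covered by Lemma~\ref{L: u'abu''=u'bau''} and must be brought back inside it by a preliminary rewriting before that lemma is applied.
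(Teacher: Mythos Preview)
Your overall plan is the right one and matches the paper's: identify, from the depths $d_p=D(\mathbf u,p)$ and $d_q=D(\mathbf u,q)$, which identity $\theta\in\Phi$ is relevant, prove $\theta$ holds in $\mathbf V$, and then invoke Lemma~\ref{L: u'abu''=u'bau''}. However, the central step---deriving $\theta$ from the restricted identity $\mathbf u(Z)\approx\mathbf v(Z)$---is not justified, and in fact fails as stated.

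You write that ``$\mathbf u\approx\mathbf v$ respects $\ini$ and all level-$1$ divider data, so that the sole reordering it records within $Z$ is the transposition of ${}_{1\mathbf u}p$ and ${}_{2\mathbf u}q$.'' But $\mathbf F_1\subseteq\mathbf V$ gives only $0$-well-balancedness (Corollary~\ref{C: (k-1)-well-balanced}), not $r$-well-balancedness for $r\ge1$. The identity $\theta$ you want (say $\delta_{d_p}^{d_q-1}$) involves the tail $\mathbf b_{d_p}$, whose structure records the positions of \emph{second} occurrences of $x_0,\dots,x_{d_p-1}$. If the identity is only $0$-well-balanced, those second occurrences can sit in arbitrary positions in $\mathbf v(Z)$, and the restriction need not equal $\theta$ at all. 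The paper handles this by a genuine case split you have omitted: either $\mathbf F\subseteq\mathbf V$, in which case $\mathbf u\approx\mathbf v$ is $r$-well-balanced for every $r$ and your argument goes through (this is the paper's Case~1, where in fact $d_p=\infty$ automatically and one only needs $\varepsilon_{d_q-1}$); or $\mathbf F\nsubseteq\mathbf V$, in which case there is a largest $k$ with $\mathbf F_k\subseteq\mathbf V$, the identity is only $(k-1)$-well-balanced, and $\kappa_k$ holds in $\mathbf V$ by Lemma~\ref{L: V does not contain F_k}. When $d_p\le k$ the restricted identity still has the right shape; but when $d_p>k$ the $\mathbf v$-side $\mathbf v(Z)$ has a scrambled block of multiple letters above level $k$, and the paper must invoke Lemma~\ref{L: ab_k=ini(a)ini(a_{x_k})b_k} (which uses $\kappa_k$) to rewrite that block into the canonical form before $\theta$ can be read off. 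Your sketch contains no analogue of this step, and without it the argument does not produce $\theta$.

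A smaller point: the two cases $(i,j)=(1,2)$ and $(2,1)$ are genuinely symmetric here (the paper disposes of $(2,1)$ in one line), so your worry that $(2,1)$ is ``more delicate'' and needs a separate boundary analysis is misplaced; the real work, and the real gap, is entirely in the derivation of $\theta$ described above.
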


\begin{proof}
It suffices to assume that $(i,j)=(1,2)$ since the case when $(i,j)=(2,1)$ is similar.
Let $\mathbf w$ denote the  word obtained from $\mathbf u$ by replacing ${_{1\mathbf u} x}\,{_{2\mathbf u} y}$ with ${_{2\mathbf u} y}\,{_{1\mathbf u} x}$.
Since $\{{_1 x}, {_2 y}\}$ is a critical pair in $\mathbf u \approx \mathbf v$, the identity $\mathbf u(x,y) \approx \mathbf v(x,y)$ is equivalent modulo~\eqref{xsxt=xsxtx} to~\eqref{xyxy=xxyy}.
Therefore,~\eqref{xyxy=xxyy} is satisfied by $\mathbf V$.

Suppose that $D(\mathbf u,y)=\infty$.
Then $x$ is not an $r$-divider of $\mathbf u$ for any $r\ge 0$.
Therefore, $D(\mathbf u,x)=\infty$ by Lemma~\ref{L: k-divider and depth}.
Then $\mathbf u \approx \mathbf w$ follows from $\{\eqref{xsxt=xsxtx},\,\eqref{xyxy=xxyy}\} \subseteq \Phi(\mathbf V)$ by Lemma~\ref{L: u'abu''=u'bau''}(iv).
So, we may assume that $D(\mathbf u,y)<\infty$, say $D(\mathbf u,y)=s$ for some $s \in \mathbb N$.
Further, Lemma~\ref{L: 2-limited word} allows us to assume that the identity $\mathbf u \approx \mathbf v$ is 2-limited.

There are two cases.

\smallskip

\textit{Case }1: $\mathbf F \subseteq \mathbf V$.
In view of Remark~\ref{R: max decomposition}, there is a number $k$ such that the $k$-decompositions of $\mathbf u$ and $\mathbf v$ are maximal.
According to Corollary~\ref{C: (k-1)-well-balanced} and the inclusion $\mathbf F_{k+1} \subset \mathbf F$, the identity $\mathbf u \approx \mathbf v$ is $k$-well-balanced.
It follows that $x$ is not an $r$-divider of $\mathbf u$ for any $r\ge 0$.
Therefore, $D(\mathbf u,x)=\infty$ by Lemma~\ref{L: k-divider and depth}.

Now we apply Lemma~\ref{L: form of the identity} and conclude that there are letters $x_0,x_1,\dots, x_{s-1}$ such that $D(\mathbf u,x_r)=D(\mathbf v,x_r)=r$ for any $0\le r<s$ and the identity $\mathbf u \approx \mathbf v$ has the form
$$
\begin{aligned}
&\mathbf u_{2s+1}\stackrel{(1)}{y}\mathbf u_{2s}\stackrel{(1)}{x_{s-1}}\mathbf u_{2s-1}\stackrel{(2)}{y}\mathbf u_{2s-2}\stackrel{(1)}{x_{s-2}}\mathbf u_{2s-3}\stackrel{(2)}{x_{s-1}}\mathbf u_{2s-4}\stackrel{(1)}{x_{s-3}}\\
&\cdot\,\mathbf u_{2s-5}\stackrel{(2)}{x_{s-2}}\cdots\mathbf u_4\stackrel{(1)}{x_1}\mathbf u_3\stackrel{(2)}{x_2}\mathbf u_2\stackrel{(1)}{x_0}\mathbf u_1\stackrel{(2)}{x_1}\mathbf u_0\\
\approx{}&\mathbf v_{2s+1}\stackrel{(1)}{y}\mathbf v_{2s}\stackrel{(1)}{x_{s-1}}\mathbf v_{2s-1}\stackrel{(2)}{y}\mathbf v_{2s-2}\stackrel{(1)}{x_{s-2}}\mathbf v_{2s-3}\stackrel{(2)}{x_{s-1}}\mathbf v_{2s-4}\stackrel{(1)}{x_{s-3}}\\
&\cdot\,\mathbf v_{2s-5}\stackrel{(2)}{x_{s-2}}\cdots\mathbf v_4\stackrel{(1)}{x_1}\mathbf v_3\stackrel{(2)}{x_2}\mathbf v_2\stackrel{(1)}{x_0}\mathbf v_1\stackrel{(2)}{x_1}\mathbf v_0
\end{aligned}
$$
for some words $\mathbf u_0,\mathbf u_1,\dots,\mathbf u_{2s+1}$ and $\mathbf v_0,\mathbf v_1,\dots,\mathbf v_{2s+1}$.
Clearly, ${_{1\mathbf u}x}$ is the last letter of $\mathbf u_{2s-1}$.
Since $D(\mathbf u,x)=\infty$, we have $({_{2\mathbf u}x})<({_{1\mathbf u}x_{s-2}})$. It follows that ${_{2\mathbf u}x}$ occurs in $\mathbf u_{2s-2}$.
Further, $D(\mathbf v, x)=\infty$ by Corollary~\ref{C: (k-1)-well-balanced}.
Since $({_{2\mathbf v}y})<({_{1\mathbf v}x})$ and the identity $\mathbf u \approx \mathbf v$ is $k$-well-balanced, both ${_{1\mathbf v}x}$ and ${_{2\mathbf v}x}$ occur in $\mathbf v_{2s-2}$.
Then we substitute $y_s$ for $y$ in the identity
$$
\mathbf u(x_0,x_1,\dots,x_{s-1},x,y) \approx \mathbf v(x_0,x_1,\dots,x_{s-1},x,y)
$$
and obtain the identity $\varepsilon_{s-1}$.
Therefore, $\varepsilon_{s-1}$ is satisfied by $\mathbf V$.
Now we apply Lemma~\ref{L: u'abu''=u'bau''}(iii) and conclude that the identity $\mathbf u \approx \mathbf w$ follows from $\{\eqref{xsxt=xsxtx},\,\varepsilon_{s-1}\} \subseteq \Phi(\mathbf V)$.

\smallskip

\textit{Case }2: $\mathbf F \nsubseteq \mathbf V$.
In view of Lemma~\ref{L: L(F)}, there is $k \in \mathbb N$ such that $\mathbf F_{k+1} \nsubseteq \mathbf V$.
Let $k$ be the least number with such a property.
Then $\mathbf F_k \subseteq\mathbf V$.
According to Lemma~\ref{L: V does not contain F_k}, $\mathbf V$ satisfies $\kappa_k$.
Then $\delta_k^k$ holds in $\mathbf V$ by Lemma~\ref{L: kappa_k and delta_k^k}.

Suppose that $k<s$.
Then, by Parts~(ii) and~(iii) of Lemma~\ref{L: gamma_k, delta_k^k,epsilon_k}, $\mathbf V$ satisfies $\varepsilon_{s-1}$ and $\delta_r^s$ for any $r\ge s$.
Clearly, $D(\mathbf u,x)\ge k$ because $D(\mathbf u,y)\le k<s$ otherwise.
If $D(\mathbf u,x)=\infty$, then Lemma~\ref{L: u'abu''=u'bau''}(iii) applies and we conclude that the identity $\mathbf u \approx \mathbf w$ follows from $\{\eqref{xsxt=xsxtx},\,\varepsilon_{s-1}\} \subseteq \Phi(\mathbf V)$.
If $D(\mathbf u,x)=r$ for some $k \le r < \infty$, then, by Lemma~\ref{L: u'abu''=u'bau''}(ii), the identity $\mathbf u \approx \mathbf w$ follows from $\{\eqref{xsxt=xsxtx},\,\delta_r^s\} \subseteq \Phi(\mathbf V)$.
So, we may assume without loss of generality that $s \le k$.

If $D(\mathbf u,x)=\infty$, then since \eqref{sim(u)=sim(v) & mul(u)=mul(v)} and~\eqref{eq the same l-dividers} hold for all $\ell=1,2,\dots,k$ by Lemma~\ref{L: word problem F_k}, one can repeat literally arguments from the second paragraph of Case~1 and obtain that the identity $\mathbf u \approx \mathbf w$ follows from $\{\eqref{xsxt=xsxtx},\,\varepsilon_{s-1}\} \subseteq \Phi(\mathbf V)$.
So, we may assume without loss of generality that $D(\mathbf u,x)=p$ for some $p \in \mathbb N$.

Then by Corollary~\ref{C: form of the word}, there exist letters $x_0,x_1,\dots, x_{p-1}$ such that
\begin{align*}
\mathbf u={}&\mathbf u_{2p+1}\stackrel{(1)}{x}\mathbf u_{2p}\stackrel{(1)}{x_{p-1}}\mathbf u_{2s-1}\stackrel{(2)}{x}\mathbf u_{2p-2}\stackrel{(1)}{x_{p-2}}\mathbf u_{2p-3}\stackrel{(2)}{x_{p-1}}\mathbf u_{2p-4}\stackrel{(1)}{x_{p-3}}\\
&\cdot\,\mathbf u_{2p-5}\stackrel{(2)}{x_{p-2}}\cdots\mathbf u_4\stackrel{(1)}{x_1}\mathbf u_3\stackrel{(2)}{x_2}\mathbf u_2\stackrel{(1)}{x_0}\mathbf u_1\stackrel{(2)}{x_1}\mathbf u_0
\end{align*}
for some $\mathbf u_0,\mathbf u_1,\dots,\mathbf u_{2p+1} \in \mathfrak A^\ast$, with $D(\mathbf u,x_r)=r$ and $\mathbf u_{2r+2}$ not containing any $r$-dividers of $\mathbf u$ for any $r=0,1,\dots,k-1$.

Clearly, ${_{1\mathbf u}y}$ occurs in $\mathbf u_{2p+1}$, while ${_{2\mathbf u}y}$ is the first letter of $\mathbf u_{2p}$.
In view of Lemma~\ref{L: h_2^{k-1}}, there is a letter $y_{s-1}$ such that $D(\mathbf u, y_{s-1})=s-1$ and $({_{1\mathbf u}y})<({_{1\mathbf u}y_{s-1}})<({_{2\mathbf u}y})$.
Let $X=\{x_0,x_1,\dots,x_{p-1},x,y_{s-1},y\}$.

\smallskip

\textit{Subcase }2.1: $s=1$.
Then $y_{s-1}=y_0 \in \simple(\mathbf u)=\simple(\mathbf v)$ and
$$
\mathbf u(X) = yy_0xyx_{p-1}x\,\mathbf b_{p-1}.
$$

Let $p \le k$.
Since $({_{2\mathbf v}y})<({_{1\mathbf v}x})$ and the identity $\mathbf u \approx \mathbf v$ is \mbox{$(k-1)$}-well-balanced by Corollary~\ref{C: (k-1)-well-balanced}, we have $\mathbf v(X) =  yy_0yxx_{p-1}x\,\mathbf b_{p-1}$.
Let $\phi$ be the substitution $(x,y)\mapsto(x_p,y_s)$.
Then the identity $\phi(\mathbf u(X)) \approx \phi(\mathbf v(X))$ coincides with $\gamma_p$.

Now let $p > k$.
Since  the identity $\mathbf u \approx \mathbf v$ is \mbox{$(k-1)$}-well-balanced by Corollary~\ref{C: (k-1)-well-balanced}, we have $\mathbf v(X) = yy_0\,\mathbf a' \mathbf b_k$ for some word $\mathbf a'$ with $\simple(\mathbf a')=\{x_k,y\}$ and $\mul(\mathbf a')=\{x_{k+1},x_{k+2},\dots,x_{p-1},x\}$.
Now Lemma~\ref{L: word problem LRB} and the fact that $({_{2\mathbf v}y})<({_{1\mathbf v}x})$ imply that $\ini(\mathbf a') = yx x_{p-1} x_{p-2}\cdots x_k$.
It follows that $\mathbf a'=y\mathbf a$ for some $a$.
Since $\kappa_k$ holds in $\mathbf V$, Lemma~\ref{L: ab_k=ini(a)ini(a_{x_k})b_k} implies that $\mathbf V$ satisfies the identities
$$
\begin{aligned}
yy_0y\mathbf a \mathbf b_k \approx{}& yy_0y\ini(\mathbf a)\ini(\mathbf a_{x_k}) \mathbf b_k = yy_0yx\,x_{p-1}x_{p-2}\cdots x_k\,x_{p-1}x_{p-2}\cdots x_{k+1}\,\mathbf b_k\\
\approx{}& yy_0yxx_{p-1}x\mathbf b_{p-1,k+1}\mathbf b_k=yy_0yxx_{p-1}x\mathbf b_{p-1}
\end{aligned}
$$
and so the identity $\gamma_p$.

We see that $\gamma_p$ is satisfied by $\mathbf V$ in either case.
Now we apply Lemma~\ref{L: u'abu''=u'bau''}(i) and conclude that the identity $\mathbf u \approx \mathbf w$ follows from $\{\eqref{xsxt=xsxtx},\,\gamma_p\} \subseteq \Phi(\mathbf V)$.

\smallskip

\textit{Subcase }2.2: $s>1$.
Clearly, $p \ge s-1$ because $h_1^p(\mathbf u,y) \ne h_2^p(\mathbf u,y)$ and so $D(\mathbf u,y)<s$ otherwise.
Suppose that $p=s-1$.
Then $p \le k$.
In view of Corollary~\ref{C: (k-1)-well-balanced}, the identity $\mathbf u \approx \mathbf v$ is \mbox{$(k-1)$}-well-balanced.
Hence
$$
\begin{aligned}
\mathbf v={}&\mathbf v_{2p+1}\stackrel{(1)}{x}\mathbf v_{2p}\stackrel{(1)}{x_{p-1}}\mathbf v_{2p-1}\stackrel{(2)}{x}\mathbf v_{2p-2}\stackrel{(1)}{x_{p-2}}\mathbf v_{2p-3}\stackrel{(2)}{x_{p-1}}\mathbf v_{2s-4}\stackrel{(1)}{x_{p-3}}\\
&\cdot\,\mathbf v_{2p-5}\stackrel{(2)}{x_{p-2}}\cdots\mathbf v_4\stackrel{(1)}{x_1}\mathbf v_3\stackrel{(2)}{x_2}\mathbf v_2\stackrel{(1)}{x_0}\mathbf v_1\stackrel{(2)}{x_1}\mathbf v_0
\end{aligned}
$$
for some words $\mathbf v_0,\mathbf v_1,\dots,\mathbf v_{2p+1}$.
Let $\phi$ be the substitution $(x,y)\mapsto(x_p,x)$.
Then the identity
$$
\phi(\mathbf u(x_0,x_1,\dots,x_{p-1},x,y)) \approx \phi(\mathbf v(x_0,x_1,\dots,x_{p-1},x,y))
$$
is equal to $\kappa_{s-1}$.
Then $\mathbf V$ satisfies $\delta_{s-1}^{s-1}$ by Lemma~\ref{L: kappa_k and delta_k^k}.
Now we apply Lemma~\ref{L: u'abu''=u'bau''}(ii) and conclude that the identity $\mathbf u \approx \mathbf w$ follows from $\{\eqref{xsxt=xsxtx},\,\delta_{s-1}^{s-1}\} \subseteq \Phi(\mathbf V)$.
So, we may assume that $p > s-1$.
In particular, $x \ne y_{s-1}$.

Since $s>1$, the letter $y_{s-1}$ is in $\mul(\mathbf u)=\mul(\mathbf v)$.
Since $D(\mathbf u,y)=s$, there are no \mbox{$(s-2)$}-dividers of $\mathbf u$ between ${_{1\mathbf u}y}$ and ${_{2\mathbf u}y}$.
This fact and Lemma~\ref{L: 2nd occurence of y_m general} imply that ${_{2\mathbf u}y_{s-1}}$ occurs in either $\mathbf u_{2s-2}$ or $\mathbf u_{2s-3}$ or $\mathbf u_{2s-4}$.
Let $y_{s-2}=h_2^{s-1}(\mathbf u,y_{s-1})$.
Using Lemmas~\ref{L: h_2^{k-1}} and~\ref{L: if first then second}, one can show that ${_{1\mathbf u}y_{s-2}}$ follows ${_{2\mathbf u}x_s}$.
Since $\mathbf u_{2s-2}$ does not contain any \mbox{$(s-2)$}-dividers of $\mathbf u$, ${_{1\mathbf u}y_{s-2}}$ does not occur in $\mathbf u_{2s-2}$.
Therefore, ${_{2\mathbf u}y_{s-1}}$ occurs in either $\mathbf u_{2s-3}$ or $\mathbf u_{2s-4}$.
Since $s \le k$ and $\mathbf u \approx \mathbf v$ is \mbox{$(k-1)$}-well-balanced by Corollary~\ref{C: (k-1)-well-balanced}, the letter ${_{2\mathbf v}y_{s-1}}$ lies between ${_{2\mathbf v}x_{s-2}}$ and ${_{1\mathbf v}x_{s-3}}$.
This means that ${_{2\mathbf v}y_{s-1}}$ occurs in either $\mathbf v_{2s-3}$ or $\mathbf v_{2s-4}$.

Clearly,
$$
\mathbf u(X) = yy_{s-1}xyx_{p-1}x\,\mathbf b_{p-1,s}\,x_{s-2}\,\mathbf c\mathbf b_{s-2},
$$
where $\mathbf c \in\{x_{s-1}y_{s-1}, y_{s-1}x_{s-1}\}$.

Let $p \le k$. Since the identity $\mathbf u \approx \mathbf v$ is \mbox{$(k-1)$}-well-balanced and $({_{2\mathbf v}y})<({_{1\mathbf v}x})$, we have
$$
\mathbf v(X) = yy_{s-1}yxx_{p-1}x\,\mathbf b_{p-1,s}\,x_{s-2}\,\mathbf d\mathbf b_{s-2},
$$
where $\mathbf d \in\{x_{s-1}y_{s-1}, y_{s-1}x_{s-1}\}$.
Then the identity $\mathbf u(X) \approx \mathbf v(X)$ coincides with
\begin{equation}
\label{almost delta_p^{s-1}}
yy_{s-1}xy\,x_{p-1}x\mathbf b_{p-1,s}\,x_{s-2}\,\mathbf c\mathbf b_{s-2} \approx yy_{s-1}yx\,x_{p-1}x\mathbf b_{p-1,s}\,x_{s-2}\,\mathbf d\mathbf b_{s-2}.
\end{equation}

Now let $p > k$.
Since the identity $\mathbf u \approx \mathbf v$ is \mbox{$(k-1)$}-well-balanced and $({_{2\mathbf v}y})<({_{1\mathbf v}x})$, we have
$$
\mathbf v(X) = yy_{s-1}\,\mathbf a' \mathbf b_{k,s}\,x_{s-2}\,\mathbf d\mathbf b_{s-2},
$$
where $\mathbf d \in\{x_{s-1}y_{s-1}, y_{s-1}x_{s-1}\}$ and $\mathbf a'$ is a word with $\simple(\mathbf a')=\{x_k,y\}$ and $\mul(\mathbf a')=\{x_{k+1},x_{k+2},\dots,x_{p-1},x\}$.
Now Lemma~\ref{L: word problem LRB} and the fact that $({_{2\mathbf v}y})<({_{1\mathbf v}x})$ imply that $\ini(\mathbf a') = yx x_{p-1} x_{p-2}\cdots x_k$.
Then $\mathbf a'=y\mathbf a$ for some $\mathbf a$.
Since $\mathbf V$ satisfies $\kappa_k$, Lemma~\ref{L: ab_k=ini(a)ini(a_{x_k})b_k} implies that $\mathbf V$ satisfies also
$$
\begin{aligned}
yy_{s-1}y\,\mathbf a \mathbf b_{k,s}\,x_{s-2}\,\mathbf d\mathbf b_{s-2} \approx{}& yy_{s-1}y\,\ini(\mathbf a) \ini(\mathbf a_{x_k})\, \mathbf b_{k,s}\,x_{s-2}\,\mathbf d\mathbf b_{s-2} \\
={}&yy_{s-1}y\,xx_{p-1}\cdots x_k\, xx_{p-1}\cdots x_{k+1}\,\mathbf b_{k,s}\,x_{s-2}\,\mathbf d\mathbf b_{s-2}\\
\approx{}& yy_{s-1}y\,xx_{p-1}x\,\mathbf b_{p-1,s}\,x_{s-2}\,\mathbf d\mathbf b_{s-2}
\end{aligned}
$$
and so~\eqref{almost delta_p^{s-1}}.

We see that~\eqref{almost delta_p^{s-1}} is satisfied by $\mathbf V$ in either case.
Then $\mathbf V$ satisfies the identity $\delta_p^{s-1}$ because
$$
\begin{aligned}
&y_s\stackrel{(1)}{y_{s-1}}x_p y_s \cdots x_{s-2}\stackrel{(2)}{x_{s-1}}\stackrel{(2)}{y_{s-1}}x_{s-3}x_{s-2}\cdots x_1 x_2 x_0 x_1\\
\stackrel{\eqref{xsxt=xsxtx}}\approx{}&y_s\stackrel{(1)}{y_{s-1}}x_p y_s \cdots (x_{s-2}\stackrel{(2)}{x_{s-1}}\stackrel{(2)}{y_{s-1}})\,\mathbf c\,x_{s-3}(x_{s-2}\stackrel{(4)}{x_{s-1}}\stackrel{(4)}{y_{s-1}})\cdots x_1 x_2 x_0 x_1\\
\stackrel{\eqref{almost delta_p^{s-1}}}\approx{}&y_s\stackrel{(1)}{y_{s-1}} y_sx_p \cdots (x_{s-2}\stackrel{(2)}{x_{s-1}}\stackrel{(2)}{y_{s-1}})\,\mathbf d\,x_{s-3}(x_{s-2}\stackrel{(4)}{x_{s-1}}\stackrel{(4)}{y_{s-1}})\cdots x_1 x_2 x_0 x_1\\
\stackrel{\eqref{xsxt=xsxtx}}\approx{}&y_s\stackrel{(1)}{y_{s-1}}y_sx_p \cdots x_{s-2}\stackrel{(2)}{x_{s-1}}\stackrel{(2)}{y_{s-1}}x_{s-3}x_{s-2}\cdots x_1 x_2 x_0 x_1.
\end{aligned}
$$
Now we apply Lemma~\ref{L: u'abu''=u'bau''}(ii) and conclude that the identity $\mathbf u \approx \mathbf w$ follows from $\{\eqref{xsxt=xsxtx},\,\delta_p^{s-1}\} \subseteq \Phi(\mathbf V)$.
\end{proof}

\subsection{Removing critical pairs of the form $\{{_2}x,{_2}y\}$}
\label{subsec: critical pairs (2x,2y)}

We introduce the following countably infinite series of identities:
$$
\begin{aligned}
\label{zeta_k}
\zeta_{k-1}:&\enskip x y_0 y x_{k-1} xy \mathbf b_{k-1} \approx x y_0 y x_{k-1} yx \mathbf b_{k-1},\\
\label{lambda_k^m}
\lambda_k^m:&\enskip x y_m y x_k xy \mathbf b_{k,m}y_m\mathbf b_{m-1}\approx x y_m y x_k yx \mathbf b_{k,m}y_m\mathbf b_{m-1},\\
\label{eta_k}
\eta_{k-1}:&\enskip x y_1 y y_0 xy x_{k-1}y_1\mathbf b_{k-1}\approx x y_1 y y_0 yxx_{k-1}y_1\mathbf b_{k-1},\\
\label{mu_k^m}
\mu_k^m:&\enskip x y_{m+1} y y_m xy x_ky_{m+1}\mathbf b_{k,m}y_m\mathbf b_{m-1}\approx x y_{m+1} y y_m yxx_k y_{m+1}\mathbf b_{k,m}y_m\mathbf b_{m-1},\\
\label{nu_k}
\nu_{k-1}:&\enskip x x_{k-1}yzxy z_\infty^2z \mathbf b_{k-1}\approx x x_{k-1}yzyx z_\infty^2z \mathbf b_{k-1},
\end{aligned}
$$
where $k \in \mathbb N$ and $1\le m \le k$.

We omit the proof of the following statement because it is very similar to the proof of Lemma~\ref{L: gamma_k, delta_k^k,epsilon_k}.

\begin{lemma}
\label{L: zeta_k,lambda_k^m,eta_k,mu_k^m}
The following proper inclusions hold:
\begin{itemize}
\item[\textup{(i)}] $\mathbf P\{\zeta_0\} \subset \mathbf P\{\zeta_1\}\subset\cdots\subset \mathbf P\{\zeta_k\}\subset\cdots\subset\mathbf P\{\nu_0\}\subset\mathbf P\{\nu_1\}\subset\cdots\subset \mathbf P\{\nu_k\}\subset\cdots$;
\item[\textup{(ii)}] $\mathbf P\{\varepsilon_k\} \subset \mathbf P\{\zeta_k\}\subset \mathbf P\{\lambda_k^1\}\subset \mathbf P\{\lambda_k^2\} \subset \cdots \subset \mathbf P\{\lambda_k^k\}\subset \mathbf P\{\lambda_{k+1}^k\} \subset \cdots \subset \mathbf P\{\lambda_{k+m}^k\}\subset\cdots$;
\item[\textup{(iii)}] $\mathbf P\{\eta_0\} \subset \mathbf P\{\eta_1\}\subset\cdots\subset \mathbf P\{\eta_k\}\subset\cdots$;
\item[\textup{(iv)}] $\mathbf P\{\mu_k^1\} \subset \mathbf P\{\mu_k^2\}\subset\cdots\subset \mathbf P\{\mu_k^k\}\subset\mathbf P\{\mu_{k+1}^k\}\subset\cdots\subset\mathbf P\{\mu_{k+m}^k\}\subset\cdots$.\qed
\end{itemize}
\end{lemma}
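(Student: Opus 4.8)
The plan is to mimic, essentially arrow by arrow, the proof of Lemma~\ref{L: gamma_k, delta_k^k,epsilon_k}. The statement is the exact analogue of that lemma: the identities $\zeta_{k-1}$, $\lambda_k^m$ and the pair $\eta_{k-1},\mu_k^m$ play here the roles that $\gamma_k$, $\delta_k^m$ and $\varepsilon_{k-1}$ played there, the only structural difference being that the letters to be exchanged now form a critical pair of the form $\{{_2}x,{_2}y\}$ rather than $\{{_1}x,{_2}y\}$ (which is why each of these identities has an extra occurrence of $x$ at the front). Hence it suffices to settle each of the inclusions occurring in (i)--(iv) separately, and for each of them to treat the non-strict part and the strict part.

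For a non-strict inclusion $\mathbf P\{\Sigma_1\}\subseteq\mathbf P\{\Sigma_2\}$ I would exhibit a substitution $\phi$ carrying the identity $\Sigma_1$ (the one with the larger subscripts) to the identity $\Sigma_2$ modulo~\eqref{xsxt=xsxtx}, so that $\Sigma_2$ becomes a consequence of $\Sigma_1$ together with~\eqref{xsxt=xsxtx} inside $\mathbf P$. The ``index-raising'' arrows $\zeta_{k-1}\to\zeta_k$, $\eta_{k-1}\to\eta_k$, $\nu_{k-1}\to\nu_k$, $\lambda_k^m\to\lambda_{k+1}^m$, $\mu_k^m\to\mu_{k+1}^m$, together with $\lambda_k^m\to\lambda_k^{m+1}$ and $\mu_k^m\to\mu_k^{m+1}$ for $m<k$, are handled by the same ``shift'' substitutions on the letters $x_i,y_i$ that realise $\gamma_k\to\gamma_{k+1}$, $\delta_k^m\to\delta_{k+1}^m$ and $\delta_k^m\to\delta_k^{m+1}$ in the model proof (its maps $\phi_1$ and $\phi_3$, and the ``evident'' shift for the $\gamma$-chain); the three ``vertical'' bridges $\mathbf P\{\varepsilon_k\}\subseteq\mathbf P\{\zeta_k\}$, $\mathbf P\{\zeta_k\}\subseteq\mathbf P\{\lambda_k^1\}$ and $\mathbf P\{\zeta_k\}\subseteq\mathbf P\{\nu_0\}$ are handled by collapsing substitutions that send the redundant $x_i$ to the empty word, exactly in the style of the substitution establishing $\mathbf P\{\gamma_k\}\subseteq\mathbf P\{\varepsilon_0\}$. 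In each instance one writes down $\phi$, applies it, and normalises the image by~\eqref{xsxt=xsxtx}; these are the same bookkeeping computations as in the model proof.

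For the strictness I would use the two devices of that proof. Whenever a separation is already realised inside $\mathbf F$, the appropriate member of the chain $\mathbf F_k\subset\mathbf H_k\subset\mathbf I_k\subset\mathbf J_k^1\subset\cdots$ of Lemma~\ref{L: L(F)} serves as a witness: combining Lemma~\ref{L: L(F)}, part~(ii) of the present statement, and the inclusions $\mathbf J_k^m\subseteq\mathbf P\{\delta_k^m\}$ of Lemma~\ref{L: gamma_k, delta_k^k,epsilon_k}, one finds such a subvariety satisfying the defining identity of the smaller variety's predecessor but not of the smaller variety itself --- this is exactly the trick that gave $\mathbf I_{k+1}\nsubseteq\mathbf P\{\gamma_k\}$ and $\mathbf J_{k+1}^m\nsubseteq\mathbf P\{\delta_k^m\}$ in the model proof. (The strictness of $\mathbf P\{\zeta_k\}\subset\mathbf P\{\nu_0\}$ then reduces to that of the $\zeta$-chain, since $\mathbf P\{\zeta_k\}\subseteq\mathbf P\{\nu_0\}$ for all $k$.) For the separations not realised inside $\mathbf F$ --- above all along the $\nu$-chain and for the bridges $\mathbf P\{\varepsilon_k\}\subset\mathbf P\{\zeta_k\}\subset\mathbf P\{\lambda_k^1\}$, where the letter $z_\infty$ of infinite depth has no counterpart among the $\gamma/\delta/\varepsilon$ identities --- I would argue directly, just as the model proof disposes of ``$\mathbf P\{\varepsilon_k\}$ violates $\varepsilon_{k-1}$'': all the identities involved are $2$-limited, so by computing the depths and the $k$-dividers of the two sides and invoking Lemma~\ref{L: word problem F_k} and the $k$-decomposition machinery of Section~\ref{sec: prelim} one checks that the relevant identity is not deducible from the stronger one and~\eqref{xsxt=xsxtx}.

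The main obstacle is organisational rather than conceptual: there are five interleaved families and four separate chains, so the argument is a long but shallow list of substitutions (for the inclusions) and of subvariety witnesses or non-deducibility checks (for the strict parts), and the genuine work is keeping the bookkeeping straight. The one new ingredient, relative to Lemma~\ref{L: gamma_k, delta_k^k,epsilon_k}, is the family $\nu_{k-1}$: because the letter $z_\infty$ prevents us from borrowing a witness from $\mathfrak L(\mathbf F)$, the strictness of $\mathbf P\{\nu_{k-1}\}\subset\mathbf P\{\nu_k\}$ has to be established by a dedicated --- though still routine --- computation with depths.
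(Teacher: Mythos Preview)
Your proposal is correct and follows exactly the approach indicated by the paper, which omits the proof entirely with the remark that it is ``very similar to the proof of Lemma~\ref{L: gamma_k, delta_k^k,epsilon_k}''. Your outline supplies precisely the details the paper suppresses: the index-raising and collapsing substitutions for the non-strict inclusions, and the witness subvarieties from $\mathfrak L(\mathbf F)$ (or direct depth computations for the $\nu$-chain) for strictness.
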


Let $\mathbf w = \mathbf w_1\mathbf w_2\mathbf w_3$.
We say that the \textit{depth} of the subword $\mathbf w_2$ in $\mathbf w$ is equal to $k$ if $D(\mathbf w,a) = k$ for some $a \in \con(\mathbf w_2)\setminus\con(\mathbf w_1)$ and $k$ is the least number with such a property.
Equivalently, the depth of the subword $\mathbf w_2$ in $\mathbf w$ is equal to $k$ if $\mathbf w_2$ contains a $k$-divider of $\mathbf w$ but does not contain any \mbox{$(k-1)$}-divider of $\mathbf w$ (see Lemma~\ref{L: k-divider and depth}).
If, for any $k\ge0$, $\mathbf w_2$ does not contain any $k$-divider of $\mathbf w$, then we say that the depth of the subword $\mathbf w_2$ in $\mathbf w$ is \textit{infinite}.
Equivalently, the depth of the subword $\mathbf w_2$ in $\mathbf w$ is infinite if $D(\mathbf w,a)=\infty$ for any $a \in \con(\mathbf w_2)\setminus\con(\mathbf w_1)$ (see Lemma~\ref{L: k-divider and depth}).

\begin{lemma}
\label{L: u'abu''=u'bau'' zeta_k,lambda_k^m}
Let $\mathbf V$ be a subvariety of $\mathbf P$ and $\mathbf u=\mathbf u'\stackrel{(2)}{x}\stackrel{(2)}{y}\mathbf u''$ for some $\mathbf u',\mathbf u'' \in \mathfrak A^\ast$.
Further, let $\mathbf u'=\mathbf u_1\stackrel{(1)}{x}\mathbf u_2\stackrel{(1)}{y}\mathbf u_3$, where the depth of the subwords $\mathbf u_2$ and $\mathbf u_3$ in $\mathbf u$ is equal to $m$ and $k$, respectively.
Suppose also that one of the following holds:
\begin{itemize}
\item[\textup{(i)}] $k\ge 0$, $m=0$ and $\mathbf V$ satisfies $\zeta_k$;
\item[\textup{(ii)}] $1\le m \le k$ and $\mathbf V$ satisfies $\lambda_k^m$.
\end{itemize}
Then $\mathbf V$ satisfies the identity $\mathbf u\approx\mathbf u'yx\mathbf u''$.
\end{lemma}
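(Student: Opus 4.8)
The plan is to adapt, with the obvious changes, the argument proving Lemma~\ref{L: u'abu''=u'bau''}(ii),(iii): produce a substitution instance of $\lambda_k^m$ (of $\zeta_k$ in case~(i)) which, after finitely many uses of~\eqref{xsxt=xsxtx}, reduces to the required identity $\mathbf u\approx\mathbf u'yx\mathbf u''$. Write $\mathbf u=\mathbf p_1\,{_1x}\,\mathbf p_2\,{_1y}\,\mathbf p_3\,{_2x}\,{_2y}\,\mathbf p_4$, so that $\mathbf u'=\mathbf p_1\,{_1x}\,\mathbf p_2\,{_1y}\,\mathbf p_3$ and $\mathbf u''=\mathbf p_4$. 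By Lemma~\ref{L: 2-limited word} we may assume $\mathbf u$ is $2$-limited; and, just as in Lemma~\ref{L: u'abu''=u'bau''} (where part~(i) was subsumed by the proof of part~(ii)), case~(i) will turn out to be a degenerate and easier instance of the computation made in case~(ii), so I would carry out case~(ii) and only point to the simplifications needed for~(i).

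Assume~(ii). Since $\mathbf p_2$ has depth $m$ and $\mathbf p_3$ has depth $k$, Lemma~\ref{L: k-divider and depth} supplies a letter $y_m$ whose first occurrence lies in $\mathbf p_2$ with $D(\mathbf u,y_m)=m$ and a letter $x_k$ whose first occurrence lies in $\mathbf p_3$ with $D(\mathbf u,x_k)=k$; moreover $\mathbf p_2$ contains no $(m-1)$-divider of $\mathbf u$ and $\mathbf p_3$ no $(k-1)$-divider, whence, second occurrences never being dividers, $D(\mathbf u,y)\ge m$. Apply Corollary~\ref{C: form of the word} to $\mathbf u$ at $x_k$: we get letters $x_0,\dots,x_{k-1}$ with $D(\mathbf u,x_s)=s$ and the decomposition~\eqref{form of u} of $\mathbf u$, each of whose blocks $\mathbf u_{2s+2}$ is free of $s$-dividers and whose leading block (everything before ${_1x_k}$) contains ${_1x}$, $\mathbf p_2$ and ${_1y}$. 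Because the segment of this leading block between ${_1y_m}$ and ${_1x_k}$ contains no $(m-1)$-divider (here one uses that $\mathbf p_2$ is free of $(m-1)$-dividers, that $D(\mathbf u,y)\ge m$, that $\mathbf p_3$ is free of $(k-1)$-dividers, and $m-1\le k-1$), Lemma~\ref{L: 2nd occurence of y_m general} applies with ${_1y_m}$ in the role of its ``$y_m$'' and places ${_2y_m}$ inside one of the three consecutive blocks the decomposition denotes $\mathbf u_{2m}$, $\mathbf u_{2m-1}$, $\mathbf u_{2m-2}$. Now, exactly as in the proof of Lemma~\ref{L: u'abu''=u'bau''}(ii), split each even block of the decomposition as $\mathbf v'\mathbf v''$ (with $\mathbf v'$ free of $(s-1)$-dividers and $\mathbf v''$ either empty or headed by one) and assemble the blocks $\mathbf q_s$ from these pieces as there. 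A substitution $\phi$ into $\lambda_k^m$ that fixes $x$ and $y$, sends $y_m$ and $x_k$ to the appropriate blocks carved out of the leading block (the analogues of $\mathbf v_{2k+1}$ in the proof of Lemma~\ref{L: u'abu''=u'bau''}(ii)), and sends $x_0,\dots,x_{k-1}$ to $\mathbf q_0,\dots,\mathbf q_{k-1}$ then has the property that the identity obtained from $\lambda_k^m$ by applying $\phi$ and prepending $\mathbf p_1$ to both sides is, modulo~\eqref{xsxt=xsxtx} (used to reorganise the $\mathbf b_{k,m}y_m\mathbf b_{m-1}$-tails of its two sides), the identity $\mathbf u\approx\mathbf p_1\,{_1x}\,\mathbf p_2\,{_1y}\,\mathbf p_3\,{_2y}\,{_2x}\,\mathbf p_4$. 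Since $\mathbf V$ satisfies $\lambda_k^m$ and lies in $\mathbf P$, it satisfies the desired identity.

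The technical heart, and the step I expect to be the main obstacle, is the verification — identical in form to the one that makes up the core of the proof of Lemma~\ref{L: u'abu''=u'bau''}(ii) — that every letter occurring in a block $\mathbf q_s$ already has a non-first occurrence within $\mathbf q_s\mathbf q_{s-1}$, and that every letter occurring in the leading block to the right of ${_1y_m}$ reappears within the matching suffix of $\mathbf u$: it is these facts that make the passage, modulo~\eqref{xsxt=xsxtx}, between the ``linear'' tail $\mathbf p_4$ of $\mathbf u$ and the $\mathbf b$-shuffled tail of $\phi(\lambda_k^m)$ legitimate. They are obtained, as there, from $D(\mathbf u,x_s)=s$ and $D(\mathbf u,x_{s+1})=s+1$ via Lemmas~\ref{L: h_2^{k-1}} and~\ref{L: if first then second}, which force each $\mathbf q_s$ to be free of $(s-1)$-dividers. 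The only feature not already present in the proof of Lemma~\ref{L: u'abu''=u'bau''}(ii) is that here the leading block additionally carries the unconstrained prefix $\mathbf p_1$ together with ${_1x}$, $\mathbf p_2$ and ${_1y}$; this merely adds a case distinction according to which of $\mathbf u_{2m}$, $\mathbf u_{2m-1}$, $\mathbf u_{2m-2}$ contains ${_2y_m}$, each branch being settled by the same estimates (and the fact that $\mathbf p_2$ has no $(m-1)$-divider). Finally, case~(i) is the specialisation in which $y_m$ is replaced by a simple letter $y_0\in\con(\mathbf p_2)$ and $\lambda_k^m$ by $\zeta_k$; there is then no second occurrence of $y_m$ to track, so both the construction of $\phi$ and the subsequent bookkeeping become strictly simpler.
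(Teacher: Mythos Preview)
Your approach is correct and follows the same line as the paper: substitute into $\lambda_k^m$ (resp.\ $\zeta_k$) and reduce the tail via~\eqref{xsxt=xsxtx}. The paper, however, avoids several of the complications you introduce. It never selects a specific letter $y_m$ inside $\mathbf p_2$, never invokes Lemma~\ref{L: 2nd occurence of y_m general}, and has no case distinction on where ${_2y_m}$ lies. Instead, the substitution simply sends the \emph{variable} $y_m$ of $\lambda_k^m$ to the entire word $\mathbf p_2$ and the variable $x_k$ to the entire word $\mathbf p_3$ (together with $x_s\mapsto\mathbf q_s$ for $s<k$, with the $\mathbf q_s$ built exactly as you describe from the decomposition of $\mathbf u''$). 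The substituted identity is then $x\,\mathbf p_2\,y\,\mathbf p_3\,xy\,\mathbf q\approx x\,\mathbf p_2\,y\,\mathbf p_3\,yx\,\mathbf q$, and prepending $\mathbf p_1$ gives the claim once $\mathbf q$ is shown equivalent to $\mathbf p_4$ modulo~\eqref{xsxt=xsxtx}. The three verifications you rightly identify as the technical heart (letters of $\mathbf p_2$ reappear before the end of $\mathbf q_{m-1}$; letters of $\mathbf p_3$ reappear within $\mathbf p_3\,xy\,\mathbf q_{k-1}$; letters of $\mathbf q_s$ reappear within $\mathbf q_s\mathbf q_{s-1}$) already handle \emph{every} letter of $\mathbf p_2$ at once, using only the hypothesis that $\mathbf p_2$ has depth $m$; there is nothing special to track about any individual letter, and the extra machinery you propose for ${_2y_m}$ can be dropped.
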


\begin{proof}
We omit the proof of Part~(i) because it is very similar to (and in fact simpler than) that of Part~(ii).

\smallskip

(ii) The fact that the depth of $\mathbf u_3$ is equal to $k$ in $\mathbf u$ and Lemma~\ref{L: k-divider and depth} imply that $\mathbf u_3$ contains a $k$-divider $x_k$ of $\mathbf u$ such that $D(\mathbf u,x_k)=k$.
It follows from Corollary~\ref{C: form of the word} that there exist letters $x_0,x_1,\dots, x_{k-1}$ such that $D(\mathbf u,x_s)=s$ for any $0\le s<k$ and
$$
\mathbf u''=\mathbf v_{2k}x_{k-1}\mathbf v_{2k-1}x_k\mathbf v_{2k-2}x_{k-2}\mathbf v_{2k-3}x_{k-1}\cdots\mathbf v_4x_1\mathbf v_3x_2\mathbf v_2x_0\mathbf v_1x_1\mathbf v_0,
$$
for some $\mathbf v_0,\mathbf v_1,\dots,\mathbf v_{2k} \in \mathfrak A^\ast$.
Clearly, for any $s=1,2,\dots,k-1$, there exist $\mathbf v_{2s}', \mathbf v_{2s}''\in \mathfrak A^\ast$ such that $\mathbf v_{2s} = \mathbf v_{2s}'\mathbf v_{2s}''$, where $\mathbf v_{2s}'$ does not contain any \mbox{$(s-1)$}-divider of $\mathbf u$, while either $\mathbf v_{2s}''=\lambda$ or $h(\mathbf v_{2s}'')$ is an \mbox{$(s-1)$}-divider of $\mathbf u$.
Put $\mathbf v_{2k}''=\mathbf v_{2k}$ and $\mathbf v_0'=\mathbf v_0$.
Let
$$
\mathbf q_s = \mathbf v_{2s+2}''x_s\mathbf v_{2s+1}x_{s+1}\mathbf v_{2s}'
$$
for any $s=0,1,\dots,k-1$.

Let $\phi$ be the substitution
$$
(x_0,\dots,x_{k-1},x_k,y_m)\mapsto (\mathbf q_0,\dots,\mathbf q_{k-1},\mathbf u_3,\mathbf u_2).
$$
Then the identity $\phi(\lambda_k^m)$ coincides with
\begin{equation}
\label{to lambda_k^m}
x \mathbf u_2 y \mathbf u_3xy\mathbf q\approx x \mathbf u_2 y \mathbf u_3yx\mathbf q
\end{equation}
where
$$
\mathbf q = \mathbf q_{k-1}\mathbf u_3\mathbf q_{k-2}\mathbf q_{k-1}\cdots \mathbf q_{m-1}\mathbf q_m\mathbf u_2\mathbf q_{m-2}\mathbf q_{m-1}\cdots \mathbf q_1\mathbf q_2\mathbf q_0\mathbf q_1.
$$

We note that $\mathbf u'' = \mathbf q_{k-1}\mathbf q_{k-2}\cdots \mathbf q_0$.
By the same arguments as in the proof of Lemma~\ref{L: u'abu''=u'bau''}(ii) we can show that
\begin{itemize}
\item if a letter occurs in $\mathbf u_2$, then the subword $\mathbf u_2y\mathbf u_3xy\mathbf q_{k-1}\cdots\mathbf q_m\mathbf q_{m-1}$ of $\mathbf u$ contains some non-first occurrence of this letter in $\mathbf u$;
\item if a letter occurs in $\mathbf u_3$, then the subword $\mathbf u_3xy\mathbf q_{k-1}$ of $\mathbf u$ contains some non-first occurrence of this letter in $\mathbf u$;
\item if a letter occurs in $\mathbf q_s$, then the subword $\mathbf q_s\mathbf q_{s-1}$ of $\mathbf u$ contains some non-first occurrence of this letter in $\mathbf u$ for any $s=1,2,\dots,k-1$.
\end{itemize}
Then the identity~\eqref{xsxt=xsxtx} implies $
\mathbf u =\mathbf u'xy\mathbf u'' \approx \mathbf u'xy\mathbf q$.
By a similar argument we can show that $\mathbf u'yx\mathbf u'' \approx \mathbf u'yx\mathbf q$ follows from~\eqref{xsxt=xsxtx}.
These identities together with~\eqref{to lambda_k^m} imply the identity $\mathbf u \approx \mathbf u'yx\mathbf u''$.
Thus, it is satisfied by $\mathbf V$.
\end{proof}

The proof of the following lemma is very similar to the proof of Lemma~\ref{L: u'abu''=u'bau'' zeta_k,lambda_k^m}.
We provide it for the sake of completeness.

\begin{lemma}
\label{L: u'abu''=u'bau'' 2nd occurrences eta_k, mu_k^m}
Let $\mathbf V$ be a subvariety of $\mathbf P$ and $\mathbf u=\mathbf u'\stackrel{(2)}{x}\stackrel{(2)}{y}\mathbf u''$ for some $\mathbf u',\mathbf u'' \in \mathfrak A^\ast$.
Further, let
$$
\mathbf u'=\mathbf u_1\stackrel{(1)}{x}\mathbf u_2\stackrel{(1)}{y_{m+1}}\mathbf u_3\stackrel{(1)}{y}\mathbf u_4 \ \text{ and }\ \mathbf u''=\mathbf u_5\stackrel{(1)}{x_k}\mathbf u_6\stackrel{(1)}{y_{m+1}}\mathbf u_7,
$$
where $D(\mathbf u,y_{m+1})=m+1$, $D(\mathbf u,x_k)=k$ and $y_{m+1}$ is a letter such that $({_{2\mathbf u}a}) \le ({_{2\mathbf u}y_{m+1}})$ for any $a \in \{ c \mid {_{1\mathbf u}c} \text{ lies in } \mathbf u_2y_{m+1}\mathbf u_3\}$.
Suppose that the depth of the subwords $\mathbf u_2y_{m+1}\mathbf u_3$, $\mathbf u_4$ and $\mathbf u_5x_k\mathbf u_6$ is equal to $m+1$, $m$ and $k$, respectively.
Suppose also that one of the following holds:
\begin{itemize}
\item[\textup{(i)}] $k \ge 0$, $m=0$ and $\mathbf V$ satisfies $\eta_k$;
\item[\textup{(ii)}] $1\le m \le k$ and $\mathbf V$ satisfies $\mu_k^m$.
\end{itemize}
Then $\mathbf V$ satisfies the identity $\mathbf u\approx\mathbf u'yx\mathbf u''$.
\end{lemma}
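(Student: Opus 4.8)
The plan is to follow, nearly line for line, the proof of Lemma~\ref{L: u'abu''=u'bau'' zeta_k,lambda_k^m}; the only new ingredient is the extra bookkeeping caused by the depth-$(m+1)$ factor $\mathbf u_2y_{m+1}\mathbf u_3$. I would prove Part~(ii); Part~(i) is the degenerate case $m=0$ and is handled in the same manner, with $\eta_k$ (the natural $m=0$ analogue of $\mu_k^m$) replacing $\mu_k^m$ and the factor $\mathbf u_4$ playing the part a simple letter $y_0$ would play.

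First I would use $D(\mathbf u,x_k)=k$ and Corollary~\ref{C: form of the word} to expose, in the part of $\mathbf u$ to the right of the first occurrence of $x_k$, the standard alternating decomposition along the descending chain of dividers $x_{k-1},x_k,x_{k-2},x_{k-1},\dots,x_1,x_2,x_0,x_1$; this produces letters $x_0,\dots,x_{k-1}$ with $D(\mathbf u,x_s)=s$ and blocks $\mathbf v_0,\dots,\mathbf v_{2k}$ in which $\mathbf v_{2s+2}$ contains no $s$-divider. Splitting each $\mathbf v_{2s}$ as $\mathbf v_{2s}'\mathbf v_{2s}''$ with $\mathbf v_{2s}'$ free of $(s-1)$-dividers and $h(\mathbf v_{2s}'')$ an $(s-1)$-divider whenever $\mathbf v_{2s}''\neq\lambda$, putting $\mathbf v_{2k}''=\mathbf v_{2k}$, $\mathbf v_0'=\mathbf v_0$ and $\mathbf q_s=\mathbf v_{2s+2}''x_s\mathbf v_{2s+1}x_{s+1}\mathbf v_{2s}'$ for $0\le s\le k-1$ — all exactly as in the companion lemma — I would next apply to $\mu_k^m$ the substitution $\phi$ that fixes $x$ and $y$ and sends $y_{m+1}\mapsto\mathbf u_2y_{m+1}\mathbf u_3$, $y_m\mapsto\mathbf u_4$, $x_k\mapsto\mathbf u_5x_k\mathbf u_6$ and $x_s\mapsto\mathbf q_s$ for $0\le s\le k-1$. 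A direct inspection shows that the two sides of $\phi(\mu_k^m)$ differ only by the swap of the displayed occurrences of $x$ and $y$, that the left side begins with the literal factor $x\mathbf u_2y_{m+1}\mathbf u_3\,y\,\mathbf u_4$ of $\mathbf u'$, and that the portion of it after $x\mathbf u_2y_{m+1}\mathbf u_3\,y\,\mathbf u_4\,x\,y$ is a word $\mathbf q$ obtained from $\mathbf u''$ by duplicating and reordering factors. Prepending $\mathbf u_1$ yields $\mathbf u'xy\mathbf q\approx\mathbf u'yx\mathbf q$ in $\mathbf V$.

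It then remains to derive, using \eqref{xsxt=xsxtx} alone, the identities $\mathbf u=\mathbf u'xy\mathbf u''\approx\mathbf u'xy\mathbf q$ and $\mathbf u'yx\mathbf u''\approx\mathbf u'yx\mathbf q$; together with the previous identity these give $\mathbf u\approx\mathbf u'yx\mathbf u''$, as wanted. This bridging step carries the real work, and is where I expect the main obstacle: one must check that every occurrence inserted in passing from $\mathbf u''$ to $\mathbf q$ is preceded, in the ambient word, by an earlier occurrence of the same letter, so that $xsxt\approx xsxtx$ applies. Mirroring the three bullet points of the companion proof, one verifies that (a)~every letter with first occurrence inside $\mathbf u_2y_{m+1}\mathbf u_3$ already has a non-first occurrence inside $\mathbf u_2y_{m+1}\mathbf u_3\,y\,\mathbf u_4\,x\,y\,\mathbf u_5x_k\mathbf u_6$ and hence strictly before the second occurrence of $y_{m+1}$ — this is exactly the place where the hypothesis $({_{2\mathbf u}a})\le({_{2\mathbf u}y_{m+1}})$ for first occurrences $a$ inside $\mathbf u_2y_{m+1}\mathbf u_3$ is used, together with $y_{m+1}\notin\con(\mathbf u_2\mathbf u_3)$; (b)~every letter with first occurrence inside $\mathbf u_5x_k\mathbf u_6$ has a non-first occurrence inside $\mathbf u_5x_k\mathbf u_6\,y_{m+1}\,\mathbf q_{k-1}$, which follows from the depth-$k$ hypothesis on $\mathbf u_5x_k\mathbf u_6$ via Lemmas~\ref{L: k-divider and depth} and~\ref{L: h_2^{k-1}}; and (c)~every letter of $\mathbf q_s$ has a non-first occurrence inside $\mathbf q_s\mathbf q_{s-1}$, proved word for word from the companion lemma using that $\mathbf q_s$ contains no $(s-1)$-divider of $\mathbf u$, via Lemmas~\ref{L: k-divider and depth},~\ref{L: h_2^{k-1}} and~\ref{L: if first then second}. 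Once (a)–(c) hold, repeated application of \eqref{xsxt=xsxtx} rewrites $\mathbf u''$ into $\mathbf q$ inside both $\mathbf u'xy\mathbf u''$ and $\mathbf u'yx\mathbf u''$, completing the proof; compared with Lemma~\ref{L: u'abu''=u'bau'' zeta_k,lambda_k^m}, item (a) is the only genuinely delicate point.
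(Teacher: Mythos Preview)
Your outline matches the paper's proof essentially line for line: the same substitution into $\mu_k^m$, the same decomposition of the tail via Corollary~\ref{C: form of the word}, the same bridging by \eqref{xsxt=xsxtx}, and you correctly identify the hypothesis on $y_{m+1}$ as what drives the new verification (a).

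There is one bookkeeping omission. The identity $\mu_k^m$ contains \emph{two} occurrences of $y_m$, so $\phi(\mu_k^m)$ carries an extra copy of $\mathbf u_4$ (coming from the second $y_m$, sitting between $\phi(\mathbf b_{k,m})$ and $\phi(\mathbf b_{m-1})$). Hence your bridging step needs a fourth bullet: every letter with first occurrence in $\mathbf u_4$ has a non-first occurrence in $\mathbf u_4xy\mathbf u_5x_k\mathbf u_6y_{m+1}\mathbf q_{k-1}\cdots\mathbf q_m\mathbf q_{m-1}$. This is precisely the analogue of the companion lemma's bullet for its $\mathbf u_2$ (both subwords have depth $m$, and the argument is identical), so no new idea is needed; but your item (a) is an \emph{addition} to the companion's three checks, not a replacement for one of them, and the paper indeed records four such verifications. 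A minor imprecision in (a): the conclusion ``strictly before the second occurrence of $y_{m+1}$'' fails for $a=y_{m+1}$ itself, but that letter is not being inserted anew, so the bridging is unaffected.
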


\begin{proof}
We omit the proof of Part~(i) because it is very similar to (and in fact simpler than) that of Part~(ii).

\smallskip

(ii) It follows from Corollary~\ref{C: form of the word} and the fact that the depth of $\mathbf u_5x_k\mathbf u_6$ is equal to $k$ that there exist letters $x_0,x_1,\dots, x_{k-1}$ such that $D(\mathbf u,x_s)=s$ for any $0\le s<k$ and
$$
\mathbf u_7=\mathbf v_{2k}x_{k-1}\mathbf v_{2k-1}x_k\mathbf v_{2k-2}x_{k-2}\mathbf v_{2k-3}x_{k-1}\cdots\mathbf v_4x_1\mathbf v_3x_2\mathbf v_2x_0\mathbf v_1x_1\mathbf v_0,
$$
for some $\mathbf v_0,\mathbf v_1,\dots,\mathbf v_{2k} \in \mathfrak A^\ast$.
Clearly, for any $s=1,2,\dots,k-1$, there exist $\mathbf v_{2s}', \mathbf v_{2s}''\in \mathfrak A^\ast$ such that $\mathbf v_{2s} = \mathbf v_{2s}'\mathbf v_{2s}''$, where $\mathbf v_{2s}'$ does not contain any \mbox{$(s-1)$}-divider of $\mathbf u$, while either $\mathbf v_{2s}''=\lambda$ or $h(\mathbf v_{2s}'')$ is an \mbox{$(s-1)$}-divider of $\mathbf u$.
Put $\mathbf v_{2k}''=\mathbf v_{2k}$ and $\mathbf v_0'=\mathbf v_0$.
Let
$$
\mathbf q_s = \mathbf v_{2s+2}''x_s\mathbf v_{2s+1}x_{s+1}\mathbf v_{2s}'
$$
for any $s=0,1,\dots,k-1$.

Let $\phi$ be the substitution
$$
(x_0,\dots,x_{k-1},x_k,y_m,y_{m+1})\mapsto (\mathbf q_0,\dots,\mathbf q_{k-1},\mathbf u_5x_k\mathbf u_6,\mathbf u_4,\mathbf u_2y_{m+1}\mathbf u_3).
$$
Then the identity $\phi(\mu_k^m)$ coincides with
\begin{equation}
\label{to mu_k^m}
x \mathbf u_2y_{m+1}\mathbf u_3 y \mathbf u_4xy \mathbf u_5x_k\mathbf u_6\mathbf q\approx x \mathbf u_2y_{m+1}\mathbf u_3 y \mathbf u_4 yx\mathbf u_5x_k\mathbf u_6\mathbf q
\end{equation}
where
$$
\mathbf q =  \mathbf u_2y_{m+1}\mathbf u_3\mathbf q_{k-1}\mathbf u_5x_k\mathbf u_6\mathbf q_{k-2}\mathbf q_{k-1}\cdots \mathbf q_{m-1}\mathbf q_m\mathbf u_4\mathbf q_{m-2}\mathbf q_{m-1}\cdots \mathbf q_1\mathbf q_2\mathbf q_0\mathbf q_1.
$$

We note that $\mathbf u_7 = \mathbf q_{k-1}\mathbf q_{k-2}\cdots \mathbf q_0$.
The choice of $y_{m+1}$ implies that if $z$ is a letter such that ${_{1\mathbf u}z}$ occurs in the subword $\mathbf u_2y_{m+1}\mathbf u_3$ of $\mathbf u$, then $({_{2\mathbf u}z}) \le ({_{2\mathbf u}y_{m+1}})$.
We see that if a letter lies in $\mathbf u_2y_{m+1}\mathbf u_3$, then the subword $\mathbf u_2y_{m+1}\mathbf u_3 y \mathbf u_4 xy \mathbf u_5x_k\mathbf u_6y_{m+1}$ of $\mathbf u$ contains some non-first occurrence of this letter in $\mathbf u$.
By the same arguments as in the proof of Lemma~\ref{L: u'abu''=u'bau''}(ii) we can show that
\begin{itemize}
\item if a letter occurs in $\mathbf u_4$, then the subword $\mathbf u_4xy\mathbf u_5x_k\mathbf u_6y_{m+1}\mathbf q_{k-1}\cdots\mathbf q_m\mathbf q_{m-1}$ of $\mathbf u$ contains some non-first occurrence of this letter in $\mathbf u$;
\item if a letter occurs in $\mathbf u_5x_k\mathbf u_6$, then the subword $\mathbf u_5x_k\mathbf u_6y_{m+1}\mathbf q_{k-1}$ of $\mathbf u$ contains some non-first occurrence of this letter in $\mathbf u$;
\item if a letter occurs in $\mathbf q_s$, then the subword $\mathbf q_s\mathbf q_{s-1}$ of $\mathbf u$ contains some non-first occurrence of this letter in $\mathbf u$ for any $s=1,2,\dots,k-1$.
\end{itemize}
Then the identity~\eqref{xsxt=xsxtx} implies
$$
\mathbf u =\mathbf u'xy\mathbf u_5x_k\mathbf u_6y_{m+1}\mathbf u_7 \approx \mathbf u'xy\mathbf u_5x_k\mathbf u_6y_{m+1}\mathbf q.
$$
By a similar argument we can show that
$$
\mathbf u'yx\mathbf u_5x_k\mathbf u_6y_{m+1}\mathbf u_7 \approx \mathbf u'yx\mathbf u_5x_k\mathbf u_6y_{m+1}\mathbf q
$$
follows from~\eqref{xsxt=xsxtx}.
These identities together with~\eqref{to mu_k^m} imply the identity $\mathbf u \approx \mathbf u'yx\mathbf u''$.
Thus, it is satisfied by $\mathbf V$.
\end{proof}

\begin{lemma}
\label{L: u'abu''=u'bau'' nu_k}
Let $\mathbf V$ be a subvariety of $\mathbf P$ and $\mathbf u=\mathbf u'\stackrel{(2)}{x}\stackrel{(2)}{y}\mathbf u''$ for some $\mathbf u',\mathbf u'' \in \mathfrak A^\ast$.
Further, let $\mathbf u'=\mathbf u_1\stackrel{(1)}{x}\mathbf u_2\stackrel{(1)}{y}\mathbf u_3$, where the depth of the subword $\mathbf u_2$ in $\mathbf u$ is equal to $k$, while the depth of the subword $\mathbf u_3$ in $\mathbf u$ is infinite.
Suppose that $\mathbf V$ satisfies $\nu_k$.
Then $\mathbf V$ satisfies the identity $\mathbf u\approx\mathbf u'yx\mathbf u''$.
\end{lemma}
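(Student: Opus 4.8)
The plan is to follow the scheme of the proofs of Lemmas~\ref{L: u'abu''=u'bau'' zeta_k,lambda_k^m} and~\ref{L: u'abu''=u'bau'' 2nd occurrences eta_k, mu_k^m}, tuning the substitution to $\nu_k$. Since the depth of $\mathbf u_2$ in $\mathbf u$ equals $k$, Lemma~\ref{L: k-divider and depth} gives a letter $x_k\in\con(\mathbf u_2)$ with $D(\mathbf u,x_k)=k$, and Corollary~\ref{C: form of the word} then produces letters $x_0,\dots,x_{k-1}$ with $D(\mathbf u,x_s)=s$ together with a canonical decomposition $\mathbf u''=\mathbf v_{2k}x_{k-1}\mathbf v_{2k-1}x_k\mathbf v_{2k-2}x_{k-2}\cdots x_1\mathbf v_0$ of the suffix; here one uses that $\mathbf u_2$ has no $(k-1)$-divider of $\mathbf u$ and $\mathbf u_3$ no divider at all, so that ${_{1\mathbf u}x_{k-1}}$ lies inside $\mathbf u''$. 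As before I would split $\mathbf v_{2s}=\mathbf v_{2s}'\mathbf v_{2s}''$ ($1\le s\le k-1$) with $\mathbf v_{2s}'$ free of $(s-1)$-dividers and $\mathbf v_{2s}''$ empty or beginning with one, put $\mathbf v_{2k}''=\mathbf v_{2k}$, $\mathbf v_0'=\mathbf v_0$, and set $\mathbf q_s=\mathbf v_{2s+2}''x_s\mathbf v_{2s+1}x_{s+1}\mathbf v_{2s}'$, so that $\mathbf u''=\mathbf q_{k-1}\mathbf q_{k-2}\cdots\mathbf q_0$. Finally, let $\hat{\mathbf v}$ be the part of $\mathbf v_{2k}$ lying in the same block of the maximal decomposition of $\mathbf u$ as $\mathbf u_3$ — equivalently, the prefix of $\mathbf v_{2k}$ preceding the first divider of $\mathbf u$ that occurs in $\mathbf v_{2k}$.

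Next, take the substitution $\phi$ fixing $x,y$ and sending $x_s\mapsto\mathbf q_s$ for $s<k$, $x_k\mapsto\mathbf u_2$, $z\mapsto\mathbf u_3$, $z_\infty\mapsto\hat{\mathbf v}$. Since $\mathbf q_{k-1}$ begins with $\mathbf v_{2k}$ and $\mathbf b_k=x_{k-1}x_kx_{k-2}x_{k-1}\cdots x_0x_1$, the identity $\phi(\nu_k)$ becomes
$$x\,\mathbf u_2\,y\,\mathbf u_3\,xy\,\mathbf q\ \approx\ x\,\mathbf u_2\,y\,\mathbf u_3\,yx\,\mathbf q,\qquad \mathbf q=\hat{\mathbf v}\,\hat{\mathbf v}\,\mathbf u_3\,\mathbf q_{k-1}\,\mathbf u_2\,\mathbf q_{k-2}\mathbf q_{k-1}\cdots\mathbf q_0\mathbf q_1 .$$
It then suffices to check, exactly as in the earlier lemmas, that \eqref{xsxt=xsxtx} implies $\mathbf u=\mathbf u'xy\mathbf u''\approx\mathbf u'xy\mathbf q$ and, symmetrically, $\mathbf u'yx\mathbf u''\approx\mathbf u'yx\mathbf q$; chaining these with $\phi(\nu_k)$ yields $\mathbf u\approx\mathbf u'yx\mathbf u''$, as required.

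The main obstacle is verifying the ``duplication'' facts that license the two applications of \eqref{xsxt=xsxtx}: to re-insert each of the subwords $\hat{\mathbf v}$, $\mathbf u_3$, $\mathbf u_2$, $\mathbf q_s$ one needs every letter occurring in it to have a non-first occurrence inside a suitable segment of $\mathbf u$. The statements for $\mathbf u_2$ and for the blocks $\mathbf q_s$ go through word-for-word as in Lemma~\ref{L: u'abu''=u'bau''}(ii), via Lemmas~\ref{L: h_2^{k-1}},~\ref{L: if first then second} and~\ref{L: 2nd occurence of y_m general}. The genuinely new input concerns $\hat{\mathbf v}\mathbf u_3$: since $\mathbf u_3$ contains no $(k-1)$-divider of $\mathbf u$, any letter $a$ whose first occurrence lies in $\mathbf u_3$ satisfies $D(\mathbf u,a)=\infty$, so ${_{1\mathbf u}a}$ and ${_{2\mathbf u}a}$ cannot be separated by the $(k-1)$-divider $x_{k-1}$, whence ${_{2\mathbf u}a}$ also lies in the block of $\mathbf u$ containing $\mathbf u_3$, i.e. inside $\mathbf u_3\,{_{2\mathbf u}x}\,{_{2\mathbf u}y}\,\hat{\mathbf v}$; likewise a letter first occurring in $\hat{\mathbf v}$ has infinite depth, so its second occurrence also lies in $\hat{\mathbf v}$. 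Hence the subword $\mathbf u_3\,{_{2\mathbf u}x}\,{_{2\mathbf u}y}\,\hat{\mathbf v}$ of $\mathbf u$ carries a non-first occurrence of every letter of $\con(\mathbf u_3)\cup\con(\hat{\mathbf v})$, and this is exactly what makes the squared factor $z_\infty^2$ in $\nu_k$ the right amount of padding: starting from $\mathbf u'xy\mathbf u''$ one inserts, immediately after the copy of $\hat{\mathbf v}$ already heading $\mathbf u''$, the subword $\hat{\mathbf v}\,\mathbf u_3\,\hat{\mathbf v}$, then a copy of $\mathbf u_2$ after $\mathbf q_{k-1}$, and finally performs the standard $\mathbf b_k$-interleaving of $\mathbf q_{k-1},\dots,\mathbf q_1$. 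Carrying out these confinement arguments in full, and in parallel for the $xy$- and $yx$-prefixes, is the step that will take the most care.
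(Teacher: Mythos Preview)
Your proposal is correct and follows essentially the same route as the paper. The paper also picks a $k$-divider $x_k$ in $\mathbf u_2$, invokes Corollary~\ref{C: form of the word} to obtain $x_0,\dots,x_{k-1}$ and the decomposition of $\mathbf u''$, builds the words $\mathbf q_s$, applies the substitution $(x_0,\dots,x_{k-1},x_k,z_\infty,z)\mapsto(\mathbf q_0,\dots,\mathbf q_{k-1},\mathbf u_2,\mathbf v_{2k}',\mathbf u_3)$ to $\nu_k$, and then uses~\eqref{xsxt=xsxtx} to pass between $\mathbf u'xy\mathbf u''$ and $\mathbf u'xy\mathbf q$ (and similarly with $yx$). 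The only bookkeeping difference is that the paper splits $\mathbf v_{2k}=\mathbf v_{2k}'\mathbf v_{2k}''$ (with $\mathbf v_{2k}'$ free of all dividers) and takes $\mathbf q_{k-1}=\mathbf v_{2k}''x_{k-1}\mathbf v_{2k-1}x_k\mathbf v_{2k-2}'$, so that $\mathbf u''=\mathbf v_{2k}'\mathbf q_{k-1}\cdots\mathbf q_0$; your $\hat{\mathbf v}$ is exactly the paper's $\mathbf v_{2k}'$, just kept inside $\mathbf q_{k-1}$ rather than split off. Either packaging leads to the same $\mathbf q$ and the same confinement checks.

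One small wording slip: you write ``since $\mathbf u_3$ contains no $(k-1)$-divider of $\mathbf u$, any letter $a$ whose first occurrence lies in $\mathbf u_3$ satisfies $D(\mathbf u,a)=\infty$''. Absence of $(k-1)$-dividers alone would only give $D(\mathbf u,a)\ge k$; the conclusion $D(\mathbf u,a)=\infty$ is exactly the hypothesis that the depth of $\mathbf u_3$ in $\mathbf u$ is infinite, so just cite that directly. Also, the reference to Lemma~\ref{L: 2nd occurence of y_m general} is not really needed here; the confinement for $\mathbf u_2$ and the $\mathbf q_s$ goes through by the same argument as in Lemma~\ref{L: u'abu''=u'bau''}(ii), which uses only Lemma~\ref{L: h_2^{k-1}} and the divider structure.
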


\begin{proof}
The fact that the depth of $\mathbf u_2$ is equal to $k$ in $\mathbf u$ and Lemma~\ref{L: k-divider and depth} imply that $\mathbf u_2$ contains a $k$-divider $x_k$ of $\mathbf u$ such that $D(\mathbf u,x_k)=k$.
It follows from Corollary~\ref{C: form of the word} and the fact that the depth of $\mathbf u_3$ is infinite that there exist letters $x_0,x_1,\dots, x_{k-1}$ such that $D(\mathbf u,x_s)=s$ for any $0\le s<k$ and
$$
\mathbf u''=\mathbf v_{2k}x_{k-1}\mathbf v_{2k-1}x_k\mathbf v_{2k-2}x_{k-2}\mathbf v_{2k-3}x_{k-1}\cdots\mathbf v_4x_1\mathbf v_3x_2\mathbf v_2x_0\mathbf v_1x_1\mathbf v_0,
$$
for some $\mathbf v_0,\mathbf v_1,\dots,\mathbf v_{2k} \in \mathfrak A^\ast$.
Clearly, for any $s=1,2,\dots,k-1$, there exist $\mathbf v_{2s}', \mathbf v_{2s}''\in \mathfrak A^\ast$ such that $\mathbf v_{2s} = \mathbf v_{2s}'\mathbf v_{2s}''$, where $\mathbf v_{2s}'$ does not contain any \mbox{$(s-1)$}-divider of $\mathbf u$, while either $\mathbf v_{2s}''=\lambda$ or $h(\mathbf v_{2s}'')$ is an \mbox{$(s-1)$}-divider of $\mathbf u$.
Besides that, there are words $\mathbf v_{2k}'$ and $\mathbf v_{2k}''$ such that $\mathbf v_{2k} = \mathbf v_{2k}'\mathbf v_{2k}''$, where $\mathbf v_{2k}'$ does not contain \mbox{$r$}-dividers of $\mathbf u$ for any $r \in \mathbb N\cup\{0\}$, while either $\mathbf v_{2s}''=\lambda$ or $h(\mathbf v_{2s}'')$ is an $r'$-divider of $\mathbf u$ for some $r'\in\mathbb N\cup\{0\}$.
Put $\mathbf v_0'=\mathbf v_0$.
Let
$$
\mathbf q_s = \mathbf v_{2s+2}''x_s\mathbf v_{2s+1}x_{s+1}\mathbf v_{2s}'
$$
for any $s=0,1,\dots,k-1$.

Let $\phi$ be the substitution
$$
(x_0,x_1,\dots,x_{k-1},x_k,z_\infty,z)\mapsto (\mathbf q_0,\mathbf q_1,\dots,\mathbf q_{k-1},\mathbf u_2,\mathbf v_{2k}',\mathbf u_3).
$$
Then the identity $\phi(\nu_k)$ coincides with
\begin{equation}
\label{to nu_k}
x \mathbf u_2 y \mathbf u_3xy\mathbf q\approx x \mathbf u_2 y \mathbf u_3yx\mathbf q
\end{equation}
where
$$
\mathbf q = (\mathbf v_{2k}')^2\mathbf u_3 \mathbf q_{k-1}\mathbf u_2\mathbf q_{k-2}\mathbf q_{k-1}\cdots \mathbf q_1\mathbf q_2\mathbf q_0\mathbf q_1.
$$
We note that $\mathbf u'' = \mathbf v_{2k}'\mathbf q_{k-1}\mathbf q_{k-2}\cdots \mathbf q_0$.
Since the subword $\mathbf u_3xy\mathbf v_{2k}'$ of $\mathbf u$ does not contain $r$-dividers of $\mathbf u$ for any $r \in \mathbb N\cup\{0\}$ and the first letter of $\mathbf q_{k-1}$ is an $r'$-divider of $\mathbf u$ for some $r' \in \mathbb N\cup\{0\}$, if a letter occurs in the subword $\mathbf u_3xy\mathbf v_{2k}'$ of $\mathbf u$, then this subword contains some non-first occurrnce of this letter.
By the same arguments as in the proof of Lemma~\ref{L: u'abu''=u'bau''}(ii) we can show that
\begin{itemize}
\item if a letter occurs in $\mathbf u_2$, then the subword $\mathbf u_2y\mathbf u_3xy\mathbf v_{2k}'\mathbf q_{k-1}$ of $\mathbf u$ contains some non-first occurrence of this letter in $\mathbf u$;
\item if a letter occurs in $\mathbf q_s$, then the subword $\mathbf q_s\mathbf q_{s-1}$ of $\mathbf u$ contains some non-first occurrence of this letter in $\mathbf u$ for any $s=1,2,\dots,k-1$.
\end{itemize}
Then the identity~\eqref{xsxt=xsxtx} implies $\mathbf u =\mathbf u'xy\mathbf u'' \approx \mathbf u'xy\mathbf q$.
By a similar argument we can show that $\mathbf u'yx\mathbf u'' \approx \mathbf u'yx\mathbf q$ follows from~\eqref{xsxt=xsxtx}.
These identities together with~\eqref{to nu_k} imply the identity $\mathbf u \approx \mathbf u'yx\mathbf u''$.
Thus, it is satisfied by $\mathbf V$.
\end{proof}

Let $\mathbf w = \mathbf w_1\mathbf w_2\mathbf w_3$.
The depth of $\mathbf w_2$ in $\mathbf w$ is \textit{strictly infinite} if
\begin{itemize}
\item the depth of $\mathbf w_2$ in $\mathbf w$ is infinite, in particular, $\con(\mathbf w_2)\subseteq\mul(\mathbf w)$;
\item for any $a \in \simple(\mathbf w_2)\setminus \con(\mathbf w_1)$, there are no first occurrences of letters between the last letter of $\mathbf w_2$ and the second occurrence of $a$ in $\mathbf w$ .
\end{itemize}
For an arbitrary $n \in \mathbb N$, we denote by $S_n$ the full symmetric group on the set $\{1,2,\dots,n\}$.

\begin{lemma}
\label{L: xtyxy=xtyyx implies}
Let $\mathbf u = \mathbf u_1\stackrel{(1)}{x}\mathbf u_2\stackrel{(1)}{y}\mathbf u_3\stackrel{(2)}{x}\stackrel{(2)}{y}\mathbf u_4$.
If the depth of $\mathbf u_3$ in $\mathbf u$ is strictly infinite, then the identity $\mathbf u \approx \mathbf u_1x\mathbf u_2y\mathbf u_3yx\mathbf u_4$ follows from the identities~\eqref{xsxt=xsxtx} and
\begin{equation}
\label{xtyxy=xtyyx}
xtyxy \approx xty^2x.
\end{equation}
\end{lemma}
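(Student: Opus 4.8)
The plan is to derive $\mathbf u\approx\mathbf u_1 x\mathbf u_2 y\mathbf u_3 yx\mathbf u_4$ by applications of~\eqref{xtyxy=xtyyx} framed by rewritings that use only \eqref{xsxt=xsxtx}, in the style of the proofs of Lemmas~\ref{L: u'abu''=u'bau''}--\ref{L: u'abu''=u'bau'' nu_k}. Since, by Lemma~\ref{L: 2-limited word}, the identity $\mathbf w\approx\ini_2(\mathbf w)$ follows from \eqref{xsxt=xsxtx}, two words are equal modulo \eqref{xsxt=xsxtx} exactly when they have the same image under $\ini_2$. Replacing $\mathbf u$ and $\mathbf u_1 x\mathbf u_2 y\mathbf u_3 yx\mathbf u_4$ by those images, I may assume $\mathbf u$, hence also $\mathbf u_1 x\mathbf u_2 y\mathbf u_3 yx\mathbf u_4$, is $2$-limited; so it suffices to produce words $\mathbf w,\mathbf w'$ with $\mathbf w\approx\mathbf w'$ directly deducible from~\eqref{xtyxy=xtyyx}, $\ini_2(\mathbf w)=\mathbf u$, and $\ini_2(\mathbf w')=\mathbf u_1 x\mathbf u_2 y\mathbf u_3 yx\mathbf u_4$.

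Next I would extract the combinatorics of the hypothesis. By Lemma~\ref{L: k-divider and depth}, infinite depth of $\mathbf u_3$ gives $\con(\mathbf u_3)\subseteq\mul(\mathbf u)$; writing $\mathbf u_4=\mathbf p\mathbf r$ with $\mathbf p$ the longest prefix of $\mathbf u_4$ containing no first occurrence (in $\mathbf u$) of a letter, the second clause of ``strictly infinite'' forces every letter that is simple in $\mathbf u_3$ and occurs there for the first time to have its second occurrence inside $\mathbf p$. Hence each $a\in\con(\mathbf u_3)$ is of exactly one of the types: (a)~$_{1\mathbf u}a$ lies in $\mathbf u_1 x\mathbf u_2 y$ and $a$ does not reoccur in $\mathbf u_4$; (b)~$a$ occurs twice inside $\mathbf u_3$; (c)~$_{1\mathbf u}a$ lies in $\mathbf u_3$ and $_{2\mathbf u}a$ lies in $\mathbf p$.

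When there are no type-(c) letters I would finish directly: put $\mathbf w=\mathbf u_1 x\mathbf u_2 y\mathbf u_3\stackrel{(2)}{x}\stackrel{(2)}{y}\mathbf u_3\mathbf p\mathbf r$, a copy of $\mathbf u_3$ being inserted immediately after $\stackrel{(2)}{x}\stackrel{(2)}{y}$, and let $\mathbf w'=\mathbf u_1 x\mathbf u_2 y\mathbf u_3 y\mathbf u_3 x\mathbf p\mathbf r$; these are exactly $\mathbf u_1\phi(\mathbf s)\mathbf p\mathbf r$ and $\mathbf u_1\phi(\mathbf t)\mathbf p\mathbf r$ for the instance of~\eqref{xtyxy=xtyyx} given by the substitution $(x,t,y)\mapsto(x,\mathbf u_2,y\mathbf u_3)$, so $\mathbf w\approx\mathbf w'$ is directly deducible from~\eqref{xtyxy=xtyyx}. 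Every letter of the inserted copy of $\mathbf u_3$ is of type (a) or (b), so it receives there only a third or fourth occurrence, which $\ini_2$ discards; moreover $x$ and $y$ occur only twice in each of $\mathbf w,\mathbf w'$, and the occurrences of all other letters are undisturbed. Hence $\ini_2(\mathbf w)=\mathbf u_1 x\mathbf u_2 y\mathbf u_3 xy\mathbf p\mathbf r=\mathbf u$ and $\ini_2(\mathbf w')=\mathbf u_1 x\mathbf u_2 y\mathbf u_3 yx\mathbf p\mathbf r=\mathbf u_1 x\mathbf u_2 y\mathbf u_3 yx\mathbf u_4$, as required.

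The general case would be reduced to this base case by induction on the number of type-(c) letters, and this inductive step is where I expect the real difficulty to lie. A type-(c) letter $c$ has its first occurrence inside $\mathbf u_3$ and its second inside $\mathbf p$, so \eqref{xsxt=xsxtx} alone cannot relocate it; the aim of one step is, using a further instance of~\eqref{xtyxy=xtyyx} in which the image of $y$ absorbs only the segment of $\mathbf u_3$ up to $c$ (and a matching adjustment of $\mathbf p$), to reduce the count of type-(c) letters while changing nothing else modulo \eqref{xsxt=xsxtx}. Here the ``strictly infinite'' hypothesis is indispensable: it guarantees that the second occurrence of $c$ lies in $\mathbf p$, before any new first occurrence of $\mathbf u_4$, which is exactly the configuration in which such a rearrangement is legitimate. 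The bulk of the proof is then the occurrence-by-occurrence check that each step preserves the $\ini_2$-image of both sides; the three-way classification above is what keeps this verification finite, and getting right the interaction between the order of the type-(c) letters in $\mathbf u_3$ and their order in $\mathbf p$ is the subtle point.
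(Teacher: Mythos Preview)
Your $\ini_2$-bookkeeping and your base case are both correct; the base case is exactly the sub-claim that the paper isolates first, namely that whenever $\simple(\mathbf w_3)\subseteq\con(\mathbf w_1\mathbf w_2)$ one has $\mathbf w_1 x\mathbf w_2 y\mathbf w_3 xy\approx\mathbf w_1 x\mathbf w_2 y\mathbf w_3 yx$, proved via the substitution $(x,t,y)\mapsto(x,\mathbf w_2,y\mathbf w_3)$ in~\eqref{xtyxy=xtyyx} framed by~\eqref{xsxt=xsxtx}.

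The gap is in your inductive step. A single fresh instance of~\eqref{xtyxy=xtyyx} with $\phi(y)$ equal to ``the segment of $\mathbf u_3$ up to $c$'' cannot be set up: matching the pattern $xtyxy$ would require a second copy of $\phi(y)$ positioned after the occurrence of $\phi(x)$ you wish to swap, and~\eqref{xsxt=xsxtx} cannot manufacture one there, since at that point the letter $y$ of $\mathbf u$ (and any other type-(c) letters inside the segment) have appeared only once. The paper's reduction is different and, in a sense, simpler: it introduces no new substitution but \emph{re-uses the sub-claim on a different pair of letters}. Taking $z_n$ to be the \emph{last} type-(c) letter in $\mathbf u_3$, every letter $c$ lying between ${_{1}z_n}$ and ${_{2}z_n}$ has its first occurrence to the left of ${_{1}z_n}$, so the sub-claim with $(c,z_n)$ in the role of $(x,y)$ transposes ${_{2}c}\,{_{2}z_n}\to{_{2}z_n}\,{_{2}c}$. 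Iterating drags ${_{2}z_n}$ to just before ${_{2}x}\,{_{2}y}$, converting $z_n$ from type~(c) to type~(b). After all $z_i$ have been pulled in this way (in the order $z_n,z_{n-1},\dots,z_1$) one is in the base case and can swap $xy\to yx$; undoing the $z_i$-moves by symmetry finishes. Thus the ``real difficulty'' you anticipated is resolved not by a new idea but by recognising that your own base case, applied to the pairs $(c,z_i)$, already drives the induction; the permutation between the orders of the $z_i$ in $\mathbf u_3$ and in $\mathbf p$ that you flag as subtle turns out to be irrelevant once one processes the $z_i$ in decreasing order of their position in $\mathbf u_3$.
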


\begin{proof}
First, we note that if $\mathbf w_1,\mathbf w_2,\mathbf w_3$ are words with $\simple(\mathbf w_3)\subseteq\con(\mathbf w_1\mathbf w_2)$, then the identities~\eqref{xsxt=xsxtx} and~\eqref{xtyxy=xtyyx} imply the identity
$$
\mathbf w_1x\mathbf w_2y\mathbf w_3xy \approx\mathbf w_1x\mathbf w_2y\mathbf w_3yx
$$
because
$$
\mathbf w_1x\mathbf w_2y\mathbf w_3xy \stackrel{\eqref{xsxt=xsxtx}}\approx\mathbf w_1x\mathbf w_2y\mathbf w_3xy\mathbf w_3 \stackrel{\eqref{xtyxy=xtyyx}}\approx \mathbf w_1x\mathbf w_2(y\mathbf w_3)^2x \stackrel{\eqref{xsxt=xsxtx}}\approx \mathbf w_1x\mathbf w_2y\mathbf w_3yx.
$$

By the hypothesis, $\mathbf u_3 = \mathbf a_0z_1\mathbf a_1z_2\mathbf a_2\cdots z_n\mathbf a_n$ and $\mathbf u_4 = \mathbf b_1z_{1\pi}\mathbf b_2z_{2\pi}\cdots \mathbf b_nz_{n\pi}\mathbf b'$, where $\pi \in S_n$ and $\mathbf a_0,\mathbf a_1,\mathbf b_1,\mathbf a_2,\mathbf b_2,\dots,\mathbf a_n,\mathbf b_n$ do not contain first occurrences of letters in $\mathbf u$.
In view of the saying in previous paragraph, the identities~\eqref{xsxt=xsxtx} and~\eqref{xtyxy=xtyyx} imply the identities
$$
\begin{aligned}
\mathbf u \approx{}& \mathbf u_1x\mathbf u_2y\mathbf u_3z_nxy (\mathbf b_1z_{1\pi}\mathbf b_2z_{2\pi}\cdots \mathbf b_nz_{n\pi})_{\{z_n\}}\mathbf b'\\
\approx{}&\mathbf u_1x\mathbf u_2y\mathbf u_3z_nz_{n-1}xy (\mathbf b_1z_{1\pi}\mathbf b_2z_{2\pi}\cdots \mathbf b_nz_{n\pi})_{\{z_n,z_{n-1}\}}\mathbf b'\\
\rule{4pt}{0pt}\vdots \;\; &\\
\approx{}&\mathbf u_1x\mathbf u_2y\mathbf u_3z_nz_{n-1}\cdots z_1xy \mathbf b_1\mathbf b_2\cdots \mathbf b_n\mathbf b'\\
\approx{}&\mathbf u_1x\mathbf u_2y\mathbf u_3z_nz_{n-1}\cdots z_1yx \mathbf b_1\mathbf b_2\cdots \mathbf b_n\mathbf b'.
\end{aligned}
$$
By a similar argument we can show that~\eqref{xsxt=xsxtx} and~\eqref{xtyxy=xtyyx} imply
$$
\mathbf u_1x\mathbf u_2y\mathbf u_3yx\mathbf u_4 \approx \mathbf u_1x\mathbf u_2y\mathbf u_3z_nz_{n-1}\cdots z_1yx \mathbf b_1\mathbf b_2\cdots \mathbf b_n\mathbf b'.
$$
We see that $\mathbf u \approx \mathbf u_1x\mathbf u_2y\mathbf u_3yx\mathbf u_4$ follows from~\eqref{xsxt=xsxtx} and~\eqref{xtyxy=xtyyx}.
\end{proof}

\begin{lemma}
\label{L: V satisfies zeta_k,lambda_k^m}
Let $\mathbf V$ be a monoid variety from the interval $[\mathbf F_1,\mathbf P]$ and $\mathbf u \approx \mathbf v$ be an identity of $\mathbf V$ such that
$$
\mathbf u = \mathbf w_1\stackrel{(1)}{x}\mathbf w_2\stackrel{(1)}{y}\mathbf w_3\stackrel{(2)}{x}\stackrel{(2)}{y}\mathbf w_4
$$
for some words $\mathbf w_1,\mathbf w_2,\mathbf w_3,\mathbf w_4$ and $({_{2\mathbf v}y})<({_{2\mathbf v}x})$.
Suppose that the depth of the subwords $\mathbf w_2$ and $\mathbf w_3$ of $\mathbf u$ is equal to $m$ and $k$, respectively.
\begin{itemize}
\item[\textup{(i)}] If $k \ge 0$ and $m=0$, then $\mathbf V$ satisfies $\zeta_k$.
\item[\textup{(ii)}] If $1\le m \le k$, then $\mathbf V$ satisfies $\lambda_k^m$.
\end{itemize}
\end{lemma}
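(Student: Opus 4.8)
The plan is to deduce $\zeta_k$ (in part~(i)) and $\lambda_k^m$ (in part~(ii)) as projections of the given identity $\mathbf u\approx\mathbf v$; I would write out part~(ii) in full and remark that part~(i) is the degenerate case $m=0$, handled the same way but more easily. Two preliminary reductions are available: since $\mathbf V\subseteq\mathbf P$, Lemma~\ref{L: 2-limited word} lets us assume $\mathbf u\approx\mathbf v$ is 2-limited; and since $\mathbf F_1\subseteq\mathbf V$, the identity $\mathbf u\approx\mathbf v$ holds in $\mathbf F_1$, so by Lemma~\ref{L: word problem F_k} and Corollary~\ref{C: (k-1)-well-balanced} it is balanced and $0$-well-balanced.

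Next I would extract the relevant letters from $\mathbf u$. As the depth of $\mathbf w_3$ equals $k$, Lemma~\ref{L: k-divider and depth} gives a letter $x_k$ whose first occurrence lies in $\mathbf w_3$ and with $D(\mathbf u,x_k)=k$; Corollary~\ref{C: form of the word} then yields $x_0,\dots,x_{k-1}$ with $D(\mathbf u,x_s)=s$ in the usual staircase suffix of $\mathbf u$, the factors $\mathbf u_{2s+2}$ carrying no $s$-divider. Since $\mathbf w_3$ contains no $(k-1)$-divider, the first occurrence of each $x_s$ with $s<k$ --- being an $s$-divider --- lies to the right of $\mathbf w_3$, and so does ${_{2\mathbf u}x_k}$ (it is in a different $(k-1)$-block from ${_{1\mathbf u}x_k}$); hence, apart from ${_{1\mathbf u}x_k}$, all occurrences of $x_0,\dots,x_k$ lie in $\mathbf w_4$. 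Likewise the depth of $\mathbf w_2$ being $m$ provides a letter $y_m$ with first occurrence in $\mathbf w_2$ and $D(\mathbf u,y_m)=m$, and Lemma~\ref{L: 2nd occurence of y_m general} places ${_{2\mathbf u}y_m}$ inside $\mathbf u_{2m}$, $\mathbf u_{2m-1}$ or $\mathbf u_{2m-2}$. (In part~(i) one simply takes a simple letter $y_0$ of $\mathbf w_2$.) Projecting $\mathbf u$ onto $X=\{x,y,y_m,x_0,\dots,x_k\}$ and deleting third and subsequent occurrences via~\eqref{xsxt=xsxtx}, one checks from this layout and 2-limitedness that $\mathbf u(X)$ coincides with the left-hand side of $\lambda_k^m$ (resp.\ of $\zeta_k$).

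It remains to deduce, inside $\mathbf V$, the right-hand side of $\lambda_k^m$ (resp.\ $\zeta_k$) from $\mathbf v(X)$; the conclusion then follows because $\mathbf u(X)\approx\mathbf v(X)$ is a substitution instance of $\mathbf u\approx\mathbf v$. I would split into the cases $\mathbf F_{k+1}\subseteq\mathbf V$ and $\mathbf F_{k+1}\nsubseteq\mathbf V$, as in the proof of Proposition~\ref{P: 1st and 2nd occurrences}. If $\mathbf F_{k+1}\subseteq\mathbf V$, then $\mathbf u\approx\mathbf v$ is an identity of $\mathbf F_{k+1}$, hence $k$-well-balanced by Corollary~\ref{C: (k-1)-well-balanced}; since $x_0,\dots,x_k,y_m$ are $k$-dividers their first occurrences in $\mathbf v$ are forced, the $k$-block memberships of all occurrences are forced, and together with the given condition on the order of ${_{2\mathbf v}x}$ and ${_{2\mathbf v}y}$ this makes $\mathbf v(X)$ literally the required right-hand side. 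If $\mathbf F_{k+1}\nsubseteq\mathbf V$, let $r$ be least with $\mathbf F_{r+1}\nsubseteq\mathbf V$; then $1\le r\le k$, $\mathbf F_r\subseteq\mathbf V$, and $\mathbf V$ satisfies $\kappa_r$ by Lemma~\ref{L: V does not contain F_k}, hence also $\kappa_k$, $\delta_k^k$, \eqref{xyxy=xxyy} and all $\varepsilon_p$ with $p\ge k$ by Lemmas~\ref{L: kappa_k and delta_k^k} and~\ref{L: gamma_k, delta_k^k,epsilon_k}. Now $\mathbf u\approx\mathbf v$ is $(r-1)$-well-balanced, which pins everything except the internal order of the low-depth blocks of $\mathbf v$; the surplus material of $\mathbf v$ inside those blocks is rearranged onto the staircase $\mathbf b_{k,m}$ by Corollary~\ref{C: u'abu''=u'bau'' kappa_k} and Lemma~\ref{L: ab_k=ini(a)ini(a_{x_k})b_k}, applied exactly as in Subcase~2.1 of the proof of Proposition~\ref{P: 1st and 2nd occurrences}. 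Either way $\lambda_k^m$ (resp.\ $\zeta_k$) holds in $\mathbf V$.

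I expect the decisive difficulty to be this last step in the case $\mathbf F_{k+1}\nsubseteq\mathbf V$: $0$-well-balancedness alone does not control the order inside long $0$-blocks of $\mathbf v$, so one must verify that the factor of $\mathbf v$ to which Lemma~\ref{L: ab_k=ini(a)ini(a_{x_k})b_k} is applied really has the shape required there --- a single simple letter $x_k$, the right depths, and a tail equal modulo the available identities to $\mathbf b_k$ --- and this requires the same detailed bookkeeping about $(r-1)$-well-balancedness and the locations of ${_{2\mathbf v}x}$, ${_{2\mathbf v}y}$, ${_{2\mathbf v}y_m}$ as in Proposition~\ref{P: 1st and 2nd occurrences}. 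A secondary point needing its own short treatment is the degenerate configurations (in particular $m=k$, and small values of $D(\mathbf u,x)$), just as the subcases there.
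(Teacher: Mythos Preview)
Your overall plan---project $\mathbf u\approx\mathbf v$ onto $X=\{x,y,y_m,x_0,\dots,x_k\}$ and read off $\zeta_k$ or $\lambda_k^m$---is exactly what the paper does in the case where $\mathbf u\approx\mathbf v$ is $k$-well-balanced. Two small corrections are needed there. First, after Lemma~\ref{L: 2nd occurence of y_m general} places ${_{2\mathbf u}y_m}$ in $\mathbf u_{2m}$, $\mathbf u_{2m-1}$ or $\mathbf u_{2m-2}$, you must rule out $\mathbf u_{2m}$ (the paper does this via $y_{m-1}=h_2^{m-1}(\mathbf u,y_m)$ and the fact that $\mathbf u_{2m}$ contains no $(m-1)$-divider). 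Second, even then $\mathbf u(X)$ need not be literally the left side of $\lambda_k^m$: one obtains $xy_myx_kxy\,\mathbf b_{k,m+1}x_{m-1}\,\mathbf c\,\mathbf b_{m-1}$ with $\mathbf c\in\{x_my_m,y_mx_m\}$, and similarly $\mathbf v(X)$ has $\mathbf d\in\{x_my_m,y_mx_m\}$; a short bridging argument using~\eqref{xsxt=xsxtx} (replacing $x_{m-1}$ by $x_{m-1}x_my_m$ and back) is needed to pass from this ``almost $\lambda_k^m$'' to $\lambda_k^m$ itself.

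The place where your plan diverges from the paper is the case $\mathbf F_{k+1}\nsubseteq\mathbf V$. You propose to analyse $\mathbf v$ under $(r-1)$-well-balancedness and then massage $\mathbf v(X)$ into the required form via Corollary~\ref{C: u'abu''=u'bau'' kappa_k} and Lemma~\ref{L: ab_k=ini(a)ini(a_{x_k})b_k}, in the style of Proposition~\ref{P: 1st and 2nd occurrences}. That is precisely the ``decisive difficulty'' you flag---and the paper avoids it entirely. Once the smallest $p\le k$ with $\mathbf u\approx\mathbf v$ not $p$-well-balanced is found, Corollary~\ref{C: (k-1)-well-balanced} gives $\mathbf F_{p+1}\nsubseteq\mathbf V$, Lemma~\ref{L: V does not contain F_k} gives $\kappa_p$, and then the inclusion chain of Lemma~\ref{L: zeta_k,lambda_k^m,eta_k,mu_k^m}(ii) (combined with Lemmas~\ref{L: kappa_k and delta_k^k} and~\ref{L: gamma_k, delta_k^k,epsilon_k}(ii)) shows that $\mathbf V$ already satisfies $\zeta_k$ and every $\lambda_k^r$ with $1\le r\le m$, with no further inspection of $\mathbf u$ or $\mathbf v$. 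So your proposed bookkeeping is unnecessary here: the whole non-$k$-well-balanced case collapses to a one-line citation of these inclusion lemmas.
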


\begin{proof}
In view of Lemma~\ref{L: 2-limited word}, we may assume without loss of generality that the identity $\mathbf u \approx \mathbf v$ is 2-limited.

Let $y_m$ be an $m$-divider of $\mathbf u$ that occurs in $\mathbf w_2$ and let $x_k$ be a $k$-divider of $\mathbf u$ that occurs in $\mathbf w_3$.
In view of Lemma~\ref{L: k-divider and depth}, $D(\mathbf u,x_k)=k$ and $D(\mathbf u,y_m)=m$.

If $k=0$, then $m=0$ as well.
Then Corollary~\ref{C: (k-1)-well-balanced} and the inclusion $\mathbf F_1 \subset \mathbf V$ imply that the identity $\mathbf u(x,y,x_0,y_0) \approx \mathbf v(x,y,x_0,y_0)$ coincides with the identity $\zeta_0$, and we are done.
So, we may assume that $k>0$.

Suppose that the identity $\mathbf u \approx \mathbf v$ is not $k$-well-balanced.
In view of Corollary~\ref{C: (k-1)-well-balanced} and the inclusion $\mathbf F_1 \subset \mathbf V$, the identity $\mathbf u \approx \mathbf v$ is $0$-well-balanced.
Then there is the smallest $p$ such that $0< p \le k$ and the identity $\mathbf u \approx \mathbf v$ is not $p$-well-balanced.
In view of Corollary~\ref{C: (k-1)-well-balanced}, $\mathbf F_{p+1} \nsubseteq \mathbf V$.
According to Lemma~\ref{L: V does not contain F_k}, $\mathbf V$ satisfies $\kappa_p$.
Then Lemmas~\ref{L: kappa_k and delta_k^k},~\ref{L: gamma_k, delta_k^k,epsilon_k}(ii) and~\ref{L: zeta_k,lambda_k^m,eta_k,mu_k^m}(ii) imply that $\mathbf V$ satisfies $\zeta_k$ and $\lambda_k^r$ for any $r=1,2,\dots,m$, and we are done.
So, we may assume that the identity $\mathbf u \approx \mathbf v$ is $k$-well-balanced.

Now we apply Corollary~\ref{C: form of the word} to deduce that there exist letters $x_0,x_1,\dots, x_{k-1}$ such that
\begin{align*}
\mathbf u={}&\mathbf u_{2k+1}\stackrel{(1)}{x_k}\mathbf u_{2k}\stackrel{(1)}{x_{k-1}}\mathbf u_{2k-1}\stackrel{(2)}{x_k}\mathbf u_{2k-2}\stackrel{(1)}{x_{k-2}}\mathbf u_{2s-3}\stackrel{(2)}{x_{k-1}}\mathbf u_{2k-4}\stackrel{(1)}{x_{k-3}}\\
&\cdot\,\mathbf u_{2k-5}\stackrel{(2)}{x_{k-2}}\cdots\mathbf u_4\stackrel{(1)}{x_1}\mathbf u_3\stackrel{(2)}{x_2}\mathbf u_2\stackrel{(1)}{x_0}\mathbf u_1\stackrel{(2)}{x_1}\mathbf u_0
\end{align*}
for some $\mathbf u_0,\mathbf u_1,\dots,\mathbf u_{2k+1} \in \mathfrak A^\ast$, $D(\mathbf u,x_r)=r$ and $\mathbf u_{2r+2}$ does not contain any $r$-dividers of $\mathbf u$ for any $r=0,1,\dots,k-1$.
The choice of $x_k$ implies that both ${_{2\mathbf u}x}$ and ${_{2\mathbf u}y}$ lie in $\mathbf u_{2k}$.
Put
$$
X = \{x_0,x_1,\dots,x_k,x,y_m,y\}.
$$

Suppose that $m=0$. Then $y_0 \in \simple(\mathbf u)=\simple(\mathbf v)$ and $\mathbf u(X) = xy_0yx_kxy\,\mathbf b_k$.
Since the identity $\mathbf u \approx \mathbf v$ is $k$-well-balanced, $\mathbf v(X) = xy_0yx_kyx\,\mathbf b_k$ and, therefore, the identity $\mathbf u(X) \approx \mathbf v(X)$ coincides with $\zeta_k$, we are done.

Suppose now that $1\le m \le k$.
Then $y_m \in \mul(\mathbf u)=\mul(\mathbf v)$.
According to the choice of $y_m$ and $x_k$, the subwords $\mathbf w_2$ and $\mathbf w_3$ do not contain any \mbox{$(m-1)$}-divider of $\mathbf u$.
It follows from Lemma~\ref{L: 2nd occurence of y_m general} that ${_{2\mathbf u}y_m}$ occurs in either $\mathbf u_{2m}$ or $\mathbf u_{2m-1}$ or $\mathbf u_{2m-2}$.
Let $y_{m-1}=h_2^{m-1}(\mathbf u,y_m)$.
Since $\mathbf u_{2m}$ does not contain any \mbox{$(m-1)$}-dividers of $\mathbf u$, the letter ${_{1\mathbf u}y_{m-1}}$ does not occur in $\mathbf u_{2m}$.
Therefore, ${_{2\mathbf u}y_m}$ occurs in either $\mathbf u_{2m-1}$ or $\mathbf u_{2m-2}$.
Then
$$
\mathbf u(X) = xy_myx_kxy\,\mathbf b_{k,m+1}\,x_{m-1}\,\mathbf c\mathbf b_{m-1},
$$
where $\mathbf c \in\{x_my_m, y_mx_m\}$.
Since $\mathbf u \approx \mathbf v$ is $k$-well-balanced, ${_{2\mathbf v}y_m}$ lies between ${_{1\mathbf v}x_{m-1}}$ and ${_{1\mathbf v}x_{m-2}}$ and
$$
\mathbf v(X) = xy_myx_kyx\,\mathbf b_{k,m+1}\,x_{m-1}\,\mathbf d\mathbf b_{m-1},
$$
where $\mathbf d \in\{x_my_m, y_mx_m\}$.
We see that the identity $\mathbf u(X) \approx \mathbf v(X)$ coincides with
\begin{equation}
\label{almost lambda_k^m}
xy_myx_kxy\,\mathbf b_{k,m+1}\,x_{m-1}\,\mathbf c\mathbf b_{m-1} \approx xy_myx_kyx\,\mathbf b_{k,m+1}\,x_{m-1}\,\mathbf d\mathbf b_{m-1}.
\end{equation}
Then $\mathbf V$ satisfies $\lambda_k^m$ because
$$
\begin{aligned}
&x\stackrel{(1)}{y_m}yx_k xy \cdots x_{m-1}\stackrel{(2)}{x_m}\stackrel{(2)}{y_m}x_{m-2}x_{m-1}\cdots x_1 x_2 x_0 x_1\\
\stackrel{\eqref{xsxt=xsxtx}}\approx{}&x\stackrel{(1)}{y_m}yx_k xy \cdots (x_{m-1}\stackrel{(2)}{x_m}\stackrel{(2)}{y_m})\,\mathbf c\,x_{m-2}(x_{m-1}\stackrel{(4)}{x_m}\stackrel{(4)}{y_m})\cdots x_1 x_2 x_0 x_1\\
\stackrel{\eqref{almost lambda_k^m}}\approx{}&x\stackrel{(1)}{y_m}yx_k yx \cdots (x_{m-1}\stackrel{(2)}{x_m}\stackrel{(2)}{y_m})\,\mathbf d\,x_{m-2}(x_{m-1}\stackrel{(4)}{x_m}\stackrel{(4)}{y_m})\cdots x_1 x_2 x_0 x_1\\
\stackrel{\eqref{xsxt=xsxtx}}\approx{}&x\stackrel{(1)}{y_m}yx_k yx \cdots x_{m-1}\stackrel{(2)}{x_m}\stackrel{(2)}{y_m}x_{m-2}x_{m-1}\cdots x_1 x_2 x_0 x_1,
\end{aligned}
$$
and we are done.
\end{proof}

\begin{lemma}
\label{L: V satisfies eta_k,mu_k^m}
Let $\mathbf V$ be a monoid variety from the interval $[\mathbf F_1,\mathbf P]$ and $\mathbf u \approx \mathbf v$ be an identity of $\mathbf V$ such that
$$
\mathbf u = \mathbf w_1\stackrel{(1)}{x}\mathbf w_2\stackrel{(1)}{y_{m+1}}\mathbf w_3\stackrel{(1)}{y}\mathbf w_4\,\stackrel{(2)}{x}\stackrel{(2)}{y}\mathbf w_5\stackrel{(2)}{y_{m+1}}\mathbf w_6
$$
for some words $\mathbf w_1,\mathbf w_2,\mathbf w_3,\mathbf w_4,\mathbf w_5,\mathbf w_6$ and $({_{2\mathbf v}x})<({_{2\mathbf v}y})$.
Suppose that $({_{2\mathbf u}a})<({_{2\mathbf u}y_{m+1}})$ for any $a \in \con(\mathbf w_2\mathbf w_3)\setminus \con(\mathbf w_1)$.
Suppose also that the depth of the subwords $\mathbf w_2y_{m+1}\mathbf w_3$, $\mathbf w_4$ and $\mathbf w_5$ of $\mathbf u$ is equal to $m+1$, $m$ and $k$, respectively.
\begin{itemize}
\item[\textup{(i)}] If $k \ge 0$ and $m=0$, then $\mathbf V$ satisfies $\eta_k$.
\item[\textup{(ii)}] If $1\le m \le k$, then $\mathbf V$ satisfies $\mu_k^m$.
\end{itemize}
\end{lemma}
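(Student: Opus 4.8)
The plan is to imitate the proof of Lemma~\ref{L: V satisfies zeta_k,lambda_k^m} almost verbatim, dragging the extra letter $y_{m+1}$ along and keeping careful track of where its second occurrence lands. By Lemma~\ref{L: 2-limited word} we may assume $\mathbf u\approx\mathbf v$ is $2$-limited. Fix an $m$-divider $y_m$ of $\mathbf u$ lying inside $\mathbf w_4$ and a $k$-divider $x_k$ of $\mathbf u$ lying inside $\mathbf w_5$; by Lemma~\ref{L: k-divider and depth} we have $D(\mathbf u,x_k)=k$ and $D(\mathbf u,y_m)=m$ (when $m=0$ the letter $y_0$ is simple in $\mathbf u$). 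If $k=0$ then necessarily $m=0$, and Corollary~\ref{C: (k-1)-well-balanced} together with $\mathbf F_1\subseteq\mathbf V$ forces $\mathbf v$ to have the same $0$-dividers in the same order with the same distribution of first and second occurrences as $\mathbf u$, so that after restricting to the letters $x,y,y_0,y_1,x_0$ the identity becomes exactly $\eta_0$; so from now on assume $k>0$.

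Next treat the case when $\mathbf u\approx\mathbf v$ is not $k$-well-balanced. Exactly as in Lemma~\ref{L: V satisfies zeta_k,lambda_k^m}, let $p$ be least with $0<p\le k$ for which $\mathbf u\approx\mathbf v$ fails to be $p$-well-balanced; then $\mathbf F_{p+1}\nsubseteq\mathbf V$ by Corollary~\ref{C: (k-1)-well-balanced}, hence $\mathbf V$ satisfies $\kappa_p$ by Lemma~\ref{L: V does not contain F_k}, and then Lemmas~\ref{L: kappa_k and delta_k^k},~\ref{L: gamma_k, delta_k^k,epsilon_k} and~\ref{L: zeta_k,lambda_k^m,eta_k,mu_k^m} (which relate $\kappa_p$ to the whole hierarchy of identities) yield that $\mathbf V$ satisfies $\eta_k$ and $\mu_k^r$ for every $1\le r\le m$; this settles both parts. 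So we may assume $\mathbf u\approx\mathbf v$ is $k$-well-balanced.

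Now apply Corollary~\ref{C: form of the word} to the letter $x_k$, putting $\mathbf u$ in its canonical form with blocks $\mathbf u_0,\dots,\mathbf u_{2k+1}$ and letters $x_0,\dots,x_{k-1}$ satisfying $D(\mathbf u,x_r)=r$, where $\mathbf u_{2r+2}$ contains no $r$-divider. Since ${_{1\mathbf u}x_k}$ lies in $\mathbf w_5$, all of ${_{1\mathbf u}x}$, ${_{1\mathbf u}y_{m+1}}$, ${_{1\mathbf u}y}$, ${_{1\mathbf u}y_m}$, ${_{2\mathbf u}x}$, ${_{2\mathbf u}y}$ lie in $\mathbf u_{2k+1}$ (compare the opening of $\mu_k^m$, which reads $xy_{m+1}yy_m\,xy\,x_k\dots$). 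Using Lemmas~\ref{L: h_2^{k-1}},~\ref{L: if first then second} and~\ref{L: 2nd occurence of y_m general} — the depth hypotheses on $\mathbf w_2y_{m+1}\mathbf w_3$, $\mathbf w_4$, $\mathbf w_5$ supplying the needed absence of low-level dividers, just as in Lemma~\ref{L: V satisfies zeta_k,lambda_k^m} — one locates ${_{2\mathbf u}y_{m+1}}$ in $\mathbf u_{2k}$ and ${_{2\mathbf u}y_m}$ in $\mathbf u_{2m-1}$ or $\mathbf u_{2m-2}$; the hypothesis $({_{2\mathbf u}a})<({_{2\mathbf u}y_{m+1}})$ for all $a\in\con(\mathbf w_2\mathbf w_3)\setminus\con(\mathbf w_1)$ is precisely what prevents ${_{2\mathbf u}y_{m+1}}$ from being dragged past ${_{1\mathbf u}x_{k-1}}$ when the word is cut down to a few letters. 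Put $X=\{x_0,\dots,x_k,x,y_m,y_{m+1},y\}$. When $m=0$ the letter $y_0$ is simple and $\mathbf u(X)=x y_1 y y_0 x y x_k y_1\mathbf b_k$, while $k$-well-balancedness and $({_{2\mathbf v}x})<({_{2\mathbf v}y})$ give $\mathbf v(X)=x y_1 y y_0 y x x_k y_1\mathbf b_k$, so that $\mathbf u(X)\approx\mathbf v(X)$ is literally $\eta_k$. When $1\le m\le k$ the same analysis shows that $\mathbf u(X)\approx\mathbf v(X)$ is an ``almost $\mu_k^m$'' identity (with the second occurrences of $x_m$ and $y_m$ possibly interchanged), from which $\mu_k^m$ follows by the same short chain of applications of~\eqref{xsxt=xsxtx} used at the end of the proof of Lemma~\ref{L: V satisfies zeta_k,lambda_k^m}.

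The main obstacle, as in the preceding lemma, is the bookkeeping in the $k$-well-balanced case: one must pin down exactly which blocks $\mathbf u_j$ and $\mathbf v_j$ house the second occurrences of $x,y,y_m,y_{m+1}$ — the novelty here being the interplay between the two ``$(m{+}1)$-level'' letters $y$ and $y_{m+1}$ — so that the restricted identity is recognizably $\eta_k$ (respectively, equivalent to $\mu_k^m$ modulo~\eqref{xsxt=xsxtx}). This is where the depth assignments to $\mathbf w_2y_{m+1}\mathbf w_3$, $\mathbf w_4$, $\mathbf w_5$ and the order condition $({_{2\mathbf u}a})<({_{2\mathbf u}y_{m+1}})$ are consumed; everything else is a routine transcription of Lemma~\ref{L: V satisfies zeta_k,lambda_k^m}.
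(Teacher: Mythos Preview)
Your treatment of the $k$-well-balanced case (your paragraph~3) is essentially the paper's Case~1 and is fine. The gap is in your paragraph~2, the non-$k$-well-balanced case. You assert that from $\kappa_p$, Lemmas~\ref{L: kappa_k and delta_k^k}, \ref{L: gamma_k, delta_k^k,epsilon_k} and~\ref{L: zeta_k,lambda_k^m,eta_k,mu_k^m} yield $\eta_k$ and $\mu_k^r$. They do not. The analogy with Lemma~\ref{L: V satisfies zeta_k,lambda_k^m} breaks precisely here: in that lemma the chain works because Lemma~\ref{L: zeta_k,lambda_k^m,eta_k,mu_k^m}(ii) provides the bridge $\mathbf P\{\varepsilon_p\}\subset\mathbf P\{\zeta_p\}\subset\mathbf P\{\lambda_p^1\}\subset\cdots$, linking the $\kappa/\delta/\varepsilon$ world to $\zeta$ and $\lambda$. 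No such bridge exists for $\eta$ or $\mu$: parts~(iii) and~(iv) of Lemma~\ref{L: zeta_k,lambda_k^m,eta_k,mu_k^m} give only chains \emph{within} the $\eta$'s and \emph{within} the $\mu$'s, with no entry point from $\kappa_p$, $\delta_p^p$, $\varepsilon_p$, $\zeta_p$ or $\lambda_p^q$. Indeed, $\kappa_p$ alone does \emph{not} imply $\eta_k$ in $\mathbf P$; one must also use the given identity $\mathbf u\approx\mathbf v$.

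The paper's Case~2 is accordingly a genuinely different and longer argument. One first disposes of the case $\mathbf{LRB}\nsubseteq\mathbf V$ (then $\mathbf V\subseteq\mathbf F$, which satisfies $\eta_k$ and $\mu_k^m$ outright). Assuming $\mathbf{LRB}\subseteq\mathbf V$ and letting $n$ be your $p$: for $m=0$ one computes $\mathbf v(X)=xy_1yy_0\,\mathbf a\,\mathbf b_n$ using only $(n{-}1)$-well-balancedness and $\ini$-information from $\mathbf{LRB}$, then applies Lemma~\ref{L: ab_k=ini(a)ini(a_{x_k})b_k} (twice) together with Proposition~\ref{P: 1st and 2nd occurrences} to normalise $\mathbf a$ and extract $\eta_k$. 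For $1\le m\le k$ one splits again: if $n\le m$ then Corollary~\ref{C: u'abu''=u'bau'' kappa_k} gives $\mu_k^m$ directly (the depth of $y$ in both sides of $\mu_k^m$ is $m+1>n$); if $m<n$ one repeats the Lemma~\ref{L: ab_k=ini(a)ini(a_{x_k})b_k}/Proposition~\ref{P: 1st and 2nd occurrences} manoeuvre to reach the ``almost $\mu_k^m$'' identity~\eqref{almost mu_k^m}, and finishes as in Case~1. None of this is a transcription of Lemma~\ref{L: V satisfies zeta_k,lambda_k^m}.
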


\begin{proof}
If $\mathbf{LRB}\nsubseteq \mathbf V$, then $\mathbf V\subseteq \mathbf F$ by Lemma~\ref{L: does not contain LRB,F_1}.
Evidently, $\mathbf F$ satisfies $\eta_k$ and $\mu_k^m$ for any $k \ge 0$ and $m\le k$.
So, we may further assume that $\mathbf{LRB}\subseteq \mathbf V$.
Further, Lemma~\ref{L: 2-limited word} allows us to assume that the identity $\mathbf u \approx \mathbf v$ is 2-limited.

Let $x_k$ be a $k$-divider of $\mathbf u$ that occurs in $\mathbf w_5$.
In view of Lemma~\ref{L: k-divider and depth}, $D(\mathbf u,x_k)=k$ and  $D(\mathbf u,y_{m+1})=m+1$.
Now by Corollary~\ref{C: form of the word}, there exist letters $x_0,x_1,\dots, x_{k-1}$ such that $D(\mathbf u,x_r)=r$ and
\begin{align*}
\mathbf u={}&\mathbf u_{2k+1}\stackrel{(1)}{x_k}\mathbf u_{2k}\stackrel{(1)}{x_{k-1}}\mathbf u_{2k-1}\stackrel{(2)}{x_k}\mathbf u_{2k-2}\stackrel{(1)}{x_{k-2}}\mathbf u_{2k-3}\stackrel{(2)}{x_{k-1}}\mathbf u_{2k-4}\stackrel{(1)}{x_{k-3}}\\
&\cdot\,\mathbf u_{2k-5}\stackrel{(2)}{x_{k-2}}\cdots\mathbf u_4\stackrel{(1)}{x_1}\mathbf u_3\stackrel{(2)}{x_2}\mathbf u_2\stackrel{(1)}{x_0}\mathbf u_1\stackrel{(2)}{x_1}\mathbf u_0
\end{align*}
for some $\mathbf u_0,\mathbf u_1,\dots,\mathbf u_{2k+1} \in \mathfrak A^\ast$ such that $\mathbf u_{2r+2}$ does not contain any $r$-dividers of $\mathbf u$ for any $r=0,1,\dots,k-1$.
Since the depth of $\mathbf w_5$ in $\mathbf u$ is equal to $k$, the letter ${_{2\mathbf u}y_{m+1}}$ lies in $\mathbf u_{2k}$.
Let $y_m$ be an $m$-divider of $\mathbf u$ that occurs in $\mathbf w_4$.
Clearly,
$$
({_{1\mathbf u}x})<({_{1\mathbf u}y_{m+1}})<({_{1\mathbf u}y})<({_{1\mathbf u}y_m})<({_{2\mathbf u}x})<({_{2\mathbf u}y})<({_{1\mathbf u}x_k}).
$$
Put
$$
X = \{x_0,x_1,\dots,x_k,x,y_m,y_{m+1},y\}.
$$
There are two cases.

\smallskip

\textit{Case }1: the identity $\mathbf u \approx \mathbf v$ is $k$-well-balanced.

Suppose that $k \ge 0$ and $m=0$.
Then $y_m=y_0 \in \simple(\mathbf u)=\simple(\mathbf v)$ and $\mathbf u(X) = xy_1yy_0xyx_ky_1\,\mathbf b_k$.
Since the identity $\mathbf u \approx \mathbf v$ is $k$-well-balanced, we have $\mathbf v(X) = xy_1yy_0yxx_ky_1\,\mathbf b_k$.
Therefore, $\mathbf u(X) \approx \mathbf v(X)$ coincides with $\eta_k$, and we are done.

Suppose now that $1 \le m \le k$.
Then $y_m \in \mul(\mathbf u)=\mul(\mathbf v)$.
Clearly, the subwords $\mathbf w_4$ and $\mathbf w_5$ do not contain any \mbox{$(m-1)$}-divider of $\mathbf u$.
It follows from Lemma~\ref{L: 2nd occurence of y_m general} that ${_{2\mathbf u}y_m}$ occurs in either $\mathbf u_{2m}$ or $\mathbf u_{2m-1}$ or $\mathbf u_{2m-2}$.
Let $y_{m-1}=h_2^{m-1}(\mathbf u,y_m)$.
Since $\mathbf u_{2m}$ does not contain any \mbox{$(m-1)$}-dividers of $\mathbf u$, the letter ${_{1\mathbf u}y_{m-1}}$ does not occur in $\mathbf u_{2m}$.
Therefore, ${_{2\mathbf u}y_m}$ occurs in either $\mathbf u_{2m-1}$ or $\mathbf u_{2m-2}$.
Then
$$
\mathbf u(X) = xy_{m+1}yy_mxyx_ky_{m+1}\,\mathbf b_{k,m+1}\,x_{m-1}\,\mathbf c\mathbf b_{m-1},
$$
where $\mathbf c \in\{x_my_m, y_mx_m\}$.
Since $\mathbf u \approx \mathbf v$ is $k$-well-balanced, ${_{2\mathbf v}y_m}$ lies between ${_{1\mathbf v}x_{m-1}}$ and ${_{1\mathbf v}x_{m-2}}$ and
$$
\mathbf v(X) = xy_{m+1}yy_myxx_ky_{m+1}\,\mathbf b_{k,m+1}\,x_{m-1}\,\mathbf d\mathbf b_{m-1},
$$
where $\mathbf d \in\{x_my_m, y_mx_m\}$.
We see that the identity $\mathbf u(X) \approx \mathbf v(X)$ coincides with
\begin{equation}
\label{almost mu_k^m}
\begin{aligned}
&xy_{m+1}yy_mxyx_ky_{m+1}\,\mathbf b_{k,m+1}\,x_{m-1}\,\mathbf c\mathbf b_{m-1}\\ \approx{}& xy_{m+1}yy_myxx_ky_{m+1}\,\mathbf b_{k,m+1}\,x_{m-1}\,\mathbf d\mathbf b_{m-1}.
\end{aligned}
\end{equation}
Then $\mathbf V$ satisfies $\mu_k^m$ because
$$
\begin{aligned}
&x\stackrel{(1)}{y_{m+1}}yy_m xyx_ky_{m+1} \cdots x_{m-1}\stackrel{(2)}{x_m}\stackrel{(2)}{y_m}x_{m-2}x_{m-1}\cdots x_1 x_2 x_0 x_1\\
\stackrel{\eqref{xsxt=xsxtx}}\approx{}&x\stackrel{(1)}{y_{m+1}}yy_m xyx_ky_{m+1} \cdots (x_{m-1}\stackrel{(2)}{x_m}\stackrel{(2)}{y_m})\,\mathbf c\,x_{m-2}(x_{m-1}\stackrel{(4)}{x_m}\stackrel{(4)}{y_m})\cdots x_1 x_2 x_0 x_1\\
\stackrel{\eqref{almost mu_k^m}}\approx{}&x\stackrel{(1)}{y_{m+1}}yy_m yxx_ky_{m+1} \cdots (x_{m-1}\stackrel{(2)}{x_m}\stackrel{(2)}{y_m})\,\mathbf d\,x_{m-2}(x_{m-1}\stackrel{(4)}{x_m}\stackrel{(4)}{y_m})\cdots x_1 x_2 x_0 x_1\\
\stackrel{\eqref{xsxt=xsxtx}}\approx{}&x\stackrel{(1)}{y_{m+1}}yx_m yxx_ky_{m+1} \cdots x_{m-1}\stackrel{(2)}{x_m}\stackrel{(2)}{y_m}x_{m-2}x_{m-1}\cdots x_1 x_2 x_0 x_1,
\end{aligned}
$$
and we are done.

\textit{Case }2: the identity $\mathbf u \approx \mathbf v$ is not $k$-well-balanced.
In view of Corollary~\ref{C: (k-1)-well-balanced} and the inclusion $\mathbf F_1 \subset \mathbf V$, the identity $\mathbf u \approx \mathbf v$ is $0$-well-balanced.
Then there is the smallest $n$ such that $0< n \le k$ and the identity $\mathbf u \approx \mathbf v$ is not $n$-well-balanced.
In view of Corollary~\ref{C: (k-1)-well-balanced}, $\mathbf F_{n+1} \nsubseteq \mathbf V$.
Then $\mathbf F_n \subseteq\mathbf V$.
According to Lemma~\ref{L: V does not contain F_k}, $\mathbf V$ satisfies $\kappa_n$.

Suppose that $k \ge 0$ and $m=0$.
Then $y_m=y_0 \in \simple(\mathbf u)=\simple(\mathbf v)$ and $\mathbf u(X) = xy_1yy_0xyx_ky_1\mathbf b_k$.
Lemma~\ref{L: word problem LRB} and the inclusion $\mathbf{LRB}\subset\mathbf V$ together with the fact that the identity $\mathbf u \approx \mathbf v$ is \mbox{$(n-1)$}-well-balanced imply $\mathbf v(X) = xy_1yy_0\,\mathbf a \mathbf b_n$ for some word $\mathbf a$ with $\simple(\mathbf a)=\{x_n,x,y_1,y\}$ and $\mul(\mathbf a)=\{x_{n+1},x_{n+2},\dots,x_k\}$.
According to Lemma~\ref{L: word problem LRB}, $\ini(\mathbf a_{\{x,y\}}) = x_k x_{k-1} \cdots x_n$.
Further, $\mathbf a(x,y)=yx$ because $({_{2\mathbf v}y})<({_{2\mathbf v}x})$.
Then $\mathbf V$ satisfies the identities
$$
\begin{aligned}
xy_1yy_0\,\mathbf a \mathbf b_n\approx{}&xy_1yy_0\,\mathbf axyy_1\, \mathbf b_n&&\textup{by }\eqref{xsxt=xsxtx}\\
\approx{}&xy_1yy_0\,\ini(\mathbf axyy_1)\ini((\mathbf axyy_1)_{x_n})\, \mathbf b_n&&\textup{by Lemma~\ref{L: ab_k=ini(a)ini(a_{x_k})b_k}}\\
={}&xy_1yy_0\,\ini(\mathbf a)\ini(\mathbf a_{x_n})\, \mathbf b_n.
\end{aligned}
$$
Since $({_{1\mathbf a}y})<({_{1\mathbf a}x})$, it follows from Proposition~\ref{P: 1st and 2nd occurrences} that $\Phi(\mathbf V)$ together with
\begin{equation}
\label{xytxy=xytyx}
xytxy \approx xytyx
\end{equation}
imply the identity
$$
\ini(\mathbf a)\ini(\mathbf a_{x_n})\, \mathbf b_n \approx \mathbf a' \mathbf b_n,
$$
where $\simple(\mathbf a')=\{x_n\}$ and $\ini(\mathbf a')=yxx_ky_1 x_{k-1} \cdots x_n$.
Now we apply Lemma~\ref{L: ab_k=ini(a)ini(a_{x_k})b_k} again and obtain that $\mathbf V$ satisfies the identities
$$
\begin{aligned}
\mathbf a'\mathbf b_n \approx{}&\ini(\mathbf a')\ini(\mathbf a_{x_n}')\mathbf b_n  =yxx_ky_1 x_{k-1} \cdots x_nyxx_ky_1 x_{k-1} \cdots x_{n+1}\mathbf b_n\\
={}&\ini(y^2x^2x_ky_1^2\mathbf b_{k,n+1})\ini((y^2x^2x_ky_1^2\mathbf b_{k,n+1})_{x_n})\mathbf b_n \approx y^2x^2x_ky_1^2\mathbf b_{k,n+1}\mathbf b_n\\
= {}&y^2x^2x_ky_1^2\mathbf b_k.
\end{aligned}
$$
Then the identities
$$
xy_1yy_0\,\mathbf a'\mathbf b_n \approx xy_1yy_0\, y^2x^2x_ky_1^2\mathbf b_k \stackrel{\eqref{xsxt=xsxtx}}\approx xy_1yy_0\, yxx_ky_1\mathbf b_k
$$
hold in $\mathbf V$.
Therefore, $\mathbf V$ satisfies
$$
xy_1yy_0\,\mathbf a \mathbf b_n\approx xy_1yy_0\, yxx_ky_1\mathbf b_k
$$
and so $\eta_k$.

Suppose now that $1 \le m \le k$.
Then $y_m \in \mul(\mathbf u)=\mul(\mathbf v)$.
It is routine to verify that the depth of the letter $y$ in both sides of the identity $\mu_k^m$ is equal to $m+1$.
Thus, if $n \le m$, then $\mathbf V$ satisfies $\mu_k^m$ by Corollary~\ref{C: u'abu''=u'bau'' kappa_k}.
So, we may assume that $m < n$.
Clearly, the subwords $\mathbf w_4$ and $\mathbf w_5$ do not contain any \mbox{$(m-1)$}-divider of $\mathbf u$.
It follows from Lemma~\ref{L: 2nd occurence of y_m general} that ${_{2\mathbf u}y_m}$ occurs in either $\mathbf u_{2m}$ or $\mathbf u_{2m-1}$ or $\mathbf u_{2m-2}$.
Let $y_{m-1}=h_2^{m-1}(\mathbf u,y_m)$.
Since $\mathbf u_{2m}$ does not contain any \mbox{$(m-1)$}-divider of $\mathbf u$, the letter ${_{1\mathbf u}y_{m-1}}$ does not occur in $\mathbf u_{2m}$.
Therefore, ${_{2\mathbf u}y_m}$ occurs in either $\mathbf u_{2m-1}$ or $\mathbf u_{2m-2}$.
Then
$$
\mathbf u(X) = xy_{m+1}yy_mxyx_ky_{m+1}\,\mathbf b_{k,m+1}\,x_{m-1}\,\mathbf c\mathbf b_{m-1},
$$
where $\mathbf c \in\{x_my_m, y_mx_m\}$.
Lemma~\ref{L: word problem LRB} and the inclusion $\mathbf{LRB}\subset\mathbf V$ together with the facts that $m < n$ and $\mathbf u \approx \mathbf v$ is \mbox{$(n-1)$}-well-balanced imply that ${_{2\mathbf v}y_m}$ lies between ${_{1\mathbf v}x_{m-1}}$ and ${_{1\mathbf v}x_{m-2}}$ and
$$
\mathbf v(X) = xy_{m+1}yy_m\mathbf a\,\mathbf b_{n,m+1}\,x_{m-1}\,\mathbf d\mathbf b_{m-1},
$$
$\simple(\mathbf a)=\{x_n,x,y,y_{m+1}\}$, $\mul(\mathbf a)=\{x_{n+1},x_{n+2},\dots,x_k\}$ and $\mathbf d \in \{x_my_m,y_mx_m\}$.
As in the previous paragraph, one can show that $\mathbf V$ satisfies the identity
$$
xy_{m+1}yy_m\mathbf a\,\mathbf b_{n,m+1}\,x_{m-1}\,\mathbf d\mathbf b_{m-1} \approx xy_{m+1}yy_myxx_ky_{m+1}\,\mathbf b_{k,m+1}\,x_{m-1}\,\mathbf d\mathbf b_{m-1}
$$
and so the identity~\eqref{almost mu_k^m}.
As we have verified in Case~1, the last identity together with~\eqref{xsxt=xsxtx} imply $\mu_k^m$.
We see that the identity $\mu_k^m$ holds in $\mathbf V$.
\end{proof}

\begin{lemma}
\label{L: V satisfies nu_p}
Let $\mathbf V$ be a variety from the interval $[\mathbf{LRB},\mathbf P]$.
Suppose that $\mathbf V$ satisfies the identity
\begin{equation}
\label{almost nu_p}
xx_pyzxyz_\infty^2z\mathbf b_p \approx xx_p\mathbf c\mathbf b_p,
\end{equation}
where
\begin{equation}
\label{conditions for almost nu_p}
\mathbf c=\mathbf c_1y\mathbf c_2y\mathbf c_3x\mathbf c_4z_\infty^2\mathbf c_5 \ \text{ and }\ \mathbf c_1\mathbf c_2\mathbf c_3\mathbf c_4\mathbf c_5=z^2.
\end{equation}
Then the identity $\nu_p$ holds in $\mathbf V$.
\end{lemma}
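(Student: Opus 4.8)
Since $\mathbf V\subseteq\mathbf P$, the identity~\eqref{xsxt=xsxtx} holds in $\mathbf V$, and since $\mathbf{LRB}\subseteq\mathbf V$, Lemma~\ref{L: word problem LRB} tells us that every identity of $\mathbf V$ preserves $\ini$. The left-hand side of~\eqref{almost nu_p} coincides verbatim with the left-hand side of $\nu_p$, so it is enough to prove that $\mathbf V$ satisfies $xx_p\mathbf c\mathbf b_p\approx xx_p\,yzyxz_\infty^2z\,\mathbf b_p$. The first step is to pin down the shape of $\mathbf c$: comparing $\ini$ of both sides of~\eqref{almost nu_p} with $\ini\bigl(xx_p\,yzxyz_\infty^2z\,\mathbf b_p\bigr)=x\,x_p\,y\,z\,z_\infty\,x_{p-1}x_{p-2}\cdots x_0$ forces $\mathbf c_1=\lambda$ and forces the first occurrence of $z$ in $\mathbf c$ to lie in $\mathbf c_2\mathbf c_3\mathbf c_4$, so that at least one of $\mathbf c_2,\mathbf c_3,\mathbf c_4$ is nonempty. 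By Lemma~\ref{L: 2-limited word} we may replace $\mathbf c$ by a $2$-limited word, and then, in view of~\eqref{conditions for almost nu_p}, $\mathbf c$ is one of the finitely many words $y z^i y z^j x z^\ell z_\infty^2 z^m$ with $i+j+\ell+m=2$ and $i+j+\ell\ge1$.

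It then remains to transform $xx_p\mathbf c\mathbf b_p$ into $xx_p\,yzyxz_\infty^2z\,\mathbf b_p$ using identities of $\mathbf V$; I would proceed by a short case analysis on the word $\mathbf c$, the case $\mathbf c=yzyxz_\infty^2z$ (i.e.\ $i=m=1$) being exactly $\nu_p$ and hence trivial. In each remaining case one or both occurrences of $z$ must be repositioned, and the tool for this is to apply to~\eqref{almost nu_p} — or to an identity of $\mathbf V$ already derived from it — a substitution that reuses the letters $z$ and $z_\infty$; after deleting, via Lemma~\ref{L: 2-limited word}, the third and later occurrences that the substitution creates, the net effect of one such application of~\eqref{almost nu_p} is precisely to move a single occurrence of $z$ inside $xx_p\mathbf c\mathbf b_p$, while~\eqref{xsxt=xsxtx} absorbs the auxiliary occurrences. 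For example, the substitution $z_\infty\mapsto z_\infty z$ turns~\eqref{almost nu_p} (after this collapse) into an identity of $\mathbf V$ relating $z_\infty^2z$ with $z_\infty z z_\infty$ in the appropriate context; iterating identities of this kind together with~\eqref{xsxt=xsxtx} and, where needed, substitutions of the type used to derive the swap identities in Lemmas~\ref{L: u'abu''=u'bau''}, \ref{L: u'abu''=u'bau'' zeta_k,lambda_k^m} and~\ref{L: u'abu''=u'bau'' nu_k}, one moves the two occurrences of $z$ one at a time into their canonical positions, which yields $\nu_p$.

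The principal obstacle is that~\eqref{xsxt=xsxtx} on its own only inserts and deletes \emph{redundant} (third or later) occurrences of a letter; it can move no occurrence of $z$ past the square $z_\infty^2$ nor past an occurrence of $y$. Consequently every genuine relocation of a $z$ must be produced by re-invoking the hypothesis~\eqref{almost nu_p} under a carefully chosen substitution, and the delicate part of the argument is the bookkeeping: one must check, case by case, that the two words obtained from such a substituted instance of~\eqref{almost nu_p} collapse modulo~\eqref{xsxt=xsxtx} to exactly the words required, in particular that the freshly created occurrences of $z$ and $z_\infty$ are precisely the ones that get absorbed.
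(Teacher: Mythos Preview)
Your outline correctly isolates the finitely many possible shapes of $\mathbf c$ and correctly notes that~\eqref{xsxt=xsxtx} alone cannot push an occurrence of $z$ across $z_\infty^2$ or across an occurrence of $y$. But the heart of the argument---the ``case-by-case bookkeeping'' you allude to---is precisely what is missing, and the tool you propose (substituting into the full identity~\eqref{almost nu_p}) does not do the job as stated. For instance, applying $z_\infty\mapsto z_\infty z$ to~\eqref{almost nu_p} itself changes the \emph{left}-hand side from $xx_pyzxyz_\infty^2z\mathbf b_p$ to (after cleanup with~\eqref{xsxt=xsxtx}) $xx_pyzxyz_\infty z z_\infty\mathbf b_p$, so you do not obtain an identity linking two candidate right-hand sides; you obtain a new identity with a different left side, and iterating this does not obviously converge to $\nu_p$.

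The paper handles the two non-trivial situations by two quite different devices that your plan does not invoke. When $\mathbf c_2=\lambda$ (the first $z$ sits after the second $y$), the swap of ${_1z}$ with ${_2y}$ is produced via Proposition~\ref{P: 1st and 2nd occurrences}: the given identity~\eqref{almost nu_p} is itself an identity of $\mathbf V$ with a critical pair of type $\{{_1z},{_2y}\}$, so that proposition manufactures an identity in $\Phi(\mathbf V)$ which performs the swap. When $\mathbf c_5=\lambda$ (the second $z$ sits before $z_\infty^2$), the trick is to first erase all letters except $z,z_\infty$ from~\eqref{almost nu_p}, obtaining $zz_\infty^2z\approx z^2z_\infty^2$, and only \emph{then} substitute $z_\infty\mapsto z_\infty z$: modulo~\eqref{xsxt=xsxtx} this yields~\eqref{xyxy=xxyy}, after which Corollary~\ref{C: block = ini^2(block)} collapses the whole block to $\ini^2(\mathbf c)=y^2z^2x^2z_\infty^2=\ini^2(yzyxz_\infty^2z)$ and $\nu_p$ follows at once. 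Neither of these moves is a ``substitution into~\eqref{almost nu_p}'' in your sense, and without them the case analysis does not close.
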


\begin{proof}
According to Lemma~\ref{L: word problem LRB} and the inclusion $\mathbf{LRB}\subset\mathbf V$, $\ini(\mathbf c_x)=yzz_\infty$.
Therefore, $\mathbf c_1=\lambda$ and $\mathbf c_2\mathbf c_3\mathbf c_4\ne \lambda$.
If $\mathbf c_2=\lambda$, then it follows from Proposition~\ref{P: 1st and 2nd occurrences} that $\Phi(\mathbf V)$ implies the identity $xx_p\mathbf c\mathbf b_p\approx xx_pyz\mathbf d\mathbf b_p$, where $\mathbf d$ is a word obtained from $y\mathbf c_3x\mathbf c_4z_\infty^2\mathbf c_5$ by removing the first occurrence of $z$.
So, we may assume that $z\in\con(\mathbf c_2)$.
If $z\in\con(\mathbf c_5)$, then~\eqref{almost nu_p} is nothing but $\nu_p$.
So, it remains to consider the case when $\mathbf c_5=\lambda$.
In this case, we substitute $z_\infty z$ for $z_\infty$ in the identity $\mathbf u(z,z_\infty)\approx \mathbf v(z,z_\infty)$.
As a result, we obtain an identity that is equivalent modulo~\eqref{xsxt=xsxtx} to~\eqref{xyxy=xxyy}.
Since $\ini^2(\mathbf c)=y^2z^2x^2z_\infty^2=\ini^2(yzyxz_\infty^2 z)$, Corollary~\ref{C: block = ini^2(block)} implies that $\mathbf V$ satisfies the identity $xx_p\mathbf c\mathbf b_p \approx  xx_pyzyxz_\infty^2z\mathbf b_p$ and, therefore, the identity $\nu_p$.
\end{proof}

Let $\mathbf O_1$ denote the variety defined by the identities~\eqref{xsxt=xsxtx},~\eqref{xtyxy=xtyyx} and
\begin{equation}
\label{xzyt xy z_infty z_infty z = xzyt yx z_infty z_infty z}
xzytxyz_\infty^2z \approx xzytyxz_\infty^2z.
\end{equation}
Put
$$
\Psi_1 = \{\eqref{xsxt=xsxtx},\,\eqref{xtyxy=xtyyx},\,\eqref{xzyt xy z_infty z_infty z = xzyt yx z_infty z_infty z},\,\zeta_{k-1},\,\lambda_k^m,\,\eta_{k-1},\,\mu_k^m,\,\nu_{k-1} \mid k \in \mathbb N, \ 1 \le m \le k \}.
$$

\begin{proposition}
\label{P: 2nd occurrences O_1}
Let $\mathbf V$ be a monoid variety from the interval $[\mathbf{LRB}\vee\mathbf F_1,\mathbf O_1]$, $\mathbf u \approx \mathbf v$ be an identity of $\mathbf V$ and $\{_2 x, {_2 y}\}$ be a critical pair in $\mathbf u \approx \mathbf v$. Then the critical pair $\{_2 x, {_2 y}\}$ is $\Delta$-removable for some $\Delta\subseteq\Psi_1(\mathbf V)$.
\end{proposition}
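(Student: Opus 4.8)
The plan is to follow the strategy used for Proposition~\ref{P: 1st and 2nd occurrences}: pass to a 2-limited identity, read off the shape of $\mathbf u$ around the critical pair, and in every resulting configuration either extract one of the identities $\zeta_{k-1},\lambda_k^m,\eta_{k-1},\mu_k^m,\nu_{k-1}$ as an identity of $\mathbf V$, or observe that one of \eqref{xtyxy=xtyyx}, \eqref{xzyt xy z_infty z_infty z = xzyt yx z_infty z_infty z} is available because $\mathbf V\subseteq\mathbf O_1$; then apply the matching ``swap'' lemma among Lemmas~\ref{L: u'abu''=u'bau'' zeta_k,lambda_k^m}, \ref{L: u'abu''=u'bau'' 2nd occurrences eta_k, mu_k^m}, \ref{L: u'abu''=u'bau'' nu_k}, \ref{L: xtyxy=xtyyx implies} (or Corollary~\ref{C: block = ini^2(block)} together with \eqref{xzyt xy z_infty z_infty z = xzyt yx z_infty z_infty z}) to conclude $\mathbf u\approx\mathbf w$.

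More precisely, Lemma~\ref{L: 2-limited word} lets us assume $\mathbf u\approx\mathbf v$ is 2-limited, and we write $\mathbf u=\mathbf u'\stackrel{(2)}{x}\stackrel{(2)}{y}\mathbf u''$; since ${_{2\mathbf u}x}\,{_{2\mathbf u}y}$ is a subword, $x,y\in\mul(\mathbf u)$, and the definition of a critical pair gives $({_{2\mathbf v}y})<({_{2\mathbf v}x})$, which is the hypothesis on $\mathbf v$ required by Lemmas~\ref{L: V satisfies zeta_k,lambda_k^m} and~\ref{L: V satisfies eta_k,mu_k^m}. We split first according to whether ${_{1\mathbf u}x}$ precedes ${_{1\mathbf u}y}$. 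If it does, write $\mathbf u'=\mathbf u_1\stackrel{(1)}{x}\mathbf u_2\stackrel{(1)}{y}\mathbf u_3$ and let $k$ be the depth of $\mathbf u_3$ in $\mathbf u$. When $\mathbf u_3$ has strictly infinite depth (in particular when $\mathbf u_3=\lambda$), Lemma~\ref{L: xtyxy=xtyyx implies} applies and $\mathbf u\approx\mathbf w$ follows from $\{\eqref{xsxt=xsxtx},\eqref{xtyxy=xtyyx}\}\subseteq\Psi_1(\mathbf V)$. When $k<\infty$, one examines the depth $m$ of $\mathbf u_2$ and the position of the second occurrence of a minimal-depth divider of $\mathbf u_2$: if $m=0$, or $1\le m\le k$ with that second occurrence lying before the critical pair, Lemma~\ref{L: V satisfies zeta_k,lambda_k^m} gives $\zeta_k$, resp.\ $\lambda_k^m$, in $\mathbf V$ and Lemma~\ref{L: u'abu''=u'bau'' zeta_k,lambda_k^m} finishes; if that second occurrence lies past the critical pair, the relevant divider plays the role of $y_{m+1}$ and Lemmas~\ref{L: V satisfies eta_k,mu_k^m} and~\ref{L: u'abu''=u'bau'' 2nd occurrences eta_k, mu_k^m} apply via $\eta_k$ or $\mu_k^m$. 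Finally, when $k<\infty$ but either $m>k$ or $\mathbf u_3$ has infinite-yet-not-strictly-infinite depth, one uses the letter witnessing failure of strict infiniteness (together with a letter of infinite depth first occurring in $\mathbf u''$) to bring the restricted identity $\mathbf u(X)\approx\mathbf v(X)$ into the form~\eqref{almost nu_p}--\eqref{conditions for almost nu_p}, so that Lemma~\ref{L: V satisfies nu_p} yields $\nu_k$ in $\mathbf V$ and Lemma~\ref{L: u'abu''=u'bau'' nu_k} concludes; the residual possibility, where that witness produces a subword of shape $z_\infty^2z$ just past the critical pair, is handled directly by \eqref{xzyt xy z_infty z_infty z = xzyt yx z_infty z_infty z} and Corollary~\ref{C: block = ini^2(block)}.

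The case ${_{1\mathbf u}y}$ precedes ${_{1\mathbf u}x}$ is handled by an entirely parallel analysis (now the span of $x$ lies inside that of $y$, so $D(\mathbf u,y)\le D(\mathbf u,x)$ and one works with the subword between ${_{1\mathbf u}y}$ and ${_{2\mathbf u}x}$), reducing in each subcase to the same four families of swap lemmas after replaying the derivations of $\zeta_k,\lambda_k^m,\eta_k,\mu_k^m,\nu_k$ from the structure of $\mathbf v$. Throughout, the assumptions $\mathbf{LRB}\vee\mathbf F_1\subseteq\mathbf V$ enter exactly as in Lemmas~\ref{L: V satisfies zeta_k,lambda_k^m}--\ref{L: V satisfies nu_p}: the inclusion $\mathbf F_1\subseteq\mathbf V$, via Corollary~\ref{C: (k-1)-well-balanced} and Lemma~\ref{L: V does not contain F_k}, controls the well-balancedness of $\mathbf v$ and hence the shape of $\mathbf v(X)$, while $\mathbf{LRB}\subseteq\mathbf V$ (Lemma~\ref{L: word problem LRB}) and Proposition~\ref{P: 1st and 2nd occurrences} are used to pre-sort the lower occurrences of letters so that the restricted identity literally matches the target identity.

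I expect the main obstacle to be exhaustiveness and bookkeeping: correctly enumerating all configurations of the depths of the subwords flanked by the four occurrences ${_{1\mathbf u}x},{_{1\mathbf u}y},{_{2\mathbf u}x},{_{2\mathbf u}y}$ --- finite versus infinite versus strictly infinite, and, in the finite cases, the ordering of $m$ and $k$ together with where the second occurrences of the governing dividers fall --- and routing each to the precisely matching pair of lemmas. The genuinely delicate point is the interface between the ``infinite but not strictly infinite'' situation and the identities $\nu_k$ and \eqref{xzyt xy z_infty z_infty z = xzyt yx z_infty z_infty z}: one must exhibit the auxiliary letters $z_\infty$ (of infinite depth, first occurring past the critical pair) and $z$ (whose second occurrence realizes the failure of strict infiniteness) in the right relative positions so that the hypotheses \eqref{conditions for almost nu_p} of Lemma~\ref{L: V satisfies nu_p}, or the shape of \eqref{xzyt xy z_infty z_infty z = xzyt yx z_infty z_infty z}, are met exactly --- which forces the preliminary sorting of lower occurrences via Proposition~\ref{P: 1st and 2nd occurrences} to be carried out before the $\nu_k$-extraction, not after.
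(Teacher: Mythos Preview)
Your proposal assembles exactly the right toolbox and the same overall strategy as the paper, but the case decomposition is tangled in a way that, taken literally, does not work. Two structural points need fixing. First, the paper disposes of a preliminary case \emph{before} introducing any depth parameters: if every letter of the subword $\mathbf w_2$ between ${_{1\mathbf u}x}$ and ${_{1\mathbf u}y}$ already has its second occurrence inside the maximal $d$-block $\mathbf a$ containing ${_{2\mathbf u}x}\,{_{2\mathbf u}y}$, the swap follows directly from $\{\eqref{xsxt=xsxtx},\eqref{xzyt xy z_infty z_infty z = xzyt yx z_infty z_infty z}\}$. This is not a ``residual possibility'' inside the $\nu$ analysis; it is precisely the case where your parameter $m$ is undefined, and it must be cleared first so that afterwards one may fix a letter $x_p$ in $\mathbf w_2$ of minimal depth $p$ whose second occurrence lies in $\mathbf u''$, i.e.\ past $\mathbf a$. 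Second, the primary split is on $D(\mathbf u,y)$, that is, on whether your $k$ (the depth of $\mathbf u_3$) is infinite or finite. In the infinite case one subdivides into ``strictly infinite'' (use \eqref{xtyxy=xtyyx} via Lemma~\ref{L: xtyxy=xtyyx implies}) versus merely infinite (derive $\nu_p$ and apply Lemma~\ref{L: u'abu''=u'bau'' nu_k}); note that $\nu$ is indexed by $p$, the depth of $\mathbf u_2$, not by $k$ --- your phrase ``$k<\infty$ but \dots\ $\mathbf u_3$ has infinite depth'' is self-contradictory, and writing $\nu_k$ there carries the wrong subscript.

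In the finite case $D(\mathbf u,y)=s+1$ (so your $k=s$) one automatically has $p\le s+1$, and the dichotomy between $\zeta_s/\lambda_s^p$ and $\eta/\mu$ is simply $p\le s$ versus $p=s+1$ --- not ``where the second occurrence of the minimal-depth divider lies'', which by the choice of $x_p$ is always in $\mathbf u''$ anyway, so your Subcase~A would be vacuous for $m>0$. In the $p=s+1$ subcase the letter $x_p$ plays the role of $y_{s+1}$ in Lemmas~\ref{L: u'abu''=u'bau'' 2nd occurrences eta_k, mu_k^m} and~\ref{L: V satisfies eta_k,mu_k^m} (not $y_{m+1}$ in your labeling, since its depth is $p=s+1$, not $m+1$), and the subscript on $\eta/\mu$ is a \emph{new} depth read off from the portion of $\mathbf u''$ preceding ${_{2\mathbf u}x_p}$, while the superscript on $\mu$ is $s$, not your $m$. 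With these corrections your outline becomes exactly the paper's proof; nothing conceptually new is required, only a cleaner bookkeeping of which depth indexes which identity.
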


\begin{proof}
We consider only the case when $({_{1\mathbf u}x})<({_{1\mathbf u}y})$ since the case when $({_{1\mathbf u}y})<({_{1\mathbf u}x})$ is similar.
Then $\mathbf u = \mathbf w_1\stackrel{(1)}{x}\mathbf w_2\stackrel{(1)}{y}\mathbf w_3\stackrel{(2)}{x}\stackrel{(2)}{y}\mathbf w_4$ for some words $\mathbf w_1,\mathbf w_2,\mathbf w_3,\mathbf w_4$.
According to the inclusion $\mathbf{LRB}\subset\mathbf V$ and Lemma~\ref{L: word problem LRB}, $({_{1\mathbf v}x})<({_{1\mathbf v}y})$.
Let $\mathbf w$ be the word obtained from $\mathbf u$ by replacing ${_{2\mathbf u} x}\,{_{2\mathbf u} y}$ with ${_{2\mathbf u} y}\,{_{1\mathbf u} x}$.
Lemma~\ref{L: 2-limited word} allows us to assume that the identity $\mathbf u \approx \mathbf v$ is 2-limited.

In view of Remark~\ref{R: max decomposition}, there is a number $d$ such that the $d$-decomposition of $\mathbf u$ is maximal.
Let $\mathbf a$ denote the $d$-block of $\mathbf u$, which contains the critical pair $\{{_2 x}, {_2 y}\}$.
Then $\mathbf a = \mathbf a'\stackrel{(2)}{x}\stackrel{(2)}{y}\mathbf a''$ and $\mathbf u = \mathbf u'\mathbf a\mathbf u''$ for some $\mathbf a',\mathbf a',\mathbf u',\mathbf u'' \in \mathfrak A^\ast$.
Clearly, $\con(\mathbf a)\subseteq\mul(\mathbf u'\mathbf a)$.

If $\con(\mathbf w_2)\subseteq\mul(\mathbf u'\mathbf a)$, then the identity $\mathbf u \approx \mathbf w$ follows from $\{\eqref{xsxt=xsxtx},\,\eqref{xzyt xy z_infty z_infty z = xzyt yx z_infty z_infty z}\}\subseteq \Psi_1(\mathbf V)$ because
$$
\mathbf u \stackrel{\eqref{xsxt=xsxtx}}\approx \mathbf w_1x\mathbf w_2y\mathbf w_3xy(\mathbf a'')^2\mathbf w_2\mathbf u'' \stackrel{\eqref{xzyt xy z_infty z_infty z = xzyt yx z_infty z_infty z}}\approx \mathbf w_1x\mathbf w_2y\mathbf w_3yx(\mathbf a'')^2\mathbf w_2\mathbf u'' \stackrel{\eqref{xsxt=xsxtx}}\approx \mathbf w.
$$
So, we may assume that $\con(\mathbf w_2)\nsubseteq\mul(\mathbf u'\mathbf a)$.
Then there exist a letter $x_p$ and a number $p\in \mathbb N\cup\{0\}$ such that the following claims hold:
\begin{itemize}
\item ${_{1\mathbf u}x_p}$ occurs in the subword $\mathbf w_2$ of $\mathbf u$;
\item $D(\mathbf u,x_p)=p$;
\item the depth of $\mathbf w_2$ in $\mathbf u$ is equal to $p$;
\item if $p>0$, then ${_{2\mathbf u}x_p}$ occurs in the subword $\mathbf u''$ of $\mathbf u$.
\end{itemize}
There are two subcases.

\smallskip

\textit{Case }1: $D(\mathbf u,y)=\infty$.
Then the first occurrence of $y$ in $\mathbf u$ lies in the $d$-block $\mathbf a$.
This means that $\mathbf a' = \mathbf a_1\,y\,\mathbf w_3$ for some $\mathbf a_1\in \mathfrak A^\ast$ such that $y \notin\con(\mathbf a_1)$.

Clearly, the depth of $\mathbf w_3$ in $\mathbf u$ is infinite.
If the depth of $\mathbf w_3$ in $\mathbf u$ is strictly infinite, then the identity $\mathbf u \approx \mathbf w$ follows from $\{\eqref{xsxt=xsxtx},\,\eqref{xtyxy=xtyyx}\}\subseteq \Psi_1(\mathbf V)$ by Lemma~\ref{L: xtyxy=xtyyx implies}.
So, it remains to consider the case when the depth of $\mathbf w_3$ in $\mathbf u$ is not strictly infinite.
Then there are letters $z$ and $z_\infty$ such that $({_{1\mathbf u}z})<({_{1\mathbf u}z_\infty})<({_{2\mathbf u}z})$ and ${_{1\mathbf u}z}$ lies in the subword $\mathbf w_3$ of $\mathbf u$, while ${_{1\mathbf u}z_\infty}$ and ${_{2\mathbf u}z}$ lie in the subword $\mathbf a''$ of $\mathbf u$.

Now by Corollary~\ref{C: form of the word}, there exist letters $x_0,x_1,\dots, x_{p-1}$ such that
\begin{align*}
\mathbf u={}&\mathbf u_{2p+1}\stackrel{(1)}{x_p}\mathbf u_{2p}\stackrel{(1)}{x_{p-1}}\mathbf u_{2p-1}\stackrel{(2)}{x_p}\mathbf u_{2p-2}\stackrel{(1)}{x_{p-2}}\mathbf u_{2p-3}\stackrel{(2)}{x_{p-1}}\mathbf u_{2p-4}\stackrel{(1)}{x_{p-3}}\\
&\cdot\,\mathbf u_{2p-5}\stackrel{(2)}{x_{p-2}}\cdots\mathbf u_4\stackrel{(1)}{x_1}\mathbf u_3\stackrel{(2)}{x_2}\mathbf u_2\stackrel{(1)}{x_0}\mathbf u_1\stackrel{(2)}{x_1}\mathbf u_0
\end{align*}
for some $\mathbf u_0,\mathbf u_1,\dots,\mathbf u_{2p+1} \in \mathfrak A^\ast$ with $D(\mathbf u,x_r)=r$.
Clearly, $({_{1\mathbf u}x_p})<({_{1\mathbf u}y})<({_{2\mathbf u}z})<({_{1\mathbf u}x_{p-1}})$.

Corollary~\ref{C: (k-1)-well-balanced} and the inclusion $\mathbf F_1 \subset \mathbf V$ imply that the identity $\mathbf u \approx \mathbf v$ is $0$-well-balanced.
Then if the identity $\mathbf u \approx \mathbf v$ is not $p$-well-balanced, then there is the smallest $k$ such that $0< k \le p$ and the identity $\mathbf u \approx \mathbf v$ is not $k$-well-balanced.
In view of Corollary~\ref{C: (k-1)-well-balanced}, $\mathbf F_{k+1} \nsubseteq \mathbf V$.
According to Lemma~\ref{L: V does not contain F_k}, $\mathbf V$ satisfies $\kappa_k$.
Then Lemmas~\ref{L: kappa_k and delta_k^k},~\ref{L: gamma_k, delta_k^k,epsilon_k}(ii) and~\ref{L: zeta_k,lambda_k^m,eta_k,mu_k^m}(i) imply that $\mathbf V$ satisfies $\nu_p$.
Now we apply Lemma~\ref{L: u'abu''=u'bau'' nu_k} and conclude that the identity $\mathbf u \approx \mathbf w$ follows from $\{\eqref{xsxt=xsxtx},\,\nu_p\} \subseteq \Psi_1(\mathbf V)$.

So, we may assume that the identity $\mathbf u \approx \mathbf v$ is $p$-well-balanced.
Now we substitute $yxz_\infty^2$ for $z_\infty$ in the identity
$$
\mathbf u(x_0,x_1,\dots,x_p,x,y,z,z_\infty) \approx \mathbf v(x_0,x_1,\dots,x_p,x,y,z,z_\infty)
$$
Since $({_{2\mathbf v}y})<({_{2\mathbf v}x})$, we obtain an identity that is equivalent modulo~\eqref{xsxt=xsxtx} to the identity~\eqref{almost nu_p} such that \eqref{conditions for almost nu_p} is true.
Now we apply Lemma~\ref{L: V satisfies nu_p} and conclude that $\nu_p$ is satisfied by $\mathbf V$.
Then the identity $\mathbf u \approx \mathbf w$ follows from $\{\eqref{xsxt=xsxtx},\,\nu_p\}\subseteq \Psi_1(\mathbf V)$ by Lemma~\ref{L: u'abu''=u'bau'' nu_k}.

\smallskip

\textit{Case }2: $D(\mathbf u,y)<\infty$, say $D(\mathbf u,y)=s+1$ for some $s \in \mathbb N\cup\{0\}$.
This means that the depth of the subword $\mathbf w_3$ in $\mathbf u$ is equal to $s$.
The choice of $x_p$ implies that  $p \le s+1$.

Suppose that $p \le s$.
If $p=0$, then $\mathbf V$ satisfies $\zeta_s$ by Lemma~\ref{L: V satisfies zeta_k,lambda_k^m}(i).
Then we apply Lemma~\ref{L: u'abu''=u'bau'' zeta_k,lambda_k^m}(i) and conclude that the identity $\mathbf u \approx \mathbf w$ follows from $\{\eqref{xsxt=xsxtx},\,\zeta_s\} \subseteq \Psi_1(\mathbf V)$.
If $p>0$, then $\mathbf V$ satisfies $\lambda_s^p$ by Lemma~\ref{L: V satisfies zeta_k,lambda_k^m}(ii).
Then the identity $\mathbf u \approx \mathbf w$ follows from $\{\eqref{xsxt=xsxtx},\,\lambda_s^p\} \subseteq \Psi_1(\mathbf V)$ by Lemma~\ref{L: u'abu''=u'bau'' zeta_k,lambda_k^m}(ii).

Suppose now that $p = s+1$.
Evidently, we may assume without loss of generality that $({_{2\mathbf u}a})\le({_{2\mathbf u}x_p})$ for any $a\in\con(\mathbf w_2)\setminus\con(\mathbf w_1)$.
Let $\mathbf u''=\mathbf u_1''\,{_{2\mathbf u}x_p}\,\mathbf u_2''$.
Since the first letter of $\mathbf u''$ is a $d$-divider of $\mathbf u$, the depth of the subword $\mathbf a''\mathbf u_1''$ in $\mathbf u$ is equal to some number, say $k$.
Clearly, $p-1=s \le k$.
If $p=1$, then $\mathbf V$ satisfies $\eta_k$ by Lemma~\ref{L: V satisfies eta_k,mu_k^m}(i).
Then we apply Lemma~\ref{L: u'abu''=u'bau'' 2nd occurrences eta_k, mu_k^m}(i) and conclude that the identity $\mathbf u \approx \mathbf w$ follows from $\{\eqref{xsxt=xsxtx},\,\eta_k\} \subseteq \Psi_1(\mathbf V)$.
If $p>1$, then $\mathbf V$ satisfies $\mu_k^s$ by Lemma~\ref{L: V satisfies eta_k,mu_k^m}(ii).
Then the identity $\mathbf u \approx \mathbf w$ follows from $\{\eqref{xsxt=xsxtx},\,\mu_k^s\} \subseteq \Psi_1(\mathbf V)$ by Lemma~\ref{L: u'abu''=u'bau'' 2nd occurrences eta_k, mu_k^m}(ii).
\end{proof}

Let $\mathbf O_2= \mathbf P\{\eqref{xzyt xy z_infty z_infty z = xzyt yx z_infty z_infty z},\,\nu_1\}$.
Put
$$
\Psi_2 = \{\eqref{xsxt=xsxtx},\,\eqref{xtyxy=xtyyx},\,\eqref{xzyt xy z_infty z_infty z = xzyt yx z_infty z_infty z},\,\zeta_{k-1},\,\lambda_k^m,\,\eta_{k-1},\,\mu_k^m,\,\nu_0,\,\nu_1 \mid k \in \mathbb N, \ 1 \le m \le k \}.
$$

\begin{proposition}
\label{P: 2nd occurrences O_2}
Let $\mathbf V$ be a monoid variety from the interval $[\mathbf{LRB}\vee\mathbf F_1,\mathbf O_2]$, $\mathbf u \approx \mathbf v$ be an identity of $\mathbf V$ and $\{_2 x, {_2 y}\}$ be a critical pair in $\mathbf u \approx \mathbf v$. Then the critical pair $\{_2 x, {_2 y}\}$ is $\Delta$-removable for some $\Delta\subseteq\Psi_2(\mathbf V)$.
\end{proposition}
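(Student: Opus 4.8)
The plan is to follow the scheme of the proof of Proposition~\ref{P: 2nd occurrences O_1} step by step, recording only the adjustments forced by the facts that $\mathbf O_2$ is axiomatised by~\eqref{xsxt=xsxtx},~\eqref{xzyt xy z_infty z_infty z = xzyt yx z_infty z_infty z} and $\nu_1$ (rather than by~\eqref{xsxt=xsxtx},~\eqref{xtyxy=xtyyx} and~\eqref{xzyt xy z_infty z_infty z = xzyt yx z_infty z_infty z}) and that $\Psi_2$ retains only $\nu_0$ and $\nu_1$ from the $\nu$-family present in $\Psi_1$. As there, I reduce to $({_{1\mathbf u}x})<({_{1\mathbf u}y})$, use Lemma~\ref{L: 2-limited word} to assume $\mathbf u\approx\mathbf v$ is $2$-limited, write $\mathbf u$ in the form $\mathbf w_1x\mathbf w_2y\mathbf w_3\cdot xy\cdot\mathbf w_4$ (the final $xy$ being the critical pair), note that $({_{1\mathbf v}x})<({_{1\mathbf v}y})$ by $\mathbf{LRB}\subseteq\mathbf V$ and Lemma~\ref{L: word problem LRB}, pass to the maximal $d$-decomposition of $\mathbf u$, and let $\mathbf a=\mathbf a'\cdot xy\cdot\mathbf a''$ be the $d$-block carrying the critical pair, with $\mathbf u=\mathbf u'\mathbf a\mathbf u''$. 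When $\con(\mathbf w_2)\subseteq\mul(\mathbf u'\mathbf a)$ the derivation of $\mathbf u\approx\mathbf w$ from $\{\eqref{xsxt=xsxtx},\eqref{xzyt xy z_infty z_infty z = xzyt yx z_infty z_infty z}\}\subseteq\Psi_2(\mathbf V)$ goes through verbatim. Otherwise one fixes, exactly as in Proposition~\ref{P: 2nd occurrences O_1}, a letter $x_p$ and an integer $p\ge 0$ with ${_{1\mathbf u}x_p}$ in $\mathbf w_2$, $D(\mathbf u,x_p)=p$, the depth of $\mathbf w_2$ in $\mathbf u$ equal to $p$, and ${_{2\mathbf u}x_p}$ in $\mathbf u''$ when $p>0$; and splits into Case~1, $D(\mathbf u,y)=\infty$, and Case~2, $D(\mathbf u,y)=s+1<\infty$.

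Case~2 carries over word for word: it appeals only to $\zeta_s$, $\lambda_s^p$, $\eta_k$ and $\mu_k^s$ via Lemmas~\ref{L: V satisfies zeta_k,lambda_k^m} and~\ref{L: V satisfies eta_k,mu_k^m} (combined with Lemmas~\ref{L: u'abu''=u'bau'' zeta_k,lambda_k^m} and~\ref{L: u'abu''=u'bau'' 2nd occurrences eta_k, mu_k^m}), and each of these identities lies in $\Psi_2$. In Case~1, the subcase in which $\mathbf w_3$ has infinite but not strictly infinite depth also goes through, with one change of bookkeeping forced by the fact that $\Psi_2$ contains $\nu_0$ and $\nu_1$ but no $\nu_p$ with $p\ge 2$. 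If $\mathbf u\approx\mathbf v$ fails to be $p$-well-balanced, then $\mathbf V$ satisfies $\kappa_k$ for some $k\le p$ by Lemma~\ref{L: V does not contain F_k}, hence satisfies $\nu_0$ by Lemmas~\ref{L: kappa_k and delta_k^k},~\ref{L: gamma_k, delta_k^k,epsilon_k}(ii) and~\ref{L: zeta_k,lambda_k^m,eta_k,mu_k^m}(i),(ii); since $\{\eqref{xsxt=xsxtx},\nu_0\}$ proves $\nu_p$ (Lemma~\ref{L: zeta_k,lambda_k^m,eta_k,mu_k^m}(i)) and hence $\mathbf u\approx\mathbf w$ (Lemma~\ref{L: u'abu''=u'bau'' nu_k}), the pair is removable using $\{\eqref{xsxt=xsxtx},\nu_0\}\subseteq\Psi_2(\mathbf V)$. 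If $\mathbf u\approx\mathbf v$ is $p$-well-balanced, the substitution $z_\infty\mapsto yxz_\infty^2$ together with Lemma~\ref{L: V satisfies nu_p} again yields $\mathbf V\models\nu_p$; one then removes the pair from $\{\eqref{xsxt=xsxtx},\nu_0\}$ when $p=0$ (available since $\mathbf V\models\nu_0$) and from $\{\eqref{xsxt=xsxtx},\nu_1\}\subseteq\Psi_2(\mathbf V)$ when $p\ge 1$, using that $\{\eqref{xsxt=xsxtx},\nu_1\}$ proves $\nu_p$ for every $p\ge 1$ (Lemma~\ref{L: zeta_k,lambda_k^m,eta_k,mu_k^m}(i)).

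The one genuinely new point, which I expect to be the main obstacle, is the remaining subcase of Case~1: $D(\mathbf u,y)=\infty$ and $\mathbf w_3$ of strictly infinite depth in $\mathbf u$. For $\mathbf O_1$ this was disposed of at once by Lemma~\ref{L: xtyxy=xtyyx implies} using~\eqref{xtyxy=xtyyx}, which need not hold in $\mathbf O_2$. Here one must use the letter $x_p$ instead: the first occurrence of $x_p$ lies in $\mathbf w_2$ and, when $p>0$, its second occurrence lies in $\mathbf u''$, so after gathering (by~\eqref{xsxt=xsxtx}, as in the proof of Lemma~\ref{L: xtyxy=xtyyx implies}) the reoccurring simple letters of $\mathbf w_3$ immediately before the critical pair, the word $\mathbf u$ acquires after the second occurrence of $y$ a subword $z_\infty^2z$ with $z=x_p$ and $z_\infty$ a suitably duplicated letter of $\mathbf a''$ (or of $\mathbf u''$), so that~\eqref{xzyt xy z_infty z_infty z = xzyt yx z_infty z_infty z} or $\nu_p$ applies; the passage from $\nu_p$ to $\{\eqref{xsxt=xsxtx},\nu_0\}$ or $\{\eqref{xsxt=xsxtx},\nu_1\}$ is then exactly as in the previous paragraph. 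The delicate part is to show that such a configuration is always available — in particular to handle $p=0$ (where $x_0$ is simple and does not reoccur) and to bring $\mathbf w_4$ into the required shape; this is carried out with the depth and well-balancedness machinery of the section (Corollaries~\ref{C: form of the word} and~\ref{C: (k-1)-well-balanced}, Lemmas~\ref{L: h_2^{k-1}} and~\ref{L: 2nd occurence of y_m general}), together with Lemma~\ref{L: ab_k=ini(a)ini(a_{x_k})b_k} and Proposition~\ref{P: 1st and 2nd occurrences} to normalise the initial parts of the $d$-blocks involved. Once every subcase supplies a set $\Delta\subseteq\Psi_2(\mathbf V)$ from which $\mathbf u\approx\mathbf w$ follows, the proof is complete.
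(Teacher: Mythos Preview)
Your handling of Case~2 and of the ``not strictly infinite'' subcase of Case~1 is correct and matches the paper. The problem is the subcase you flag as ``the one genuinely new point'': $D(\mathbf u,y)=\infty$ with $\mathbf w_3$ of strictly infinite depth. Your sketch there is both confused and incomplete. In $\nu_p$ the letter $z$ has its \emph{first} occurrence between the two occurrences of $y$ and its \emph{second} occurrence after $z_\infty^2$, so $z$ must come from $\mathbf w_3$, not from $\mathbf w_2$; setting ``$z=x_p$'' misreads the shape of $\nu_p$ and the machinery you list (Corollary~\ref{C: form of the word}, Lemma~\ref{L: ab_k=ini(a)ini(a_{x_k})b_k}, Proposition~\ref{P: 1st and 2nd occurrences}) is not what is needed. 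For $p\ge 1$ the subcase is in fact immediate: the depth of $\mathbf w_2$ is $p$ and the depth of $\mathbf w_3$ is infinite, so Lemma~\ref{L: u'abu''=u'bau'' nu_k} applies as soon as $\mathbf V\models\nu_p$, which follows from $\nu_1\in\Psi_2(\mathbf V)$ via Lemma~\ref{L: zeta_k,lambda_k^m,eta_k,mu_k^m}(i). The real gap is $p=0$: here $\mathbf w_2$ contains a simple letter $t$, and nothing you have written produces $\nu_0$ in $\mathbf V$, so Lemma~\ref{L: u'abu''=u'bau'' nu_k} is unavailable. The fix is to project to $\{x,y,t\}$: since $\mathbf u\approx\mathbf v$ is $0$-well-balanced and $({_{2\mathbf v}y})<({_{2\mathbf v}x})$, this yields~\eqref{xtyxy=xtyyx}$\in\Psi_2(\mathbf V)$, after which Lemma~\ref{L: xtyxy=xtyyx implies} removes the pair.

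The paper avoids this entire subcase analysis by organising Case~1 differently. Instead of splitting on the (strictly) infinite depth of $\mathbf w_3$, it splits on whether $\mathbf w_2$ contains a simple letter of $\mathbf u$. If it does, then projecting $\mathbf u\approx\mathbf v$ onto $\{x,y,t\}$ gives~\eqref{xtyxy=xtyyx}, so $\mathbf V\subseteq\mathbf O_1$ and Proposition~\ref{P: 2nd occurrences O_1} applies; since $\nu_1\in\Psi_2(\mathbf V)$ implies every $\nu_r$, the resulting $\Delta\subseteq\Psi_1(\mathbf V)$ is already a consequence of $\Psi_2(\mathbf V)$. If $\mathbf w_2$ contains no simple letter, one substitutes $(x_0,x_1,z_\infty,z)\mapsto(\mathbf u'',\mathbf w_2,\mathbf a'',\mathbf w_3)$ directly into $\nu_1$ and uses~\eqref{xsxt=xsxtx} to pass between $\mathbf u$ and the substituted instance; no $x_p$, no depth bookkeeping, and no appeal to Lemma~\ref{L: u'abu''=u'bau'' nu_k} are needed. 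This is considerably shorter than the route you propose.
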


\begin{proof}
We consider only the case when $({_{1\mathbf u}x})<({_{1\mathbf u}y})$ since that case when $({_{1\mathbf u}y})<({_{1\mathbf u}x})$ is similar.
Then $\mathbf u = \mathbf w_1\stackrel{(1)}{x}\mathbf w_2\stackrel{(1)}{y}\mathbf w_3\stackrel{(2)}{x}\stackrel{(2)}{y}\mathbf w_4$ for some words $\mathbf w_1,\mathbf w_2,\mathbf w_3,\mathbf w_4$.
The inclusion $\mathbf{LRB}\subset\mathbf V$ and Lemma~\ref{L: word problem LRB} imply that $({_{1\mathbf v}x})<({_{1\mathbf v}y})$.
Let $\mathbf w$ be the word obtained from $\mathbf u$ by replacing ${_{2\mathbf u} x}\,{_{2\mathbf u} y}$ with ${_{2\mathbf u} y}\,{_{1\mathbf u} x}$.
Lemma~\ref{L: 2-limited word} allows us to assume that the identity $\mathbf u \approx \mathbf v$ is 2-limited.

In view of Remark~\ref{R: max decomposition}, there is a number $d$ such that the $d$-decomposition of $\mathbf u$ is maximal.
Let $\mathbf a$ denote the $d$-block of $\mathbf u$, which contains the critical pair $\{{_2 x}, {_2 y}\}$.
Then $\mathbf a = \mathbf a'\,{_{2\mathbf u} x}\,{_{2\mathbf u} y}\,\mathbf a''$ and $\mathbf u = \mathbf u'\mathbf a\mathbf u''$ for some $\mathbf a',\mathbf a',\mathbf u',\mathbf u'' \in \mathfrak A^\ast$.
Clearly, $\con(\mathbf a)\subseteq\mul(\mathbf u'\mathbf a)$.

\smallskip

\textit{Case }1: $D(\mathbf u,y)=\infty$.
Then the first occurrence of $y$ in $\mathbf u$ lies in the $d$-block $\mathbf a$.
This means that $\mathbf a' = \mathbf a_1\,y\,\mathbf w_3$ for some $\mathbf a_1\in \mathfrak A^\ast$ such that $y \notin\con(\mathbf a_1)$.

Suppose that $\mathbf w_2$ contains some simple letter $t$ of $\mathbf u$.
In view of the inclusion $\mathbf F_1\subset\mathbf V$ and Corollary~\ref{C: (k-1)-well-balanced}, the identity $\mathbf u \approx \mathbf v$ is $0$-well-balanced.
Hence $({_{1\mathbf v}x})<({_{1\mathbf v}t})<({_{1\mathbf v}y})$.
Then the $\mathbf V$ satisfies~\eqref{xtyxy=xtyyx} because it coincides with $\mathbf u(x,y,t) \approx \mathbf v(x,y,t)$.
Hence $\mathbf V$ is a subvariety of $\mathbf O_1$.
Now Proposition~\ref{P: 2nd occurrences O_1} applies and we conclude that $\mathbf u \approx \mathbf w$ follows from $\Psi_1(\mathbf V)$.
In view of Lemma~\ref{L: zeta_k,lambda_k^m,eta_k,mu_k^m}(i), $\mathbf P\{\nu_1\}\subseteq\mathbf P\{\nu_r\}$ for any $r\in\mathbb N$.
Then, since $\nu_1$ holds in $\mathbf V$, the identities in $\Psi_1(\mathbf V)$ follow from $\Psi_2(\mathbf V)$.
Therefore, $\Psi_2(\mathbf V)$ implies $\mathbf u \approx \mathbf w$.

So, it remains to consider the case when $\mathbf w_2$ does not contain simple letters of $\mathbf u$.
Let $\phi$ be the substitution $(x_0,x_1,z_\infty,z)\mapsto (\mathbf u'',\mathbf w_2,\mathbf a'',\mathbf w_3)$.
Then the identity $\phi(\nu_1)$ coincides with
\begin{equation}
\label{to 2nd identity of O_1}
x\mathbf w_2y\mathbf w_3xy(\mathbf a'')^2\mathbf w_3\mathbf u''\mathbf w_2 \approx x\mathbf w_2y\mathbf w_3yx(\mathbf a'')^2\mathbf w_3\mathbf u''\mathbf w_2.
\end{equation}
Then the identity $\mathbf u \approx \mathbf w$ follows from $\{\eqref{xsxt=xsxtx},\,\nu_1\}\subseteq \Psi_2(\mathbf V)$ because
$$
\mathbf u\stackrel{\eqref{xsxt=xsxtx}}\approx \mathbf w_1x\mathbf w_2y\mathbf w_3\,xy\,(\mathbf a'')^2\mathbf w_3\mathbf u''\mathbf w_2\stackrel{\eqref{to 2nd identity of O_1}}\approx \mathbf w_1x\mathbf w_2y\mathbf w_3\,yx\,(\mathbf a'')^2\mathbf w_3\mathbf u''\mathbf w_2\stackrel{\eqref{xsxt=xsxtx}}\approx \mathbf w,
$$
and we are done.

\smallskip

\textit{Case }2: $D(\mathbf u,y)<\infty$, say $D(\mathbf u,y)=s+1$ for some $s \in \mathbb N\cup\{0\}$.
This means that the depth of the subword $\mathbf w_3$ in $\mathbf u$ is equal to $s$.
If $\con(\mathbf w_2)\subseteq\mul(\mathbf u'\mathbf a)$, then the identity $\mathbf u \approx \mathbf w$ follows from $\{\eqref{xsxt=xsxtx},\,\eqref{xzyt xy z_infty z_infty z = xzyt yx z_infty z_infty z}\}\subseteq \Psi_2(\mathbf V)$ because
$$
\mathbf u \stackrel{\eqref{xsxt=xsxtx}}\approx \mathbf w_1x\mathbf w_2y\mathbf w_3xy(\mathbf a'')^2\mathbf w_2\mathbf u'' \stackrel{\eqref{xzyt xy z_infty z_infty z = xzyt yx z_infty z_infty z}}\approx \mathbf w_1x\mathbf w_2y\mathbf w_3yx(\mathbf a'')^2\mathbf w_2\mathbf u'' \stackrel{\eqref{xsxt=xsxtx}}\approx \mathbf w.
$$
So, we may assume that $\con(\mathbf w_2)\nsubseteq\mul(\mathbf u'\mathbf a)$.
Then there exist a letter $x_p$ and a number $p\in \mathbb N\cup\{0\}$ such that $D(\mathbf u,x_p)=p$; the letters ${_{1\mathbf u}x_p}$ and ${_{2\mathbf u}x_p}$ occur in the subwords $\mathbf w_2$ and $\mathbf u''$ of $\mathbf u$, respectively; and the depth of $\mathbf w_2$ in $\mathbf u$ is equal to $p$.
The choice of $x_p$ implies that  $p \le s+1$.
Further, by similar arguments as in the last two paragraphs of the proof of Proposition~\ref{P: 2nd occurrences O_1}, one can show that $\mathbf u \approx \mathbf v$ follows from $\Phi_2(\mathbf V)$.
\end{proof}

\begin{corollary}
\label{C: HFB O_i}
For any $i=1,2$, each variety from the interval $[\mathbf{LRB} \vee \mathbf F_1,\mathbf O_i]$ can be defined within $\mathbf O_i$ by identities from $\Phi\cup\Psi_i$.
Consequently, the variety $\mathbf O_i$ is \HFB.
\end{corollary}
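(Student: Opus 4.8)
The plan is to obtain the first assertion as a direct application of Lemma~\ref{L: fblemma} together with Propositions~\ref{P: 1st and 2nd occurrences} and~\ref{P: 2nd occurrences O_1} (for $i=1$) or~\ref{P: 2nd occurrences O_2} (for $i=2$), and then to derive the HFB claim from it by classifying the subvarieties of $\mathbf O_i$ and exploiting the near-linear ordering of the identity systems $\Phi$ and $\Psi_i$ recorded in Lemmas~\ref{L: gamma_k, delta_k^k,epsilon_k} and~\ref{L: zeta_k,lambda_k^m,eta_k,mu_k^m}.

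For the first assertion, fix $\mathbf V$ in $[\mathbf{LRB}\vee\mathbf F_1,\mathbf O_i]$ and set $\Sigma=(\Phi\cup\Psi_i)(\mathbf V)$; note that $\eqref{xsxt=xsxtx}\in\Sigma$ and $\mathbf V\subseteq\mathbf O_i\Sigma$, so it remains to deduce every identity of $\mathbf V$ from $\Sigma$. By Lemma~\ref{L: 2-limited word} every identity $\mathbf u\approx\mathbf v$ of $\mathbf V$ is deducible from $\eqref{xsxt=xsxtx}$ together with the $2$-limited identity $\ini_2(\mathbf u)\approx\ini_2(\mathbf v)$, which is balanced because $\mathbf F_1\subseteq\mathbf V$ (Lemma~\ref{L: word problem F_k}); hence it suffices to derive every balanced identity of $\mathbf V$ from $\Sigma$, and for this I would verify the hypothesis of Lemma~\ref{L: fblemma} with $\Delta=\Sigma$. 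Let $\{{_i}x,{_j}y\}$ be a critical pair in a balanced identity $\mathbf p\approx\mathbf q$ of $\mathbf V$. If $\max\{i,j\}\ge 3$, then the word obtained from $\mathbf p$ by the swap has the same image under $\ini_2$ as $\mathbf p$, so the pair is $\{\eqref{xsxt=xsxtx}\}$-removable; a critical pair with $\{i,j\}=\{1,1\}$ cannot occur because $\mathbf{LRB}\subseteq\mathbf V$ (Lemma~\ref{L: word problem LRB}); and in the remaining cases $\{i,j\}=\{1,2\}$ and $i=j=2$ the pair is removable by a subset of $\Phi(\mathbf V)\subseteq\Sigma$ (resp. $\Psi_i(\mathbf V)\subseteq\Sigma$) by Proposition~\ref{P: 1st and 2nd occurrences} (resp. Proposition~\ref{P: 2nd occurrences O_1} if $i=1$, Proposition~\ref{P: 2nd occurrences O_2} if $i=2$). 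Therefore Lemma~\ref{L: fblemma} applies and $\mathbf V=\mathbf O_i\Sigma$, as claimed.

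For the HFB claim, take any subvariety $\mathbf V\subseteq\mathbf O_i$. By Lemma~\ref{L: does not contain LRB,F_1}, either $\mathbf V\subseteq\mathbf F$ (if $\mathbf{LRB}\nsubseteq\mathbf V$), and $\mathbf F$ is HFB since by Lemma~\ref{L: L(F)} the lattice $\mathfrak L(\mathbf F)$ is a chain each of whose members has an explicit finite basis; or $\mathbf V\subseteq\mathbf{LRB}\vee\mathbf C=\var\{x^2\approx x^3,\,x^2y\approx xyx\}$ (if $\mathbf F_1\nsubseteq\mathbf V$), which is HFB; or $\mathbf{LRB}\vee\mathbf F_1\subseteq\mathbf V\subseteq\mathbf O_i$, so by the first assertion $\mathbf V=\mathbf O_i\Sigma$ with $\Sigma=(\Phi\cup\Psi_i)(\mathbf V)\subseteq\Phi\cup\Psi_i$. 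In the last case it remains to replace $\Sigma$ by a finite subsystem: the four sporadic members $\eqref{xsxt=xsxtx},\eqref{xyxy=xxyy},\eqref{xtyxy=xtyyx},\eqref{xzyt xy z_infty z_infty z = xzyt yx z_infty z_infty z}$ are harmless, and for each of the families $\{\gamma_k\},\{\delta_k^m\},\{\varepsilon_k\},\{\zeta_k\},\{\lambda_k^m\},\{\eta_k\},\{\mu_k^m\},\{\nu_k\}$ one argues, using Lemmas~\ref{L: gamma_k, delta_k^k,epsilon_k} and~\ref{L: zeta_k,lambda_k^m,eta_k,mu_k^m}, that modulo $\eqref{xsxt=xsxtx}$ the $\Rightarrow$-strongest member of that family belonging to $\Sigma$ implies all but finitely many of its other members occurring in $\Sigma$ (for instance the least $\gamma_{k_0}\in\Sigma$ implies every $\delta_k^m$ with $k\ge k_0$), so that only finitely many identities of that family need to be listed explicitly. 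Together with the finite basis of $\mathbf O_i$, these finitely many identities form a finite basis of $\mathbf V$; hence every subvariety of $\mathbf O_i$ is finitely based, i.e., $\mathbf O_i$ is HFB.

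The bookkeeping in the first assertion is routine once the two propositions are in hand; the genuine difficulty is the finiteness argument in the third case of the HFB claim. Because the two-parameter families $\{\delta_k^m\},\{\lambda_k^m\},\{\mu_k^m\}$ are not chains, a subvariety of $\mathbf P$ may a priori satisfy no $\gamma_k$ and yet satisfy $\delta_k^m$ for arbitrarily large $k$, which would make it non-finitely based; the heart of the proof is to rule this out for subvarieties of $\mathbf O_i$ by invoking the concrete defining identities of $\mathbf O_i$ (again through Lemmas~\ref{L: gamma_k, delta_k^k,epsilon_k},~\ref{L: zeta_k,lambda_k^m,eta_k,mu_k^m} and~\ref{L: does not contain LRB,F_1}) in order to bound the first index of every member of these families occurring in $\Sigma$ — equivalently, to show that the interval $[\mathbf{LRB}\vee\mathbf F_1,\mathbf O_i]$ is finite.
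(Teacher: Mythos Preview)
Your first assertion is correct and matches the paper: reduce to balanced identities via Lemmas~\ref{L: 2-limited word} and~\ref{L: word problem F_k}, rule out $\{_1x,_1y\}$ via $\mathbf{LRB}$, dispatch $\{_1x,_2y\}$ and $\{_2x,_2y\}$ by Propositions~\ref{P: 1st and 2nd occurrences} and~\ref{P: 2nd occurrences O_1}/\ref{P: 2nd occurrences O_2}, and handle higher occurrences by~\eqref{xsxt=xsxtx}. Likewise your treatment of the boundary cases $\mathbf V\subseteq\mathbf F$ and $\mathbf V\subseteq\mathbf{LRB}\vee\mathbf C$ in the HFB claim agrees with the paper (which cites \cite[Proposition~4.1]{Lee-12b} for the latter).

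The gap is in your final paragraph. Your diagnosis that the two-parameter families $\{\delta_k^m\},\{\lambda_k^m\},\{\mu_k^m\}$ are the obstacle is wrong, and your proposed fix is false: the interval $[\mathbf{LRB}\vee\mathbf F_1,\mathbf O_i]$ is \emph{infinite} (e.g.\ $\mathbf P_2^1\subseteq\mathbf O_1$ by Lemma~\ref{L: basis for P_2^1}, and Proposition~\ref{P: L(P_2^1)} exhibits an infinite chain inside $[\mathbf{LRB}\vee\mathbf F_1,\mathbf P_2^1]$). What actually happens is much simpler. Lemmas~\ref{L: gamma_k, delta_k^k,epsilon_k}(ii) and~\ref{L: zeta_k,lambda_k^m,eta_k,mu_k^m}(ii),(iv) give $\delta_p^q\Rightarrow\delta_{p'}^{q'}$ (and likewise for $\lambda,\mu$) whenever $p\le p'$ and $q\le q'$; hence the set $\{(p,q):\delta_p^q\in\Sigma\}$ is upward-closed in $\{(p,q):1\le q\le p\}\subset\mathbb N^2$ under the product order. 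By Dickson's lemma this set has only finitely many minimal elements, and those finitely many $\delta_p^q$ imply all the rest. The one-parameter families are chains and need only their least member. So every subset of $\Phi\cup\Psi_i$ is equivalent within $\mathbf P$ to a finite subset, exactly as the paper asserts in one line. No bound on the subscript $k$ is needed, and there is nothing special about $\mathbf O_i$ here beyond its own finite basis.
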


\begin{proof}
Let $\mathbf V$ be a variety from $[\mathbf{LRB} \vee \mathbf F_1,\mathbf O_i]$.
The inclusion $\mathbf O_i\subseteq\mathbf P$ and Lemmas~\ref{L: word problem F_k} and~\ref{L: 2-limited word} imply that $\mathbf V$ can be defined within $\mathbf P$ by a set of 2-limited balanced identities.
In view of Lemma~\ref{L: word problem LRB}, $\ini(\mathbf u)=\ini(\mathbf v)$ for any identity $\mathbf u \approx \mathbf v$ of $\mathbf V$.
Then Lemma~\ref{L: fblemma} and Propositions~\ref{P: 1st and 2nd occurrences},~\ref{P: 2nd occurrences O_1} and~\ref{P: 2nd occurrences O_2} imply that the identity system $(\Phi\cup\Psi_i)(\mathbf V)$ forms an identity basis for $\mathbf V$.
It follows from Lemmas~\ref{L: gamma_k, delta_k^k,epsilon_k} and~\ref{L: zeta_k,lambda_k^m,eta_k,mu_k^m} that every subset of $\Phi\cup\Psi_i$ defines a {\FB} subvariety of $\mathbf P$.
Therefore, $\mathbf V$ is \FB.

So, it remains to show that if $\mathbf U$ is a subvariety of $\mathbf O_i$ that does not contain $\mathbf{LRB} \vee \mathbf F_1$, then $\mathbf U$ is \FB.
Clearly, either $\mathbf{LRB}\nsubseteq \mathbf U$ or $\mathbf F_1\nsubseteq \mathbf U$.
Then by Lemma~\ref{L: does not contain LRB,F_1}, either $\mathbf U \subseteq\mathbf F$ or $\mathbf U \subseteq\mathbf{LRB}\vee \mathbf C$.
Since $\mathbf F$ is {\HFB} by Lemma~\ref{L: L(F)} and $\mathbf{LRB}\vee \mathbf C$ is also {\HFB} \cite[Proposition~4.1]{Lee-12b}, $\mathbf U$ is {\HFB} in any case.
\end{proof}

\section{Limit subvarieties of $\mathbf P$}
\label{sec: limit varieties}

\subsection{The limit variety $\mathbf J_1$}

\begin{lemma}
\label{L: P{kappa_1} violates}
The variety $\mathbf P\{\kappa_1\}$ violates the identity~\eqref{xytxy=xytyx}.
\end{lemma}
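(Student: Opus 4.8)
The plan is to establish the non-containment by exhibiting a concrete witness monoid: it suffices to produce a monoid $M$ satisfying \eqref{xsxt=xsxtx} and $\kappa_1$ on which \eqref{xytxy=xytyx} fails, for then $M\in\mathbf P\{\kappa_1\}$ shows that $\mathbf P\{\kappa_1\}$ does not satisfy \eqref{xytxy=xytyx}. Before constructing $M$ it is worth pinning down where such a witness must lie in $\mathfrak L(\mathbf P)$. The words $xytxy$ and $xytyx$ have the same simple letters $\{t\}$, the same multiple letters $\{x,y\}$, and (since $x,y$ both have depth $1$ and $t$ depth $0$ in each of them) the same values of $h^{\ell-1}_i$ for all $i\in\{1,2\}$ and all $\ell$; hence by Lemma~\ref{L: word problem F_k} the identity $xytxy\approx xytyx$ holds in $\mathbf F_k$ for every $k$, and therefore in $\mathbf F=\bigvee_k\mathbf F_k$. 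Moreover $\ini(xytxy)=xyt=\ini(xytyx)$, so by Lemma~\ref{L: word problem LRB} the identity also holds in $\mathbf{LRB}$. Consequently $M$ must fail $xytxy\approx xytyx$, hence generate a variety not contained in $\mathbf F$, hence (by Lemma~\ref{L: does not contain LRB,F_1}) contain $\mathbf{LRB}$, hence generate a variety strictly above $\mathbf{LRB}$ — in particular $M$ violates $xy\approx xyx$ — while still satisfying $\kappa_1$ and thus, by Lemma~\ref{L: V does not contain F_k}, not containing $\mathbf F_2$.

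With this in mind I would take $M$ to be an explicit finite monoid with a left-regular-band–like transition behaviour in which every product depends only on the $\ini_2$-normal form of the corresponding word (as forced by Lemma~\ref{L: 2-limited word}), arranged so that for suitable $a,b,c\in M$ one has $abcab\neq abcba$ while every instance of $\kappa_1$ collapses — i.e.\ essentially the monoid that will govern the limit variety $\mathbf J_1$ introduced below. The verification then consists of three checks: (i) $M\models xsxt\approx xsxtx$, using that once a generator has occurred twice in a product, re-appending it changes nothing; (ii) $M\models\kappa_1$, by running through all substitutions $(x,x_1,x_0)\mapsto(m_1,m_2,m_3)$, which is the delicate part because $\kappa_1$ is a rather tight four-letter word; and (iii) $M\not\models xytxy\approx xytyx$, immediate from the design of $M$ via $x\mapsto a$, $y\mapsto b$, $t\mapsto c$. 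Having done this, $M\in\mathbf P\{\kappa_1\}$ fails \eqref{xytxy=xytyx}, so $\mathbf P\{\kappa_1\}$ violates \eqref{xytxy=xytyx}.

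The main obstacle is exactly step (ii): one must arrange that $M$ collapses every instance of $\kappa_1$ — which transposes the first occurrence of $x_1$ with the second occurrence of $x$ in the presence of the trailing witnesses $x_0$ and the second $x_1$ — yet does \emph{not} collapse $xytxy\approx xytyx$, whose two sides differ precisely by a transposition of the two \emph{second} occurrences of $x$ and $y$. The intuition that this is possible is that $\kappa_1$ only ever moves a second occurrence leftward past a \emph{first} occurrence, never past another second occurrence; turning this intuition into a proof is what dictates the precise choice of $M$. An alternative route, avoiding an explicit monoid, is to argue directly via Proposition~\ref{P: deduction}: show that $xytxy\approx xytyx$ is not deducible from $\{\eqref{xsxt=xsxtx},\kappa_1\}$ by producing an invariant of words refining $\ini$ that is preserved under both deduction rules but separates $xytxy$ from $xytyx$; here too the crux is verifying invariance under all substitution instances and all contexts of $\kappa_1$.
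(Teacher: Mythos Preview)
Your proposal is a plan rather than a proof, and the central ingredient of your primary approach is missing: you never actually specify the monoid $M$. Saying it should be ``essentially the monoid that will govern the limit variety $\mathbf J_1$'' is not a definition --- $\mathbf J_1$ is introduced as a variety via an infinite identity basis, and the paper neither exhibits a generating monoid for it nor even claims it is finitely generated. Without a concrete $M$, steps~(i)--(iii) cannot be carried out, and step~(ii) in particular (checking $M\models\kappa_1$) is exactly where a careless choice of $M$ would fail. Your diagnostic paragraph about where such an $M$ must sit in $\mathfrak L(\mathbf P)$ is correct and useful, but it only constrains $M$; it does not produce one.

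The paper in fact takes the \emph{second} route you mention at the end: a direct syntactic argument via Proposition~\ref{P: deduction}. The invariant is simply the condition $\ini_2(\mathbf u)=xytxy$ (equivalently, $({}_{2\mathbf u}x)<({}_{2\mathbf u}y)$ together with the rest of the $\ini_2$-structure). One checks that this is preserved under any single application of \eqref{xsxt=xsxtx} or $\kappa_1$: the former does not touch first or second occurrences at all, and for $\kappa_1$ one argues that any substitution instance which could flip $({}_{2}x,{}_{2}y)$ to $({}_{2}y,{}_{2}x)$ would force the simple letter $t$ to lie in the image of a multiple letter of $\kappa_1$, a contradiction. This is exactly the ``second-versus-first occurrence'' intuition you articulated, made precise. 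Your alternative-route paragraph already had the right idea; what was missing was naming the invariant and doing the case analysis for $\kappa_1$.
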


\begin{proof}
Let $\mathbf u$ be a word such that $\mathbf P\{\kappa_1\}$ satisfies $xytxy \approx \mathbf u$ and $\ini_2(\mathbf u) = xytxy$.
In view of Proposition~\ref{P: deduction}, it suffices to show that if the identity $\mathbf u\approx \mathbf v$ is directly deducible from some identity $\mathbf s\approx\mathbf t\in\{\eqref{xsxt=xsxtx},\,\kappa_1\}$, that is, $\{\mathbf u,\mathbf v\}=\{\mathbf a\xi(\mathbf s)\mathbf b,\mathbf a\xi(\mathbf t)\mathbf b\}$ for some words $\mathbf a,\mathbf b\in\mathfrak A^\ast$ and some endomorphism $\xi$ of $\mathfrak A^\ast$, then $\ini_2(\mathbf v) = xytxy$.
Arguing by contradiction suppose that $\ini_2(\mathbf v) \ne xytxy$.
According to Lemmas~\ref{L: word problem LRB} and~\ref{L: word problem F_k} and the inclusion $\mathbf{LRB}\vee\mathbf F_1\subset\mathbf P\{\kappa_1\}$, $\ini_2(\mathbf v) = xytyx$.
In particular, $({_{2\mathbf v}y})<({_{2\mathbf v}x})$.

Clearly, $\mathbf s\approx\mathbf t$ does not coincide with~\eqref{xsxt=xsxtx} because it does not change the first and the second occurrences of letters in a word.
Suppose that $\mathbf s\approx\mathbf t$ coincides with $\kappa_1$.
We consider only the case when $(\mathbf s,\mathbf t)=(xx_1xx_0x_1,x^2x_1x_0x_1)$ because the other case is similar.
Since the left side of $\kappa_1$ differs from the right side of $\kappa_1$ only in swapping of the first occurrence of $x_1$ and the second occurrence of $x$ and $({_{2\mathbf v}y})<({_{2\mathbf v}x})$, there are three possibilities:
\begin{itemize}
\item $\xi({_{1\mathbf s}x_1})$ contains ${_{2\mathbf u}x}$ and $\xi({_{2\mathbf s}x})$ contains ${_{2\mathbf u}y}$;
\item ${_{2\mathbf u}x}{_{2\mathbf u}y}$ is a subword of $\xi({_{1\mathbf s}x_1})$ and $\xi({_{2\mathbf s}x})$ contains some non-first and non-second occurrence of $y$;
\item $\xi({_{1\mathbf s}x_1})$ contains ${_{1\mathbf u}x}$ and ${_{2\mathbf u}x}{_{2\mathbf u}y}$ is a subword of $\xi({_{2\mathbf s}x})$.
\end{itemize}
In any case, $\xi({_{1\mathbf s}x})$ contains ${_{1\mathbf u}y}$, while $\xi({_{2\mathbf s}x})$ contains some non-first occurrence of $y$.
Then $t \in \con(\xi(x_1x))$ because $({_{1\mathbf u}y})<({_{1\mathbf u}t})<({_{2\mathbf u}y})$.
But this is impossible because $t \in \simple(\mathbf u)$, while $x,x_1 \in \mul(\mathbf s)$.
Therefore, $\mathbf s\approx\mathbf t$ cannot coincide with $\kappa_1$ as well.
\end{proof}

For arbitrary $n \in \mathbb N$ and $\pi,\tau\in S_{2n}$, we put
$$
\begin{aligned}
\mathbf w_n[\pi,\tau]&=\biggl(\prod_{i=1}^n x_it_i\biggr) \,x\, \biggl(\prod_{i=1}^{2n} z_i\biggr) \,y\, \biggl(\prod_{i=n+1}^{2n} t_ix_i\biggr)\,txy\, \biggl(\prod_{i=1}^{2n} x_{i\pi}z_{i\tau}\biggr),\\
\mathbf w_n'[\pi,\tau]&=\biggl(\prod_{i=1}^n x_it_i\biggr) \,x\, \biggl(\prod_{i=1}^{2n} z_i\biggr) \,y\, \biggl(\prod_{i=n+1}^{2n} t_ix_i\biggr)\,tyx\, \biggl(\prod_{i=1}^{2n} x_{i\pi}z_{i\tau}\biggr).
\end{aligned}
$$
Let $\mathbf J_1 =\var \Omega_1$, where
$$
\Omega_1 = \{\eqref{xsxt=xsxtx},\,\kappa_1,\, \mathbf w_n[\pi,\tau] \approx \mathbf w_n'[\pi,\tau] \mid n \in \mathbb N, \ \pi,\tau\in S_{2n}\}.
$$

\begin{lemma}
\label{L: J_1 violates}
The variety $\mathbf J_1$ violates the identity
\begin{equation}
\label{xsytxy=xsytyx}
xsytxy \approx xsytyx.
\end{equation}
\end{lemma}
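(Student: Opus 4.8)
The plan is to follow the pattern of the proof of Lemma~\ref{L: P{kappa_1} violates}. By Proposition~\ref{P: deduction} it suffices to show that the identity~\eqref{xsytxy=xsytyx} is not deducible from $\Omega_1$, and since $xsytxy$ is $2$-limited this will follow once we prove that $\ini_2(\mathbf w)=xsytxy$ for every word $\mathbf w$ obtained from $xsytxy$ by a deduction from $\Omega_1$ (if $\eqref{xsytxy=xsytyx}$ were deducible, then $xsytyx$ would be such a word, yet $\ini_2(xsytyx)=xsytyx$). I would first record that $\mathbf{LRB}\vee\mathbf F_1\subseteq\mathbf J_1$: the identity~\eqref{xsxt=xsxtx} holds in $\mathbf{LRB}$ and $\mathbf F_1$ because these are subvarieties of $\mathbf P$; $\kappa_1$ holds in $\mathbf{LRB}$ by Lemma~\ref{L: word problem LRB} and in $\mathbf F_1$ by Lemma~\ref{L: V does not contain F_k}; and each identity $\mathbf w_n[\pi,\tau]\approx\mathbf w_n'[\pi,\tau]$ holds in $\mathbf{LRB}$ by Lemma~\ref{L: word problem LRB} and in $\mathbf F_1$ by Lemma~\ref{L: word problem F_k} (its two sides differ only by the order of the second occurrences of two letters that are both preceded by the same last simple letter, so a routine check gives the claims~\eqref{sim(u)=sim(v) & mul(u)=mul(v)} and~\eqref{eq the same l-dividers} with $\ell=1$). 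Consequently, whenever $\mathbf J_1\models xsytxy\approx\mathbf w$, Lemmas~\ref{L: word problem LRB} and~\ref{L: word problem F_k} give $\con(\mathbf w)=\{x,y,s,t\}$, $\simple(\mathbf w)=\{s,t\}$, $\ini(\mathbf w)=xsyt$ and $h_2^0(\mathbf w,x)=h_2^0(\mathbf w,y)=t$; hence $\ini_2(\mathbf w)\in\{xsytxy,\,xsytyx\}$.

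It then remains to show that a single direct deduction step from $\Omega_1$ applied to a word $\mathbf u$ with $\ini_2(\mathbf u)=xsytxy$ yields a word $\mathbf v$ with $\ini_2(\mathbf v)=xsytxy$; by the previous paragraph it is enough to exclude $\ini_2(\mathbf v)=xsytyx$, that is, $({_{2\mathbf v}y})<({_{2\mathbf v}x})$. A step using~\eqref{xsxt=xsxtx} alters neither the first nor the second occurrence of any letter, hence is harmless. A step using $\kappa_1$ is dealt with exactly as in Lemma~\ref{L: P{kappa_1} violates}, via the simple letter $t$ with $({_{1\mathbf u}y})<({_{1\mathbf u}t})<({_{2\mathbf u}y})$, the extra simple letter $s$ being merely carried along. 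The only genuinely new case is a step deducible from $\mathbf w_n[\pi,\tau]\approx\mathbf w_n'[\pi,\tau]$, say $\{\mathbf u,\mathbf v\}=\{\mathbf a\,\xi(\mathbf w_n[\pi,\tau])\,\mathbf b,\ \mathbf a\,\xi(\mathbf w_n'[\pi,\tau])\,\mathbf b\}$ for a substitution $\xi$ and words $\mathbf a,\mathbf b$; rename the distinguished letters $x,y,t$ of the axiom to $x',y',t'$, and write $\xi(z)^{(i)}$ for the factor of $\mathbf u$ that is the image of the $i$th occurrence of $z$ in the axiom. Since $\mathbf w_n'[\pi,\tau]$ differs from $\mathbf w_n[\pi,\tau]$ only by transposing the second occurrences of $x'$ and $y'$, the word $\mathbf v$ is obtained from $\mathbf u$ by transposing the adjacent factors $\xi(x')^{(2)}$ and $\xi(y')^{(2)}$, with $\xi(x')^{(2)}$ preceding $\xi(y')^{(2)}$; if $\mathbf u=\mathbf v$ there is nothing to prove.

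Finally, assume $({_{2\mathbf v}y})<({_{2\mathbf v}x})$. Transposing two adjacent factors of $\mathbf u$ reverses the order of $({_{2\mathbf u}x})$ and $({_{2\mathbf u}y})$ only if one of these two occurrences lies in $\xi(x')^{(2)}$ and the other in $\xi(y')^{(2)}$. If $({_{2\mathbf u}x})\in\xi(x')^{(2)}$ and $({_{2\mathbf u}y})\in\xi(y')^{(2)}$, then $x\in\con(\xi(x'))$ and $y\in\con(\xi(y'))$; since $\xi(x')$ and $\xi(y')$ each occur exactly twice in $\xi(\mathbf w_n[\pi,\tau])$, the letter $x$ already occurs in $\xi(x')^{(1)}$, and as $({_{2\mathbf u}x})$ is in $\xi(x')^{(2)}$ that must be its only occurrence before $\xi(x')^{(2)}$, so $({_{1\mathbf u}x})$ lies inside $\xi(x')^{(1)}$; likewise $({_{1\mathbf u}y})$ lies inside $\xi(y')^{(1)}$. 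But in $\mathbf w_n[\pi,\tau]$ only the letters $z_1,\dots,z_{2n}$ occur between the first occurrences of $x'$ and $y'$, all of them multiple, so every letter of $\mathbf u$ strictly between $({_{1\mathbf u}x})$ and $({_{1\mathbf u}y})$ is multiple in $\mathbf u$ — contradicting the fact that $s$ is simple and $({_{1\mathbf u}x})<({_{1\mathbf u}s})<({_{1\mathbf u}y})$. If instead $({_{2\mathbf u}y})\in\xi(x')^{(2)}$ and $({_{2\mathbf u}x})\in\xi(y')^{(2)}$, then $y\in\con(\xi(x'))$ and $x\in\con(\xi(y'))$, so by the same reasoning $({_{1\mathbf u}x})$ lies inside $\xi(y')^{(1)}$, whereas $y$ already occurs inside $\xi(x')^{(1)}$, which precedes $\xi(y')^{(1)}$; hence $({_{1\mathbf u}y})<({_{1\mathbf u}x})$, contradicting $\ini_2(\mathbf u)=xsytxy$. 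Thus no deduction step turns $\ini_2$ into $xsytyx$, the invariant $\ini_2=xsytxy$ propagates along every deduction from $xsytxy$, and so~\eqref{xsytxy=xsytyx} is not deducible from $\Omega_1$; by Proposition~\ref{P: deduction}, $\mathbf J_1$ violates~\eqref{xsytxy=xsytyx}. I expect the main obstacle to be the bookkeeping in this last case: one must pin down the locations of $({_{1\mathbf u}x}),({_{1\mathbf u}y}),({_{2\mathbf u}x}),({_{2\mathbf u}y})$ among the factors of $\xi(\mathbf w_n[\pi,\tau])$ precisely enough — using that $\con(\mathbf u)=\{x,y,s,t\}$ with $s,t$ simple and that $s$ can lie neither in $\mathbf a$ nor in $\mathbf b$ in view of $({_{1\mathbf u}x})<({_{1\mathbf u}s})<({_{1\mathbf u}y})$ — to be certain that the ``no simple letter between the first occurrences of $x$ and $y$'' obstruction really forces a contradiction in every subcase.
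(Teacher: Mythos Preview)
Your approach is correct and essentially the same as the paper's: show that the invariant $\ini_2(\mathbf u)=xsytxy$ is preserved by every direct deduction from $\Omega_1$, disposing of $\eqref{xsxt=xsxtx}$ and $\kappa_1$ via Lemma~\ref{L: P{kappa_1} violates} and handling $\mathbf w_n[\pi,\tau]\approx\mathbf w_n'[\pi,\tau]$ by a case analysis that ultimately contradicts the simplicity of $s$ between ${_{1\mathbf u}x}$ and ${_{1\mathbf u}y}$.

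The one point that needs sharpening (and which you flag yourself) is the sentence ``transposing two adjacent factors \dots\ reverses the order \dots\ only if one of these two occurrences lies in $\xi(x')^{(2)}$ and the other in $\xi(y')^{(2)}$.'' As a general statement about swapping two adjacent factors this is false; it becomes true here precisely because $\xi(x')^{(1)}$ and $\xi(y')^{(1)}$ both lie in the prefix $P$ before the swap region, which forces $\occ_x(P)\ge\occ_x(\xi(x'))+\occ_x(\xi(y'))$ (and likewise for $y$), so that no first occurrence can lie in the swap region and, e.g., ${_{2\mathbf u}x}\in\xi(x')^{(2)}$ already implies $x\notin\con(\xi(y'))$. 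Once this is said, your two-case split is justified (your second case is in fact vacuous since ${_{2\mathbf u}x}<{_{2\mathbf u}y}$). The paper organises the same bookkeeping as a three-case split (a)--(c), but the underlying observation and the final contradiction via $s$ are identical.
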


\begin{proof}
Let $\mathbf u$ be a word such that $\mathbf J_1$ satisfies $xsytxy \approx \mathbf u$ and $\ini_2(\mathbf u) = xsytxy$.
In view of Proposition~\ref{P: deduction}, to verify that $\mathbf J_1$ violates~\eqref{xsytxy=xsytyx}, it suffices to show that if the identity $\mathbf u\approx \mathbf v$ is directly deducible from some identity $\mathbf s\approx\mathbf t\in\Omega_1$, that is, $\{\mathbf u,\mathbf v\}=\{\mathbf a\xi(\mathbf s)\mathbf b,\mathbf a\xi(\mathbf t)\mathbf b\}$ for some words $\mathbf a,\mathbf b\in\mathfrak A^\ast$ and some endomorphism $\xi$ of $\mathfrak A^\ast$, then $\ini_2(\mathbf v) = xsytxy$.
Arguing by contradiction suppose that $\ini_2(\mathbf v) \ne xsytxy$.
According to Lemma~\ref{L: word problem F_k} and the inclusion $\mathbf F_1\subset\mathbf J_1$, $\ini_2(\mathbf v) = xsytyx$.
In particular, $({_{2\mathbf v}y})<({_{2\mathbf v}x})$.

In view of Lemma~\ref{L: P{kappa_1} violates}, $\mathbf s\approx\mathbf t$ cannot coincide with~\eqref{xsxt=xsxtx} or $\kappa_1$.
Therefore, $\mathbf s\approx\mathbf t$ coincides with $\mathbf w_k[\pi,\tau] \approx \mathbf w_k'[\pi,\tau]$ for some $k \in \mathbb N$ and $\pi,\tau \in S_{2k}$.
We consider only the case when $(\mathbf s,\mathbf t)=(\mathbf w_k[\pi,\tau],\mathbf w_k'[\pi,\tau])$ because the other case is similar.
Since $\mathbf w_k[\pi,\tau]$ differs from $\mathbf w_k'[\pi,\tau]$ only in swapping of the second occurrences of letters $x$ and $y$ and $({_{2\mathbf v}y})<({_{2\mathbf v}x})$, there are three possibilities:
\begin{itemize}
\item[(a)] $\xi({_{2\mathbf s}x})$ contains ${_{2\mathbf u}x}$ and $\xi({_{2\mathbf s}y})$ contains ${_{2\mathbf u}y}$;
\item[(b)] ${_{2\mathbf u}x}{_{2\mathbf u}y}$ is a subword of $\xi({_{2\mathbf s}x})$ and $\xi({_{2\mathbf s}y})$ contains some occurrence of $y$;
\item[(c)] $\xi({_{2\mathbf s}x})$ contains ${_{1\mathbf u}x}$ and ${_{2\mathbf u}x}{_{2\mathbf u}y}$ is a subword of $\xi({_{2\mathbf s}y})$.
\end{itemize}
Suppose that~(a) holds.
Then $\xi({_{1\mathbf s}x})$ contains ${_{1\mathbf u}x}$ and $\xi({_{1\mathbf s}y})$ contains ${_{1\mathbf u}y}$.
Therefore $s\in\con(\xi(xz_1z_2\cdots z_{2k}y))$, contradicting $s \in \simple(\mathbf u)$ and $\{x,y,z_1,z_2,\dots, z_{2k}\} \subset \mul(\mathbf s)$.
Suppose that~(b) holds.
Then $\xi({_{1\mathbf s}x})$ contains both ${_{1\mathbf u}x}$ and ${_{1\mathbf u}y}$.
Hence $s\in\con(\xi(x))$, which contradicts $s \in \simple(\mathbf u)$ and $x\in\mul(\mathbf s)$.
Finally,~(c) is impossible because $\xi({_{2\mathbf s}x})$ cannot contain any first occurrence of a letter in $\mathbf u$.
We see that  $\mathbf s\approx\mathbf t$ cannot coincide with $\mathbf w_k[\pi,\tau] \approx \mathbf w_k'[\pi,\tau]$ as well.
Therefore, $\mathbf J_1$ violates~\eqref{xsytxy=xsytyx}.
\end{proof}

\begin{lemma}
\label{L: V does not contain J_1}
Let $\mathbf V$ be a subvariety of $\mathbf P$.
If $\mathbf V$ does not contain $\mathbf J_1$, then $\mathbf V$ satisfies the identity~\eqref{xzyt xy z_infty z_infty z = xzyt yx z_infty z_infty z}.
\end{lemma}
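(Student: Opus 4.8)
The plan is to prove the contrapositive: assuming that $\mathbf V$ violates~\eqref{xzyt xy z_infty z_infty z = xzyt yx z_infty z_infty z}, I show that $\mathbf J_1\subseteq\mathbf V$, equivalently (Proposition~\ref{P: deduction}) that every identity of $\mathbf V$ is deducible from $\Omega_1$. First I reduce to the case $\mathbf{LRB}\vee\mathbf F_1\subseteq\mathbf V$. If $\mathbf{LRB}\nsubseteq\mathbf V$ then $\mathbf V\subseteq\mathbf F$ by Lemma~\ref{L: does not contain LRB,F_1}(i), and one checks (from the word problem of $\mathbf F$, or via Corollary~\ref{C: block = ini^2(block)} together with $x^2y^2\approx y^2x^2$, which reduce the two sides of~\eqref{xzyt xy z_infty z_infty z = xzyt yx z_infty z_infty z} to $xzyt\,x^2y^2\,z_\infty^2z^2$ and $xzyt\,y^2x^2\,z_\infty^2z^2$) that $\mathbf F$, hence $\mathbf V$, satisfies~\eqref{xzyt xy z_infty z_infty z = xzyt yx z_infty z_infty z}, a contradiction. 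If $\mathbf F_1\nsubseteq\mathbf V$ then $\mathbf V\subseteq\mathbf{LRB}\vee\mathbf C$ by Lemma~\ref{L: does not contain LRB,F_1}(ii), and~\eqref{xzyt xy z_infty z_infty z = xzyt yx z_infty z_infty z} holds in $\mathbf{LRB}$ (equal initial parts, Lemma~\ref{L: word problem LRB}) and in $\mathbf C$ (equal $2$-capped occurrence counts), hence in their join, again a contradiction.

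Since $\mathbf V\subseteq\mathbf P$, Lemmas~\ref{L: word problem LRB}, \ref{L: word problem F_k} and~\ref{L: 2-limited word} let me replace every identity of $\mathbf V$, modulo the identity~\eqref{xsxt=xsxtx} which lies in $\Omega_1$, by a $2$-limited balanced identity whose two sides have equal initial parts; so by Lemma~\ref{L: fblemma} it suffices to show that every critical pair of every such identity is $\Omega_1$-removable, and, the initial parts being equal, such a pair is of type $\{{}_1x,{}_2y\}$, $\{{}_2x,{}_1y\}$ or $\{{}_2x,{}_2y\}$. The preparatory observation I would rely on is that $\kappa_1\in\Omega_1$ already settles most of $\Phi\cup\Psi_1\cup\Psi_2$: substituting $x_0\mapsto\lambda$ in $\kappa_1$ gives~\eqref{xyxy=xxyy}; by Lemma~\ref{L: kappa_k and delta_k^k} and Lemma~\ref{L: gamma_k, delta_k^k,epsilon_k}(ii) (with $\kappa_1=\delta_1^1$ at the foot of the relevant chain), $\kappa_1$ proves every $\delta_k^m$ and every $\varepsilon_r$ with $r\ge1$; and by Corollary~\ref{C: u'abu''=u'bau'' kappa_k} (case $k=1$), $\kappa_1$ removes every $\{{}_2x,{}_2y\}$-swap with $D(\mathbf u,x)>1$ or $D(\mathbf u,y)>1$, so that~\eqref{xtyxy=xtyyx} and the ``deep'' members $\zeta_r,\lambda_k^m,\eta_r,\mu_k^m,\nu_r$ ($r\ge1$) of the $\Psi_i$-series are consequences of $\Omega_1$. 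Dually, the ``shallow'' identities $\gamma_k$, $\varepsilon_0$, $\zeta_0$, $\eta_0$ each imply~\eqref{xzyt xy z_infty z_infty z = xzyt yx z_infty z_infty z} modulo~\eqref{xsxt=xsxtx} (for $\zeta_0$ and $\eta_0$ by a direct deduction; for $\gamma_k$ and $\varepsilon_0$ via $\mathbf P\{\gamma_k\}\subset\mathbf P\{\varepsilon_0\}\subset\mathbf P\{\zeta_0\}$ and the previous sentence), so none of them holds in $\mathbf V$.

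With these facts, critical pairs of type $\{{}_1x,{}_2y\}$ or $\{{}_2x,{}_1y\}$ are $\Delta$-removable with $\Delta\subseteq\Phi(\mathbf V)$ by Proposition~\ref{P: 1st and 2nd occurrences}, and $\Phi(\mathbf V)\subseteq\{\eqref{xsxt=xsxtx},\eqref{xyxy=xxyy}\}\cup\{\delta_k^m\}\cup\{\varepsilon_r:r\ge1\}$, all consequences of $\Omega_1$; hence such pairs are $\Omega_1$-removable. For a pair of type $\{{}_2x,{}_2y\}$ I would re-run the case analysis of Propositions~\ref{P: 2nd occurrences O_1} and~\ref{P: 2nd occurrences O_2}, which cannot be cited directly since $\mathbf V$ need not be a subvariety of either $\mathbf O_1$ or $\mathbf O_2$. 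Every configuration either yields a witness identity implying~\eqref{xzyt xy z_infty z_infty z = xzyt yx z_infty z_infty z} (impossible here), or a ``deep'' witness removed by $\kappa_1$, or is the single remaining configuration, namely $\{{}_2x,{}_2y\}$ with $D(\mathbf u,x)=D(\mathbf u,y)=1$, with only multiple letters strictly between ${}_1x$ and ${}_1y$, and with at least one multiple letter between ${}_1y$ and ${}_2x$. In that case, after padding with~\eqref{xsxt=xsxtx}, the restriction of the identity to the relevant letters takes the form $\mathbf a\,\phi(\mathbf w_n[\pi,\tau])\,\mathbf b\approx\mathbf a\,\phi(\mathbf w_n'[\pi,\tau])\,\mathbf b$ for a suitable $n$, permutations $\pi,\tau$ (encoding the return positions of the multiple letters), substitution $\phi$ and words $\mathbf a,\mathbf b$, so the swap is carried out by a member of $\Omega_1$.

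The hard part will be this last step. Most of the work should lie in verifying that the configurations listed are the only ones that can occur in a balanced identity of $\mathbf V$, and that every ``non-forbidden'' $\{{}_2x,{}_2y\}$-configuration can indeed be inflated, using only~\eqref{xsxt=xsxtx}, into a direct-deduction instance of some $\mathbf w_n[\pi,\tau]\approx\mathbf w_n'[\pi,\tau]$ with the correct parameters; this is a long combinatorial bookkeeping tracking, for each multiple letter between the first occurrences of $x$ and $y$, which block carries its second occurrence and which simple letters flank it, in effect an $\Omega_1$-version of Propositions~\ref{P: 2nd occurrences O_1}--\ref{P: 2nd occurrences O_2}. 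Isolating the precisely ``forbidden'' shapes --- \eqref{xzyt xy z_infty z_infty z = xzyt yx z_infty z_infty z} itself, $\zeta_0$, $\eta_0$, $\gamma_k$ --- and matching everything else to the family $\{\mathbf w_n[\pi,\tau]\approx\mathbf w_n'[\pi,\tau]\}$ is where the genuine difficulty lies.
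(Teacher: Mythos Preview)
Your approach is the contrapositive of the paper's proof and relies on the same ingredients: both show that every critical pair not handled by $\kappa_1$ or by some $\mathbf w_n[\pi,\tau]\approx\mathbf w_n'[\pi,\tau]$ forces $\mathbf V$ to satisfy~\eqref{xzyt xy z_infty z_infty z = xzyt yx z_infty z_infty z}. The paper's treatment of the $\{{}_2x,{}_2y\}$ case is a short direct analysis rather than a rerun of Propositions~\ref{P: 2nd occurrences O_1}--\ref{P: 2nd occurrences O_2}: after reducing via $\kappa_1$ to $D(\mathbf u,y)=1$, the dichotomy is whether there exist letters $z,z_\infty$ with ${}_{1\mathbf u}z$ between ${}_{1\mathbf u}x$ and ${}_{1\mathbf u}y$ and ${}_{1\mathbf u}z_\infty$ between ${}_{2\mathbf u}y$ and ${}_{2\mathbf u}z$; if so one derives~\eqref{xzyt xy z_infty z_infty z = xzyt yx z_infty z_infty z}, and if not the pair is declared $\{\eqref{xsxt=xsxtx},\mathbf w_n[\pi,\tau]\approx\mathbf w_n'[\pi,\tau]\}$-removable. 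Your side condition ``at least one multiple letter between ${}_1y$ and ${}_2x$'' is not the relevant one (and is unnecessary), but this is a cosmetic slip; your observation that only multiple letters can lie between ${}_1x$ and ${}_1y$ in the residual case is correct (a simple letter there yields~\eqref{xsytxy=xsytyx}, which implies~\eqref{xzyt xy z_infty z_infty z = xzyt yx z_infty z_infty z}), and the overall plan is sound.
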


\begin{proof}
If $\mathbf V$ does not contain  $\mathbf{LRB}$ or $\mathbf F_1$, then $\mathbf V$ is contained in either $\mathbf F$ or $\mathbf{LRB}\vee\mathbf C$  by Lemma~\ref{L: does not contain LRB,F_1}.
Evidently, both $\mathbf F$ and $\mathbf{LRB}\vee\mathbf C$ satisfy~\eqref{xzyt xy z_infty z_infty z = xzyt yx z_infty z_infty z}.
So, we may assume that $\mathbf V$ belongs to the interval $[\mathbf{LRB}\vee \mathbf F_1, \mathbf P]$.

In view of Lemma~\ref{L: word problem F_k} and the inclusion $\mathbf F_1 \subset \mathbf V$, we have $\simple(\mathbf a)=\simple(\mathbf b)$ and $\mul(\mathbf a)=\mul(\mathbf b)$ for any identity $\mathbf a \approx \mathbf b$ of $\mathbf V$.
This fact and Lemma~\ref{L: 2-limited word} imply that any identity of $\mathbf V$ is equivalent to a 2-limited balanced identity.
Then Lemma~\ref{L: fblemma} implies that there exists a 2-limited balanced identity $\mathbf u \approx \mathbf v$ of $\mathbf V$ with a critical pair $\{{_i x}, {_j y}\}$ that is not $\Omega_1$-removable.
It follows from the inclusion $\mathbf{LRB} \subset \mathbf V$ and Lemma~\ref{L: word problem LRB} that $(i,j) \ne (1,1)$.
Then $(i,j) \in \{(1,2),(2,1),(2,2)\}$ because the identity $\mathbf u \approx \mathbf v$ is 2-limited.

Suppose that $\{i,j\}=\{1,2\}$.
According to Proposition~\ref{P: 1st and 2nd occurrences}, the identity $\mathbf u \approx \mathbf v$ may be chosen from the set
$$
\Phi = \{\eqref{xsxt=xsxtx},\,\eqref{xyxy=xxyy},\,\gamma_k,\,\delta_k^m,\,\varepsilon_{k-1} \mid k \in \mathbb N, \ 1 \le m \le k \}.
$$
Evidently, $\mathbf u \approx \mathbf v$ does not coincide with~\eqref{xsxt=xsxtx} because it holds in $\mathbf J_1$.
Since $\mathbf J_1$ satisfies $\kappa_1$, it also satisfies $\delta_k^m$ and $\varepsilon_k$ for any $k \in \mathbb N$ and $1 \le m \le k$ by Lemmas~\ref{L: kappa_k and delta_k^k} and~\ref{L: gamma_k, delta_k^k,epsilon_k}(ii),(iii).
Besides that,~\eqref{xyxy=xxyy} holds in $\mathbf J_1$ because it is a consequence of $\kappa_1$.
It follows that $\mathbf u \approx \mathbf v$ coincides with either $\varepsilon_0$ or $\gamma_k$ for some $k \in \mathbb N$.
It is easy to see that~\eqref{xzyt xy z_infty z_infty z = xzyt yx z_infty z_infty z} is a consequence of $\varepsilon_0$ and so $\gamma_k$ for any $k \in \mathbb N$.

So, it remains to consider the case when $(i,j) = (2,2)$.
By symmetry, we may assume that $({_{1\mathbf u}x})<({_{1\mathbf u}y})$.
Then $({_{1\mathbf v}x})<({_{1\mathbf v}y})$ by Lemma~\ref{L: word problem LRB} and  the inclusion $\mathbf{LRB} \subset \mathbf V$.
If there are no simple letters between ${_{1\mathbf u}y}$ and ${_{2\mathbf u}y}$, then $D(\mathbf u,y)>1$ and so the critical pair $\{{_2 x}, {_2 y}\}$ is $\{\eqref{xsxt=xsxtx},\,\kappa_1\}$-removable in $\mathbf u \approx \mathbf v$ by Corollary~\ref{C: u'abu''=u'bau'' kappa_k}.
So, we may assume that there is a simple letter $t$ between ${_{1\mathbf u}y}$ and ${_{2\mathbf u}y}$.

If, for any letter $a$ with
$$
({_{1\mathbf u}x})<({_{1\mathbf u}a})<({_{1\mathbf u}y})<({_{2\mathbf u}y})<({_{2\mathbf u}a}),
$$
there are no first occurrences of letters between ${_{2\mathbf u}y}$ and ${_{2\mathbf u}a}$, then it is easy to see that one can choose $n\in \mathbb N$ and $\pi,\tau\in S_{2n}$ so that the critical pair $\{{_2 x}, {_2 y}\}$ is $\{\eqref{xsxt=xsxtx},\,\mathbf w_n[\pi,\tau] \approx \mathbf w_n'[\pi,\tau]\}$-removable in $\mathbf u \approx \mathbf v$.
So, we may assume that there are letters $z$ and $z_\infty$ such that
$$
({_{1\mathbf u}x})<({_{1\mathbf u}z})<({_{1\mathbf u}y})<({_{2\mathbf u}y})<({_{1\mathbf u}z_\infty})<({_{2\mathbf u}z}).
$$
Now we substitute~$yxz_\infty^2$ for $z_\infty$ in the identity $\mathbf u(x,y,t,z,z_\infty) \approx \mathbf v(x,y,t,z,z_\infty)$.
Since $({_{2\mathbf v}y})<({_{2\mathbf v}x})$, we obtain an identity that is equivalent modulo~\eqref{xsxt=xsxtx} to
$$
xzyt xy z_\infty^2 z \approx xzyt \mathbf w,
$$
where $\mathbf w\in\{zyxz_\infty^2,yzxz_\infty^2,yxzz_\infty^2,yxz_\infty^2z\}$.
If $\mathbf w=yxz_\infty^2z$, then $\mathbf V$ satisfies~\eqref{xzyt xy z_infty z_infty z = xzyt yx z_infty z_infty z}.
So, it remains to consider the case when
$$
\mathbf w\in\{zyxz_\infty^2,yzxz_\infty^2,yxzz_\infty^2\}.
$$
In this case, the variety $\mathbf V$ satisfies the identity
$$
xzyt \mathbf w \approx xzyt yx z z_\infty^2
$$
because this identity is a consequence of the identity~\eqref{xytxy=xytyx}, which is evidently satisfied by $\mathbf V$.
Now Proposition~\ref{P: 1st and 2nd occurrences} applies and we conclude that the identity $xzyt yx z z_\infty^2 \approx xzyt yx z_\infty z z_\infty$ follows from $\Phi(\mathbf V)$.
Hence $\mathbf V$ satisfies
\begin{equation}
\label{xzyt xy z_infty z_infty z = xzyt yx z_infty z z_infty}
xzyt xy z_\infty^2 z \approx xzyt yx z_\infty z z_\infty.
\end{equation}
Finally, we substitute~$z_\infty^2$ for $z_\infty$ in the identity~\eqref{xzyt xy z_infty z_infty z = xzyt yx z_infty z z_infty}.
We obtain an identity that is equivalent modulo~\eqref{xsxt=xsxtx} to~\eqref{xzyt xy z_infty z_infty z = xzyt yx z_infty z_infty z}.
We see that~\eqref{xzyt xy z_infty z_infty z = xzyt yx z_infty z_infty z} is satisfied by $\mathbf V$ in either case.
\end{proof}

\begin{proposition}
\label{P: J_1 is limit}
The variety $\mathbf J_1$ is a limit variety.
\end{proposition}

\begin{proof}
According to Lemmas~\ref{L: kappa_k and delta_k^k},~\ref{L: gamma_k, delta_k^k,epsilon_k}(ii) and~\ref{L: zeta_k,lambda_k^m,eta_k,mu_k^m}(i),(ii), $\nu_1$ is satisfied by $\mathbf J_1$.
Then Lemma~\ref{L: V does not contain J_1} implies that any proper subvariety of $\mathbf J_1$ is contained in $\mathbf O_2$ and so is {\FB} by Corollary~\ref{C: HFB O_i}.

So, it remains to establish that $\mathbf J_1$ is \NFB.
It suffices to verify that, for any $n \in \mathbb N$, the set of identities
$$
\Delta_n = \{\eqref{xsxt=xsxtx},\,\kappa_1,\, \mathbf w_k[\pi,\tau] \approx \mathbf w_k'[\pi,\tau] \mid k < n, \ \pi,\tau\in S_{2k}\}
$$
does not imply the identity $\mathbf w_n[\varepsilon,\varepsilon] \approx \mathbf w_n'[\varepsilon,\varepsilon]$, where $\varepsilon$ is the trivial permutation on $\{1,2,\dots,2n\}$.
Let $\mathbf w$ be a word such that $\mathbf J_1$ satisfies $\mathbf w_n[\varepsilon,\varepsilon] \approx \mathbf w$.
According to Lemmas~\ref{L: word problem LRB} and~\ref{L: word problem F_k} and the inclusion $\mathbf{LRB}\vee\mathbf F_1\subseteq\mathbf J_1$,
$$
\mathbf w= \biggl(\prod_{i=1}^n x_it_i\biggr) \,x\, \biggl(\prod_{i=1}^{2n} z_i\biggr) \,y\, \biggl(\prod_{i=n+1}^{2n} t_ix_i\biggr)\,t\mathbf w'
$$
for some word $\mathbf w'$ with $\con(\mathbf w')=\{x,y,x_1,z_1,x_2,z_2,\dots,x_{2n},z_{2n}\}$.
If ${_{1\mathbf w'}x}$ follows ${_{1\mathbf w'}x_1}$ in $\mathbf w'$, then the identity $\mathbf w_n[\varepsilon,\varepsilon](x,t,x_1,t_1) \approx \mathbf w(x,t,x_1,t_1)$ is equivalent modulo~\eqref{xsxt=xsxtx} to~\eqref{xsytxy=xsytyx}.
But this is impossible by Lemma~\ref{L: J_1 violates}.
Therefore, ${_{1\mathbf w'}x}$ precedes ${_{1\mathbf w'}x_1}$ in $\mathbf w'$.
By a similar argument we can show that $({_{1\mathbf w'}y})<({_{1\mathbf w'}x_1})$ and
$$
({_{1\mathbf w'}x_1})<({_{1\mathbf w'}z_1})<({_{1\mathbf w'}x_2})<({_{1\mathbf w'}z_2})<\cdots<({_{1\mathbf w'}x_{2n}})<({_{1\mathbf w'}z_{2n}}).
$$
This means $\ini(\mathbf w') = \mathbf ax_1z_1x_2z_2\cdots x_{2n}z_{2n}$, where $\mathbf a \in\{xy,yx\}$.
So, we have proved that $\ini_2(\mathbf w)\in \{\mathbf w_n[\varepsilon,\varepsilon],\mathbf w_n'[\varepsilon,\varepsilon]\}$.

Let $\mathbf u$ be a word such that $\mathbf J_1$ satisfies $\mathbf w_n[\varepsilon,\varepsilon] \approx \mathbf u$ and $\ini_2(\mathbf u) = \mathbf w_n[\varepsilon,\varepsilon]$.
According to Proposition~\ref{P: deduction}, to verify that $\mathbf w_n[\varepsilon,\varepsilon] \approx \mathbf w_n'[\varepsilon,\varepsilon]$ does not follow from $\Delta_n$, it suffices to establish that if the identity $\mathbf u\approx \mathbf v$ is directly deducible from some identity $\mathbf s\approx\mathbf t$ in $\Delta_n$, that is, $\{\mathbf u,\mathbf v\}=\{\mathbf a\xi(\mathbf s)\mathbf b,\mathbf a\xi(\mathbf t)\mathbf b\}$ for some words $\mathbf a,\mathbf b\in\mathfrak A^\ast$ and some endomorphism $\xi$ of $\mathfrak A^\ast$, then $\ini_2(\mathbf v) = \mathbf w_n[\varepsilon,\varepsilon]$.
Arguing by contradiction suppose that $\ini_2(\mathbf v) \ne \mathbf w_n[\varepsilon,\varepsilon]$.
In view of the previous paragraph, $\ini_2(\mathbf v) = \mathbf w_n'[\varepsilon,\varepsilon]$.
In particular, $({_{2\mathbf v}y})<({_{2\mathbf v}x})$.

Since $\ini_2(\mathbf u(x,y,t))=xytxy$, Lemma~\ref{L: P{kappa_1} violates} implies that $\mathbf s\approx\mathbf t$ cannot coincide with~\eqref{xsxt=xsxtx} and $\kappa_1$.
Suppose that $\mathbf s\approx\mathbf t$ coincides with $\mathbf w_k[\pi,\tau] \approx \mathbf w_k'[\pi,\tau]$ for some $k < n$ and $\pi,\tau \in S_{2k}$.
We consider only the case when $(\mathbf s,\mathbf t)=(\mathbf w_k[\pi,\tau],\mathbf w_k'[\pi,\tau])$ because the other case is similar.
Since $\mathbf w_k[\pi,\tau]$ differs from $\mathbf w_k'[\pi,\tau]$ only in swapping of the second occurrences of letters $x$ and $y$ and $({_{2\mathbf v}y})<({_{2\mathbf v}x})$, there are three possibilities:
\begin{itemize}
\item[(a)] $\xi({_{2\mathbf s}x})$ contains ${_{2\mathbf u}x}$ and $\xi({_{2\mathbf s}y})$ contains ${_{2\mathbf u}y}$;
\item[(b)] ${_{2\mathbf u}x}{_{2\mathbf u}y}$ is a subword of $\xi({_{2\mathbf s}x})$ and $\xi({_{2\mathbf s}y})$ contains some occurrence of $y$;
\item[(c)] $\xi({_{2\mathbf s}x})$ contains ${_{1\mathbf u}x}$ and ${_{2\mathbf u}x}{_{2\mathbf u}y}$ is a subword of $\xi({_{2\mathbf s}y})$.
\end{itemize}
Suppose that~(a) holds.
Then $\xi({_{1\mathbf s}x})$ contains ${_{1\mathbf u}x}$ and $\xi({_{1\mathbf s}y})$ contains ${_{1\mathbf u}y}$.
Hence $t_n,z_1\notin\con(\xi(x))$ and $z_{2n},t_{n+1}\notin\con(\xi(y))$ because $\ini_2(\mathbf u) \ne \mathbf w_n[\varepsilon,\varepsilon]$ otherwise.
Therefore, $\xi(x)=x$ and $\xi(y)=y$.
It follows that $\xi(\prod_{i=1}^{2k}z_i)=\prod_{i=1}^{2n}z_i$.
Then $\xi({_{1\mathbf s}z_p})$ contains ${_{1\mathbf u}z_q}\,{_{1\mathbf u}z_{q+1}}$ for some $p \le 2k$ and $q < 2n$.
Since $\ini_2(\mathbf u) = \mathbf w_n[\varepsilon,\varepsilon]$, the word $\xi({_{2\mathbf s}z_p})$ does not contain ${_{2\mathbf u}z_q}\,{_{2\mathbf u}z_{q+1}}$.
Hence ${_{2\mathbf u}z_q}$ is contained in the subword $\xi\bigl(\prod_{i=1}^{r-1}(x_{i\pi}z_{i\tau})x_{r\pi}\bigr)$ of $\mathbf u$, where $r\tau=p$.
Then ${_{1\mathbf u}z_q}$ must be contained in $\xi({_{1\mathbf s}a})$ for some $a \in \{z_{1\tau},x_{1\pi},z_{2\tau},x_{2\pi},\dots, z_{(r-1)\tau},x_{(r-1)\pi},x_{r\pi}\}$.
This contradicts the fact that ${_{1\mathbf u}z_q}$ is contained in $\xi({_{1\mathbf s}z_p})$.
Therefore,~(a) is impossible.
Suppose that~(b) holds.
Then $\xi({_{1\mathbf s}x})$ contains both ${_{1\mathbf u}x}$ and ${_{1\mathbf u}y}$.
This is only possible when $xz_1z_2\cdots z_{2n}y$ is a subword of $\xi(x)$.
But this contradicts the fact that $\ini_2(\mathbf u) = \mathbf w_n[\varepsilon,\varepsilon]$ and, therefore,~(b) is impossible as well.
Finally,~(c) is impossible because $\xi({_{2\mathbf s}x})$ cannot contain any first occurrence of a letter in $\mathbf u$.
We see that  $\mathbf s\approx\mathbf t$ cannot coincide with $\mathbf w_k[\pi,\tau] \approx \mathbf w_k'[\pi,\tau]$.
So, we have proved that $\Delta_n$ does not imply $\mathbf w_n[\varepsilon,\varepsilon] \approx \mathbf w_n'[\varepsilon,\varepsilon]$ and so $\mathbf J_1$ is \NFB.
\end{proof}

\subsection{The limit variety $\mathbf J_2$}

For arbitrary $n \in \mathbb N$ and $\pi,\tau\in S_{2n}$, define
$$
\begin{aligned}
\mathbf z_n[\pi,\tau]&=\biggl(\prod_{i=1}^{2n} x_it_i\biggr) \,x\, \biggl(\prod_{i=1}^n z_is_i\biggr) \,y\, \biggl(\prod_{i=1}^n z_{n+i}\biggr)\,xy\, \biggl(\prod_{i=1}^{2n} x_{i\pi}z_{i\tau}\biggr)\,t\,\biggl(\prod_{i=1}^ns_i\biggr),\\
\mathbf z_n'[\pi,\tau]&=\biggl(\prod_{i=1}^{2n} x_it_i\biggr) \,x\, \biggl(\prod_{i=1}^n z_is_i\biggr) \,y\, \biggl(\prod_{i=1}^n z_{n+i}\biggr)\,yx\, \biggl(\prod_{i=1}^{2n} x_{i\pi}z_{i\tau}\biggr)\,t\,\biggl(\prod_{i=1}^ns_i\biggr).
\end{aligned}
$$
Let $\mathbf J_2 =\var \Omega_2$, where
$$
\Omega_2 = \{\eqref{xsxt=xsxtx},\,\eta_1,\, \mathbf z_n[\pi,\tau] \approx \mathbf z_n'[\pi,\tau] \mid n \in \mathbb N, \ \pi,\tau\in S_{2n}\}.
$$

\begin{lemma}
\label{L: ({_{1u}x})<({_{2u}y}) in J_2}
Let $\mathbf u \approx \mathbf v$ be an identity of $\mathbf J_2$.
Then $({_{1\mathbf u}x})<({_{2\mathbf u}y})$ if and only if $({_{1\mathbf v}x})<({_{2\mathbf v}y})$ for any letters $x$ and $y$.
\end{lemma}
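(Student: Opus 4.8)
The plan is to invoke Proposition~\ref{P: deduction}: the identity $\mathbf u\approx\mathbf v$ holds in $\mathbf J_2$ precisely when there is a sequence $\mathbf u=\mathbf w_0,\mathbf w_1,\dots,\mathbf w_\ell=\mathbf v$ in which every $\mathbf w_i\approx\mathbf w_{i+1}$ is directly deducible from some identity of $\Omega_2$. Since the asserted equivalence is transitive along such a sequence and symmetric in its two sides, it suffices to prove that a single direct-deduction step from $\Omega_2$ preserves, for each pair of letters $a,b$, whether $({_{i}a})<({_{j}b})$ with $i=1$, $j=2$; we may of course assume $b\in\mul$, the statement being vacuous otherwise (and $\mul$ is seen to be preserved along the way).

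First I would dispose of the identity \eqref{xsxt=xsxtx}. If $\mathbf w_{i+1}$ is obtained from $\mathbf w_i$ by a direct deduction from \eqref{xsxt=xsxtx} via a substitution $\phi$, then, up to interchanging the two words, $\mathbf w_i=\mathbf a\phi(x)\phi(s)\phi(x)\phi(t)\mathbf b$ and $\mathbf w_{i+1}=\mathbf a\phi(x)\phi(s)\phi(x)\phi(t)\phi(x)\mathbf b$; thus one word is obtained from the other by inserting or deleting one occurrence of the block $\phi(x)$ at a position at which every letter of $\phi(x)$ already has at least two earlier occurrences, namely inside the two preceding copies of $\phi(x)$. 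Hence the first and the second occurrences of every letter coincide in $\mathbf w_i$ and $\mathbf w_{i+1}$, so the relation is preserved. (This is the same observation used in the proof of Lemma~\ref{L: P{kappa_1} violates}.)

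The remaining identities of $\Omega_2$, namely $\eta_1$ and each $\mathbf z_n[\pi,\tau]\approx\mathbf z_n'[\pi,\tau]$, all have the following shape: writing such an identity as $\mathbf s\approx\mathbf t$, one has $\mathbf s=\mathbf s_1\,x\,y\,\mathbf s_2$ and $\mathbf t=\mathbf s_1\,y\,x\,\mathbf s_2$, where the displayed letters are the \emph{second} occurrences of $x$ and of $y$, while $\mathbf s_1$ already contains the \emph{first} occurrences of both $x$ and $y$. Consequently, for a direct-deduction step applied with a substitution $\phi$ and flanking words, $\mathbf w_i$ and $\mathbf w_{i+1}$ have the form $\mathbf p\,\phi(x)\,\phi(y)\,\mathbf q$ and $\mathbf p\,\phi(y)\,\phi(x)\,\mathbf q$ with $\mathbf p=\mathbf a\phi(\mathbf s_1)$ and $\mathbf q=\phi(\mathbf s_2)\mathbf b$, and, crucially, $\con(\phi(x))\cup\con(\phi(y))\subseteq\con(\phi(\mathbf s_1))\subseteq\con(\mathbf p)$. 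The point is that the transposed block $\phi(x)\phi(y)$ therefore carries no first occurrence of any letter, in either word, and carries the same occurrences (with multiplicities) of each letter in both words. From this one reads off: for any letter $a$, the occurrence ${_1a}$ lies either in the common prefix $\mathbf p$, at a fixed position, or strictly past the transposed block inside the common suffix $\mathbf q$, again at a fixed position, identically in $\mathbf w_i$ and $\mathbf w_{i+1}$; and for any $b\in\mul$ the occurrence ${_2b}$ lies in $\mathbf p$, or in the transposed block, or in $\mathbf q$, with the number of occurrences of $b$ strictly preceding $\mathbf q$ being the same in both words. A short case check on the locations of ${_1a}$ and ${_2b}$ — using that ${_1a}$ is never inside the transposed block — then shows that $({_1a})<({_2b})$ holds in $\mathbf w_i$ if and only if it holds in $\mathbf w_{i+1}$. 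Combining the two cases and iterating along the deduction sequence yields the claim.

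I expect the only genuinely delicate point to be the structural verification that $\eta_1$ and every $\mathbf z_n[\pi,\tau]\approx\mathbf z_n'[\pi,\tau]$ really do fit the displayed form $\mathbf s_1\,x\,y\,\mathbf s_2\approx\mathbf s_1\,y\,x\,\mathbf s_2$ with both first occurrences of $x$ and $y$ contained in $\mathbf s_1$; this is exactly what makes the transposed pair ``invisible'' to the initial part and is the heart of the argument. Once it is confirmed, the occurrence bookkeeping in the last paragraph is routine.
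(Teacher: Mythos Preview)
Your proposal is correct and follows essentially the same approach as the paper: reduce via Proposition~\ref{P: deduction} to a single direct-deduction step, handle \eqref{xsxt=xsxtx} by noting it leaves first and second occurrences unchanged, and handle the remaining identities of $\Omega_2$ by observing that they differ only by a swap of the second occurrences of $x$ and $y$, so that the transposed block after substitution contains no first occurrences of any letter. The paper's proof states this last point in a single sentence, whereas you spell out the case analysis on the locations of ${_1a}$ and ${_2b}$; there is no substantive difference.
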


\begin{proof}
In view of Proposition~\ref{P: deduction}, it suffices to show that the required conclusion is true for an identity $\mathbf u\approx \mathbf v$ of $\mathbf J_2$ that is directly deducible from some identity $\mathbf s\approx\mathbf t\in\Omega_2$, that is, $\{\mathbf u,\mathbf v\}=\{\mathbf a\xi(\mathbf s)\mathbf b,\mathbf a\xi(\mathbf t)\mathbf b\}$ for some words $\mathbf a,\mathbf b\in\mathfrak A^\ast$ and some endomorphism $\xi$ of $\mathfrak A^\ast$.

If $\mathbf s\approx\mathbf t$ coincides with~\eqref{xsxt=xsxtx}, then the required conclusion is true because~\eqref{xsxt=xsxtx} does not change the first and the second occurrences of letters in a word.
Suppose that $\mathbf s\approx\mathbf t$ coincides with some identity from $\Omega_2\setminus\{\eqref{xsxt=xsxtx}\}$.
Then $\mathbf s$ differs from $\mathbf t$ only in swapping of the second occurrences of $x$ and $y$.
Clearly, both $\xi({_{2\mathbf s}x})$ and $\xi({_{2\mathbf s}y})$ cannot contain first occurrences of letters.
It follows that if $({_{1\mathbf u}x})<({_{2\mathbf u}y})$ for some letters $x$ and $y$, then $({_{1\mathbf v}x})<({_{2\mathbf v}y})$.
\end{proof}

\begin{lemma}
\label{L: P{eta_1} violates}
The variety $\mathbf P\{\eta_1\}$ violates the identity
\begin{equation}
\label{xx_1yxyx_0x_1=xx_1y^2xx_0x_1}
xx_1yxyx_0x_1 \approx xx_1y^2xx_0x_1.
\end{equation}
\end{lemma}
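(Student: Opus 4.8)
The plan is to imitate the proof of Lemma~\ref{L: P{kappa_1} violates}. Choose a word $\mathbf u$ such that $\mathbf P\{\eta_1\}$ satisfies $xx_1yxyx_0x_1\approx\mathbf u$ and $\ini_2(\mathbf u)=xx_1yxyx_0x_1$; by Proposition~\ref{P: deduction} it suffices to prove that whenever an identity $\mathbf u\approx\mathbf v$ is directly deducible from some $\mathbf s\approx\mathbf t\in\{\eqref{xsxt=xsxtx},\eta_1\}$, that is, $\{\mathbf u,\mathbf v\}=\{\mathbf a\xi(\mathbf s)\mathbf b,\mathbf a\xi(\mathbf t)\mathbf b\}$ for some $\mathbf a,\mathbf b\in\mathfrak A^\ast$ and some endomorphism $\xi$ of $\mathfrak A^\ast$, then still $\ini_2(\mathbf v)=xx_1yxyx_0x_1$. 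Arguing by contradiction, suppose $\ini_2(\mathbf v)\neq xx_1yxyx_0x_1$. A single application of~\eqref{xsxt=xsxtx} only inserts or removes occurrences that are third or later in the longer of $\mathbf u,\mathbf v$, so it leaves $\ini_2$ unchanged; hence $\mathbf s\approx\mathbf t$ is $\eta_1$.

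The two sides of $\eta_1$ differ only by transposing the two adjacent occurrences $\stackrel{(2)}{x}\stackrel{(2)}{y}$, so $\mathbf v$ arises from $\mathbf u$ by interchanging the two adjacent factors $\xi({_{2\mathbf s}x})$ and $\xi({_{2\mathbf s}y})$ of $\xi(\mathbf s)$, each of which consists solely of non-first occurrences of letters of $\mathbf u$. Hence this interchange moves no first occurrence and disturbs no ordering that involves a first occurrence; using Lemmas~\ref{L: word problem LRB} and~\ref{L: word problem F_k} inside $\mathbf{LRB}\vee\mathbf F_1\subseteq\mathbf P\{\eta_1\}$ to fix $\ini(\mathbf v)=xx_1yx_0$, $\simple(\mathbf v)=\{x_0\}$ and the $0$-block of each occurrence, one finds that both interchanged factors lie to the left of ${_{1\mathbf v}x_0}$ and that there the only second occurrences are ${_2 x}$ and ${_2 y}$. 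Since ${_2 x}$ and ${_2 y}$ are adjacent in $\ini_2(\mathbf u)$, it follows that $\ini_2(\mathbf v)=xx_1y^2xx_0x_1$; in particular ${_{2\mathbf v}y}<{_{2\mathbf v}x}$.

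It remains to derive a contradiction, and here I would copy the case analysis of Lemma~\ref{L: P{kappa_1} violates}, treating only the case where $\mathbf s$ is the left-hand side of $\eta_1$ and $\mathbf t$ the right-hand side (the other being similar). The inequality ${_{2\mathbf v}y}<{_{2\mathbf v}x}$ leaves only a short list of possibilities; in each, write $\xi(\mathbf s)$ as the product of its ten factors, so that factors $1$ and $5$ both equal $\xi({_{1\mathbf s}x})=\xi({_{2\mathbf s}x})$, factors $3$ and $6$ both equal $\xi({_{1\mathbf s}y})=\xi({_{2\mathbf s}y})$, factors $2$ and $8$ are the two images of $y_1$, factor $4$ is $\xi(y_0)$, factor $9$ is $\xi(x_0)$, and factors $7,10$ are the images of $x_1$; the swap interchanges factors $5$ and $6$. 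One checks in every case that ${_{1\mathbf u}x}$ lies in factor $1$, that ${_{1\mathbf u}y}$ lies in factor $1$ or $3$, and — because $x_0$ is simple in $\mathbf u$ while ${_{1\mathbf u}x_0}$ must follow ${_{2\mathbf u}y}$ — that ${_{1\mathbf u}x_0}$ lies in factor $9$ or in $\mathbf b$, hence after factor $8$. Consequently ${_{1\mathbf u}x_1}$, which lies strictly between ${_{1\mathbf u}x}$ and ${_{1\mathbf u}y}$, must lie in one of the factors $1,2,3$, so $x_1\in\con(\xi(x))\cup\con(\xi(y_1))\cup\con(\xi(y))$; but then $x_1$ has two occurrences before factor $9$ — in factors $1$ and $5$, or $2$ and $8$, or $3$ and $6$, respectively — so ${_{2\mathbf u}x_1}$ precedes ${_{1\mathbf u}x_0}$, contradicting $\ini_2(\mathbf u)=xx_1yxyx_0x_1$. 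I expect this last step — the bookkeeping of where the two images of each letter of $\eta_1$ sit, relative to the images of the simple letters $y_0$ and $x_0$ — to be the only delicate part; everything else follows the template of Lemma~\ref{L: P{kappa_1} violates} verbatim.
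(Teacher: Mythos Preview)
Your proof is correct and follows the paper's template: reduce to a single application of $\eta_1$, factor $\xi(\mathbf s)$ into its ten blocks, and obtain a contradiction from the position of $x_1$. Two differences are worth noting. First, to pin down $\ini_2(\mathbf v)=xx_1y^2xx_0x_1$, the paper invokes Lemma~\ref{L: ({_{1u}x})<({_{2u}y}) in J_2} (about the subvariety $\mathbf J_2\subseteq\mathbf P\{\eta_1\}$), whereas you argue directly that the two swapped factors contain only non-first occurrences; this is the same observation, just inlined, and is more self-contained here. Second, and more substantively, the final contradiction differs: the paper splits into three cases and, in the main one, argues that $x_1\notin\con(\xi(x))\cup\con(\xi(y))$ so as to force $\xi(x)=x$, $\xi(y)=y$, $\xi(y_1)=x_1$ exactly, and then tracks down $x_0$; you instead observe uniformly that whichever of factors $1$, $2$, $3$ contains ${_{1\mathbf u}x_1}$, its companion factor ($5$, $8$, or $6$ respectively) places a second $x_1$ still before factor~$9\ni{_{1\mathbf u}x_0}$, contradicting $\ini_2(\mathbf u)$ directly. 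Your route is shorter and handles all the paper's cases at once. One small point to make explicit when you write this up: the claim that the swapped block lies before ${_{1\mathbf u}x_0}$ uses that $x_0\notin\con(\xi(x))\cup\con(\xi(y))$ (since $x_0$ is simple in $\mathbf u$ while $x,y\in\mul(\mathbf s)$), so the block $F_5F_6$ lies entirely on one side of ${_{1\mathbf u}x_0}$, and if it lay after then $\ini_2$ could not change.
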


\begin{proof}
Let $\mathbf u$ be a word such that $\mathbf P\{\eta_1\}$ satisfies $xx_1yxyx_0x_1 \approx \mathbf u$ and $\ini_2(\mathbf u) = xx_1yxyx_0x_1$.
In view of Proposition~\ref{P: deduction}, it suffices to establish that if the identity $\mathbf u\approx \mathbf v$ is directly deducible from some identity $\mathbf s\approx\mathbf t\in\{\eqref{xsxt=xsxtx},\,\eta_1\}$, that is $\{\mathbf u,\mathbf v\}=\{\mathbf a\xi(\mathbf s)\mathbf b,\mathbf a\xi(\mathbf t)\mathbf b\}$ for some words $\mathbf a,\mathbf b\in\mathfrak A^\ast$ and some endomorphism $\xi$ of $\mathfrak A^\ast$, then $\ini_2(\mathbf v) = xx_1yxyx_0x_1$.
Arguing by contradiction suppose that $\ini_2(\mathbf v) \ne xx_1yxyx_0x_1$.
According to Lemmas~\ref{L: word problem LRB},~\ref{L: word problem F_k} and~\ref{L: ({_{1u}x})<({_{2u}y}) in J_2} and the inclusions $\mathbf{LRB}\vee\mathbf F_1\subseteq\mathbf J_2\subseteq\mathbf P\{\eta_1\}$, we have $\ini_2(\mathbf v) = xx_1y^2xx_0x_1$.
In particular, $({_{2\mathbf v}y})<({_{2\mathbf v}x})$.

Clearly, $\mathbf s\approx\mathbf t$ does not coincide with~\eqref{xsxt=xsxtx} because it does not change the first and the second occurrences of letters in a word.
Suppose that $\mathbf s\approx\mathbf t$ coincides with $\eta_1$.
We consider only the case when
$$
(\mathbf s,\mathbf t)=(xy_1yy_0xyx_1y_1x_0x_1,xy_1yy_0yxx_1y_1x_0x_1)
$$
because the other case is similar.
Since the left side of $\eta_1$ differs from the right side of $\eta_1$ only in swapping of the second occurrences of $x$ and $y$ and $({_{2\mathbf v}y})<({_{2\mathbf v}x})$, there are three possibilities:
\begin{itemize}
\item[(a)] $\xi({_{2\mathbf s}x})$ contains ${_{2\mathbf u}x}$ and $\xi({_{2\mathbf s}y})$ contains ${_{2\mathbf u}y}$;
\item[(b)] ${_{2\mathbf u}x}{_{2\mathbf u}y}$ is a subword of $\xi({_{2\mathbf s}x})$ and $\xi({_{2\mathbf s}y})$ contains some non-first and non-second occurrence of $y$;
\item[(c)] $\xi({_{2\mathbf s}x})$ contains ${_{1\mathbf u}x}$ and ${_{2\mathbf u}x}{_{2\mathbf u}y}$ is a subword of $\xi({_{2\mathbf s}y})$.
\end{itemize}
Suppose that~(a) holds.
Then $\xi({_{1\mathbf s}x})$ contains ${_{1\mathbf u}x}$ and $\xi({_{1\mathbf s}y})$ contains ${_{1\mathbf u}y}$.
It follows that $x_1\in\con(\xi(xy_1y))$.
Clearly, $x_1\notin\con(xy)$ because otherwise, some occurrence of $x_1$ lies between ${_{2\mathbf u}x}$ and ${_{2\mathbf u}y}$, contradicting $\ini_2(\mathbf u) = xx_1yxyx_0x_1$.
Therefore, ${_{1\mathbf u}x_1}$ coincides with $\xi({_{1\mathbf s}y_1})$ and so $\xi(x)=x$, $\xi(y)=y$ and $\xi(y_1)=x_1$.
It follows that $x_0 \in \con(\xi(x_1))$, contradicting $x_0 \in \simple(\mathbf u)$ and $x_1\in \mul(\mathbf s)$.
Suppose now that~(b) holds, then $\xi({_{1\mathbf s}x})$ contains both ${_{1\mathbf u}x}$ and ${_{1\mathbf u}y}$.
It follows that $x_1\in\con(\xi(x))$.
But this is impossible because there are no occurrences of $x_1$ between ${_{2\mathbf u}x}$ and ${_{2\mathbf u}y}$.
Finally,~(c) is impossible because $\xi({_{2\mathbf s}x})$ cannot contain any first occurrence of a letter in $\mathbf u$.
We see that $\mathbf s\approx\mathbf t$ cannot coincide with $\eta_1$ as well.
Therefore, $\mathbf P\{\eta_1\}$ violates~\eqref{xx_1yxyx_0x_1=xx_1y^2xx_0x_1}.
\end{proof}

\begin{lemma}
\label{L: J_2 violates}
The variety $\mathbf J_2$ violates the identity~\eqref{xtyxy=xtyyx}.
\end{lemma}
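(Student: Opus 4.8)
The plan is to follow the template of Lemma~\ref{L: P{eta_1} violates} and Lemma~\ref{L: J_1 violates}, reducing everything to an $\ini_2$-invariance argument through Proposition~\ref{P: deduction}. First I would observe that it suffices to prove that $\mathbf J_2$ violates~\eqref{xx_1yxyx_0x_1=xx_1y^2xx_0x_1}, since~\eqref{xx_1yxyx_0x_1=xx_1y^2xx_0x_1} arises from~\eqref{xtyxy=xtyyx} by the substitution $t\mapsto x_1$ followed by multiplication on the right by $x_0x_1$, so every monoid variety satisfying~\eqref{xtyxy=xtyyx} also satisfies~\eqref{xx_1yxyx_0x_1=xx_1y^2xx_0x_1}; this is precisely why Lemma~\ref{L: P{eta_1} violates} is stated the way it is. So I would fix a word $\mathbf u$ with $\mathbf J_2\models xx_1yxyx_0x_1\approx\mathbf u$ and $\ini_2(\mathbf u)=xx_1yxyx_0x_1$; by Proposition~\ref{P: deduction} it is enough to show that whenever $\mathbf u\approx\mathbf v$ is directly deducible from some $\mathbf s\approx\mathbf t\in\Omega_2$, one still has $\ini_2(\mathbf v)=xx_1yxyx_0x_1$. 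Arguing by contradiction and applying Lemmas~\ref{L: word problem LRB},~\ref{L: word problem F_k} and~\ref{L: ({_{1u}x})<({_{2u}y}) in J_2} together with $\mathbf{LRB}\vee\mathbf F_1\subseteq\mathbf J_2$, one is forced into $\ini_2(\mathbf v)=xx_1y^2xx_0x_1$, so $({_{2\mathbf v}y})<({_{2\mathbf v}x})$; the task is to exclude this, identity by identity.

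The case $\mathbf s\approx\mathbf t=\eqref{xsxt=xsxtx}$ is immediate, as~\eqref{xsxt=xsxtx} never changes the first or second occurrence of any letter. For $\mathbf s\approx\mathbf t=\eta_1$ I would avoid redoing the combinatorics and invoke Lemma~\ref{L: P{eta_1} violates} directly: since both~\eqref{xsxt=xsxtx} and $\eta_1$ hold in $\mathbf P\{\eta_1\}$, the identity $\mathbf u\approx\mathbf v$ holds in $\mathbf P\{\eta_1\}$; combined with $\mathbf P\models\mathbf u\approx\ini_2(\mathbf u)=xx_1yxyx_0x_1$ and $\mathbf P\models\mathbf v\approx\ini_2(\mathbf v)=xx_1y^2xx_0x_1$ from Lemma~\ref{L: 2-limited word}, this would yield $\mathbf P\{\eta_1\}\models$~\eqref{xx_1yxyx_0x_1=xx_1y^2xx_0x_1}, a contradiction.

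It remains to treat $\mathbf s\approx\mathbf t=\mathbf z_n[\pi,\tau]\approx\mathbf z_n'[\pi,\tau]$, which is the core of the proof. After renaming the letters denoted $x$ and $y$ inside $\mathbf z_n[\pi,\tau]$ and $\mathbf z_n'[\pi,\tau]$ to $X$ and $Y$ (to avoid a clash with the letters of $\mathbf u$), one notes that these two words differ only by swapping the adjacent second occurrences of $X$ and $Y$. Taking, without loss of generality, $(\mathbf s,\mathbf t)=(\mathbf z_n[\pi,\tau],\mathbf z_n'[\pi,\tau])$ and using $({_{2\mathbf v}y})<({_{2\mathbf v}x})$, there are three possibilities for how the substitution $\xi$ locates the images of these occurrences, exactly as in the companion lemmas: (a) $\xi({_{2\mathbf s}X})$ contains ${_{2\mathbf u}x}$ and $\xi({_{2\mathbf s}Y})$ contains ${_{2\mathbf u}y}$; (b) ${_{2\mathbf u}x}\,{_{2\mathbf u}y}$ is a subword of $\xi({_{2\mathbf s}X})$ and $\xi({_{2\mathbf s}Y})$ contains a further occurrence of $y$; (c) $\xi({_{2\mathbf s}X})$ contains ${_{1\mathbf u}x}$ and ${_{2\mathbf u}x}\,{_{2\mathbf u}y}$ is a subword of $\xi({_{2\mathbf s}Y})$. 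Possibility~(c) is impossible outright, since $\xi({_{2\mathbf s}X})$ cannot contain a first occurrence of any letter of $\mathbf u$. In the two remaining cases I would trace where $\xi$ must send ${_{1\mathbf u}x}$, ${_{1\mathbf u}y}$, the occurrence of $x_0$ and the second occurrence of $x_1$, exploiting on the one hand that in $\mathbf z_n[\pi,\tau]$ the letters $X$, $Y$ and the entire ``barrier'' $Z_1,\dots,Z_{2n},S_1,\dots,S_n$ around the swapped pair are multiple, and on the other hand that $x_0\in\simple(\mathbf u)$ while the pattern $\ini_2(\mathbf u)=xx_1yxyx_0x_1$ confines the occurrence of $x_0$ and the second occurrence of $x_1$ to very rigid positions relative to ${_{2\mathbf u}y}$; each configuration then collapses to a simple letter of $\mathbf u$ lying inside the $\xi$-image of a multiple letter of $\mathbf z_n[\pi,\tau]$, which cannot happen.

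Assembling the cases shows that $\ini_2$ is preserved along every deduction from $\Omega_2$, so $xx_1yxyx_0x_1\approx xx_1y^2xx_0x_1$, whose two sides have different $\ini_2$, is not deducible from $\Omega_2$; hence $\mathbf J_2$ violates~\eqref{xx_1yxyx_0x_1=xx_1y^2xx_0x_1} and therefore~\eqref{xtyxy=xtyyx}. I expect the $\mathbf z_n[\pi,\tau]\approx\mathbf z_n'[\pi,\tau]$ case to be the real obstacle: because these are long products indexed by $S_{2n}$, the bookkeeping needed to pin down the relevant occurrences of $x$, $y$, $x_1$, $x_0$ under an arbitrary substitution $\xi$ — and to verify that the $Z_i$/$S_i$ barrier blocks every way of realizing the offending swap — is the delicate part, even though each individual contradiction is of the familiar ``simple letter inside a multiple image'' type already used for Lemmas~\ref{L: P{eta_1} violates} and~\ref{L: J_1 violates}.
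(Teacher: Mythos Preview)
Your very first move --- reducing to the claim that $\mathbf J_2$ violates~\eqref{xx_1yxyx_0x_1=xx_1y^2xx_0x_1} --- is fatal, because $\mathbf J_2$ actually \emph{satisfies}~\eqref{xx_1yxyx_0x_1=xx_1y^2xx_0x_1}. Concretely, take $n=1$ and the substitution $\xi$ sending (in your capital-letter notation for the letters of $\mathbf z_1[\varepsilon,\varepsilon]$) $X\mapsto x$, $Y\mapsto y$, $S_1\mapsto x_1$, $T\mapsto x_0$, and all of the remaining letters $A_1,A_2,T_1,T_2,Z_1,Z_2$ to $\lambda$. Then $\xi(\mathbf z_1[\varepsilon,\varepsilon])=xx_1yxyx_0x_1$ and $\xi(\mathbf z_1'[\varepsilon,\varepsilon])=xx_1y^2xx_0x_1$, so~\eqref{xx_1yxyx_0x_1=xx_1y^2xx_0x_1} is directly deducible from an identity in $\Omega_2$. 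Your proposed $\ini_2$-invariant therefore cannot be preserved along $\Omega_2$-deductions, and the case analysis you sketch for $\mathbf z_n[\pi,\tau]\approx\mathbf z_n'[\pi,\tau]$ must break down: the ``barrier'' of $Z_i$'s and $S_i$'s does \emph{not} block the swap, because $S_1$ absorbs the multiple letter $x_1$ while the simple letter $T$ absorbs $x_0$, and nothing forces a simple letter of $\mathbf u$ into the image of a multiple letter of $\mathbf s$.

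The paper avoids this trap by working directly with $xtyxy$ rather than passing to~\eqref{xx_1yxyx_0x_1=xx_1y^2xx_0x_1}. The decisive structural feature is that the \emph{simple} letter $t$ sits between ${_{1}x}$ and ${_{1}y}$; since in $\mathbf z_k[\pi,\tau]$ every letter between ${_{1\mathbf s}X}$ and ${_{1\mathbf s}Y}$ (namely $X,Y$ and $z_1,s_1,\dots,z_k,s_k$) is multiple, case~(a) immediately forces $t$ into the $\xi$-image of a multiple letter, and cases~(b),~(c) are dispatched in one line each. Your reduction destroys exactly this feature: after substituting $t\mapsto x_1$, the letter between ${_{1}x}$ and ${_{1}y}$ becomes multiple, and no bookkeeping on $x_0$ and ${_{2}x_1}$ can recover the contradiction. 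Lemma~\ref{L: P{eta_1} violates} is phrased for~\eqref{xx_1yxyx_0x_1=xx_1y^2xx_0x_1} only because that is what is needed to dispose of the $\eta_1$ case \emph{within} the proof of Lemma~\ref{L: J_2 violates}; it is not meant as a target for the whole lemma.
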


\begin{proof}
Let $\mathbf u$ be a word such that $\mathbf J_2$ satisfies $xtyxy \approx \mathbf u$ and $\ini_2(\mathbf u) = xtyxy$.
In view of Proposition~\ref{P: deduction}, to verify that $\mathbf J_2$ violates~\eqref{xtyxy=xtyyx}, it suffices to show that if the identity $\mathbf u\approx \mathbf v$ is directly deducible from some identity $\mathbf s\approx\mathbf t\in\Omega_2$, that is $\{\mathbf u,\mathbf v\}=\{\mathbf a\xi(\mathbf s)\mathbf b,\mathbf a\xi(\mathbf t)\mathbf b\}$ for some words $\mathbf a,\mathbf b\in\mathfrak A^\ast$ and some endomorphism $\xi$ of $\mathfrak A^\ast$, then $\ini_2(\mathbf v) = xtyxy$.
Arguing by contradiction suppose that $\ini_2(\mathbf v) \ne xtyxy$.
According to Lemmas~\ref{L: word problem LRB},~\ref{L: word problem F_k} and~\ref{L: ({_{1u}x})<({_{2u}y}) in J_2} and the inclusion $\mathbf{LRB}\vee\mathbf F_1\subset\mathbf J_2$, $\ini_2(\mathbf v) = xty^2x$.
In particular, $({_{2\mathbf v}y})<({_{2\mathbf v}x})$.

Since $\ini_2(\mathbf u)\approx \ini_2(\mathbf v)$ coincides with~\eqref{xtyxy=xtyyx}, in view of Lemma~\ref{L: P{eta_1} violates} and the fact that~\eqref{xtyxy=xtyyx} implies~\eqref{xx_1yxyx_0x_1=xx_1y^2xx_0x_1}, the identity $\mathbf s\approx\mathbf t$ cannot coincide with~\eqref{xsxt=xsxtx} or $\eta_1$.
Therefore, $\mathbf s\approx\mathbf t$ coincides with $\mathbf z_k[\pi,\tau] \approx \mathbf z_k'[\pi,\tau]$ for some $k \in \mathbb N$ and $\pi,\tau \in S_{2k}$.
We consider only the case when $(\mathbf s,\mathbf t)=(\mathbf z_k[\pi,\tau],\mathbf z_k'[\pi,\tau])$ because the other case is similar.
Since $\mathbf z_k[\pi,\tau]$ differs from $\mathbf z_k'[\pi,\tau]$ only in swapping of the second occurrences of letters $x$ and $y$ and $({_{2\mathbf v}y})<({_{2\mathbf v}x})$, there are three possibilities:
\begin{itemize}
\item[(a)] $\xi({_{2\mathbf s}x})$ contains ${_{2\mathbf u}x}$ and $\xi({_{2\mathbf s}y})$ contains ${_{2\mathbf u}y}$;
\item[(b)] ${_{2\mathbf u}x}{_{2\mathbf u}y}$ is a subword of $\xi({_{2\mathbf s}x})$ and $\xi({_{2\mathbf s}y})$ contains some occurrence of $y$;
\item[(c)] $\xi({_{2\mathbf s}x})$ contains ${_{1\mathbf u}x}$ and ${_{2\mathbf u}x}{_{2\mathbf u}y}$ is a subword of $\xi({_{2\mathbf s}y})$.
\end{itemize}
Suppose that~(a) holds.
Then $\xi({_{1\mathbf s}x})$ contains ${_{1\mathbf u}x}$ and $\xi({_{1\mathbf s}y})$ contains ${_{1\mathbf u}y}$, so that $t\in\con(\xi(xz_1s_1z_2s_2\cdots z_ks_ky))$, contradicting $\{x,y,z_1,s_1,\dots, z_k,s_k\} \subset \mul(\mathbf s)$ and $t \in \simple(\mathbf u)$.
Suppose that~(b) holds.
Then $\xi({_{1\mathbf s}x})$ contains both ${_{1\mathbf u}x}$ and ${_{1\mathbf u}y}$, so that $t\in\con(\xi(x))$, contradicting $t \in \simple(\mathbf u)$ and $x\in\mul(\mathbf s)$.
Finally,~(c) is impossible because $\xi({_{2\mathbf s}x})$ cannot contain any first occurrence of a letter in $\mathbf u$.
We see that $\mathbf s\approx\mathbf t$ cannot coincide with $\mathbf z_k[\pi,\tau] \approx \mathbf z_k'[\pi,\tau]$ as well.
Therefore, $\mathbf J_2$ violates~\eqref{xtyxy=xtyyx}.
\end{proof}

\begin{lemma}
\label{L: V does not contain J_2}
Let $\mathbf V$ be a subvariety of $\mathbf P$.
If $\mathbf V$ does not contain $\mathbf J_2$, then $\mathbf V$ satisfies one of the identities~\eqref{xtyxy=xtyyx} or $\nu_1$.
\end{lemma}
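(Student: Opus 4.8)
The plan is to follow the proof of Lemma~\ref{L: V does not contain J_1}. If $\mathbf{LRB}\nsubseteq\mathbf V$ or $\mathbf F_1\nsubseteq\mathbf V$, then Lemma~\ref{L: does not contain LRB,F_1} puts $\mathbf V$ inside $\mathbf F$ or inside $\mathbf{LRB}\vee\mathbf C$; both satisfy~\eqref{xtyxy=xtyyx} (in $\mathbf F$ this holds already in $\mathbf F_1$, since the hypotheses of Lemma~\ref{L: word problem F_k} with $k=1$ are met by $xtyxy\approx xty^2x$; in $\mathbf{LRB}\vee\mathbf C$ note that $x^2y\approx xyx$ gives $yxy\approx y^2x$, hence $xtyxy\approx xty^2x$). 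So I may assume $\mathbf{LRB}\vee\mathbf F_1\subseteq\mathbf V\subseteq\mathbf P$. Since $\mathbf F_1\subseteq\mathbf V$, Lemmas~\ref{L: word problem F_k} and~\ref{L: 2-limited word} let me present $\mathbf V$ within $\mathbf P$ by $2$-limited balanced identities; as $\mathbf J_2\nsubseteq\mathbf V$, Lemma~\ref{L: fblemma} yields a $2$-limited balanced identity $\mathbf u\approx\mathbf v$ of $\mathbf V$ with a critical pair $\{{_i x},{_j y}\}$ that is not $\Omega_2$-removable. Since $\mathbf{LRB}\subseteq\mathbf V$, Lemma~\ref{L: word problem LRB} gives $\ini(\mathbf u)=\ini(\mathbf v)$, so $(i,j)\ne(1,1)$ and hence $(i,j)\in\{(1,2),(2,1),(2,2)\}$.

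For $\{i,j\}=\{1,2\}$ I would argue as in the proof of Lemma~\ref{L: V does not contain J_1}: by Proposition~\ref{P: 1st and 2nd occurrences} the pair is $\Delta$-removable for some $\Delta\subseteq\Phi(\mathbf V)$, so some identity $\varepsilon\in\Phi$ holds in $\mathbf V$ but not in $\mathbf J_2$. Each member of $\Phi$ other than~\eqref{xsxt=xsxtx} interchanges a first occurrence of one letter with a later second occurrence of another, which no identity of $\mathbf J_2$ does by Lemma~\ref{L: ({_{1u}x})<({_{2u}y}) in J_2}; hence $\varepsilon$ is one of~\eqref{xyxy=xxyy}$,\gamma_k,\delta_k^m,\varepsilon_{k-1}$. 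Now~\eqref{xyxy=xxyy} implies~\eqref{xtyxy=xtyyx} modulo~\eqref{xsxt=xsxtx}: by Corollary~\ref{C: block = ini^2(block)} the block $yxy$ of $xtyxy$ may be replaced by $\ini^2(yxy)=y^2x^2$, and Lemma~\ref{L: 2-limited word} then removes the superfluous occurrence of $x$. The inclusion chains of Lemma~\ref{L: gamma_k, delta_k^k,epsilon_k} reduce each of $\gamma_k,\delta_k^m,\varepsilon_{k-1}$ to~\eqref{xyxy=xxyy}. So $\mathbf V$ satisfies~\eqref{xtyxy=xtyyx}.

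The substantive case is $(i,j)=(2,2)$, which I would carry out in parallel with Case~2 of the proof of Lemma~\ref{L: V does not contain J_1} and with the case analyses of Propositions~\ref{P: 2nd occurrences O_1} and~\ref{P: 2nd occurrences O_2}. By symmetry and $\mathbf{LRB}\subseteq\mathbf V$ write $\mathbf u=\mathbf w_1{_{1\mathbf u}x}\mathbf w_2{_{1\mathbf u}y}\mathbf w_3{_{2\mathbf u}x}{_{2\mathbf u}y}\mathbf w_4$ with $({_{2\mathbf v}y})<({_{2\mathbf v}x})$, and split on the depth picture around the pair. If $D(\mathbf u,y)=\infty$ and the depth of $\mathbf w_3$ in $\mathbf u$ is strictly infinite, Lemma~\ref{L: xtyxy=xtyyx implies} shows the pair $\{\eqref{xsxt=xsxtx},\eqref{xtyxy=xtyyx}\}$-removable, and restricting $\mathbf u\approx\mathbf v$ to the letters involved exhibits an instance of~\eqref{xtyxy=xtyyx} in $\mathbf V$. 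If $D(\mathbf u,y)>1$ (no simple letter between ${_{1\mathbf u}y}$ and ${_{2\mathbf u}y}$), Corollary~\ref{C: u'abu''=u'bau'' kappa_k} shows the pair $\{\eqref{xsxt=xsxtx},\kappa_k\}$-removable for the least $k$ with $\mathbf V\models\kappa_k$ (if no such $k$ exists then $\mathbf F\subseteq\mathbf V$ by Lemma~\ref{L: V does not contain F_k} and one is pushed into the strictly infinite subcase), and then $\mathbf V$ satisfies $\nu_1$ along $\kappa_k\equiv\delta_k^k\Rightarrow\varepsilon_k\Rightarrow\zeta_k\Rightarrow\nu_0\Rightarrow\nu_1$ (Lemmas~\ref{L: kappa_k and delta_k^k},~\ref{L: gamma_k, delta_k^k,epsilon_k} and~\ref{L: zeta_k,lambda_k^m,eta_k,mu_k^m}). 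Otherwise there is a simple letter $t$ between ${_{1\mathbf u}y}$ and ${_{2\mathbf u}y}$, so $\mathbf V$ satisfies~\eqref{xytxy=xytyx} (the restriction of $\mathbf u\approx\mathbf v$ to $\{x,y,t\}$); if in addition every relevant letter $a$ (with $({_{1\mathbf u}x})<({_{1\mathbf u}a})<({_{1\mathbf u}y})<({_{2\mathbf u}y})<({_{2\mathbf u}a})$) has all its first occurrences before ${_{2\mathbf u}a}$ preceding ${_{2\mathbf u}y}$, a suitable choice of $n,\pi,\tau$, or the identity $\eta_1$, makes the pair $\Omega_2$-removable, contradicting the choice of $\mathbf u\approx\mathbf v$; and in the remaining subcase there are $z,z_\infty$ with $({_{1\mathbf u}x})<({_{1\mathbf u}z})<({_{1\mathbf u}y})<({_{2\mathbf u}y})<({_{1\mathbf u}z_\infty})<({_{2\mathbf u}z})$, and substituting $yxz_\infty^2$ for $z_\infty$ in $\mathbf u(x,y,t,z,z_\infty)\approx\mathbf v(x,y,t,z,z_\infty)$ gives, modulo~\eqref{xsxt=xsxtx}, $xzytxyz_\infty^2z\approx xzyt\mathbf w$ with $\mathbf w\in\{zyxz_\infty^2,yzxz_\infty^2,yxzz_\infty^2,yxz_\infty^2z\}$; for $\mathbf w=yxz_\infty^2z$ this is~\eqref{xzyt xy z_infty z_infty z = xzyt yx z_infty z_infty z}, whence $\nu_1$ by Lemma~\ref{L: V satisfies nu_p}, and in the other three cases I would pass through~\eqref{xytxy=xytyx} and Proposition~\ref{P: 1st and 2nd occurrences} exactly as~\eqref{xzyt xy z_infty z_infty z = xzyt yx z_infty z z_infty} was turned into~\eqref{xzyt xy z_infty z_infty z = xzyt yx z_infty z_infty z} in Lemma~\ref{L: V does not contain J_1}, again obtaining $\nu_1$.

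The main obstacle I expect is the configuration $D(\mathbf u,y)>1$, including $D(\mathbf u,y)$ infinite: in the $\mathbf J_1$ argument this is ruled out because $\kappa_1\in\Omega_1$, whereas here one must genuinely extract a conclusion, and the dichotomy ``\eqref{xtyxy=xtyyx} or $\nu_1$'' in the statement is forced precisely by whether $\mathbf V$ still satisfies some $\kappa_k$ (giving $\nu_1$) or the occurrences of $y$ lie in a block of strictly infinite depth (giving~\eqref{xtyxy=xtyyx} via Lemma~\ref{L: xtyxy=xtyyx implies}). Checking that the families $\{\mathbf z_n[\pi,\tau]\approx\mathbf z_n'[\pi,\tau]\}$ and $\eta_1$, together with~\eqref{xtyxy=xtyyx} and $\nu_1$, cover every configuration of the critical pair — in particular that the ``all first occurrences already in place'' pattern is absorbed by the $\mathbf z_n$ family, the role the $\mathbf w_n$ family plays for $\mathbf J_1$ — is the delicate combinatorial core of the proof.
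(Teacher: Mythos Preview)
Your handling of the preliminary reductions and the $\{i,j\}=\{1,2\}$ case is fine and matches the paper. The $(2,2)$ case, however, diverges from the paper in ways that leave genuine gaps.

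First, the paper reduces at the outset not merely to $\mathbf F_1\subseteq\mathbf V$ but to $\mathbf F\subseteq\mathbf V$: if $\mathbf F\nsubseteq\mathbf V$ then some $\kappa_r$ holds and Corollary~\ref{C: u'abu''=u'bau'' kappa_k} already yields~\eqref{xtyxy=xtyyx}. This gives $r$-well-balancedness of $\mathbf u\approx\mathbf v$ for \emph{every} $r$, which is what makes the later restriction arguments go through (for instance the extraction of $\eta_0$ uses $1$-well-balancedness). Your attempt to fold this reduction into the $(2,2)$ analysis breaks down: when $\mathbf F\subseteq\mathbf V$ and $1<D(\mathbf u,y)<\infty$, you are \emph{not} ``pushed into the strictly infinite subcase'', and your argument stalls there. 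Likewise, in your strictly-infinite subcase, the fact that the pair is $\{\eqref{xsxt=xsxtx},\eqref{xtyxy=xtyyx}\}$-removable does not by itself show that $\mathbf V$ satisfies~\eqref{xtyxy=xtyyx}; you cannot read~\eqref{xtyxy=xtyyx} off a restriction unless you already know the relevant well-balancedness.

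Second, your case split tracks the wrong letters. The paper first asks for a simple letter between ${_1\mathbf u}x$ and ${_1\mathbf u}y$ (this is what produces~\eqref{xtyxy=xtyyx} via $\mathbf u(x,y,t)\approx\mathbf v(x,y,t)$); you instead look between ${_1\mathbf u}y$ and ${_2\mathbf u}y$, which yields only~\eqref{xytxy=xytyx}, a different identity that is not the target. More importantly, the $\mathbf z_n[\pi,\tau]$ words have their crucial $z$-letters sitting between ${_1}y$ and ${_2}x$, not between ${_1}x$ and ${_1}y$; so the ``relevant letter $a$'' condition you import from the $\mathbf J_1$ proof (with $({_1\mathbf u}x)<({_1\mathbf u}a)<({_1\mathbf u}y)$) does not match the $\Omega_2$-removability pattern, and your $z,z_\infty$ should likewise satisfy $({_1\mathbf u}y)<({_1\mathbf u}z)<({_2\mathbf u}x)$ rather than $({_1\mathbf u}x)<({_1\mathbf u}z)<({_1\mathbf u}y)$. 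The paper's actual dichotomy is: a simple letter between ${_1}x$ and ${_1}y$ gives~\eqref{xtyxy=xtyyx}; otherwise one obtains $x_0,x_1$, and then either a simple letter between ${_1}y$ and ${_2}x$ gives $\eta_0$ (hence $\nu_1$), or the $\mathbf z_n$-pattern removes the pair (contradiction), or suitable $z,z_\infty$ between ${_1}y$ and ${_2}x$ feed into Lemma~\ref{L: V satisfies nu_p} to produce $\nu_1$.
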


\begin{proof}
If $\mathbf V$ does not contain $\mathbf{LRB}$ or $\mathbf F$, then $\mathbf V$ is contained in either $\mathbf F$ or $\mathbf P\{\kappa_r\}$ for some $r\in\mathbb N$ by Lemmas~\ref{L: does not contain LRB,F_1}(i),~\ref{L: L(F)} and~\ref{L: V does not contain F_k}.
Evidently, $\mathbf F$ satisfies the identity~\eqref{xtyxy=xtyyx}, while $\mathbf P\{\kappa_r\}$ satisfies this identity by Corollary~\ref{C: u'abu''=u'bau'' kappa_k}.
So, we may assume that $\mathbf V$ belongs to the interval $[\mathbf{LRB}\vee \mathbf F, \mathbf P]$.
In particular, Corollary~\ref{C: (k-1)-well-balanced} implies that every identity of $\mathbf V$ is $r$-well-balanced for any $r \ge 0$.
In view of this fact and Lemma~\ref{L: 2-limited word}, any identity of $\mathbf V$ is equivalent to a 2-limited balanced identity.
Then Lemma~\ref{L: fblemma} implies that there exist a 2-limited balanced identity $\mathbf u \approx \mathbf v$ of $\mathbf V$ and a critical pair $\{{_i x}, {_j y}\}$ in $\mathbf u \approx \mathbf v$ that is not $\Omega_2$-removable in $\mathbf u \approx \mathbf v$.
It follows from the inclusion $\mathbf{LRB} \subset \mathbf V$ and Lemma~\ref{L: word problem LRB} that $(i,j) \ne (1,1)$.
Then $(i,j) \in \{(1,2),(2,1),(2,2)\}$ because the identity $\mathbf u \approx \mathbf v$ is 2-limited.

Suppose that $\{i,j\}=\{1,2\}$.
According to Proposition~\ref{P: 1st and 2nd occurrences}, the identity $\mathbf u \approx \mathbf v$ may be chosen from the set
$$
\Phi = \{\eqref{xsxt=xsxtx},\,\eqref{xyxy=xxyy},\,\gamma_k,\,\delta_k^m,\,\varepsilon_{k-1} \mid k \in \mathbb N, \ 1 \le m \le k \}.
$$
Evidently, $\mathbf u \approx \mathbf v$ does not coincide with~\eqref{xsxt=xsxtx} because it holds in $\mathbf J_2$.
It follows from Lemma~\ref{L: gamma_k, delta_k^k,epsilon_k} that every identity from $\Phi\setminus \{\eqref{xsxt=xsxtx}\}$ together with~\eqref{xsxt=xsxtx} imply the identity~\eqref{xyxy=xxyy}.
It is routine to verify that~\eqref{xtyxy=xtyyx} follows from $\{\eqref{xsxt=xsxtx},\,\eqref{xyxy=xxyy}\}$.
Therefore,~\eqref{xtyxy=xtyyx} is satisfied by $\mathbf V$, and we are done.

So, it remains to consider the case when $(i,j) = (2,2)$.
By symmetry, we may assume that $({_{1\mathbf u}x})<({_{1\mathbf u}y})$.
Then $({_{1\mathbf v}x})<({_{1\mathbf v}y})$ by Lemma~\ref{L: word problem LRB} and  the inclusion $\mathbf{LRB} \subset \mathbf V$.
If there is a simple letter $t$ between ${_{1\mathbf u}x}$ and ${_{1\mathbf u}y}$, then the identity $\mathbf u(x,y,t) \approx \mathbf v(x,y,t)$ coincides with~\eqref{xtyxy=xtyyx} because $\mathbf u \approx \mathbf v$ is $0$-well-balanced.
So, we may assume that there are no simple letters between ${_{1\mathbf u}x}$ and ${_{1\mathbf u}y}$.
If, for any letter $a$ with
$$
({_{1\mathbf u}x})<({_{1\mathbf u}a})<({_{1\mathbf u}y})<({_{2\mathbf u}y})<({_{2\mathbf u}a}),
$$
there are no simple letters between ${_{2\mathbf u}y}$ and ${_{2\mathbf u}a}$, then it is routine to show that the critical pair $\{{_2 x}, {_2 y}\}$ is $\{\eqref{xsxt=xsxtx},\,\eta_1\}$-removable.
So, we may assume that there are letters $x_0\in\simple(\mathbf u)$ and $x_1\in\mul(\mathbf u)$ such that
$$
({_{1\mathbf u}x})<({_{1\mathbf u}x_1})<({_{1\mathbf u}y})<({_{2\mathbf u}y})<({_{1\mathbf u}x_0})<({_{2\mathbf u}x_1}).
$$
If there is a simple letter $t$ between ${_{1\mathbf u}y}$ and ${_{2\mathbf u}x}$, then
the identity $\mathbf u(x_0,x_1,x,y,t) \approx \mathbf v(x_0,x_1,x,y,t)$ coincides with the identity $\eta_0$ (up to renaming of letters) because $\mathbf u \approx \mathbf v$ is $1$-well-balanced.
It is easy to see that $\eta_0$ implies $\nu_1$, and we are done.
So, it remains to consider the case when there are no simple letters between ${_{1\mathbf u}y}$ and ${_{2\mathbf u}x}$.
If, for any letter $a$ with
$$
({_{1\mathbf u}y})<({_{1\mathbf u}a})<({_{2\mathbf u}x})<({_{2\mathbf u}y})<({_{2\mathbf u}a}),
$$
there are no first occurrences of letters between ${_{2\mathbf u}y}$ and ${_{2\mathbf u}a}$, then it is easy to see that one can choose $n\in \mathbb N$ and $\pi,\tau\in S_{2n}$ so that the critical pair $\{{_2 x}, {_2 y}\}$ is $\{\eqref{xsxt=xsxtx},\,\mathbf z_n[\pi,\tau] \approx \mathbf z_n'[\pi,\tau]\}$-removable in $\mathbf u \approx \mathbf v$.
So, we may assume that there are letters $z$ and $z_\infty$ such that
$$
({_{1\mathbf u}y})<({_{1\mathbf u}z})<({_{2\mathbf u}x})<({_{2\mathbf u}y})<({_{1\mathbf u}z_\infty})<({_{2\mathbf u}z}).
$$
We may assume without loss of generality that $({_{1\mathbf u}z_\infty})<({_{2\mathbf u}z})<({_{1\mathbf u}x_0})$ because we can substitute $z_\infty z x_0$ for $x_0$.
Then we substitute $yxz_\infty^2$ for $z_\infty$ in the identity
$$
\mathbf u(x_0,x_1,x,y,z_\infty,z) \approx \mathbf v(x_0,x_1,x,y,z_\infty,z).
$$
Since the identity $\mathbf u \approx \mathbf v$ is $1$-well-balanced and $({_{2\mathbf v}y})<({_{2\mathbf v}x})$, we obtain an identity, which is equivalent modulo~\eqref{xsxt=xsxtx} to
$$
xx_1yzxyz_\infty^2zx_0x_1 \approx xx_1\mathbf c x_0x_1
$$
such that \eqref{conditions for almost nu_p} is true.
Now Lemma~\ref{L: V satisfies nu_p} applies and we conclude that $\nu_1$ is satisfied by $\mathbf V$.
We see that $\mathbf V$ satisfies either $\nu_1$ or~\eqref{xtyxy=xtyyx} in either case.
\end{proof}

\begin{proposition}
\label{P: J_2 is limit}
The variety $\mathbf J_2$ is a limit variety.
\end{proposition}

\begin{proof}
It is routine to verify that $\mathbf J_2$ satisfies~\eqref{xzyt xy z_infty z_infty z = xzyt yx z_infty z_infty z}.
Then Lemma~\ref{L: V does not contain J_2} implies that any proper subvariety of $\mathbf J_2$ is contained in ether $\mathbf O_1$ or $\mathbf O_2$ and so is {\FB} by Corollary~\ref{C: HFB O_i}.

So, it remains to establish that $\mathbf J_2$ is \NFB.
It suffices to verify that, for any $n \in \mathbb N$, the set of identities
$$
\Delta_n = \{\eqref{xsxt=xsxtx},\,\eta_1,\, \mathbf z_k[\pi,\tau] \approx \mathbf z_k'[\pi,\tau] \mid k < n, \ \pi,\tau\in S_{2k}\}
$$
does not imply the identity $\mathbf z_n[\varepsilon,\varepsilon] \approx \mathbf z_n'[\varepsilon,\varepsilon]$, where $\varepsilon$ is the trivial permutation on $\{1,2,\dots,2n\}$.
Let $\mathbf w$ be a word such that $\mathbf J_2$ satisfies $\mathbf z_n[\varepsilon,\varepsilon] \approx \mathbf w$.
According to the inclusion $\mathbf{LRB}\subset\mathbf J_2$ and Lemmas~\ref{L: word problem LRB} and~\ref{L: ({_{1u}x})<({_{2u}y}) in J_2},
$$
\mathbf w= \biggl(\prod_{i=1}^{2n} x_it_i\biggr) \,x\, \biggl(\prod_{i=1}^n z_is_i\biggr) \,y\, \biggl(\prod_{i=1}^n z_{n+i}\biggr)\,\mathbf w'\,t\,\mathbf w''
$$
for some words $\mathbf w'$ and $\mathbf w''$ such that $\con(\mathbf w')=\{x,y,x_1,z_1,x_2,z_2,\dots,x_{2n},z_{2n}\}$ and $\con(\mathbf w'')=\{s_1,s_2,\dots,s_n\}$.
If ${_{1\mathbf w'}x}$ follows ${_{1\mathbf w'}x_1}$ in $\mathbf w'$, then the identity $\mathbf w_n[\varepsilon,\varepsilon](x,x_1,t_1) \approx \mathbf w(x,x_1,t_1)$ is equivalent modulo~\eqref{xsxt=xsxtx} to~\eqref{xtyxy=xtyyx}.
But this is impossible by Lemma~\ref{L: J_2 violates}.
Therefore, ${_{1\mathbf w'}x}$ precedes ${_{1\mathbf w'}x_1}$ in $\mathbf w'$.
By a similar argument we can show that $({_{1\mathbf w'}y})<({_{1\mathbf w'}x_1})$ and
$$
({_{1\mathbf w'}x_1})<({_{1\mathbf w'}z_1})<({_{1\mathbf w'}x_2})<({_{1\mathbf w'}z_2})<\cdots<({_{1\mathbf w'}x_{2n}})<({_{1\mathbf w'}z_{2n}}).
$$
This means that $\ini(\mathbf w') = \mathbf ax_1z_1x_2z_2\cdots x_{2n}z_{2n}$, where $\mathbf a \in\{xy,yx\}$.
So, we have proved that $\ini_2(\mathbf w)=\mathbf z_n[\theta]$ for some $\theta \in S_n$, where
$$
\mathbf z_n[\theta]=\biggl(\prod_{i=1}^{2n} x_it_i\biggr) \,x\, \biggl(\prod_{i=1}^n z_is_i\biggr) \,y\, \biggl(\prod_{i=1}^n z_{n+i}\biggr)\,\mathbf a\, \biggl(\prod_{i=1}^{2n} x_iz_i\biggr)\,t\,\biggl(\prod_{i=1}^ns_{i\theta}\biggr).
$$

Let $\mathbf u$ be a word such that $\mathbf J_2$ satisfies $\mathbf z_n[\varepsilon,\varepsilon] \approx \mathbf u$ and $({_{2\mathbf u}x})<({_{2\mathbf u}y})$.
In view of the previous paragraph, $\ini_2(\mathbf u)=\mathbf z_n[\theta_1]$ for some $\theta_1 \in S_n$.
According to Proposition~\ref{P: deduction}, to verify that $\mathbf z_n[\varepsilon,\varepsilon] \approx \mathbf z_n'[\varepsilon,\varepsilon]$ does not follow from $\Delta_n$, it suffices to show that if an identity $\mathbf u\approx \mathbf v$ is directly deducible from some identity $\mathbf s\approx\mathbf t$ in $\Delta_n$, that is, $\{\mathbf u,\mathbf v\}=\{\mathbf a\xi(\mathbf s)\mathbf b,\mathbf a\xi(\mathbf t)\mathbf b\}$ for some words $\mathbf a,\mathbf b\in\mathfrak A^\ast$ and some endomorphism $\xi$ of $\mathfrak A^\ast$, then $({_{2\mathbf v}x})<({_{2\mathbf v}y})$.
Arguing by contradiction suppose that $({_{2\mathbf v}y})<({_{2\mathbf v}x})$.
Then, in view of the observation in the previous paragraph, $\ini_2(\mathbf v)=\mathbf z_n[\theta_2]$ for some $\theta_2 \in S_n$.

Further, since $\ini_2(\mathbf u(x,y,s_1,t))=xs_1yxyts_1$ and $\ini_2(\mathbf v(x,y,s_1,t))=xs_1y^2xts_1$, Lemma~\ref{L: P{eta_1} violates} implies that $\mathbf s\approx\mathbf t$ cannot coincide with~\eqref{xsxt=xsxtx} and $\eta_1$.
Suppose that $\mathbf s\approx\mathbf t$ coincides with $\mathbf z_k[\pi,\tau] \approx \mathbf z_k'[\pi,\tau]$ for some $k < n$ and $\pi,\tau \in S_{2k}$.
We consider only the case when $(\mathbf s,\mathbf t)=(\mathbf z_k[\pi,\tau],\mathbf z_k'[\pi,\tau])$ because the other case is similar.
Since $\mathbf z_k[\pi,\tau]$ differs from $\mathbf z_k'[\pi,\tau]$ only in swapping of the second occurrences of letters $x$ and $y$, there are three possibilities:
\begin{itemize}
\item[(a)] $\xi({_{2\mathbf s}x})$ contains ${_{2\mathbf u}x}$ and $\xi({_{2\mathbf s}y})$ contains ${_{2\mathbf u}y}$;
\item[(b)] ${_{2\mathbf u}x}{_{2\mathbf u}y}$ is a subword of $\xi({_{2\mathbf s}x})$ and $\xi({_{2\mathbf s}y})$ contains some occurrence of $y$;
\item[(c)] $\xi({_{2\mathbf s}x})$ contains ${_{1\mathbf u}x}$ and ${_{2\mathbf u}x}{_{2\mathbf u}y}$ is a subword of $\xi({_{2\mathbf s}y})$.
\end{itemize}
Suppose that~(a) holds.
Then $\xi({_{1\mathbf s}x})$ contains ${_{1\mathbf u}x}$ and $\xi({_{1\mathbf s}y})$ contains ${_{1\mathbf u}y}$.
Hence $t_{2n},z_1\notin\con(\xi(x))$ and $s_n,z_{n+1}\notin\con(\xi(y))$ because the equality $\ini_2(\mathbf u)=\mathbf z_n[\theta_1]$ is false otherwise.
Therefore, $\xi(x)=x$ and $\xi(y)=y$, whence $\xi(\prod_{i=1}^kz_{k+i})=\prod_{i=1}^nz_{n+i}$.
Then $\xi({_{1\mathbf s}z_p})$ contains ${_{1\mathbf u}z_q}\,{_{1\mathbf u}z_{q+1}}$ for some $k+1\le p \le 2k$ and $n+1\le q < 2n$.
Since $\ini_2(\mathbf u)=\mathbf z_n[\theta_1]$, the word $\xi({_{2\mathbf s}z_p})$ cannot contain ${_{2\mathbf u}z_q}$.
Then ${_{2\mathbf u}z_q}$ is contained in the subword $\xi\bigl(\prod_{i=1}^{r-1}(x_{i\pi}z_{i\tau})x_{r\pi}\bigr)$ of $\mathbf u$, where $r\tau=p$.
Hence ${_{1\mathbf u}z_q}$ must be contained in $\xi({_{1\mathbf s}a})$ for some
$$
a \in \{z_{1\tau},x_{1\pi},z_{2\tau},x_{2\pi},\dots, z_{(r-1)\tau},x_{(r-1)\pi},x_{r\pi}\}.
$$
This contradicts the fact that ${_{1\mathbf u}z_q}$ is contained in $\xi({_{1\mathbf s}z_p})$.
Therefore,~(a) is impossible.
Suppose that~(b) holds.
Then $\xi({_{1\mathbf s}x})$ contains both ${_{1\mathbf u}x}$ and ${_{1\mathbf u}y}$.
This is only possible when $xz_1s_1z_2s_2\cdots z_y$ is a subword of $\xi(x)$.
But this contradicts the fact that $\ini_2(\mathbf u)=\mathbf z_n[\theta_1]$ and, therefore,~(b) is impossible as well.
Finally,~(c) is impossible because $\xi({_{2\mathbf s}x})$ cannot contain any first occurrence of a letter in $\mathbf u$.
We see that  $\mathbf s\approx\mathbf t$ cannot coincide with $\mathbf z_k[\pi,\tau] \approx \mathbf z_k'[\pi,\tau]$.
So, we have proved that $\Delta_n$ does not imply $\mathbf z_n[\varepsilon,\varepsilon] \approx \mathbf z_n'[\varepsilon,\varepsilon]$ and so $\mathbf J_2$ is \NFB.
\end{proof}

The following fact provides the first example of a limit variety of monoids with infinitely many subvarieties.

\begin{corollary}
\label{P: J_2 has infinitely many subvarieties}
The limit variety $\mathbf J_2$ have countably infinitely many subvarieties.
\end{corollary}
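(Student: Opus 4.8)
The plan is to bound the number of subvarieties of $\mathbf J_2$ from below and from above by $\aleph_0$. For the lower bound I would first prove that $\mathbf F\subseteq\mathbf J_2$, and then invoke Lemma~\ref{L: L(F)}: since $\mathfrak L(\mathbf F)$ is a chain of order type $\omega+1$, the variety $\mathbf F$ has countably infinitely many subvarieties, and all of them are subvarieties of $\mathbf J_2$. To establish $\mathbf F\subseteq\mathbf J_2=\var\Omega_2$ it suffices to check that $\mathbf F$ satisfies every identity of $\Omega_2$. The identity~\eqref{xsxt=xsxtx} holds in $\mathbf F$ because $\mathbf F\subseteq\mathbf P$, and $\eta_1$ holds in $\mathbf F$ because $\mathbf F$ satisfies $\eta_k$ for every $k$ (as recorded in the proof of Lemma~\ref{L: V satisfies eta_k,mu_k^m}). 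So the crux is to show that $\mathbf F$ satisfies $\mathbf z_n[\pi,\tau]\approx\mathbf z_n'[\pi,\tau]$ for all $n\in\mathbb N$ and $\pi,\tau\in S_{2n}$.

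For that I would write $\mathbf z_n[\pi,\tau]=\mathbf u_1\stackrel{(1)}{x}\mathbf u_2\stackrel{(1)}{y}\mathbf u_3\stackrel{(2)}{x}\stackrel{(2)}{y}\mathbf u_4$ with $\mathbf u_3=z_{n+1}z_{n+2}\cdots z_{2n}$, and verify directly from the shape of $\mathbf z_n[\pi,\tau]$ that the depth of $\mathbf u_3$ in $\mathbf z_n[\pi,\tau]$ is strictly infinite: each $z_{n+i}$ is multiple, and for each $i$ there is no first occurrence of a letter between the last letter $z_{2n}$ of $\mathbf u_3$ and the second occurrence of $z_{n+i}$, since everything lying strictly between them, namely the block $xy$ together with an initial segment of $\prod_{i=1}^{2n}x_{i\pi}z_{i\tau}$, consists exclusively of non-first occurrences of letters. (Note that the only first occurrence inside $\mathbf u_4$, that of the simple letter $t$, comes after the last of $z_{n+1},\dots,z_{2n}$, hence in the tail factor of $\mathbf u_4$, which the hypothesis of Lemma~\ref{L: xtyxy=xtyyx implies} allows to contain first occurrences.) By Lemma~\ref{L: xtyxy=xtyyx implies} the identity $\mathbf z_n[\pi,\tau]\approx\mathbf z_n'[\pi,\tau]$ then follows from~\eqref{xsxt=xsxtx} and~\eqref{xtyxy=xtyyx}; since $\mathbf F$ satisfies both (the latter is recorded in the proof of Lemma~\ref{L: V does not contain J_2}), it satisfies $\mathbf z_n[\pi,\tau]\approx\mathbf z_n'[\pi,\tau]$. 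Hence $\mathbf F\subseteq\mathbf J_2$, so $\mathbf J_2$ has at least $\aleph_0$ subvarieties.

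For the upper bound I would use that, as shown inside the proof of Proposition~\ref{P: J_2 is limit}, every proper subvariety of $\mathbf J_2$ is contained in $\mathbf O_1$ or in $\mathbf O_2$, and that $\mathbf O_1$ and $\mathbf O_2$ are HFB by Corollary~\ref{C: HFB O_i}. An HFB variety has at most countably many subvarieties: each of its subvarieties is finitely based, hence definable by some finite subset of the countable set $\mathfrak A^\ast\times\mathfrak A^\ast$ of monoid identities, and there are only countably many finite subsets of a countable set. Consequently $\mathbf J_2$ has at most countably many proper subvarieties, and hence at most countably many subvarieties in all; together with the lower bound this yields exactly countably infinitely many. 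The only genuinely non-routine step is the verification that $z_{n+1}z_{n+2}\cdots z_{2n}$ has strictly infinite depth in $\mathbf z_n[\pi,\tau]$, i.e.\ that the ``large-index'' letters $z_{n+1},\dots,z_{2n}$ play the role of the letters $z_1,\dots,z_n$ in the hypothesis of Lemma~\ref{L: xtyxy=xtyyx implies}; this amounts to carefully tracking first versus second occurrences inside the factor $\prod_{i=1}^{2n}x_{i\pi}z_{i\tau}$, after which the remainder is bookkeeping or a direct appeal to results already proved.
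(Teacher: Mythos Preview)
Your argument is correct. The upper bound is exactly the paper's: $\mathbf J_2$ being limit (Proposition~\ref{P: J_2 is limit}) means every proper subvariety is FB, hence there are at most countably many; you simply unpack this by going through $\mathbf O_1,\mathbf O_2$ and Corollary~\ref{C: HFB O_i}, which is the content of that proposition anyway.

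For the lower bound the paper takes a shorter route. It does not prove $\mathbf F\subseteq\mathbf J_2$; instead it observes directly that $\mathbf F_k\subseteq\mathbf J_2$ for every $k$ by Lemma~\ref{L: word problem F_k}. The point is that every identity $\mathbf u\approx\mathbf v$ in $\Omega_2$ (namely~\eqref{xsxt=xsxtx}, $\eta_1$, and each $\mathbf z_n[\pi,\tau]\approx\mathbf z_n'[\pi,\tau]$) either only adds a third occurrence or only swaps two adjacent \emph{second} occurrences, so $\simple(\mathbf u)=\simple(\mathbf v)$, $\mul(\mathbf u)=\mul(\mathbf v)$, and $h_i^{k-1}(\mathbf u,a)=h_i^{k-1}(\mathbf v,a)$ for $i=1,2$ and all $a$, whence $\mathbf F_k$ satisfies it. The infinite strict chain $\mathbf F_1\subset\mathbf F_2\subset\cdots$ (Lemma~\ref{L: L(F)}) then finishes. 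Your route via Lemma~\ref{L: xtyxy=xtyyx implies} and the strictly-infinite-depth check is valid and even yields the slightly stronger fact $\mathbf F\subseteq\mathbf J_2$, but it costs more work: the word-problem criterion for $\mathbf F_k$ dispatches the lower bound in one line, without any depth computation.
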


\begin{proof}
Since $\mathbf J_2$ is a limit variety by Proposition~\ref{P: J_2 is limit}, it contains at most  countably infinitely many subvarieties.
It remains to note that $\mathbf J_2$ has infinitely many subvarieties because
$\mathbf F_1\subset\mathbf F_2\subset\cdots\subset\mathbf F_k\subset\cdots\subset \mathbf J_2$ by Lemma~\ref{L: word problem F_k}.
\end{proof}

\subsection{Classification of limit subvarieties of $\mathbf P$}

The following is the main result of the paper.

\begin{theorem}
\label{T: main result}
The following statements on any subvariety $\mathbf V$ of $\mathbf P$ are equivalent:
\begin{itemize}
\item[\textup{(i)}] $\mathbf V$ is \HFB;
\item[\textup{(ii)}] $\mathbf V \subseteq \mathbf O_1$ or $\mathbf V \subseteq \mathbf O_2$;
\item[\textup{(iii)}] $\mathbf J_1, \mathbf J_2 \nsubseteq \mathbf V$.
\end{itemize}
Consequently, $\mathbf J_1$ and $\mathbf J_1$ are the only limit subvarieties of $\mathbf P$.
\end{theorem}

\begin{proof}
The implication (ii)\,$\longrightarrow$\,(i) holds by Corollary~\ref{C: HFB O_i}, while the implication (i)\,$\longrightarrow$\,(iii) holds because the varieties $\mathbf J_1$ and $\mathbf J_2$ are {\NFB} by Propositions~\ref{P: J_1 is limit} and~\ref{P: J_2 is limit}, respectively.
By Lemmas~\ref{L: V does not contain J_1} and~\ref{L: V does not contain J_2}, the statement $\mathbf J_1, \mathbf J_2 \nsubseteq \mathbf V$ implies that $\mathbf V$ satisfies either $\{ \eqref{xzyt xy z_infty z_infty z = xzyt yx z_infty z_infty z}, \eqref{xtyxy=xtyyx} \}$ or $\{ \eqref{xzyt xy z_infty z_infty z = xzyt yx z_infty z_infty z}, \nu_1 \}$, whence either $\mathbf V \subseteq \mathbf O_1$ or $\mathbf V \subseteq \mathbf O_2$.
Therefore the implication (iii)\,$\longrightarrow$\,(ii) holds.
\end{proof}

\section{The monoid $P_2^1$}
\label{sec: the monoid P_2^1}

\subsection{The variety $\mathbf P_2^1$ is \HFB}
\label{subsec: P_2^1 is HFB}

\begin{proposition}
\label{P: P_2^1}
The monoid variety $\mathbf P_2^1$ is \HFB.
\end{proposition}

\begin{proof}
Since the identities~\eqref{xsxt=xsxtx} and~\eqref{xsytxy=xsytyx} constitute an identity basis for the variety $\mathbf P_2^1$ \cite[Corollary~6.6]{Lee-Li-11}, the inclusion $\mathbf P_2^1\subseteq\mathbf O_1$ is easily deduced.
Then $\mathbf P_2^1$ is {\HFB} because $\mathbf O_1$ is {\HFB} by Corollary~\ref{C: HFB O_i}.
\end{proof}

\begin{lemma}[Lee and Zhang~\cite{Lee-Zhang-14}]
\label{L: except P_2^1}
Up to isomorphism and anti-isomorphism, every monoid of order five or less, with the possible exception of $P_2^1$, generates a small {\HFB} variety.
\end{lemma}

\begin{corollary}
\label{C: five or less}
The variety generated by any monoid of order five is {\HFB} and so contains at most countably many subvarieties.\qed
\end{corollary}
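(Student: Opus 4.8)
The plan is to obtain the claim as an immediate consequence of Corollary~\ref{C: P_2^1} and Lemma~\ref{L: except P_2^1}. First I would split the monoids of order five into two families. If $M$ is a monoid of order five that is neither isomorphic nor anti-isomorphic to $P_2^1$, then Lemma~\ref{L: except P_2^1} already states that $\var M$ is HFB (in fact with finitely many subvarieties), so nothing further is needed. If $M$ is isomorphic to $P_2^1$, then $\var M=\mathbf P_2^1$, which is HFB by Corollary~\ref{C: P_2^1}. The only slightly less automatic case is a monoid $M$ anti-isomorphic to $P_2^1$, for which $\var M$ is the dual of $\mathbf P_2^1$; here I would invoke the elementary fact that the dual of an HFB monoid variety is again HFB, which follows because the dual operation is order-preserving and involutive on monoid varieties and the dual of a finitely based variety is finitely based (reverse each word in a finite identity basis). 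Hence every monoid of order five generates an HFB variety.

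It then remains to pass from being HFB to having at most countably many subvarieties. I would argue that any HFB monoid variety $\mathbf W$ has a countable subvariety lattice $\mathfrak L(\mathbf W)$: since the alphabet $\mathfrak A$ is countable, the free monoid $\mathfrak A^\ast$ is countable, hence there are only countably many identities and only countably many finite sets of identities; as every subvariety of $\mathbf W$ is finitely based, assigning to a finite set of identities the subvariety of $\mathbf W$ it defines yields a surjection from a countable set onto $\mathfrak L(\mathbf W)$. Specialising to $\mathbf W=\var M$ for $M$ of order five completes the argument.

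I do not anticipate a genuine obstacle; the deduction is routine bookkeeping. The two points worth spelling out are that the ``up to isomorphism and anti-isomorphism'' proviso in the cited results costs nothing --- this is precisely the duality remark in the first paragraph --- and the cardinality estimate linking HFB-ness to countability of $\mathfrak L(\mathbf W)$; neither involves any computation.
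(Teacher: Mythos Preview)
Your proposal is correct and follows essentially the same approach as the paper: the paper's proof is simply the sentence ``Corollary~\ref{C: P_2^1} and Lemma~\ref{L: except P_2^1} imply'' immediately preceding the statement, with no further elaboration. Your version is more explicit---you spell out the anti-isomorphism case via duality and the passage from HFB to countability of the subvariety lattice---but these are exactly the routine details one would supply, and they do not represent a different argument.
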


In contrast, as we have mentioned in the introduction, there exist monoids of order six that generate varieties with continuum many subvarieties \cite{Jackson-Lee-18,Jackson-Zhang-21}.

\begin{remark}
Up to isomorphism and anti-isomorphism, every semigroup of order five or less that is different from $P_2^1$ generates a {\HFB} variety of semigroups~\cite{Lee-13}.
As observed in Remark~\ref{R: P2}, whether or not $P_2^1$ generates a {\HFB} variety of semigroups is presently an open question.
\end{remark}

\subsection{Subvarieties of $\mathbf P_2^1$}
\label{subsec: subvar P_2^1}

In this subsection, we describe the subvariety lattice of $\mathbf P_2^1$.
Recall from Subsection~\ref{subsec: critical pairs (1x,2y)} that
$$
\Phi = \{\eqref{xsxt=xsxtx},\,\eqref{xyxy=xxyy},\,\gamma_k,\,\delta_k^m,\,\varepsilon_{k-1} \mid k \in \mathbb N, \ 1 \le m \le k \}.
$$

\begin{proposition}
\label{P: subvar of P_2^1}
Each variety $\mathbf V$ from the interval $[\mathbf{LRB} \vee \mathbf F_1,\mathbf P_2^1]$ can be defined within $\mathbf P_2^1$ by identities from $\Phi$.
\end{proposition}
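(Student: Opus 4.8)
The plan is to obtain the statement from Corollary~\ref{C: HFB O_i} after checking that, inside $\mathbf P_2^1$, the ``extra'' identities collected in $\Psi_1$ are superfluous. First recall from Lemma~\ref{L: basis for P_2^1} that $\mathbf P_2^1$ is defined by \eqref{xsxt=xsxtx} and \eqref{xsytxy=xsytyx}, and (as noted right after that lemma) $\mathbf P_2^1\subseteq\mathbf O_1$. Hence $\mathbf V$ lies in the interval $[\mathbf{LRB}\vee\mathbf F_1,\mathbf O_1]$, so Corollary~\ref{C: HFB O_i} with $i=1$ applies and gives that the identity system $(\Phi\cup\Psi_1)(\mathbf V)$ is a basis for $\mathbf V$; in particular every identity of $\mathbf V$ is deducible from $(\Phi\cup\Psi_1)(\mathbf V)$.

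The crucial step is the claim that \emph{every identity in $\Psi_1$ is deducible from $\{\eqref{xsxt=xsxtx},\eqref{xsytxy=xsytyx}\}$}. The identity \eqref{xsxt=xsxtx} is itself a member of that set. Each of the remaining members of $\Psi_1$ — namely \eqref{xtyxy=xtyyx}, \eqref{xzyt xy z_infty z_infty z = xzyt yx z_infty z_infty z}, $\zeta_{k-1}$, $\lambda_k^m$, $\eta_{k-1}$, $\mu_k^m$, $\nu_{k-1}$ — is directly deducible from \eqref{xsytxy=xsytyx}. Indeed, inspection shows each such identity has the shape $\mathbf p\,xy\,\mathbf q\approx\mathbf p\,yx\,\mathbf q$ where $\mathbf p=x\,s\,y\,t$ with $s,t$ single letters (one takes $t$ to be the empty word in the case of \eqref{xtyxy=xtyyx}), the displayed occurrences of $x$ and $y$ in $\mathbf p$ are their first occurrences, and $\mathbf q$ is a word. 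Writing $\phi$ for the substitution sending the variables $x,s,y,t$ of \eqref{xsytxy=xsytyx} to the corresponding words, the two sides equal $\phi(xsytxy)\,\mathbf q$ and $\phi(xsytyx)\,\mathbf q$, so with $\mathbf a=\lambda$ and $\mathbf b=\mathbf q$ in the definition of direct deducibility the identity is directly deducible from \eqref{xsytxy=xsytyx}. (It is harmless here that $\phi(s)$ or $\phi(t)$ is a letter that recurs inside $\mathbf q$, and that $\phi(t)$ may be empty, since a substitution may send a letter to any element of $\mathfrak A^\ast$.) Consequently $\mathbf P_2^1$ satisfies all of $\Psi_1$, so $\Psi_1(\mathbf V)=\Psi_1$ and $(\Phi\cup\Psi_1)(\mathbf V)=\Phi(\mathbf V)\cup\Psi_1$.

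Combining the two steps: every identity of $\mathbf V$ is deducible from $\Phi(\mathbf V)\cup\Psi_1$, and every identity of $\Psi_1$ is deducible from $\{\eqref{xsxt=xsxtx},\eqref{xsytxy=xsytyx}\}$, which by Lemma~\ref{L: basis for P_2^1} are the defining identities of $\mathbf P_2^1$; hence every identity of $\mathbf V$ is deducible from $\Phi(\mathbf V)$ together with the identities defining $\mathbf P_2^1$. Since conversely $\mathbf V$ satisfies all identities in $\Phi(\mathbf V)$ and all identities defining $\mathbf P_2^1$, the variety $\mathbf V$ coincides with the subvariety of $\mathbf P_2^1$ determined by $\Phi(\mathbf V)\subseteq\Phi$, which is exactly the assertion.

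The only genuinely case-by-case work is verifying the ``$\mathbf p\,xy\,\mathbf q$ with $s,t$ single letters'' description for each of the five infinite families $\zeta_{k-1},\lambda_k^m,\eta_{k-1},\mu_k^m,\nu_{k-1}$: one must identify which letter plays the role of $s$ and which plays the role of $t$, and check the degenerate cases $k=1$ or $m=1$ in which $\mathbf b_{k-1}$ or $\mathbf b_{m-1}$ is empty. I expect this bookkeeping to be the main point requiring care, although it is entirely routine, since each of these identities was built precisely by transposing the two second occurrences of a pair of letters separated in the prescribed way on each of the two relevant sides — that is, in exactly the shape of \eqref{xsytxy=xsytyx}. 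The one mild subtlety worth stating is that the families $\eta_{k-1}$ and $\mu_k^m$, despite involving the extra recurring letter $y_{m+1}$ and superficially resembling a two-separator pattern, do match \eqref{xsytxy=xsytyx} directly, because in a direct deduction the images of the variables $s$ and $t$ are permitted to recur elsewhere in the word.
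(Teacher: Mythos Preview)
Your argument is correct. The route differs from the paper's: rather than invoking Corollary~\ref{C: HFB O_i} and then collapsing $\Psi_1$, the paper repeats the critical-pair analysis directly, observing in one line that any $(2,2)$ critical pair is $\{\eqref{xsytxy=xsytyx}\}$-removable and citing Proposition~\ref{P: 1st and 2nd occurrences} for the $\{1,2\}$ pairs, then applying Lemma~\ref{L: fblemma}. Your approach trades that short direct argument for the already-packaged Corollary~\ref{C: HFB O_i} plus the verification that every identity of $\Psi_1$ is a substitution instance of \eqref{xsytxy=xsytyx} followed by a suffix; this is a perfectly valid reorganisation and the substance is the same, since the $\Psi_1$ identities are precisely the tools used in the corollary to remove $(2,2)$ pairs. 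The paper's version is marginally shorter because it never needs to name the individual families in $\Psi_1$, while yours makes the dependence on Corollary~\ref{C: HFB O_i} explicit and so is more modular.
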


\begin{proof}
The inclusion $\mathbf F_1\subset \mathbf V$ and Lemma~\ref{L: word problem F_k} imply that any 2-limited identity of $\mathbf V$ is balanced.
In view of this fact and Lemma~\ref{L: 2-limited word}, $\mathbf V$ can be defined within $\mathbf P_2^1$ by a set of 2-limited balanced identities.

Let $\mathbf u \approx \mathbf v$ be a non-trivial 2-limited balanced identity of $\mathbf V$.
Then there exists a critical pair $\{{_i x}, {_j y}\}$ in $\mathbf u \approx \mathbf v$.
Let $\mathbf w$ denote the word obtained from $\mathbf u$ by replacing ${_{i\mathbf u} x}{_{j\mathbf u} y}$ with ${_{j\mathbf u} y}{_{i\mathbf u} x}$.
It follows from the inclusion $\mathbf{LRB} \subset \mathbf V$ and Lemma~\ref{L: word problem LRB} that $(i,j) \ne (1,1)$.
Then $(i,j) \in \{(1,2),(2,1),(2,2)\}$ because the identity $\mathbf u \approx \mathbf v$ is 2-limited.
If $(i,j)=(2,2)$, then the critical pair $\{{_i x}, {_j y}\}$ is $\{\eqref{xsytxy=xsytyx}\}$-removable.
Now let $\{i,j\}=\{1,2\}$.
Then the critical pair $\{{_i x}, {_j y}\}$ is $\Phi(\mathbf V)$-removable by Proposition~\ref{P: 1st and 2nd occurrences}.
We see that $\Phi(\mathbf V)\cup\{\eqref{xsytxy=xsytyx}\}$ implies $\mathbf u \approx \mathbf w$ in either case.
According to Lemma~\ref{L: fblemma}, $\mathbf u \approx \mathbf v$ is a consequence of $\Phi(\mathbf V)\cup\{\eqref{xsytxy=xsytyx}\}$ and, therefore, $\mathbf V$ is defined within $\mathbf P_2^1$ by $\Phi(\mathbf V)$.
\end{proof}

\begin{lemma}
\label{L: within P{gamma_{k+1}}}
For any $1 \le m\le k$, we have $\mathbf P\{\delta_k^m,\,\varepsilon_0\}\subseteq \mathbf P\{\gamma_{k+1}\}$.
\end{lemma}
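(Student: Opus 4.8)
The plan is to derive $\gamma_{k+1}$ directly from $\{\eqref{xsxt=xsxtx},\,\delta_k^m,\,\varepsilon_0\}$. First I would collect the tools the hypotheses supply. Since $1\le m\le k$, Lemma~\ref{L: gamma_k, delta_k^k,epsilon_k}(ii) gives $\mathbf P\{\delta_k^m\}\subseteq\mathbf P\{\delta_k^k\}$, and $\delta_k^k$ is equivalent to $\kappa_k$ within $\mathbf P$ by Lemma~\ref{L: kappa_k and delta_k^k}; hence $\mathbf V:=\mathbf P\{\delta_k^m,\varepsilon_0\}$ satisfies both $\delta_k^k$ (equivalently $\kappa_k$) and $\varepsilon_0$. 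The two removal instruments I will use are Lemma~\ref{L: u'abu''=u'bau''}(ii) applied with $\delta_k^k$ in the role of its $\delta_k^m$ (so it transposes a first occurrence of depth $k$ past a second occurrence of depth $k+1$), and Lemma~\ref{L: u'abu''=u'bau''}(iii) applied with $\varepsilon_0$ (parameter $0$, so it transposes a first occurrence of infinite depth past a second occurrence of depth $1$). The hypothesis $\varepsilon_0$ is used essentially at the last step, since from $\kappa_k$ alone one only obtains $\varepsilon_p$ for $p\ge k$ by Lemma~\ref{L: gamma_k, delta_k^k,epsilon_k}(ii),(iii).

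Write $\mathbf u=y_1y_0x_{k+1}y_1\mathbf b_{k+1}$ and $\mathbf v=y_1y_0y_1x_{k+1}\mathbf b_{k+1}$ for the two sides of $\gamma_{k+1}$, and recall $\mathbf b_{k+1}=x_kx_{k+1}\mathbf b_k$. A routine computation using Lemma~\ref{L: k-divider and depth} shows $D(\mathbf u,x_{k+1})=D(\mathbf v,x_{k+1})=k+1$, $D(\mathbf u,x_k)=D(\mathbf v,x_k)=k$ and $D(\mathbf u,y_1)=1$. In $\mathbf u$ the initial block $x_kx_{k+1}$ of $\mathbf b_{k+1}$ consists of the first occurrence of $x_k$ followed by the second occurrence of $x_{k+1}$, and the prefix $y_1y_0x_{k+1}y_1$ preceding it contains $x_{k+1}$ as a simple letter and does not contain $x_k$; so Lemma~\ref{L: u'abu''=u'bau''}(ii) with $\delta_k^k$ lets $\mathbf V$ transpose them, giving $\mathbf u\approx\mathbf u':=y_1y_0x_{k+1}y_1x_{k+1}x_k\mathbf b_k$. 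The identical transposition applied to $\mathbf v$ yields $\mathbf v\approx\mathbf v':=y_1y_0y_1x_{k+1}x_{k+1}x_k\mathbf b_k$.

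The point of this preliminary step is the key observation about $\mathbf u'$: the two occurrences of $x_{k+1}$ in $\mathbf u'$ are now separated only by the second occurrence of $y_1$, which, being a non-first occurrence, is a $j$-divider of $\mathbf u'$ for no $j$; hence $D(\mathbf u',x_{k+1})=\infty$, whereas $D(\mathbf u',y_1)$ is still $1$ because the simple letter $y_0$ lies between the two occurrences of $y_1$. Consequently the remaining transposition in $\mathbf u'$ — moving the first occurrence of $x_{k+1}$ past the second occurrence of $y_1$ — is exactly the configuration governed by Lemma~\ref{L: u'abu''=u'bau''}(iii) with $\varepsilon_0$ ($y_1$ being simple in the prefix $y_1y_0$ and $x_{k+1}$ not occurring there), and applying it gives $\mathbf u'\approx y_1y_0y_1x_{k+1}x_{k+1}x_k\mathbf b_k=\mathbf v'$. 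Chaining the steps, $\mathbf u\approx\mathbf u'\approx\mathbf v'\approx\mathbf v$, so $\mathbf V$ satisfies $\gamma_{k+1}$, which is the assertion.

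The main obstacle is that the transposition prescribed by $\gamma_{k+1}$ itself — a depth-$(k+1)$ first occurrence past a depth-$1$ second occurrence — is not directly covered by any single one of the removal lemmas (their admissible depth pairs are $(k,1)$, $(k,m+1)$, $(\infty,k+1)$ and $(\infty,\infty)$, but not $(k+1,1)$). The way around this is the first step: the harmless transposition inside $\mathbf b_{k+1}$, justified by $\kappa_k$, pushes $x_{k+1}$ to infinite depth, after which $\varepsilon_0$ becomes applicable. The only part requiring careful verification is the depth bookkeeping — the equalities $D(\mathbf u,x_{k+1})=k+1$, $D(\mathbf u,x_k)=k$, $D(\mathbf u',x_{k+1})=\infty$ and their counterparts for $\mathbf v,\mathbf v'$ — all of which are immediate from the shape of $\mathbf b_{k+1}$ and Lemma~\ref{L: k-divider and depth} once one recalls that only first occurrences of letters can be dividers.
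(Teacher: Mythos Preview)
Your proof is correct and traces the same derivation chain as the paper: both arguments pass through the intermediate words $y_1y_0x_{k+1}y_1x_{k+1}x_k\mathbf b_k$ and $y_1y_0y_1x_{k+1}^2x_k\mathbf b_k$, using (a consequence of) $\delta_k^m$ for the ${}_1x_k\leftrightarrow{}_2x_{k+1}$ swap and $\varepsilon_0$ for the ${}_1x_{k+1}\leftrightarrow{}_2y_1$ swap. The only difference is cosmetic: the paper applies $\kappa_k$, $\varepsilon_0$ and $\delta_k^m$ by explicit substitution (inserting extra occurrences via~\eqref{xsxt=xsxtx} where needed), whereas you package the same two moves through Lemma~\ref{L: u'abu''=u'bau''}(ii),(iii) after first passing from $\delta_k^m$ to $\delta_k^k$.
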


\begin{proof}
By Lemmas~\ref{L: kappa_k and delta_k^k} and~\ref{L: gamma_k, delta_k^k,epsilon_k}(ii), the identity $\kappa_k$ is satisfied by $\mathbf P\{\delta_k^m,\,\varepsilon_0\}$.
Then the identity $\gamma_{k+1}$ holds in $\mathbf P\{\delta_k^m,\,\varepsilon_0\}$ because
$$
\begin{aligned}
&y_1y_0y_1x_{k+1}x_kx_{k+1}\mathbf b_k\stackrel{\kappa_k}\approx y_1y_0y_1x_{k+1}^2x_k\mathbf b_k \stackrel{\varepsilon_0}\approx
y_1y_0x_{k+1}y_1x_{k+1}x_k\mathbf b_k\\
\stackrel{\eqref{xsxt=xsxtx}}\approx
&y_1y_0x_{k+1}y_1x_{k+1}x_k\mathbf b_{k,m}y_1\mathbf b_{m-1}
\stackrel{\delta_k^m}\approx
y_1y_0x_{k+1}y_1x_kx_{k+1}\mathbf b_{k,m}y_1\mathbf b_{m-1}\\\stackrel{(1.1)}\approx
&y_1y_0x_{k+1}y_1x_kx_{k+1}\mathbf b_k.
\end{aligned}
$$
The lemma is thus proved.
\end{proof}

\begin{lemma}
\label{L: within P{delta_{k+1}^m}}
For any $1\le m \le \ell \le k$, we have $\mathbf P\{\delta_k^\ell,\,\varepsilon_m\}\subseteq \mathbf P\{\delta_{k+1}^m\}$.
\end{lemma}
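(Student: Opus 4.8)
The plan is to imitate the proof of Lemma~\ref{L: within P{gamma_{k+1}}}, so I would first collect the consequences of the two hypotheses that the derivation needs. Since $\ell\le k$, Lemma~\ref{L: gamma_k, delta_k^k,epsilon_k}(ii) gives $\mathbf P\{\delta_k^\ell\}\subseteq\mathbf P\{\delta_k^k\}$, and $\delta_k^k$ is equivalent to $\kappa_k$ within $\mathbf P$ by Lemma~\ref{L: kappa_k and delta_k^k}; hence $\kappa_k$ holds in $\mathbf P\{\delta_k^\ell,\varepsilon_m\}$. The same part of Lemma~\ref{L: gamma_k, delta_k^k,epsilon_k} also shows that $\delta_k^j$ holds for every $\ell\le j\le k$, while part~(iii) shows that $\varepsilon_j$ holds for every $j\ge m$. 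Thus it will suffice to deduce $\delta_{k+1}^m$ from the set $\{\eqref{xsxt=xsxtx},\,\kappa_k,\,\varepsilon_m,\,\delta_k^\ell\}$ and then invoke Proposition~\ref{P: deduction}.

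For the deduction itself I would proceed as in the chain of Lemma~\ref{L: within P{gamma_{k+1}}}. Recall that $\mathbf b_{k+1,m}=x_kx_{k+1}\mathbf b_{k,m}$, so that $\delta_{k+1}^m$ reads
\[
y_{m+1}y_mx_{k+1}y_{m+1}\,x_kx_{k+1}\mathbf b_{k,m}\,y_m\mathbf b_{m-1}\approx y_{m+1}y_my_{m+1}x_{k+1}\,x_kx_{k+1}\mathbf b_{k,m}\,y_m\mathbf b_{m-1}.
\]
Starting from the right-hand side, I would (1) use~\eqref{xsxt=xsxtx} to insert the auxiliary occurrences of $x_{k+1}$ and of the letters of $\mathbf b_{k,m}$ needed so that a suitably substituted instance of $\kappa_k$ applies over the truncated suffix $\mathbf b_{k,m}y_m\mathbf b_{m-1}$, thereby collapsing $x_{k+1}x_kx_{k+1}$ into $x_{k+1}^2x_k$ (this kind of bookkeeping is exactly what is carried out in the proof of Lemma~\ref{L: kappa_k and delta_k^k}); (2) apply a substituted instance of $\varepsilon_m$, sending the letter playing the role of ``$x$'' in $\varepsilon_m$ to $x_{k+1}$ and ``$x_m$'' to $y_m$, in order to split $y_{m+1}^2x_{k+1}^2$ and pull the second $y_{m+1}$ in front of the first $x_{k+1}$ --- this is the step that lowers the depth of the configuration back from $k$ to $m$, and it is the reason $\varepsilon_m$ (rather than $\varepsilon_0$) is what must be assumed; (3) apply a substituted instance of $\delta_k^\ell$ to transpose $x_{k+1}x_k$ into $x_kx_{k+1}$; (4) strip the auxiliary occurrences again by~\eqref{xsxt=xsxtx}, arriving at the left-hand side.

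I expect step~(3) to be the main obstacle. The hypothesis $\delta_k^\ell$ carries a suffix of depth $\ell$, namely $\mathbf b_{k,\ell}y_\ell\mathbf b_{\ell-1}$, whereas the target $\delta_{k+1}^m$ has the suffix $\mathbf b_{k+1,m}y_m\mathbf b_{m-1}$ of depth $m$; the substitution used when applying $\delta_k^\ell$ must therefore be chosen so that, after first duplicating the relevant $x_i$'s by~\eqref{xsxt=xsxtx}, the transposition takes place over a suffix that can subsequently be rewritten --- once more by~\eqref{xsxt=xsxtx} (together with the available identities $\varepsilon_m,\dots,\varepsilon_\ell$) --- to $\mathbf b_{k+1,m}y_m\mathbf b_{m-1}$. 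The condition $m\le\ell$ is exactly what makes this reconciliation of suffixes possible: the $m$-boundary of $\mathbf b_k$ lies to the right of its $\ell$-boundary, so the extra occurrence of $y_m$ demanded by the target can be created and then absorbed inside the region of the word occupied by the image of the suffix $\mathbf b_{k,\ell}y_\ell\mathbf b_{\ell-1}$ of the applied copy of $\delta_k^\ell$. Once this matching has been worked out, the remaining manipulations are routine and of the same nature as those in the proof of Lemma~\ref{L: within P{gamma_{k+1}}}.
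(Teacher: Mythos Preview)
Your plan matches the paper's proof almost exactly: first pass to $\kappa_k$ via Lemmas~\ref{L: kappa_k and delta_k^k} and~\ref{L: gamma_k, delta_k^k,epsilon_k}(ii), then build a chain
\[
\text{RHS of }\delta_{k+1}^m \;\stackrel{\eqref{xsxt=xsxtx}}{\approx}\; \cdots \;\stackrel{\kappa_k}{\approx}\; \cdots \;\stackrel{\eqref{xsxt=xsxtx}}{\approx}\; \cdots \;\stackrel{\varepsilon_m}{\approx}\; \cdots \;\stackrel{\eqref{xsxt=xsxtx}}{\approx}\; \cdots \;\stackrel{\delta_k^{?}}{\approx}\; \cdots \;\stackrel{\eqref{xsxt=xsxtx}}{\approx}\; \text{LHS of }\delta_{k+1}^m .
\]
The only substantive divergence is in your step~(3): you propose to apply $\delta_k^\ell$, and then spend a paragraph worrying about the resulting mismatch between the depth-$\ell$ suffix $\mathbf b_{k,\ell}y_\ell\mathbf b_{\ell-1}$ and the depth-$m$ suffix $\mathbf b_{k,m}y_m\mathbf b_{m-1}$ of the target. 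The paper simply avoids this obstacle by applying $\delta_k^k$ instead --- an identity you yourself note is available as a consequence of $\delta_k^\ell$. Under the substitution $y_{k+1}\mapsto x_{k+1}$, $y_k\mapsto y_{m+1}$ (and $x_{m-2}\mapsto y_mx_{m-2}$ to handle the extra $y_m$), the suffix of $\delta_k^k$ becomes $\mathbf b_{k,k}y_{m+1}\mathbf b_{k-1,m}\mathbf d_1$, which after an \eqref{xsxt=xsxtx}-step coincides with $\mathbf b_{k,m}y_m\mathbf b_{m-1}$; no delicate matching of the $\ell$- and $m$-boundaries is needed. In particular, your claim that ``$m\le\ell$ is exactly what makes this reconciliation possible'' is off the mark: in the paper's argument $\ell$ plays no role beyond giving access to $\delta_k^k$, and only $m\le k$ is used in the chain itself. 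So your outline is correct, but you have made step~(3) harder than necessary; swapping $\delta_k^\ell$ for $\delta_k^k$ there turns your sketch into the paper's proof verbatim.
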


\begin{proof}
For convenience, let $\mathbf b_{p-1,p}=\lambda$ for any $p\in\mathbb N$.
We notice that the identity $\kappa_k$ is satisfied by $\mathbf P\{\delta_k^k,\,\varepsilon_m\}$ by Lemmas~\ref{L: kappa_k and delta_k^k} and~\ref{L: gamma_k, delta_k^k,epsilon_k}(ii).
Then the identity $\delta_{k+1}^m$ holds in $\mathbf P\{\delta_k^k,\,\varepsilon_m\}$ because
$$
\begin{array}{lcl}
&&y_{m+1}y_my_{m+1}x_{k+1}x_kx_{k+1}\mathbf b_{k,m}y_m\mathbf b_{m-1}\\
&\stackrel{\eqref{xsxt=xsxtx}}\approx&
y_{m+1}y_my_{m+1}x_{k+1}x_kx_{k+1}\mathbf b_{k,m} \mathbf d_1\\
&\stackrel{\kappa_k}\approx&
y_{m+1}y_my_{m+1}x_{k+1}^2x_k\mathbf b_{k,m} \mathbf d_1\\
&\stackrel{\eqref{xsxt=xsxtx}}\approx&
y_{m+1}y_my_{m+1}x_{k+1}^2(x_k\mathbf b_{k,m}) \mathbf d_2\\
&\stackrel{\varepsilon_m}\approx&
y_{m+1}y_mx_{k+1}y_{m+1}x_{k+1}(x_k\mathbf b_{k,m}) \mathbf d_2\\
&\stackrel{\eqref{xsxt=xsxtx}}\approx&
y_{m+1}y_mx_{k+1}y_{m+1}x_{k+1}x_k\mathbf b_{k,k}y_{m+1}\mathbf b_{k-1,m} \mathbf d_1\\
&\stackrel{\delta_k^k}\approx&
y_{m+1}y_mx_{k+1}y_{m+1}x_kx_{k+1}\mathbf b_{k,k}y_{m+1}\mathbf b_{k-1,m} \mathbf d_1\\
&\stackrel{\eqref{xsxt=xsxtx}}\approx&
y_{m+1}y_mx_{k+1}y_{m+1}x_kx_{k+1}\mathbf b_{k,m}y_m\mathbf b_{m-1},
\end{array}
$$
where
$$
\begin{aligned}
&\mathbf d_1 =
\begin{cases}
y_m&\text{if }m=1,\\
(y_mx_{m-2})x_{m-1}&\text{if }m=2,\\
(y_mx_{m-2})x_{m-1}x_{m-3}(y_mx_{m-2})\mathbf b_{m-3}&\text{if }m\ge3,
\end{cases}
\\
&\mathbf d_2 =
\begin{cases}
y_m&\text{if }m=1,\\
x_{m-2}(x_k\mathbf b_{k,m})\mathbf b_{m-2}&\text{if }m\ge2.
\end{cases}
\end{aligned}
$$
To complete the proof, it remains to notice that $\mathbf P\{\delta_k^\ell,\,\varepsilon_m\}\subseteq\mathbf P\{\delta_k^k,\,\varepsilon_m\}$ by Lemma~\ref{L: gamma_k, delta_k^k,epsilon_k}(ii).
\end{proof}

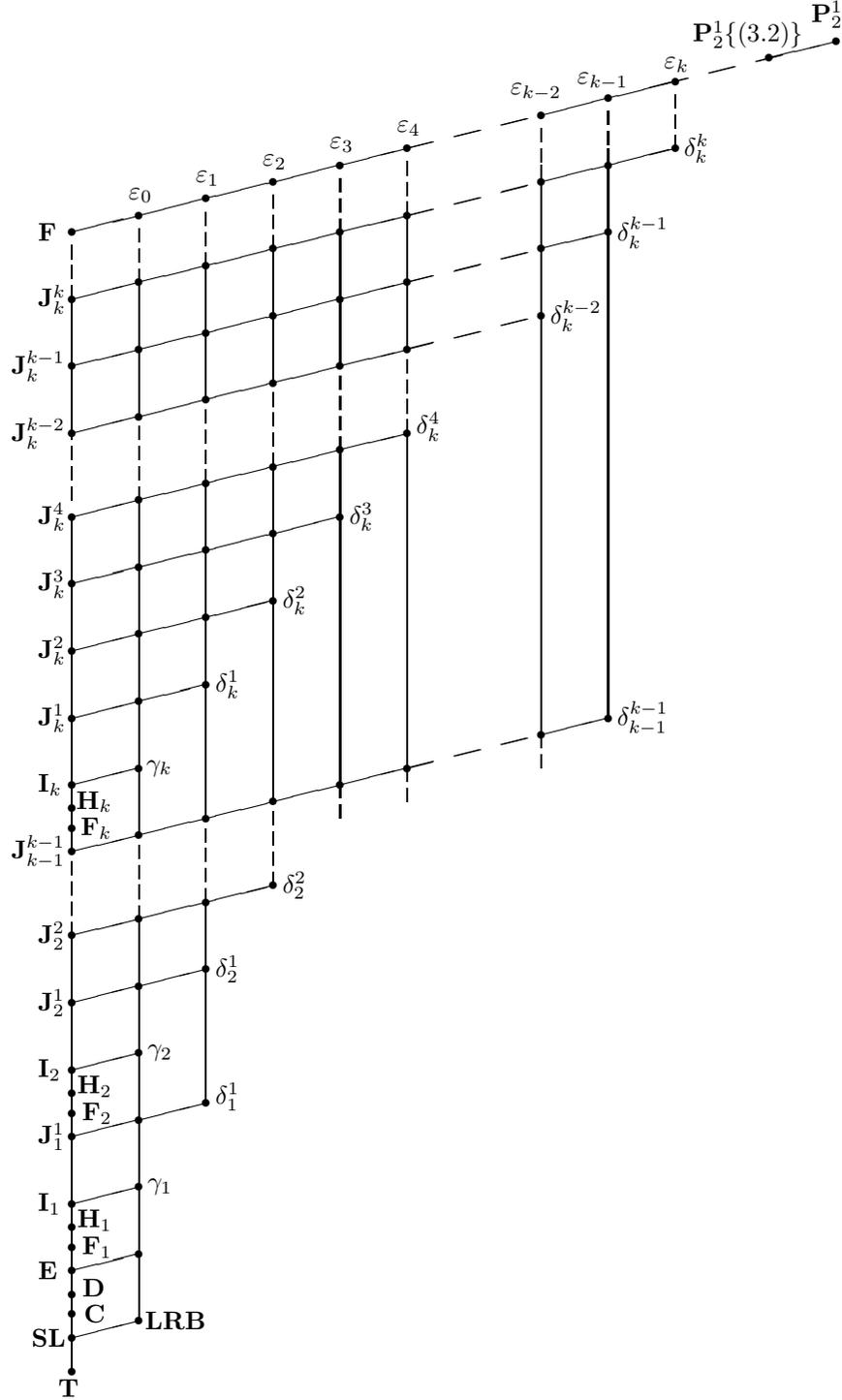
\begin{figure}
	\begin{center}
		\begin{picture}(125,200)
		\setlength{\unitlength}{0.92mm}
		\put(10,7.5){\circle*{1}}\put(10,2.5){\circle*{1}}%
        \put(20,10){\circle*{1}}\put(20,20){\circle*{1}}
        \put(20,30){\circle*{1}}\put(20,40){\circle*{1}}
        \put(30,42.5){\circle*{1}}%
        \put(20,50){\circle*{1}}\put(20,60){\circle*{1}}
        \put(30,62.5){\circle*{1}}%
        \put(20,70){\circle*{1}}\put(30,72.5){\circle*{1}}
        \put(40,75){\circle*{1}}
        \put(20,82.5){\circle*{1}}\put(30,85){\circle*{1}}
        \put(40,87.5){\circle*{1}}\put(50,90){\circle*{1}}
        \put(60,92.5){\circle*{1}}\put(80,97.5){\circle*{1}}
        \put(90,100){\circle*{1}}%
        \put(20,92.5){\circle*{1}}\put(20,102.5){\circle*{1}}
        \put(30,105){\circle*{1}}%
        \put(20,112.5){\circle*{1}}\put(30,115){\circle*{1}}
        \put(40,117.5){\circle*{1}}%
        \put(20,122.5){\circle*{1}}\put(30,125){\circle*{1}}
        \put(40,127.5){\circle*{1}}\put(50,130){\circle*{1}}%
        \put(20,132.5){\circle*{1}}\put(30,135){\circle*{1}}
        \put(40,137.5){\circle*{1}}\put(50,140){\circle*{1}}
        \put(60,142.5){\circle*{1}}%
        \put(20,145){\circle*{1}}\put(30,147.5){\circle*{1}}
        \put(40,150){\circle*{1}}\put(50,152.5){\circle*{1}}
        \put(60,155){\circle*{1}}\put(80,160){\circle*{1}}%
        \put(20,155){\circle*{1}}\put(30,157.5){\circle*{1}}
        \put(40,160){\circle*{1}}\put(50,162.5){\circle*{1}}
        \put(60,165){\circle*{1}}\put(80,170){\circle*{1}}
        \put(90,172.5){\circle*{1}}%
        \put(20,165){\circle*{1}}\put(30,167.5){\circle*{1}}
        \put(40,170){\circle*{1}}\put(50,172.5){\circle*{1}}
        \put(60,175){\circle*{1}}\put(80,180){\circle*{1}}
        \put(90,182.5){\circle*{1}}\put(100,185){\circle*{1}}%
        \put(20,175){\circle*{1}}\put(30,177.5){\circle*{1}}
        \put(40,180){\circle*{1}}\put(50,182.5){\circle*{1}}
        \put(60,185){\circle*{1}}\put(80,190){\circle*{1}}
        \put(90,192.5){\circle*{1}}\put(100,195){\circle*{1}}
        \put(114,198.5){\circle*{1}}\put(124,201){\circle*{1}}%
        \put(10,17.5){\circle*{1}}\put(10,27.5){\circle*{1}}
        \put(10,37.5){\circle*{1}}\put(10,47.5){\circle*{1}}
        \put(10,57.5){\circle*{1}}\put(10,67.5){\circle*{1}}
        \put(10,80){\circle*{1}}\put(10,90){\circle*{1}}
        \put(10,100){\circle*{1}}\put(10,110){\circle*{1}}
        \put(10,120){\circle*{1}}\put(10,130){\circle*{1}}
        \put(10,142.5){\circle*{1}}\put(10,152.5){\circle*{1}}
        \put(10,162.5){\circle*{1}}\put(10,172.5){\circle*{1}}%
        \put(10,11){\circle*{1}}\put(10,14){\circle*{1}}
        \put(10,21){\circle*{1}}\put(10,24){\circle*{1}}
        \put(10,41){\circle*{1}}\put(10,44){\circle*{1}}
        \put(10,83.5){\circle*{1}}\put(10,86.5){\circle*{1}}

        \put(20,10){\line(0,1){60}}\put(20,40){\line(4,1){10}}
        \put(20,60){\line(4,1){10}}\put(20,70){\line(4,1){20}}
        \put(30,42.5){\line(0,1){30}}%
        \put(20,82.5){\line(4,1){40}}\put(20,102.5){\line(4,1){10}}
        \put(20,112.5){\line(4,1){20}}\put(20,122.5){\line(4,1){30}}
        \put(20,132.5){\line(4,1){40}}\put(80,97.5){\line(4,1){10}}
        \put(20,82.5){\line(0,1){50}}\put(30,85){\line(0,1){50}}
        \put(40,87.5){\line(0,1){50}}\put(50,90){\line(0,1){50}}
        \put(60,92.5){\line(0,1){50}}\put(80,97.5){\line(0,1){85}}
        \put(90,100){\line(0,1){85}}%

        \put(20,145){\line(4,1){40}}\put(20,155){\line(4,1){40}}
        \put(20,165){\line(4,1){40}}
        \put(20,175){\line(4,1){40}}\put(80,170){\line(4,1){10}}
        \put(80,180){\line(4,1){20}}\put(80,190){\line(4,1){20}}
        \put(114,198.5){\line(4,1){10}}\put(20,145){\line(0,1){20}}
        \put(30,147.5){\line(0,1){20}}
        \put(40,150){\line(0,1){20}}
        \put(50,152.5){\line(0,1){20}}\put(60,155){\line(0,1){20}}%

        \put(10,2.5){\line(0,1){65}}\put(10,80){\line(0,1){50}}
        \put(10,142.5){\line(0,1){20}}
        \put(10,7.5){\line(4,1){10}}
        \put(10,17.5){\line(4,1){10}}\put(10,27.5){\line(4,1){10}}
        \put(10,37.5){\line(4,1){10}}\put(10,47.5){\line(4,1){10}}
        \put(10,57.5){\line(4,1){10}}\put(10,67.5){\line(4,1){10}}
        \put(10,80){\line(4,1){10}}\put(10,90){\line(4,1){10}}
        \put(10,100){\line(4,1){10}}
        \put(10,110){\line(4,1){10}}
        \put(10,120){\line(4,1){10}}\put(10,130){\line(4,1){10}}
        \put(10,142.5){\line(4,1){10}}\put(10,152.5){\line(4,1){10}}
        \put(10,162.5){\line(4,1){10}}\put(10,172.5){\line(4,1){10}}%
        \put(60,92.5){\line(4,1){4}}\put(67,94.3){\line(4,1){4}}
        \put(74,96){\line(4,1){4}}%

        \put(60,155){\line(4,1){4}}\put(67,156.7){\line(4,1){4}}
        \put(74,158.5){\line(4,1){4}}%
        \put(60,165){\line(4,1){4}}\put(67,166.7){\line(4,1){4}}
        \put(74,168.5){\line(4,1){4}}%
        \put(60,175){\line(4,1){4}}\put(67,176.7){\line(4,1){4}}
        \put(74,178.5){\line(4,1){4}}%
        \put(60,185){\line(4,1){4}}\put(67,186.7){\line(4,1){4}}
        \put(74,188.5){\line(4,1){4}}%
        \put(98,194.5){\line(4,1){4}}
        \put(105,196.2){\line(4,1){4}}
        \put(112,198){\line(4,1){4}}%

        \multiput(20,70)(0,3){4}{\line(0,1){2}}
        \multiput(30,72.5)(0,3){4}{\line(0,1){2}}
        \multiput(40,75)(0,3){4}{\line(0,1){2}}
        \multiput(50,85)(0,3){2}{\line(0,1){2}}
        \multiput(60,87.5)(0,3){2}{\line(0,1){2}}
        \multiput(80,92.5)(0,3){2}{\line(0,1){2}}%
        \multiput(20,132.5)(0,3){4}{\line(0,1){2}}
        \multiput(30,135)(0,3){4}{\line(0,1){2}}
        \multiput(40,137.5)(0,3){4}{\line(0,1){2}}
        \multiput(50,140)(0,3){4}{\line(0,1){2}}
        \multiput(60,142.5)(0,3){4}{\line(0,1){2}}
        \multiput(20,165)(0,3){3}{\line(0,1){2}}
        \multiput(30,167.5)(0,3){3}{\line(0,1){2}}
        \multiput(40,170)(0,3){3}{\line(0,1){2}}
        \multiput(50,172.5)(0,3){3}{\line(0,1){2}}
        \multiput(60,175)(0,3){3}{\line(0,1){2}}
        \multiput(80,180)(0,3){3}{\line(0,1){2}}
        \multiput(90,182.5)(0,3){3}{\line(0,1){2}}
        \multiput(100,185)(0,3){3}{\line(0,1){2}}%
        \multiput(10,162.5)(0,3){3}{\line(0,1){2}}
        \multiput(10,132.5)(0,3){4}{\line(0,1){2}}
        \multiput(10,67.5)(0,3){4}{\line(0,1){2}}

        \put(30,10){\makebox(0,0)[r]{$\mathbf{LRB}$}}
		\put(25,30){\makebox(0,0)[r]{$\gamma_1$}}
        \put(25,50){\makebox(0,0)[r]{$\gamma_2$}}
        \put(25,92.5){\makebox(0,0)[r]{$\gamma_k$}}%
        \put(22,178){\makebox(0,0)[r]{$\varepsilon_0$}}
        \put(32,180.5){\makebox(0,0)[r]{$\varepsilon_1$}}
        \put(42,183){\makebox(0,0)[r]{$\varepsilon_2$}}
        \put(52,185.5){\makebox(0,0)[r]{$\varepsilon_3$}}
        \put(62,188){\makebox(0,0)[r]{$\varepsilon_4$}}
        \put(83,193.5){\makebox(0,0)[r]{$\varepsilon_{k-2}$}}
        \put(93,195){\makebox(0,0)[r]{$\varepsilon_{k-1}$}}
        \put(102,197.5){\makebox(0,0)[r]{$\varepsilon_k$}}
        \put(119,202){\makebox(0,0)[r]{$\mathbf P_2^1\{\eqref{xyxy=xxyy}\}$}}
        \put(125,205){\makebox(0,0)[r]{$\mathbf P_2^1$}}%
        \put(35,43.5){\makebox(0,0)[r]{$\delta_1^1$}}
        \put(35,62.5){\makebox(0,0)[r]{$\delta_2^1$}}
        \put(35,105){\makebox(0,0)[r]{$\delta_k^1$}}
        \put(45,75){\makebox(0,0)[r]{$\delta_2^2$}}
        \put(45,117.5){\makebox(0,0)[r]{$\delta_k^2$}}
        \put(55,130){\makebox(0,0)[r]{$\delta_k^3$}}
        \put(65,143.5){\makebox(0,0)[r]{$\delta_k^4$}}
        \put(99,100){\makebox(0,0)[r]{$\delta_{k-1}^{k-1}$}}
        \put(89,160){\makebox(0,0)[r]{$\delta_{k}^{k-2}$}}
        \put(99,172.5){\makebox(0,0)[r]{$\delta_{k}^{k-1}$}}
        \put(105,185){\makebox(0,0)[r]{$\delta_{k}^{k}$}}%
        \put(8,0){\makebox(0,0)[l]{$\mathbf T$}}
        \put(4,7.5){\makebox(0,0)[l]{$\mathbf{SL}$}}
        \put(5,17.5){\makebox(0,0)[l]{$\mathbf E$}}
        \put(5,27.5){\makebox(0,0)[l]{$\mathbf I_1$}}
        \put(5,37.5){\makebox(0,0)[l]{$\mathbf J_1^1$}}
        \put(5,47.5){\makebox(0,0)[l]{$\mathbf I_2$}}
        \put(5,57.5){\makebox(0,0)[l]{$\mathbf J_2^1$}}
        \put(5,67.5){\makebox(0,0)[l]{$\mathbf J_2^2$}}
        \put(1,80){\makebox(0,0)[l]{$\mathbf J_{k-1}^{k-1}$}}
        \put(5,90){\makebox(0,0)[l]{$\mathbf I_k$}}
        \put(5,100){\makebox(0,0)[l]{$\mathbf J_k^1$}}
        \put(5,110){\makebox(0,0)[l]{$\mathbf J_k^2$}}
        \put(5,120){\makebox(0,0)[l]{$\mathbf J_k^3$}}
        \put(5,130){\makebox(0,0)[l]{$\mathbf J_k^4$}}
        \put(1,142.5){\makebox(0,0)[l]{$\mathbf J_{k}^{k-2}$}}
        \put(1,152.5){\makebox(0,0)[l]{$\mathbf J_{k}^{k-1}$}}
        \put(5,162.5){\makebox(0,0)[l]{$\mathbf J_{k}^{k}$}}
        \put(5,172.5){\makebox(0,0)[l]{$\mathbf F$}}%
        \put(15,11){\makebox(0,0)[r]{$\mathbf C$}}
        \put(15,15){\makebox(0,0)[r]{$\mathbf D$}}
        \put(16,21){\makebox(0,0)[r]{$\mathbf F_1$}}
        \put(16,25){\makebox(0,0)[r]{$\mathbf H_1$}}
        \put(16,41){\makebox(0,0)[r]{$\mathbf F_2$}}
        \put(16,45){\makebox(0,0)[r]{$\mathbf H_2$}}
        \put(16,83.5){\makebox(0,0)[r]{$\mathbf F_k$}}
        \put(16,87.5){\makebox(0,0)[r]{$\mathbf H_k$}}
		\end{picture}
	\end{center}
\caption{The lattice $\mathfrak L(\mathbf P_2^1)$}
\label{L(P_2^1)}
\end{figure}

\begin{proposition}
\label{P: L(P_2^1)}
The lattice $\mathfrak L(\mathbf P_2^1)$ is as shown in Fig.~\ref{L(P_2^1)}.\footnote{For convenience, some varieties in Fig.~\ref{L(P_2^1)} are marked by the identities, which define them within the variety $\mathbf P_2^1$.
For instance, the identity $\gamma_1$ marks the variety $\mathbf P_2^1\{\gamma_1\}$.}
\end{proposition}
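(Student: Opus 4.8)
The plan is to assemble $\mathfrak L(\mathbf P_2^1)$ from three already-understood pieces, glued along the interval $I:=[\mathbf{LRB}\vee\mathbf F_1,\mathbf P_2^1]$. First I would dispose of the subvarieties of $\mathbf P_2^1$ lying outside $I$: if $\mathbf V\subseteq\mathbf P_2^1$ does not contain $\mathbf{LRB}\vee\mathbf F_1$, then by Lemma~\ref{L: does not contain LRB,F_1} either $\mathbf{LRB}\nsubseteq\mathbf V$, so $\mathbf V\subseteq\mathbf F$, or $\mathbf F_1\nsubseteq\mathbf V$, so $\mathbf V\subseteq\mathbf{LRB}\vee\mathbf C$. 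The lattice $\mathfrak L(\mathbf F)$ is the chain of Lemma~\ref{L: L(F)} (in which $\mathbf F_k,\mathbf H_k,\mathbf I_k,\mathbf J_k^m$ occur as $\mathbf F\{\alpha_k\},\mathbf F\{\beta_k\},\mathbf F\{\gamma_k\},\mathbf F\{\delta_k^m\}$), and $\mathfrak L(\mathbf{LRB}\vee\mathbf C)$ is computed in~\cite[Proposition~4.1]{Lee-12b}; together they account for the part of Figure~\ref{L(P_2^1)} at or below $\mathbf{LRB}\vee\mathbf F_1$, so everything reduces to describing $I$.

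Inside $I$, Proposition~\ref{P: subvar of P_2^1} says that every $\mathbf V\in I$ has the form $\mathbf P_2^1\Sigma$ for some $\Sigma\subseteq\Phi$. Since~\eqref{xsxt=xsxtx} holds in $\mathbf P_2^1$ and, by Lemma~\ref{L: gamma_k, delta_k^k,epsilon_k}, each of $\gamma_k,\delta_k^m,\varepsilon_{k-1}$ implies~\eqref{xyxy=xxyy} within $\mathbf P$, we obtain $\mathbf V=\mathbf P_2^1$ when $\Sigma=\varnothing$, and otherwise $\mathbf V\subseteq\mathbf P_2^1\{\eqref{xyxy=xxyy}\}$ with $\Sigma$ taken inside $\{\gamma_k,\delta_k^m,\varepsilon_{k-1}\mid k\in\mathbb N,\ 1\le m\le k\}$. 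I would then identify the marked nodes of the figure with $\mathbf P_2^1$, $\mathbf P_2^1\{\eqref{xyxy=xxyy}\}$ and the ``atomic'' varieties $\mathbf P_2^1\{\gamma_k\}$, $\mathbf P_2^1\{\delta_k^m\}$, $\mathbf P_2^1\{\varepsilon_{k-1}\}$, and the remaining ``rung'' nodes with the varieties $\mathbf{LRB}\vee X$, $X\in\mathfrak L(\mathbf F)$; the latter lie in $I$, hence are of the form $\mathbf P_2^1\Sigma$ by Proposition~\ref{P: subvar of P_2^1}, and are pinned down by a short identity-basis computation using the inclusions $\mathbf F\{\sigma\}\subseteq\mathbf P_2^1\{\sigma\}$ and the word problem for $\mathbf{LRB}$ (Lemma~\ref{L: word problem LRB}). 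The covering relations among all these nodes come directly from Lemmas~\ref{L: L(F)},~\ref{L: gamma_k, delta_k^k,epsilon_k},~\ref{L: within P{gamma_{k+1}}} and~\ref{L: within P{delta_{k+1}^m}} --- the last two being exactly what fixes the position of a $\gamma$-node relative to a $\delta$-node and of the $\varepsilon$-chain relative to the $\delta$-grid --- while strictness of the inclusions and the absence of any extra inclusion are witnessed by the separating objects already produced in the proofs of those lemmas ($\mathbf I_{k+1}\nsubseteq\mathbf P\{\gamma_k\}$, $\mathbf J_k^1\nsubseteq\mathbf P\{\gamma_k\}$, $\mathbf J_{k+1}^m\nsubseteq\mathbf P\{\delta_k^m\}$, $\mathbf J_k^{m+1}\nsubseteq\mathbf P\{\delta_k^m\}$, and $\mathbf P\{\varepsilon_k\}$ violating $\varepsilon_{k-1}$), together with the fact that $\mathbf P_2^1$ itself does not satisfy~\eqref{xyxy=xxyy} (already $(ba)^2\ne b^2a^2$ in $P_2^1$).

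The remaining, and most laborious, step is \emph{completeness}: showing that every $\mathbf P_2^1\Sigma$ with $\Sigma\subseteq\{\gamma_k,\delta_k^m,\varepsilon_{k-1}\}$ coincides with one of the listed nodes, equivalently that the meet in $\mathfrak L(\mathbf P_2^1)$ of any two of the nodes found so far is again such a node, so that intersections create no new varieties. The chain inclusions of Lemma~\ref{L: gamma_k, delta_k^k,epsilon_k} organise the atomic varieties into the displayed staircase and reduce the issue to the meets of \emph{incomparable} pairs --- a $\gamma_k$ against a $\delta_{k'}^{m'}$, two $\delta$'s $\delta_k^m$ and $\delta_{k'}^{m'}$, or a $\delta_k^m$ against an $\varepsilon_j$ --- and Lemmas~\ref{L: within P{gamma_{k+1}}} and~\ref{L: within P{delta_{k+1}^m}}, read together with the reverse implications that one must check by hand through suitable substitutions into $\gamma_{k+1}$ and $\delta_{k+1}^m$, are precisely the tool for showing that each such meet collapses onto an already-named node; this lets one rewrite any finite $\Sigma$, one identity at a time, as a single generating identity (or as $\varnothing$). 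I expect the main obstacle to be exactly this bookkeeping --- the case analysis on the relative positions $(k,m)$ versus $(k',m')$, and the hand-verification of the reverse implications needed to turn the one-sided collapse lemmas into equalities. Assembling the three pieces then yields that $\mathfrak L(\mathbf P_2^1)$ is precisely the lattice of Figure~\ref{L(P_2^1)}; in particular it is countably infinite, consistent with $\mathbf P_2^1$ being HFB by Corollary~\ref{C: P_2^1}.
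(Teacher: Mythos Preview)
Your high-level decomposition matches the paper's exactly: split off $\mathfrak L(\mathbf F)$ and $\mathfrak L(\mathbf{LRB}\vee\mathbf C)$ via Lemma~\ref{L: does not contain LRB,F_1}, then attack $I=[\mathbf{LRB}\vee\mathbf F_1,\mathbf P_2^1]$ using Proposition~\ref{P: subvar of P_2^1}, with Lemmas~\ref{L: gamma_k, delta_k^k,epsilon_k},~\ref{L: within P{gamma_{k+1}}},~\ref{L: within P{delta_{k+1}^m}} supplying the inclusions and the objects $\mathbf I_{k+1},\mathbf J_k^m$ witnessing strictness.

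The completeness step, however, is organised differently and your version has a small error. Rather than computing pairwise meets, the paper \emph{stratifies} $[\mathbf{LRB}\vee\mathbf F_1,\mathbf P_2^1\{\eqref{xyxy=xxyy}\}]$ by the chain $\mathbf P_2^1\{\delta_1^1\}\subset\mathbf P_2^1\{\delta_2^2\}\subset\cdots$: for a proper $\mathbf V$ one takes the largest $k$ with $\mathbf P_2^1\{\delta_k^k\}\subseteq\mathbf V$ (or $k=0$ if none), argues via Proposition~\ref{P: subvar of P_2^1} and Lemma~\ref{L: gamma_k, delta_k^k,epsilon_k} that $\mathbf V$ must then satisfy $\varepsilon_k$, so that $\mathbf V$ lies in the strip $[\mathbf P_2^1\{\delta_k^k\},\mathbf P_2^1\{\varepsilon_k\}]$ (reading $\mathbf P_2^1\{\delta_0^0\}$ as $\mathbf{LRB}\vee\mathbf F_1$). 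Each strip is then sliced, via Lemmas~\ref{L: within P{gamma_{k+1}}} and~\ref{L: within P{delta_{k+1}^m}}, into intervals $[\mathbf P_2^1\{\gamma_p\},\mathbf P_2^1\{\gamma_{p+1}\}]$ (for $k=0$) or $[\mathbf P_2^1\{\delta_p^k\},\mathbf P_2^1\{\delta_{p+1}^k\}]$ (for $k\ge1$), and each of these is shown --- again via Proposition~\ref{P: subvar of P_2^1} plus the separating facts --- to be a finite \emph{chain}. This disposes of arbitrary (even infinite) $\Sigma$ in one stroke and sidesteps both your pairwise case analysis and the ``reverse implications'' you anticipated needing.

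The error: your claim that every $\mathbf P_2^1\Sigma$ collapses to a \emph{single} generating identity is false. The interior (unlabelled) nodes of the chains above are genuinely two-identity varieties --- $\mathbf P_2^1\{\delta_p^q,\gamma_{p+1}\}$ for $1\le q\le p$ in the $k=0$ strip, and $\mathbf P_2^1\{\delta_p^q,\delta_{p+1}^k\}$ for $k<q\le p$ in the $k\ge1$ strips --- lying strictly between adjacent single-identity nodes. Relatedly, your proposed identification of these nodes with $\mathbf{LRB}\vee X$ for $X\in\mathfrak L(\mathbf F)$ is not pursued in the paper; even if it is correct for the leftmost column, it would itself require proof, and the paper simply works with the two-identity descriptions.
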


\begin{proof}
Lemma~\ref{L: does not contain LRB,F_1}(ii) implies that the lattice $\mathfrak L(\mathbf P_2^1)$ is the disjoint union of the lattice $\mathfrak L(\mathbf{LRB}\vee \mathbf C)$ and the interval $[\mathbf F_1, \mathbf P_2^1]$.
As verified in Lee \cite[Fig.~2]{Lee-12}, the lattice $\mathfrak L(\mathbf{LRB}\vee \mathbf C)$ is as shown in Fig.~\ref{L(P_2^1)}.
In view of Lemma~\ref{L: does not contain LRB,F_1}(i), the interval $[\mathbf F_1, \mathbf P_2^1]$ is a disjoint union of the intervals $[\mathbf F_1, \mathbf F]$ and $[\mathbf{LRB}\vee\mathbf F_1, \mathbf P_2^1]$.
According to Lemma~\ref{L: L(F)}, the interval $[\mathbf F_1, \mathbf F]$ has the form shown in Fig.~\ref{L(P_2^1)}.
Since any identity from $\Phi\setminus\{\eqref{xsxt=xsxtx}\}$ together with~\eqref{xsxt=xsxtx} imply \eqref{xyxy=xxyy} by Lemma~\ref{L: gamma_k, delta_k^k,epsilon_k}, the variety $\mathbf P_2^1\{\eqref{xyxy=xxyy}\}$ is the greatest proper subvariety of $\mathbf P_2^1$.
(This result is also deducible from Lee and Li \cite[Lemma 6.7]{Lee-Li-11}.)
Let $\mathbf V$ be a variety from $[\mathbf{LRB}\vee\mathbf F_1,\mathbf P_2^1\{\eqref{xyxy=xxyy}\}]$.

Suppose that $\mathbf P_2^1\{\delta_1^1\}\nsubseteq\mathbf V$.
Since $\mathbf P_2^1\{\delta_1^1\}$ satisfies the identities \eqref{xyxy=xxyy}, $\delta_k^m$ and $\varepsilon_k$ for any $k\in\mathbb N$ and $1\le m\le k$ by Parts~(ii) and~(iii) of Lemma~\ref{L: gamma_k, delta_k^k,epsilon_k}, Proposition~\ref{P: subvar of P_2^1} implies that $\mathbf V$ satisfies $\varepsilon_0$ or $\gamma_\ell$ for some $\ell\in\mathbb N$.
Now we apply Lemma~\ref{L: gamma_k, delta_k^k,epsilon_k}(i) and obtain that $\mathbf P_2^1\{\gamma_\ell\}\subseteq\mathbf P_2^1\{\varepsilon_0\}$ for any $\ell\in\mathbb N$.
Hence $\mathbf V$ satisfies $\varepsilon_0$.
Thus, $\mathbf V$ belongs to the interval $[\mathbf{LRB}\vee\mathbf F_1,\mathbf P_2^1\{\varepsilon_0\}]$.

Suppose now that $\mathbf P_2^1\{\delta_k^k\}\subseteq\mathbf V$ for all $k \in \mathbb N$.
Then, since $\mathbf P_2^1\{\delta_k^k\}$ violates $\varepsilon_{k-1}$, Lemma~\ref{L: gamma_k, delta_k^k,epsilon_k} and Proposition~\ref{P: subvar of P_2^1} imply that $\mathbf V= \mathbf P_2^1\{\eqref{xyxy=xxyy}\}$.

Finally, suppose that $\mathbf P_2^1\{\delta_k^k\}\subseteq\mathbf V$ but $\mathbf P_2^1\{\delta_{k+1}^{k+1}\}\nsubseteq\mathbf V$ for some $k \in \mathbb N$.
Notice that
\begin{itemize}
\item[\textup(a)] $\mathbf J_s^1\nsubseteq\mathbf P_2^1\{\gamma_s\}$ but $\mathbf J_s^1\subseteq\mathbf P_2^1\{\delta_s^1\}$ for any $s\in\mathbb N$;
\item[\textup(b)] $\mathbf I_{s+1}\nsubseteq\mathbf P_2^1\{\delta_s^s\}$ but $\mathbf I_{s+1}\subset\mathbf P_2^1\{\gamma_{s+1}\}$ for any $s\in\mathbb N$;
\item[\textup(c)] $\mathbf J_s^{t+1}\nsubseteq\mathbf P_2^1\{\delta_s^t\}$ but $\mathbf J_s^{t+1}\subseteq\mathbf P_2^1\{\delta_s^{t+1}\}$ for any $1\le t<s$;
\item[\textup(d)] $\mathbf J_{s+1}^t\nsubseteq\mathbf P_2^1\{\delta_s^t\}$ but $\mathbf J_{s+1}^t\subseteq\mathbf P_2^1\{\delta_{s+1}^t\}$ for any $1\le t\le s$.
\end{itemize}
These facts together with Lemma~\ref{L: gamma_k, delta_k^k,epsilon_k}(ii),(iii) imply that $\mathbf P_2^1\{\delta_k^k\}$ violates the identities $\gamma_p$ and $\delta_p^q$, where $p \in \mathbb N$ and $1\le q<k$.
Now we apply Lemma~\ref{L: gamma_k, delta_k^k,epsilon_k}(ii),(iii) again and conclude that $\mathbf P_2^1\{\delta_{k+1}^{k+1}\}$ satisfies $\delta_r^m$ and $\varepsilon_r$ for all $r$ and $m$ such that $k+1\le m\le r$.
Then, since $\mathbf P_2^1\{\delta_k^k\}\subseteq\mathbf V$ and $\mathbf P_2^1\{\delta_{k+1}^{k+1}\}\nsubseteq\mathbf V$, Proposition~\ref{P: subvar of P_2^1} implies that $\mathbf V$ satisfies one of the identities $\varepsilon_k$ or $\delta_p^k$ for some $p\ge k$.
Now we apply Lemma~\ref{L: gamma_k, delta_k^k,epsilon_k}(ii) again and obtain that $\mathbf P_2^1\{\delta_p^k\}\subseteq\mathbf P_2^1\{\varepsilon_k\}$.
Hence $\mathbf V$ satisfies $\varepsilon_k$.
Thus, $\mathbf V$ belongs to the interval $[\mathbf P_2^1\{\delta_k^k\},\mathbf P_2^1\{\varepsilon_k\}]$.

We see that the interval $[\mathbf{LRB}\vee\mathbf F_1,\mathbf P_2^1\{\eqref{xyxy=xxyy}\}]$ is a disjoint union of the intervals $[\mathbf{LRB}\vee\mathbf F_1, \mathbf P_2^1\{\varepsilon_0\}]$, $\{\mathbf P_2^1\{\eqref{xyxy=xxyy}\}\}$ and $[\mathbf P_2^1\{\delta_k^k\}, \mathbf P_2^1\{\varepsilon_k\}]$ for all $k\in\mathbb N$.
So, it remains to verify that these intervals are as shown in Fig.~\ref{L(P_2^1)}.

First, we will show that the interval $[\mathbf{LRB}\vee\mathbf F_1, \mathbf P_2^1\{\varepsilon_0\}]$ has the form shown in Fig.~\ref{L(P_2^1)}.
In view of Lemma~\ref{L: gamma_k, delta_k^k,epsilon_k}, $\mathbf P_2^1\{\varepsilon_0\}$ satisfies $\{\eqref{xyxy=xxyy},\,\varepsilon_r\mid r\in\mathbb N\}$.
Then Proposition~\ref{P: subvar of P_2^1} implies that every variety from the interval $[\mathbf{LRB}\vee\mathbf F_1, \mathbf P_2^1\{\varepsilon_0\}]$ can be defined within $\mathbf P_2^1\{\varepsilon_0\}$ by some possibly empty subset of $\{\gamma_p,\delta_p^q\mid1\le q\le p\}$; in particular $\mathbf{LRB}\vee\mathbf F_1=\mathbf P_2^1\{\gamma_1\}$.
It follows from Lemma~\ref{L: within P{gamma_{k+1}}} that $\mathbf P_2^1\{\delta_p^q,\varepsilon_0\}\subseteq\mathbf P_2^1\{\gamma_{p+1}\}$ for any $1\le q\le p$.
Besides that, Lemma~\ref{L: gamma_k, delta_k^k,epsilon_k}(ii) implies that $\mathbf P_2^1\{\gamma_p\}\subseteq\mathbf P_2^1\{\delta_p^q,\,\varepsilon_0\}$.
Therefore, the interval $[\mathbf{LRB}\vee\mathbf F_1, \mathbf P_2^1\{\varepsilon_0\}]$ is a union of the singleton interval $\{\mathbf P_2^1\{\varepsilon_0\}\}$ and the intervals of the form $[\mathbf P_2^1\{\gamma_p\},\mathbf P_2^1\{\gamma_{p+1}\}]$, where $p \in\mathbb N$.
In view of Lemma~\ref{L: gamma_k, delta_k^k,epsilon_k}, $\mathbf P_2^1\{\gamma_{p+1}\}$ satisfies $\gamma_s$, $\delta_s^t$ and $\varepsilon_r$ for all $s$, $t$ and $r$ such that $p+1\le s$, $1\le t \le s$ and $r\ge0$.
It follows from (a)--(d) that $\mathbf P_2^1\{\gamma_p\}$ violates $\gamma_s$ and $\delta_s^t$ for all $s$ and $t$ such that $1\le t \le s < p$.
In view of these facts and Proposition~\ref{P: subvar of P_2^1}, every variety from the interval $[\mathbf P_2^1\{\gamma_p\},\mathbf P_2^1\{\gamma_{p+1}\}]$ can be defined within $\mathbf P_2^1\{\gamma_{p+1}\}$ by some possibly empty subset of $\{\gamma_p,\delta_p^1,\delta_p^2,\dots,\delta_p^p\}$.
Now we apply Lemma~\ref{L: gamma_k, delta_k^k,epsilon_k} again and obtain that the interval $[\mathbf P_2^1\{\gamma_p\},\mathbf P_2^1\{\gamma_{p+1}\}]$ forms the chain
$$
\mathbf P_2^1\{\gamma_p\}\subseteq \mathbf P_2^1\{\delta_p^1,\gamma_{p+1}\} \subseteq \mathbf P_2^1\{\delta_p^2,\gamma_{p+1}\}\subseteq \cdots \subseteq\mathbf P_2^1\{\delta_p^p,\gamma_{p+1}\}\subseteq \mathbf P_2^1\{\gamma_{p+1}\}.
$$
It remains to notice that all these inclusions are strict by (a)--(d).
We see that the interval $[\mathbf P_2^1\{\gamma_p\},\mathbf P_2^1\{\gamma_{p+1}\}]$ and, therefore, the interval $[\mathbf{LRB}\vee\mathbf F_1, \mathbf P_2^1\{\varepsilon_0\}]$ are as shown in Fig.~\ref{L(P_2^1)}.

It remains to show that, for any $k\in\mathbb N$, the interval $[\mathbf P_2^1\{\delta_k^k\}, \mathbf P_2^1\{\varepsilon_k\}]$ has the form shown in Fig.~\ref{L(P_2^1)}.
The proof of this fact is very similar to the arguments from the previous paragraph but we provide it for the sake of completeness.
It follows from (a)--(d) that $\mathbf P_2^1\{\delta_k^k\}$ violates $\gamma_r$, $\delta_s^t$ and $\varepsilon_t$ for all $r$, $s$ and $t$ such that $r\ge 0$, $s\ge1$, $t\le s$ and $1\le t<k$.
In view of Lemma~\ref{L: gamma_k, delta_k^k,epsilon_k}, $\mathbf P_2^1\{\varepsilon_k\}$ satisfies $\{\eqref{xyxy=xxyy},\,\varepsilon_r\mid r\ge k\}$.
Then Proposition~\ref{P: subvar of P_2^1} implies that every variety from the interval $[\mathbf P_2^1\{\delta_k^k\}, \mathbf P_2^1\{\varepsilon_k\}]$ can be defined within $\mathbf P_2^1\{\varepsilon_k\}$ by some possibly empty subset of $\{\delta_p^q\mid k\le q\le p\}$.
It follows from Lemma~\ref{L: within P{delta_{k+1}^m}} that $\mathbf P_2^1\{\delta_p^q,\varepsilon_k\}\subseteq\mathbf P_2^1\{\delta_{p+1}^k\}$ for any $k\le q\le p$.
Besides that, Lemma~\ref{L: gamma_k, delta_k^k,epsilon_k} implies that $\mathbf P_2^1\{\delta_p^k\}\subseteq\mathbf P_2^1\{\delta_p^q,\varepsilon_k\}$.
Therefore, the interval $[\mathbf P_2^1\{\delta_k^k\}, \mathbf P_2^1\{\varepsilon_k\}]$ is the union of the singleton interval $\{\mathbf P_2^1\{\varepsilon_k\}\}$ and the intervals of the form $[\mathbf P_2^1\{\delta_p^k\},\mathbf P_2^1\{\delta_{p+1}^k\}]$, where $p \ge k$.
Finally, Lemma~\ref{L: gamma_k, delta_k^k,epsilon_k}, Proposition~\ref{P: subvar of P_2^1} and (a)--(d) imply that every variety from the interval $[\mathbf P_2^1\{\delta_p^k\},\mathbf P_2^1\{\delta_{p+1}^k\}]$ can be defined within $\mathbf P_2^1\{\delta_{p+1}^k\}$ by some possibly empty subset of $\{\delta_p^k,\delta_p^{k+1},\dots,\delta_p^p\}$.
Now we apply Lemma~\ref{L: gamma_k, delta_k^k,epsilon_k} again and obtain that the interval $[\mathbf P_2^1\{\delta_p^k\},\,\mathbf P_2^1\{\delta_{p+1}^k\}]$ forms the chain
$$
\mathbf P_2^1\{\delta_p^k\}\subseteq \mathbf P_2^1\{\delta_p^{k+1},\,\delta_{p+1}^k\} \subseteq \mathbf P_2^1\{\delta_p^{k+2},\,\delta_{p+1}^k\}\subseteq \cdots \subseteq \mathbf P_2^1\{\delta_p^p,\,\delta_{p+1}^k\}\subseteq \mathbf P_2^1\{\delta_{p+1}^k\}.
$$
It remains to notice that all these inclusions are strict by (a)--(d).
We see that the interval $[\mathbf P_2^1\{\delta_p^k\},\mathbf P_2^1\{\delta_{p+1}^k\}]$ and so the interval $[\mathbf P_2^1\{\delta_k^k\}, \mathbf P_2^1\{\varepsilon_k\}]$ are as shown in Fig.~\ref{L(P_2^1)}.
\end{proof}

\subsection{Finitely universal varieties}
\label{subsec: fin uni}

Following Shevrin et al.~\cite{Shevrin-Vernikov-Volkov-09}, a variety of algebras is \textit{finitely universal} if every finite lattice is order-embeddable into its subvariety lattice.
The first example of a finitely universal variety of semigroups was presented by Burris and Nelson~\cite{Burris-Nelson-71} half a century ago.
In contrast, the first example of a finitely universal variety of monoids was
just recently found, in particular, there exist finitely universal varieties of monoids that are finitely generated \cite[Section 5]{Gusev-Lee-20}.
In view of this result, it is natural to find the least order of a monoid that generates a finitely universal variety.
The following statement shows that this order is more than five.

\begin{proposition}
\label{P: fin uni of order five}
Every monoid of order five or less generates a variety that is not finitely universal.
\end{proposition}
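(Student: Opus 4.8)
The plan has two stages: first reduce the statement, via Lemma~\ref{L: except P_2^1}, to the single variety $\mathbf P_2^1$, and then read off the conclusion for $\mathbf P_2^1$ from the explicit description of its subvariety lattice in Proposition~\ref{P: L(P_2^1)}.

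For the reduction, recall that, by Lemma~\ref{L: except P_2^1}, every monoid of order at most five other than $P_2^1$ generates a variety with only finitely many subvarieties. Such a variety is trivially not finitely universal: its subvariety lattice, being finite, cannot order-embed any finite lattice with more elements, such as a sufficiently long chain. Thus it remains only to show that $\mathbf P_2^1$ itself is not finitely universal.

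Here I would use Proposition~\ref{P: L(P_2^1)}, which identifies $\mathfrak L(\mathbf P_2^1)$ with the lattice depicted in Fig.~\ref{L(P_2^1)}. The idea is that this lattice, though infinite, has order dimension at most $2$ --- that is, it order-embeds into a product of two chains --- which one can see from its planar drawing or, more concretely, from the way it is assembled (via Lemma~\ref{L: does not contain LRB,F_1}) out of the finite lattice $\mathfrak L(\mathbf{LRB}\vee\mathbf C)$, the chain $[\mathbf F_1,\mathbf F]$ of Lemma~\ref{L: L(F)}, and a ``staircase'' region $[\mathbf{LRB}\vee\mathbf F_1,\mathbf P_2^1]$ whose members, by Proposition~\ref{P: subvar of P_2^1} and Lemmas~\ref{L: gamma_k, delta_k^k,epsilon_k}, \ref{L: within P{gamma_{k+1}}} and \ref{L: within P{delta_{k+1}^m}}, are each pinned down within $\mathbf P_2^1$ by at most two of the identities $\gamma_k$, $\delta_k^m$, $\varepsilon_k$. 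Since the $8$-element Boolean lattice $2^3$ has order dimension $3$, it does not order-embed into a product of two chains, hence not into $\mathfrak L(\mathbf P_2^1)$; as $2^3$ is finite, $\mathbf P_2^1$ is not finitely universal, and together with the previous paragraph this proves the proposition.

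The delicate point is the claim that $\mathfrak L(\mathbf P_2^1)$ has order dimension at most $2$: making it precise requires carefully tracking, through the lemmas cited above, exactly which comparabilities hold between the members $\mathbf P_2^1\{\gamma_k\}$, $\mathbf P_2^1\{\delta_k^m\}$, $\mathbf P_2^1\{\varepsilon_k\}$ and the varieties between consecutive ones, so as to produce two linear extensions of $\mathfrak L(\mathbf P_2^1)$ whose intersection is the whole order. An alternative that sidesteps order dimension is to argue directly from the same data that $\mathfrak L(\mathbf P_2^1)$ contains no triple of pairwise incomparable members whose three pairwise joins are again pairwise incomparable, which is precisely the obstruction to an order-embedding of $2^3$.
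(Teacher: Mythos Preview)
Your reduction to $\mathbf P_2^1$ via Lemma~\ref{L: except P_2^1} is the same as the paper's. For $\mathbf P_2^1$ itself your route differs: the paper reads off from Fig.~\ref{L(P_2^1)} that every subvariety has at most two upper covers and concludes that the diamond $M_3$ does not embed, whereas you argue that $\mathfrak L(\mathbf P_2^1)$ has order dimension at most~$2$ and hence cannot order-embed $2^3$. Your argument is in fact the more careful one relative to the stated definition of finite universality (order-embeddability): a bound on the number of covers does not by itself obstruct an \emph{order}-embedding of $M_3$, and indeed $M_3$ does order-embed into $\mathfrak L(\mathbf P_2^1)$---for instance via $\mathbf I_1$ below the antichain $\{\mathbf F,\ \mathbf P_2^1\{\gamma_2\},\ \mathbf P_2^1\{\delta_1^1\}\}$ below $\mathbf P_2^1$---so the paper's inference is really only an argument against $M_3$ occurring as a \emph{sublattice}. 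Your choice of $2^3$ together with order dimension avoids this subtlety, since dimension is preserved under order-embeddings and the planar Hasse diagram in Fig.~\ref{L(P_2^1)} witnesses dimension~$\le 2$; the concrete way you propose to extract two linear extensions from Proposition~\ref{P: L(P_2^1)} and Lemmas~\ref{L: gamma_k, delta_k^k,epsilon_k},~\ref{L: within P{gamma_{k+1}}},~\ref{L: within P{delta_{k+1}^m}} is exactly what is needed to make this rigorous. Your alternative characterization---that $2^3$ order-embeds into a lattice iff there is a triple of pairwise incomparable elements whose three pairwise joins are again pairwise incomparable---is also correct and gives an equally viable route.
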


\begin{proof}
In view of Lemma~\ref{L: except P_2^1}, it suffices to show that $\mathbf P_2^1$ is not finitely universal.
It is clear from Fig.~\ref{L(P_2^1)} that every proper subvariety of $\mathbf P_2^1$ has at most two coverings, so that $\mathbf P_2^1$ is indeed not finitely universal.
For instance, the modular non-distributive lattice in Fig.~\ref{5-element lattice} is not embeddable in the lattice $\mathfrak L(\mathbf P_2^1)$.
\end{proof}

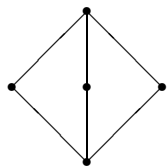
\begin{figure}[htb]
\unitlength=1mm
\linethickness{0.4pt}
\begin{center}
\begin{picture}(20,20)
\put(0,10){\circle*{1}}
\put(10,0){\circle*{1}}
\put(10,10){\circle*{1}}
\put(10,20){\circle*{1}}
\put(20,10){\circle*{1}}
\put(0,10){\line(1,1){10}}
\put(10,0){\line(1,1){10}}
\put(10,0){\line(0,1){20}}
\put(10,0){\line(-1,1){10}}
\put(20,10){\line(-1,1){10}}
\end{picture}
\end{center}
\caption{A modular non-distributive lattice}
\label{5-element lattice}
\end{figure}

\begin{remark}
In contrast, up to isomorphism and anti-isomorphism, there exist precisely four distinct semigroups of order four that generate a finitely universal variety, while the variety generated by any other semigroup of order four or less is not finitely universal~\cite{Lee-07}.
\end{remark}

\section*{APPENDIX. Tables of identities}

\begin{table}[tbh]
\caption{Identities labeled by Greek letters}
\begin{center}
\begin{tabular}{|c|c|}
\hline
Identity & Page number \\
\hline
$\alpha_k$ & \pageref{alpha_k} \\
$\beta_k$ & \pageref{beta_k} \\
$\gamma_k$ & \pageref{gamma_k} \\
$\delta_k^m$ & \pageref{delta_k^m} \\
$\varepsilon_k$ & \pageref{varepsilon_k} \\
$\zeta_k$ & \pageref{zeta_k}\\
$\eta_k$ & \pageref{eta_k}\\
$\kappa_k$ & \pageref{kappa_k} \\
$\lambda_k^m$ & \pageref{lambda_k^m}\\
$\mu_k^m$ & \pageref{mu_k^m}\\
$\nu_k$ & \pageref{nu_k}\\
\hline
\end{tabular}
\end{center}
\label{Greek letters}
\end{table}

\begin{table}[tbh]
\caption{``Non-local'' identities labeled by numbers}
\begin{center}
\begin{tabular}{|c|c|}
\hline
Identity & Page number \\
\hline
\eqref{xsxt=xsxtx} &  \pageref{xsxt=xsxtx} \\
\eqref{xyxy=xxyy} & \pageref{xyxy=xxyy} \\
\eqref{xtyxy=xtyyx} & \pageref{xtyxy=xtyyx} \\
\eqref{xytxy=xytyx} & \pageref{xytxy=xytyx} \\
\eqref{xzyt xy z_infty z_infty z = xzyt yx z_infty z_infty z} & \pageref{xzyt xy z_infty z_infty z = xzyt yx z_infty z_infty z} \\
\eqref{xsytxy=xsytyx} & \pageref{xsytxy=xsytyx}\\
\hline
\end{tabular}
\end{center}
\label{numbers}
\end{table}

\section*{Acknowledgements}
The authors thank Edmond W. H. Lee for careful reading the previous version and his valuable comments and suggestions that contributed to a significant improvement of the manuscript.
The authors are also grateful the anonymous referee for the suggestion to add tables of identities.

\small

\end{document}